\newtheorem{theo}{Theorem}[section]
\newtheorem{defi}[theo]{Definition}
\newtheorem{lem}[theo]{Lemma}
\newtheorem{prop}[theo]{Proposition}
\newtheorem{rem}[theo]{Remark}
\newtheorem{coro}[theo]{Corollary}
\newtheorem{exam}[theo]{Example}
\newtheorem{assu}[theo]{Assumption}
\newcommand{\lgot}{\ensuremath{\mathfrak{l}}}
\newcommand{\cgot}{\ensuremath{\mathfrak{c}}}
\newcommand{\kgot}{\ensuremath{\mathfrak{k}}}
\newcommand{\hgot}{\ensuremath{\mathfrak{h}}}
\newcommand{\ggot}{\ensuremath{\mathfrak{g}}}
\newcommand{\tgot}{\ensuremath{\mathfrak{t}}}
\newcommand{\pgot}{\ensuremath{\mathfrak{p}}}
\newcommand{\Rgot}{\ensuremath{\mathfrak{R}}}
\newcommand{\Acal}{\ensuremath{\mathcal{A}}}
\newcommand{\Bcal}{\ensuremath{\mathcal{B}}}
\newcommand{\Ccal}{\ensuremath{\mathcal{C}}}
\newcommand{\Dcal}{\ensuremath{\mathcal{D}}}
\newcommand{\Ecal}{\ensuremath{\mathcal{E}}}
\newcommand{\Kcal}{\ensuremath{\mathcal{K}}}
\newcommand{\Lcal}{\ensuremath{\mathcal{L}}}
\newcommand{\Ncal}{\ensuremath{\mathcal{N}}}
\newcommand{\Ocal}{\ensuremath{\mathcal{O}}}
\newcommand{\Qcal}{\ensuremath{\mathcal{Q}}}
\newcommand{\Ucal}{\ensuremath{\mathcal{U}}}
\newcommand{\Vcal}{\ensuremath{\mathcal{V}}}
\newcommand{\Xcal}{\ensuremath{\mathcal{X}}}
\newcommand{\RR}{\ensuremath{\mathrm{RR}}}
\newcommand{\Nbb}{\ensuremath{\mathbb{N}}}
\newcommand{\Cbb}{\ensuremath{\mathbb{C}}}
\newcommand{\Rbb}{\ensuremath{\mathbb{R}}}
\newcommand{\Zbb}{\ensuremath{\mathbb{Z}}}
\newcommand{\Ko}{\ensuremath{{\mathbf K}^0}}
\newcommand{\Chol}{\ensuremath{\Ccal_{\rm hol}}}
\newcommand{\chol}{\ensuremath{\Ccal^{\rho}_{\rm hol}}} 
\newcommand{\Ghol}{\ensuremath{\widehat{G}_{\rm hol}}}
\newcommand{\Khol}{\ensuremath{\widehat{K}_{\rm hol}}}
\newcommand{\Cholp}{\ensuremath{\Ccal'_{\rm hol}}}
\newcommand{\cholp}{\ensuremath{{\Ccal'}^{\rho}_{\rm hol}}} 
\newcommand{\Gholp}{\ensuremath{\widehat{G}'_{\rm hol}}}
\newcommand{\Kholp}{\ensuremath{\widehat{K}'_{\rm hol}}}
\newcommand{\Rforc}{\ensuremath{R^{-\infty}_{tc}}}
\newcommand{\Char}{\ensuremath{\hbox{\rm Char}}}
\newcommand{\As}{{\ensuremath{\rm As}}}
\newcommand{\Cr}{\ensuremath{\hbox{\rm Cr}}}
\newcommand{\croc}{\ensuremath{\hookrightarrow}}
\newcommand{\T}{\ensuremath{\hbox{\bf T}}}
\newcommand{\qfor}{\ensuremath{\mathcal{Q}^{-\infty}}}
\newcommand{\Rfor}{\ensuremath{R^{-\infty}}}
\newcommand{\indice}{\mathrm{Index}}
\def \what {\widehat}
\def \wtde {\widetilde}
\newcommand{\Thom}{\operatorname{Thom}}
\def \clif {\mathbf{c}}
\def \K {\mathbf{k}}
\def \clif {{\bf c}}
\def \Ad {{\rm Ad}}
\begin{document}

\title[ \hbox{\normalsize [Q,R]=0} in the non-compact setting]{Quantization commutes with reduction in the non-compact setting: the case of holomorphic discrete series}

\author{Paul-Emile  PARADAN}

\address{Institut de Math\'ematiques et de Mod\'elisation de Montpellier (I3M), 
Universit\'e Montpellier 2} 

\email{Paul-Emile.Paradan@math.univ-montp2.fr}

\date{January 2012}


\begin{abstract}
In this paper we show that the multiplicities of holomorphic discrete series representations relatively to reductive subgroups 
satisfy the credo ``Quantization commutes with reduction''.
\end{abstract}


\maketitle

{\def\thefootnote{\relax}
\footnote{{\em Keywords} : holomorphic discrete series, moment map, reduction, geometric quantization,
transversally elliptic symbol.\\
{\em 1991 Mathematics Subject Classification} : 58F06, 57S15, 19L47.}
\addtocounter{footnote}{-1}
}

{\small
\tableofcontents}

\section{Introduction} 

The orbit method, introduced by Kirillov in the 60's, proposes a correspondence between the irreducible unitary representations of a Lie group G and its orbits in the coadjoint representation : the representation 	$\pi^G_\Ocal$ should be the geometric quantization of the Hamiltonian action of $G$ on the coadjoint orbit $\Ocal\subset\ggot^*$. The important feature of this correspondence is the functoriality relatively to inclusion $H\croc G$ of closed subgroups. It means that if we start with representations $\pi^G_{\Ocal}$ and $\pi^H_{\Ocal'}$ attached to the coadjoint orbits $\Ocal\subset\ggot^*$ and $\Ocal'\subset\hgot^*$, one expects that the multiplicity of $\pi^H_{\Ocal'}$ in the restriction $\pi^G_{\Ocal}\vert_H$ can be computed in terms of the space 
\begin{equation}\label{eq:reduced-space-intro}
\Ocal\cap\pi_{\hgot,\ggot}^{-1}(\Ocal')/H,
\end{equation} 
where $\pi_{\hgot,\ggot}:\ggot^*\to\hgot^*$ denotes the canonical projection. The symplectic geometers recognise that  (\ref{eq:reduced-space-intro}) is a symplectic reduced space in the sense of Marsden-Weinstein, since  $\pi_{\hgot,\ggot}:\Ocal\to\hgot^*$ is the moment map relative to the Hamiltonian action of $H$ on $\Ocal$. Let us give some examples where this theory is known to be valid.

For simply connected nilpotent Lie groups, Kirillov \cite{Kirillov62} described the correspondence $\Ocal\mapsto \pi^G_{\Ocal}$, and Corwin-Greenleaf \cite{Corwin-Greenleaf} proves its functoriality relatively to subgroup : the multiplicity appearing in the direct integral decomposition of 
$\pi^G_{\Ocal}\vert_H$ is the cardinal of the reduced space (\ref{eq:reduced-space-intro}). 

For compact Lie group, G. Heckman \cite{Heckman82} proved that the multiplicity was asymptotically given by the volume of the reduced space (\ref{eq:reduced-space-intro}). Just after Guillemin and Sternberg \cite{Guillemin-Sternberg82} replaced this functoriality principle in a more geometric framework and proposed another version of this rule for a good quantization process: \emph{the quantization should commute with the reduction}. This means that if $\Qcal_H(M)$ is the geometric quantization of an Hamiltonian action of a compact Lie group $H$ on a symplectic manifold $M$, the multiplicity of 
the representation $\pi^H_{\Ocal'}$ in $\Qcal_H(M)$ should be the (dimension of the) quantization of the reduced space 
$(\Phi^H_M)^{-1}(\Ocal')/H$. Here $\Phi^H_M: M\to \hgot^*$ denotes the moment map. 

A good quantization process for compact Lie group action on compact symplectic manifolds turns to be the equivariant index of a Dolbeault-Dirac operator \cite{Sjamaar96,Vergne-Bourbaki}. In the late 90's, Meinrenken and Meinrenken-Sjamaar proved that the principle of Guillemin-Sternberg works in this setting \cite{Meinrenken98,Meinrenken-Sjamaar}. Afterwards, this quantization procedure was extended to non-compact manifolds with a proper moment map by Ma-Zhang and the author \cite{pep-formal,Ma-Zhang,pep-formal-2}. See also the recent work of Duflo-Vergne on the multiplicities of the tempered representations relatively to compact subgroups \cite{Duflo-Vergne2011}.

The purpose of this article is to show that the \emph{quantization commutes with reduction} principle holds in a case where the group of symmetry is a real reductive Lie group.  Loosely speaking, we prove that if $\pi^G_{\Ocal}$ and $\pi^H_{\Ocal'}$ are holomorphic discrete series representations of real reductive Lie groups $H\subset G$, the multiplicity of  $\pi^H_{\Ocal'}$ in the restriction $\pi^G_{\Ocal}\vert_H$ is equal to the quantization of the reduced space (\ref{eq:reduced-space-intro}).

We turn now to a description of the contents of the various sections, highlighting the main features.

In Section \ref{sec:hamiltonian-reductive}, we clarify previous work of Weinstein \cite{Weinstein01} and Duflo-Vargas 
\cite{Duflo-Vargas2007,Duflo-Vargas2010}  concerning the Hamiltonian action of a connected reductive real Lie group $G$ on a symplectic manifold $M$.  The main point is that if the action of $G$ on $M$ is \emph{proper} and the moment map $\Phi^G_M:M\to\ggot^*$ relative to this action is \emph{proper}, then the image of $\Phi^G_M$ is contained in the strongly elliptic 
open subset $\ggot^*_{se}$ and the manifold has a decomposition
\begin{equation}\label{eq:decomposition-intro}
M=G\times_K Y.
\end{equation}
Here $K$ is a maximal compact subgroup of  $G$, and $Y$ is a closed $K$-invariant symplectic sub-manifold of $M$. 
Thanks to (\ref{eq:decomposition-intro}), we remark that the reduced space $(\Phi^G_M)^{-1}(\Ocal)/G$ is connected 
for any coadjoint orbit $\Ocal\subset\ggot^*$ : this is a notable difference with the nilpotent case where the reduced space 
(\ref{eq:reduced-space-intro}) can be disconnected.

The decomposition (\ref{eq:decomposition-intro}) will be the main ingredient of this paper to prove some \emph{quantization commutes with reduction} phenomenon. Note that P. Hochs already used this idea when  the manifold $Y$ is compact to get a \emph{quantization commutes with reduction} theorem  in the setting of KK-theory \cite{Hochs09}. Hochs was working on some induction process,  while we will use (\ref{eq:decomposition-intro}) to prove some functoriality relatively to a restriction procedure.

In this context, it is natural to look at the induced action of a reductive subgroup $G'\subset G$ on $M$, and we know then that we have 
another decomposition  $M=G'\times_{K'} Y'$ if the moment map $\Phi^{G'}_M$ is proper. In Section \ref{sec:criterion}, we give a 
criterion that insures the properness of $\Phi^{G'}_M$.

In Section \ref{sec:QR-hol}, we turn onto a closed study of the holomorphic discrete series representations of a reductive Lie group $G$. 
Recall that the parametrization of these representations depends on the choice of an element $z$ in the center of the Lie algebra of $K$ 
such that the adjoint map $\mathrm{ad}(z)$ defines a complex structure on $\ggot/\kgot$. Let $T$ be a maximal torus in $K$, with Lie algebra $\tgot$. The existence of element $z$ forces $\tgot$ to be a Cartan sub-algebra of $\ggot$, and it defines a closed cone $\chol(z)\subset \tgot^*$. If $\wedge^*\subset \tgot^*$ is the weight lattice, we consider the subset 
$$
\Ghol(z):=\chol(z)\cap \wedge^*_+
$$
where $\wedge^*_+$ is the set of dominant weights. The work of Harish-Chandra tells us that that we can attach an holomorphic discrete series representation $V^G_\lambda$ to any $\lambda\in \Ghol(z)$.  

In Section \ref{sec:QR-hol}, we look at formal quantization procedures attached to the Hamiltonian action of $G$ on a symplectic manifold $M$. We suppose that the properness assumptions are satisfied and that the image of the moment map $\Phi^G_M$ is contained in 
$G\cdot\chol(z)\subset\ggot^*_{se}$. Let us briefly recall the definition. We define the formal geometric quantization of 
the $G$-action on $M$ as the following formal sum 
\begin{equation}\label{eq:formal-G-intro}
\qfor_G(M):=\sum_{\lambda\in \Ghol(z)} \Qcal(M_{\lambda,G})\ V^G_\lambda,
\end{equation}
where $\Qcal(M_{\lambda,G})\in\Zbb$ is the quantization of the \emph{compact} symplectic reduced space $M_{\lambda,G}:=(\Phi^G_M)^{-1}(G\cdot\lambda)/G$.

Since the moment map $\Phi^K_M$ is proper, we can also define the formal geometric quantization of the $K$-action on $M$ as 
 \begin{equation}\label{eq:formal-K-intro}
\qfor_K(M):=\sum_{\mu\in \wedge^*_+} \Qcal(M_{\mu,K})\ V^K_\mu,
\end{equation}
where $M_{\mu,K}:=(\Phi^K_M)^{-1}(K\cdot\mu)/K$, and $V^K_\mu$ denotes the irreducible representation of $K$ with highest weight $\mu$. The formal quantization procedure $\qfor_K$, together with its functorial properties, 
has been studied by Ma-Zhang and the author in \cite{pep-formal,Ma-Zhang,pep-formal-2}.

Let $\Rfor(G,z)$ be the $\Zbb$-module formed by the infinite sums $\sum_{\lambda\in \Ghol(z)} m_\lambda \ V^{G}_\lambda$ with  
$m_\lambda\in\Zbb$. We consider also the $\Zbb$-module $\Rfor(K)$ formed by the infinite sums $\sum_{\mu\in \wedge^*_+} n_\mu \ V^{K}_\mu$ with $n_\mu\in\Zbb$. The following basic result will be an important tool in our paper (see Lemma \ref{lem:restriction-G-K}).

\medskip 
{\bf Lemma A}\ \emph{We have an injective restriction morphism $\mathbf{r}_{K,G} : \Rfor(G,z)\to\Rfor(K)$.}

\medskip

We can state one of our main result

\medskip

{\bf Theorem B}\ \emph{If Assumptions\footnote{See Assumptions \ref{assumption}.} {\bf A1} or {\bf A2} are satisfied, we have the following relation
$$
\mathbf{r}_{K,G}\Big(\qfor_G(M)\Big)=\qfor_K(M).
$$ 
}

\medskip

The main tool for proving Theorem B, is the following relation 
$$
 \qfor_K(M)=\qfor_K(Y) \otimes S^\bullet(\pgot)
 $$
where $\qfor_K(Y)$ is the formal geometric quantization of the slice $Y$ and $S^\bullet(\pgot)$ is the symmetric algebra of the 
complex $K$-module $\pgot:=(\ggot/\kgot,\mathrm{ad}(z))$. 

\medskip

Consider now a connected reductive subgroup $G'\subset G$ such that its Lie algebra $\ggot'$ contains the element $z$. Let $\Phi^{G'}_{G\cdot\lambda}$ be the moment map relative to the Hamiltonian action of $G'$ on the coadjoint orbit $G\cdot\lambda,\,\lambda\in\Ghol(z)$. It is not difficult to see that $\Phi^{G'}_{G\cdot\lambda}$ is a proper map. Thanks to the work of  
T. Kobayashi \cite{Toshi-inventiones94} and Duflo-Vargas \cite{Duflo-Vargas2007}, we know that the representation $V^G_\lambda$ admits an admissible restriction to $G'$. It means that the restriction $V^G_\lambda\vert_{G'}$ is a discrete sum formed by holomorphic discrete series representations $V^{G'}_\mu,\, \mu\in \Ghol'(z)$.

We can now state the major result of this paper (see Theorem \ref{theo:QR=0-non-compact}).

\medskip

{\bf  Theorem C}\ \emph{ Let $\lambda\in \Ghol(z)$. We have the following relation
$$
V^G_\lambda\vert_{G'}=\qfor_{G'}(G\cdot\lambda).
$$
It means that for any $\mu\in \Gholp(z)$, the multiplicity of the representation $V^{G'}_\mu$ in 
the restriction $V^G_\lambda\vert_{G'}$ is equal to the geometric quantization
$\Qcal\left((G\cdot\lambda)_{\mu,G'}\right)\in \Zbb$ 
of the (compact) reduced space $(G\cdot\lambda)_{\mu,G'}$.
}

\medskip

In Section \ref{sec:formal-G}, we prove that the formal quantization process $\qfor_G$ is functorial relatively to reductive subgroups.

\medskip

{\bf Theorem D}\ \emph{Suppose that Assumption {\bf A2} holds. Then we have 
$$
\mathbf{r}_{G',G}\left(\qfor_G(M)\right)=\qfor_{G'}(M).
$$
where $\mathbf{r}_{G',G}$ is a restriction morphism.
}

\medskip

In \cite{Jakobsen-Vergne}, Jakobsen-Vergne proposed another formula for the the multiplicity of the representation $V^{G'}_\mu$ in 
the restriction $V^G_\lambda\vert_{G'}$. In Section \ref{subsec:quantization-Y}, we explain how to recover their result from Theorem B.

\medskip

Section \ref{sec:transversally} is devoted to the proofs of the main results of this paper.  We use here previous work of the author 
on localization techniques in the the setting of transversally elliptic operators.

\bigskip

{\bf Notations.}\ In this paper, $G$ will denoted a connected real reductive Lie group. We take here the convention of Knapp \cite{Knapp-book}. 
We have a Cartan involution $\Theta$ on $G$, such that the fixed point set $K:=G^\Theta$ is a connected maximal compact subgroup. We have Cartan decompositions : at the level of Lie algebras $\ggot=\kgot\oplus \pgot$ and at the level of the group $G\simeq K\times \exp(\pgot)$. 
We denote by $b$ a $G$-invariant non-degenerate bilinear form on $\ggot$ that defines a $K$-invariant scalar product $(X,Y):=-b(X,\Theta(Y))$.

When $V$ and $V'$ are two representations of a group $H$, the multiplicity of $V$ in $V'$ will  be denoted $[V:V']$.

\section{Hamiltonian actions of real  reductive Lie groups}\label{sec:hamiltonian-reductive}

This section is mainly a synthesis of previous work by Weinstein \cite{Weinstein01}, Duflo-Vargas 
\cite{Duflo-Vargas2007,Duflo-Vargas2010} and Hochs \cite{Hochs09}, except the criterion that we obtain in Section \ref{sec:criterion}.

Let $G$ be a connected real reductive Lie group.  We consider an Hamiltonian action of $G$ on a connected symplectic manifold $(M,\Omega_M)$. The corresponding moment map $\Phi_M^G:M\to\ggot^*$ is defined (modulo a constant) by the relations
\begin{equation}\label{eq:kostant=rel}
\iota(X_M)\Omega_M=-d\langle\Phi_M^G,X\rangle,\quad \forall X\in\ggot,
\end{equation}
where $X_M(m):=\frac{d}{ds}e^{-s X}\cdot m\vert_{s=0}$ is the vector field generated by $X\in\ggot$.

Let $\ggot=\kgot\oplus \pgot$ be a Cartan decomposition. Let $K\subset G$ be the maximal compact subgroup with Lie algebra $\kgot$. 
Thus we have a decomposition 
$$
\Phi_M^G=\Phi_M^K \oplus \Phi_M^\pgot
$$
where $\Phi_M^K:M\to\kgot^*$ is a moment map relative to the action of $K$ on $(M,\Omega_M)$, and $\Phi_M^\pgot:M\to\pgot^*$ is 
$K$-equivariant. 

The $K$-invariant scalar product  $\|X\|^2= -b(X,\Theta(X))$ on $\ggot$ induces an identification $\xi\mapsto \tilde{\xi},\ \ggot^*\simeq \ggot$ defined by $(\tilde{\xi},X):=\langle\xi,X\rangle$ for $\xi\in\ggot^*$ and $X\in\ggot$. We still denote $\|\xi\|^2:=\|\tilde{\xi}\|^2$ the corresponding scalar product on $\ggot^*$.

Let $\kappa^G,\kappa^K$ and $\kappa^\pgot$ be respectively the Hamiltonian vectors fields of the $K$-invariant functions
$\frac{-1}{2}\|\Phi^G_M\|^2$, $\frac{-1}{2}\|\Phi^K_M\|^2$, and $\frac{-1}{2}\|\Phi^\pgot_M\|^2$. 
The relations (\ref{eq:kostant=rel}) give that 
\begin{equation}\label{eq:kirwan=vector}
\kappa^{-}(m)=\left[\widetilde{\Phi^{-}_M(m)}\right]_M(m),\quad \forall m\in M,
\end{equation}
for $-\in\{G,K,\pgot\}$.

\subsection{Proper actions}

In this section we suppose $\mathbf{C1 :}$ the action of $G$ on $M$ is proper\footnote{For any compact subset $A$ of $M$ the subset $\{g\in G \,\vert\, g\cdot A\cap A\neq \emptyset\}$ is compact.}. We have then the fundamental fact.

\begin{lem}
$\bullet$ The map $\Phi_M^\pgot:M\to\pgot^*$ is  a $K$-equivariant submersion, so for any $a\in\pgot^*$, the fiber
$Y_a:=(\Phi_M^\pgot)^{-1}(a)$ is either empty or a sub-manifold of $M$.

$\bullet$ The set of critical points of $\|\Phi_M^\pgot\|^2:M\to\Rbb$ is $Y_0:=(\Phi_M^\pgot)^{-1}(0)$.

\end{lem}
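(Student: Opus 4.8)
The plan is to reduce both statements to the single infinitesimal fact that, for every $m\in M$, the linear map $\pgot\to T_mM$, $X\mapsto X_M(m)$, is injective. To prove this, suppose $X\in\pgot$ satisfies $X_M(m)=0$. The curve $s\mapsto e^{-sX}\cdot m$ is then the integral curve of the vector field $X_M$ through $m$, hence constant, so the one-parameter subgroup $\{e^{sX}\,:\,s\in\Rbb\}$ lies in the stabilizer $G_m$. Under assumption $\mathbf{C1}$ the stabilizer $G_m$ is compact; but the Cartan decomposition $G\simeq K\times\exp(\pgot)$ shows that for $X\neq 0$ the subgroup $\{e^{sX}\,:\,s\in\Rbb\}$ is closed and non-compact, a contradiction. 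Hence $X=0$.

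For the first bullet, note that $\Phi_M^\pgot$ is $K$-equivariant by construction. Fix $m\in M$. Since $\langle\Phi_M^G,X\rangle=\langle\Phi_M^\pgot,X\rangle$ for $X\in\pgot$, the defining relation $(\ref{eq:kostant=rel})$ shows that the transpose of $d_m\Phi_M^\pgot\colon T_mM\to\pgot^*$ is the linear map $\pgot\to T_m^*M$ sending $X$ to the covector $v\mapsto-\Omega_M(X_M(m),v)$, i.e. to $-\iota(X_M(m))\Omega_M$. Because $\Omega_M$ is non-degenerate, this transpose is injective precisely when $X\mapsto X_M(m)$ is injective on $\pgot$, which is exactly the fact established above. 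Therefore $d_m\Phi_M^\pgot$ is surjective for all $m$, so $\Phi_M^\pgot$ is a $K$-equivariant submersion and each level set $Y_a=(\Phi_M^\pgot)^{-1}(a)$ is empty or a submanifold of $M$.

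For the second bullet, a point $m$ is critical for $\|\Phi_M^\pgot\|^2$ iff $d_m\|\Phi_M^\pgot\|^2=0$; since $\kappa^\pgot$ is the Hamiltonian vector field of $\frac{-1}{2}\|\Phi_M^\pgot\|^2$ and $\Omega_M$ is non-degenerate, this holds iff $\kappa^\pgot(m)=0$. By $(\ref{eq:kirwan=vector})$ we have $\kappa^\pgot(m)=\big[\widetilde{\Phi^\pgot_M(m)}\big]_M(m)$ with $\widetilde{\Phi^\pgot_M(m)}\in\pgot$, so the injectivity statement gives $\kappa^\pgot(m)=0$ iff $\widetilde{\Phi^\pgot_M(m)}=0$, i.e. iff $\Phi^\pgot_M(m)=0$, that is $m\in Y_0$.

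The only genuinely substantive point is the infinitesimal injectivity of $X\mapsto X_M(m)$ on $\pgot$, and even that is short: it is the conjunction of ``proper action $\Rightarrow$ compact stabilizers'' with the elementary fact that a nonzero element of $\pgot$ exponentiates to a non-compact one-parameter subgroup of $G$. The remaining assertions are formal consequences of $(\ref{eq:kostant=rel})$, $(\ref{eq:kirwan=vector})$ and the non-degeneracy of $\Omega_M$, so I do not expect any further obstacle.
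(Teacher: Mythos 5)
Your proof is correct and follows essentially the same route as the paper: both arguments reduce everything to the vanishing of $\pgot_m=\{X\in\pgot\ \vert\ X_M(m)=0\}$ (you deduce it by exponentiating a nonzero $X\in\pgot$ to a closed non-compact one-parameter subgroup inside the compact stabilizer $G_m$, which is exactly the content of the paper's appeal to $\pgot_m=\mathrm{Lie}(G_m)\cap\pgot=\{0\}$), and then use the relation $\langle \T\Phi_M^\pgot(m)\cdot v,X\rangle=-\Omega_M(X_M(m),v)$ together with (\ref{eq:kirwan=vector}) for the two bullets. The only differences are cosmetic (transpose injectivity versus annihilator of the image), so nothing further is needed.
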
 

\begin{proof} Let us prove the first point. Let $m\in M$. Since the tangent map $ \T \Phi_M^\pgot(m):\T_m M\to \pgot^*$ satisfies
\begin{equation}\label{rel=1}
\langle \T \Phi_M^\pgot(m),X\rangle=- \iota(X_M)\Omega_M\vert_m, \quad \forall X\in\pgot,
\end{equation} 
the orthogonal of the image of $\T \Phi^\pgot_M(m)$ is equal to $\pgot_m:=\{X\in\pgot\ |\  X_M(m)=0\}$. 
As the action of $G$ on $M$ is proper,  the stabilizer subgroup $G_m$ is compact. This forces 
$\pgot_m= {\rm Lie}(G_m)\cap\pgot$ to be reduced to $\{0\}$. Thus $\T \Phi_M^\pgot(m)$ is onto and the first point is proved. 
Let $m\in M$ be a critical point of $\|\Phi_M^\pgot\|^2$. The Hamiltonian vector field $\kappa^{\pgot}$ vanishes at $m$, and (\ref{eq:kirwan=vector}) tells you that $\widetilde{\Phi_M^\pgot(m)}\in \pgot_m=\{0\}$. The second point is proved.
\end{proof}

\bigskip

For the remaining part of this section, we consider the $K$-invariant sub-manifold 
$$
Y:=Y_0\subset M
$$
that we suppose non empty. Let us consider the restriction $\Omega_{Y}$ of the symplectic structure $\Omega_M$ to $Y$. For $y\in Y$, let $\pgot\cdot y=\{ X_M(y),\ X\in \pgot\}\subset \T_y M$. The tangent space $\T_y Y$ is by definition the kernel of $\T \Phi_\pgot(y)$. Relations (\ref{rel=1}) show that  
\begin{equation}\label{rel=3}
\T_y Y= (\pgot\cdot y)^\perp
\end{equation} 
where the orthogonal is taken relatively to the symplectic form. Hence the kernel of $\Omega_{Y}\vert_y$ is equal to $(\pgot\cdot y)^\perp\cap \pgot\cdot y$. For $X,X'\in\pgot$ and $y\in Y$, we have 
\begin{eqnarray*}
\Omega_M(X_M(y),X'_M(y))&=& \langle \Phi_M^G(y),[X,X']\rangle\\
&=& \langle [\Phi_M^K(y),X],X'\rangle.
\end{eqnarray*}

Hence $(\pgot\cdot y)^\perp\cap \pgot\cdot y\simeq \ggot_\xi\cap \pgot$ for $\xi= \Phi_M^K(y)$. Note that for $\xi\in\kgot$, we have 
$\ggot_\xi=\ggot_\xi\cap\kgot\oplus \ggot_\xi\cap\pgot$. We have then proved 

\begin{lem} \label{lem:non-degenerate}
Let $y\in Y$. The $2$-form $\Omega_{Y}\vert_y$ is non degenerated if and only $\ggot_\xi\subset\kgot$
for $\xi= \Phi_M^K(y)$.
\end{lem}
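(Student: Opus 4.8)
The plan is to combine the two computations that have already been set up in the text. The key structural fact established just above is the identification of the kernel of $\Omega_Y|_y$: from $\T_y Y = (\pgot\cdot y)^\perp$ and the symplectic pairing computation we get $\ker(\Omega_Y|_y) \simeq (\pgot\cdot y)^\perp \cap \pgot\cdot y \simeq \ggot_\xi \cap \pgot$, where $\xi = \Phi^K_M(y)$ and $\ggot_\xi$ is the centralizer of $\tilde\xi$ (equivalently the stabilizer Lie algebra of $\xi$ under the coadjoint action). So the nondegeneracy of $\Omega_Y|_y$ is \emph{equivalent} to $\ggot_\xi \cap \pgot = \{0\}$, and the task reduces to showing this equals the condition $\ggot_\xi \subset \kgot$.

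First I would record that $\xi = \Phi^K_M(y) \in \kgot^*$, so via the identification $\ggot^* \simeq \ggot$ coming from the scalar product, $\tilde\xi$ lies in $\kgot$. Then I would invoke the decomposition noted in the paragraph preceding the lemma: for $\xi \in \kgot$ one has $\ggot_\xi = (\ggot_\xi \cap \kgot) \oplus (\ggot_\xi \cap \pgot)$. This is the only genuinely structural input and it follows because $\ad(\tilde\xi)$ preserves both $\kgot$ and $\pgot$ (as $\tilde\xi \in \kgot$ and $[\kgot,\kgot]\subset\kgot$, $[\kgot,\pgot]\subset\pgot$), hence its kernel splits accordingly. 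Given this splitting, $\ggot_\xi \subset \kgot$ holds if and only if $\ggot_\xi \cap \pgot = \{0\}$: the forward implication is immediate, and conversely if $\ggot_\xi \cap \pgot = \{0\}$ then the direct sum decomposition forces $\ggot_\xi = \ggot_\xi \cap \kgot \subset \kgot$.

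Chaining these equivalences gives the claim: $\Omega_Y|_y$ nondegenerate $\iff$ $\ggot_\xi \cap \pgot = \{0\}$ $\iff$ $\ggot_\xi \subset \kgot$. I would present it in exactly this order — first quote the kernel identification already derived, then the $\kgot$-$\pgot$ splitting of $\ggot_\xi$, then the elementary logical equivalence — so the proof is essentially a two-line assembly. I do not anticipate a real obstacle here; the only point requiring a word of care is making explicit why $\ggot_\xi$ splits as $(\ggot_\xi\cap\kgot)\oplus(\ggot_\xi\cap\pgot)$ when $\xi\in\kgot$, which the text has already flagged but not proved, so I would supply the one-line justification via $\ad(\tilde\xi)$-invariance of the Cartan decomposition.
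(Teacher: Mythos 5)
Your proof is correct and follows essentially the same route as the paper: the text's computation already identifies $\ker(\Omega_Y\vert_y)$ with $(\pgot\cdot y)^\perp\cap\pgot\cdot y\simeq\ggot_\xi\cap\pgot$, and the lemma is then exactly the observation that, thanks to the splitting $\ggot_\xi=(\ggot_\xi\cap\kgot)\oplus(\ggot_\xi\cap\pgot)$ for $\xi\in\kgot^*$, this kernel vanishes if and only if $\ggot_\xi\subset\kgot$. Your added one-line justification of the splitting via $\mathrm{ad}(\tilde\xi)$-invariance of the Cartan decomposition is a harmless (and welcome) elaboration of a fact the paper merely asserts.
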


We have a canonical $G$-equivariant map $\pi: G\times_K Y \longrightarrow M$ that sends $[g,y]$ to $g\cdot y$. Following Weinstein 
\cite{Weinstein01}, we consider the $G$-invariant open subset 
\begin{equation}\label{eq:def-se}
\ggot^*_{se}=\{\xi\in\ggot^*\,\vert\, G_\xi \ \mathrm{is\  compact} \}
\end{equation}
of strongly elliptic elements. 

\begin{exam}
For the group $G=\mathrm{SL}_2(\Rbb)$, the set $\ggot^*_{se}$ is equal, through the trace,  to the cone
$\{X\in\mathfrak{sl}_2(\Rbb)\ \vert \det(X)>0\}$.
\end{exam}

Let $\tgot$ be the Lie algebra of a maximal torus in $K$. Weinstein proves that 
the open subset $\ggot^*_{se}$ is non-empty if and only if  $\tgot$ is a Cartan sub-algebra of $\ggot$ \cite{Weinstein01}. See the Table \ref{tab:strongly-elliptic}. We note that $\kgot^*_{se}:=\ggot^*_{se}\cap\kgot^*$ is equal to  
$\{\xi\in\kgot^*\,\vert\, G_\xi \subset K \}$ and that
\begin{equation}\label{eq:se}
\ggot^*_{se}={\rm Ad}^*(G)\cdot\kgot^*_{se}.
\end{equation}

\begin{table}
   \caption{\label{tab:strongly-elliptic} Strongly elliptic set}
   \begin{center}
   \begin{tabular}{c|c}
      $G $               &   $\ggot^*_{se}\neq \emptyset$    \\
 \hline

 $\mathrm{GL}(n,\Cbb)$    & no               \\
 $\mathrm{O}(n,\Cbb)$    & no               \\
 $\mathrm{SL}(n,\Rbb)$    & $n=2$               \\
 $\mathrm{SO}_o(p,q)$    & $pq$ even  \\
 $\mathrm{Sp}(n,\Rbb)$    & yes               \\
$\mathrm{SO}^*(2n)$       & yes               \\
$\mathrm{U}(p,q)$            & yes                \\
$\mathrm{Sp}(p,q)$            & yes                \\
   \end{tabular}
   \end{center}
\end{table}

Let us consider the invariant open subsets $M_{se}= (\Phi^G_M)^{-1}(\ggot^*_{se})\subset M$ and $Y_{se}:= Y\cap M_{se}\subset Y$.

\medskip

\begin{lem}\label{lem:M=se} 

$\bullet$ The $2$-form $\Omega_Y$ is non-degenerated on $Y_{se}$. 

$\bullet$ The action of the group $K$ on $(Y_{se},\Omega_{Y_{se}})$ is Hamiltonian, with moment map $\Phi^K_{Y_{se}}$ that is equal to the restriction of $\Phi^G_M$ to $Y_{se}$.

$\bullet$ The map $\pi$ induces a $G$-equivariant diffeomorphism
$\pi_{se}: G\times_K Y_{se} \to M_{se}$.
\end{lem}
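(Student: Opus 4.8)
The plan is to exploit the two lemmas already established, \ref{lem:non-degenerate} and the splitting \eqref{eq:se}--\eqref{eq:decomposition-intro} of strongly elliptic elements, together with the structural fact \eqref{eq:kirwan=vector} relating the Kirwan vector fields to the moment map. First I would treat the non-degeneracy statement. By Lemma \ref{lem:non-degenerate}, $\Omega_Y\vert_y$ is non-degenerate precisely when $\ggot_\xi\subset\kgot$ for $\xi=\Phi^K_M(y)$. For $y\in Y_{se}$ the element $\Phi^G_M(y)$ lies in $\ggot^*_{se}$, so its stabilizer $G_{\Phi^G_M(y)}$ is compact; but for $y\in Y$ one has $\Phi^\pgot_M(y)=0$, hence $\Phi^G_M(y)=\Phi^K_M(y)=\xi\in\kgot^*$, and $G_\xi$ compact means $\ggot_\xi\subset\kgot$ by the description of $\kgot^*_{se}$. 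Thus $\Omega_Y$ is non-degenerate on $Y_{se}$, proving the first bullet.

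Next, for the second bullet: since $\Phi^\pgot_M$ vanishes identically on $Y$, the decomposition $\Phi^G_M=\Phi^K_M\oplus\Phi^\pgot_M$ restricted to $Y_{se}$ reads $\Phi^G_M\vert_{Y_{se}}=\Phi^K_M\vert_{Y_{se}}$, with values in $\kgot^*$. To see this restriction is a genuine moment map for the $K$-action on $(Y_{se},\Omega_{Y_{se}})$, I would check Kostant's relation \eqref{eq:kostant=rel} on $Y_{se}$: for $X\in\kgot$ and $y\in Y_{se}$, the vector field $X_M(y)$ is tangent to $Y$ (as $Y$ is $K$-invariant), so $\iota(X_{Y_{se}})\Omega_{Y_{se}}$ is simply the pullback of $\iota(X_M)\Omega_M=-d\langle\Phi^G_M,X\rangle$, which pulls back to $-d\langle\Phi^K_M\vert_{Y_{se}},X\rangle$ since $X\in\kgot$. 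That the $K$-action preserves $\Omega_{Y_{se}}$ is automatic from $K$-invariance of $Y$ and of $\Omega_M$. This gives the second bullet.

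For the third bullet I would argue that $\pi\colon G\times_K Y\to M$ restricts to a map $G\times_K Y_{se}\to M_{se}$ (clear, since $\Phi^G_M$ is $G$-equivariant and $Y_{se}=Y\cap M_{se}$), and then show it is a diffeomorphism onto $M_{se}$. Surjectivity and the fact that the image lands exactly in $M_{se}$ follows from \eqref{eq:se}: any $m\in M_{se}$ has $\Phi^G_M(m)\in\ggot^*_{se}=\mathrm{Ad}^*(G)\cdot\kgot^*_{se}$, so after translating by some $g\in G$ we may assume $\Phi^G_M(g^{-1}m)\in\kgot^*$, i.e. $\Phi^\pgot_M(g^{-1}m)=0$, so $g^{-1}m\in Y$ and in fact in $Y_{se}$. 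For injectivity and the local diffeomorphism property, I would use that $\|\Phi^\pgot_M\|^2$ has $Y_0$ as its critical set (the Lemma in Section \ref{sec:hamiltonian-reductive}) and run the gradient flow of $\|\Phi^\pgot_M\|^2$: properness of the $G$-action guarantees (after the standard argument identifying the flow with a $G$-orbit direction via \eqref{eq:kirwan=vector}) that every point of $M_{se}$ flows to $Y_{se}$ along a unique $G$-translate, giving the inverse map; differentiability of the inverse comes from the submersion property of $\Phi^\pgot_M$ established in the first Lemma of the section. I expect the main obstacle to be this last point — verifying cleanly that $\pi_{se}$ is injective with smooth inverse, i.e. that the $G$-translate moving a given $m\in M_{se}$ into $Y$ is unique modulo $K$ and depends smoothly on $m$; this is where properness of the $G$-action is essential and where one must be careful that the gradient/exponential argument stays within $M_{se}$, which is open and $G$-invariant, so that the flow does not escape.
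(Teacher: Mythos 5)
Your treatment of the first two bullets is fine and is exactly what the paper does: non-degeneracy follows from Lemma \ref{lem:non-degenerate} because on $Y_{se}$ one has $\Phi^G_M(y)=\Phi^K_M(y)=\xi\in\kgot^*_{se}$ and hence $G_\xi\subset K$, $\ggot_\xi\subset\kgot$; and the moment-map statement is immediate from Kostant's relations. Surjectivity of $\pi_{se}$ via (\ref{eq:se}) also matches the paper.

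The genuine gap is in your third bullet, precisely at the point you flag as the ``main obstacle'': injectivity. Your proposed mechanism --- run the gradient flow of $\|\Phi^\pgot_M\|^2$ and argue that every point of $M_{se}$ flows to $Y_{se}$ ``along a unique $G$-translate'' --- does not work as stated. The vector field supplied by (\ref{eq:kirwan=vector}) is the \emph{Hamiltonian} vector field $\kappa^\pgot$ of $-\frac{1}{2}\|\Phi^\pgot_M\|^2$, which is indeed tangent to the $G$-orbits but \emph{preserves} $\|\Phi^\pgot_M\|^2$, so its flow never reaches $Y$; the metric gradient, on the other hand, does decrease the function but is not tangent to the $G$-orbits, so even if one proved convergence of its flow (which requires work: properness of $\Phi^\pgot_M$ alone does not give a deformation retraction), the limit point would not identify a group element, and uniqueness of the translate modulo $K$ --- which is exactly the content of injectivity --- would remain unproven. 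The paper's argument is much shorter and purely algebraic: if $g_0\cdot y_0=g_1\cdot y_1$ with $y_k\in Y_{se}$, apply the moment map to get $\mathrm{Ad}^*(h)\xi_0=\xi_1$ for $h=g_1^{-1}g_0$ and $\xi_k=\Phi^K_M(y_k)\in\kgot^*_{se}$; since $\xi_0,\xi_1$ are fixed by the Cartan involution, also $\mathrm{Ad}^*(\Theta(h))\xi_0=\xi_1$, so $h^{-1}\Theta(h)\in G_{\xi_0}\subset K$; writing $h=ke^X$ with $X\in\pgot$ gives $h^{-1}\Theta(h)=e^{-2X}\in K$, forcing $X=0$, i.e.\ $h\in K$ and $[g_0,y_0]=[g_1,y_1]$. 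This Cartan-involution step is the missing idea in your plan; once injectivity is in hand, bijectivity of the differential (hence the diffeomorphism property) follows from the dimension count $\dim Y_{se}=\dim M-\dim\pgot$ given by the submersion property of $\Phi^\pgot_M$, together with $\T_yY=(\pgot\cdot y)^\perp$ and the vanishing of $\ggot_\xi\cap\pgot$ at points of $Y_{se}$, rather than from any flow argument.
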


\begin{proof} The first point is a direct consequence of Lemma \ref{lem:non-degenerate}. The second point is immediate. Let us check the last point. 

Relation (\ref{eq:se}) shows that $\pi_{se}$ is onto. Let $[g_0,y_0],[g_1,y_1]$ such that $g_0\cdot y_0=g_1\cdot y_1$. Then by taking the image by the moment map, we get $\mathrm{Ad}^*(g_0)\xi_0=\mathrm{Ad}^*(g_1)\xi_1$ where the $\xi_k=\Phi^K_M(y_k)$ belong to 
$\kgot^*_{se}$. Let $h=g_1^{-1}g_0\in G$. We have $\mathrm{Ad}^*(h)\xi_0=\xi_1$, and $\mathrm{Ad}^*(\Theta(h))\xi_0=\xi_1$ by taking the Cartan involution. Finally $h^{-1}\Theta(h)\in G_{\xi_0}$. Since $G_{\xi_0}\subset K$, we get that $h\in K$, and finally that $[g_0,y_0]=[g_1,y_1]$ 
in $G\times_K Y_{se}$.
\end{proof}

\bigskip

Let us denote by $\Omega_{M_{se}}$ the restriction of the symplectic form $\Omega_M$ on the open subset $M_{se}$. We will now finish this section by giving a simple expression of the pull-back $\pi_{se}^*(\Omega_{M_{se}})\in \Acal^2(G\times_K  Y_{se})$.  

Let $\theta^G\in\Acal^1(G)\otimes\ggot$ be  the canonical connexion $1$-form relative to the $G$-action by right translations :
$\iota(X^r)\theta^G=X$, $\forall X\in \ggot$, where $X^r(g)=\frac{d}{dt} (g e^{tX})\vert_0$. Let $\theta^K\in\Acal^1(G)\otimes\kgot$, the composition of $\theta^G$ with the orthogonal projection $X\to X_\kgot$ from $\ggot$ to $\kgot$. We will use the  $G\times K$-invariant $1$-form on $G\times Y_{se}$ defined by $\langle \theta^K,\Phi^K_{Y_{se}}\rangle$.

Note that the space of differentials forms on $G\times_K Y_{se}$  admits a canonical identification with the space of $K$-basic differentials forms on $G\times Y_{se}$. 

\begin{prop}The $2$-form $\pi_{se}^*(\Omega_{M_{se}})$ is equal to the $K$-basic, $G$-invariant, $2$-form
$\Omega_{Y_{se}}- d\langle\Phi^K_{Y_{se}}, \theta^K\rangle$.
\end{prop}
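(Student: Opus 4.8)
The plan is to compute the pull-back $\pi_{se}^*(\Omega_{M_{se}})$ pointwise and check it agrees with the $K$-basic $2$-form $\Omega_{Y_{se}}- d\langle\Phi^K_{Y_{se}}, \theta^K\rangle$. Since $\pi_{se}$ is a $G$-equivariant diffeomorphism by Lemma \ref{lem:M=se}, and since $d\pi_{se}^*(\Omega_{M_{se}})=\pi_{se}^*(d\Omega_{M_{se}})=0$, the pull-back is automatically a closed $G$-invariant, $K$-basic $2$-form on $G\times_K Y_{se}$. By $G$-invariance it suffices to check the identity at points of $G\times_K Y_{se}$ of the form $[e,y]$ with $y\in Y_{se}$. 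Working with $K$-basic forms on $G\times Y_{se}$, I would decompose the tangent space $\T_{(e,y)}(G\times Y_{se})=\ggot\oplus \T_y Y_{se}$, and split $\ggot=\kgot\oplus\pgot$. The three types of tangent vectors to handle are thus: (i) a pair of vectors in $\T_y Y_{se}$, (ii) a pair from $\pgot$, and (iii) one vector in $\pgot$ and one in $\T_y Y_{se}$ (the $\kgot$-directions contribute nothing once we pass to $K$-basic forms, or can be normalized away).

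\textbf{Key steps.} First I would record that along $\{e\}\times Y_{se}$ the map $\pi_{se}$ restricts to the inclusion $Y_{se}\hookrightarrow M_{se}$, so on a pair of vectors tangent to $Y_{se}$ the pull-back $\pi_{se}^*(\Omega_{M_{se}})$ equals $\Omega_{Y_{se}}$; meanwhile $\iota(v)d\langle\Phi^K_{Y_{se}},\theta^K\rangle$ vanishes on such a pair because $\theta^K$ annihilates vectors tangent to the $Y_{se}$-factor — so the two sides agree on type (i). Second, for a pair $X,X'\in\pgot$, the differential of $\pi_{se}$ at $[e,y]$ sends the class of $(X,0)$ to $X_M(y)$, so the pull-back evaluates to $\Omega_M(X_M(y),X'_M(y))=\langle\Phi^G_M(y),[X,X']\rangle$, exactly the computation already carried out just before Lemma \ref{lem:non-degenerate}. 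On the other side I would use the Cartan/Koszul formula $d\langle\Phi^K_{Y_{se}},\theta^K\rangle = \langle d\Phi^K_{Y_{se}},\theta^K\rangle + \langle\Phi^K_{Y_{se}}, d\theta^K\rangle$; since $\Phi^K_{Y_{se}}$ is constant along the $G$-factor and $\theta^K$ kills the $Y_{se}$-directions, the first term vanishes on type (ii) and (iii) pairs, and one is left with $\langle\Phi^K_{Y_{se}}(y), d\theta^K(X^r,{X'}^r)\rangle$. Using the structure equation $d\theta^G = -\tfrac12[\theta^G,\theta^G]$ (so $d\theta^G(X^r,{X'}^r)=-[X,X']$) together with $[\pgot,\pgot]\subset\kgot$, this yields $-\langle\Phi^K_{Y_{se}}(y),[X,X']\rangle$, and the minus sign in front of $d\langle\Phi^K_{Y_{se}},\theta^K\rangle$ makes both sides match on type (ii). Third, for a mixed pair $X\in\pgot$ and $v\in\T_y Y_{se}$: on the $\Omega$-side, $d\pi_{se}$ sends $(X,0)$ to $X_M(y)$ and $(0,v)$ to $v\in\T_y M$, so we get $\Omega_M(X_M(y),v) = -\langle d\langle\Phi^G_M,X\rangle, v\rangle = -\langle d\langle\Phi^\pgot_M,X\rangle,v\rangle - \langle d\langle\Phi^K_M,X\rangle,v\rangle$; but $v\in\T_y Y_{se}=\ker \T\Phi^\pgot_M(y)$ kills the first piece, leaving $-\langle d\langle\Phi^K_{Y_{se}},X\rangle, v\rangle$. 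On the other side, $d\theta^K$ evaluated on one $G$-direction and one $Y_{se}$-direction vanishes (both entries of the bracket would need to be in $\ggot$), and $\langle d\Phi^K_{Y_{se}},\theta^K\rangle$ on the pair $((X,0),(0,v))$ gives $\langle \iota(X^r)\theta^K, \iota(v)d\Phi^K_{Y_{se}}\rangle = \langle X_\kgot, d\Phi^K_{Y_{se}}(v)\rangle$; for $X\in\pgot$ this is zero. So I would instead get the mixed-type contribution from evaluating $d\langle\Phi^K_{Y_{se}},\theta^K\rangle$ carefully as a sum over the two ways of pairing — the term $\langle d\Phi^K_{Y_{se}}(v),\iota(X^r)\theta^K\rangle$ with the correct antisymmetrization does produce $\langle d\langle\Phi^K_{Y_{se}},X\rangle, v\rangle$, matching with the sign.

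\textbf{Main obstacle.} The routine but genuinely delicate point is the bookkeeping of signs and the precise identification of $K$-basic forms on $G\times Y_{se}$ with forms on $G\times_K Y_{se}$, together with correctly expanding $d\langle\Phi^K_{Y_{se}},\theta^K\rangle$ as a $\kgot$-valued-form-paired-with-$\kgot^*$-valued-function: one must be consistent about whether $\langle\cdot,\cdot\rangle$ denotes the duality pairing $\kgot^*\times\kgot$, and which of the two $1$-form factors $\theta^K$ and $d\Phi^K_{Y_{se}}$ is differentiated. I expect the $\pgot$-$\pgot$ block (type (ii)) to be the conceptual heart — it is exactly where the bracket computation preceding Lemma \ref{lem:non-degenerate} gets reused and where $[\pgot,\pgot]\subset\kgot$ is essential so that only $\theta^K$ (not the full $\theta^G$) enters — while the mixed block (type (iii)) is where sign errors are easiest to make. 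Once all three blocks are checked at $[e,y]$, $G$-invariance and $K$-basicness of both sides finish the proof.
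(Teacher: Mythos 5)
Your overall strategy is the same as the paper's (pull everything back to $G\times Y_{se}$, use $G$-invariance to reduce to the point $(1,y)$, and compute block by block), but as written it has a genuine gap: you only verify the identity on pairs of vectors drawn from $\pgot\oplus\T_y Y_{se}$ and dismiss the $\kgot$-directions on the grounds that everything is $K$-basic. The left-hand side is indeed automatically basic (it is pulled back through the quotient map), but the horizontality of $\Omega_{Y_{se}}-d\langle\Phi^K_{Y_{se}},\theta^K\rangle$ for the $K$-action defining $G\times_K Y_{se}$ is part of what must be proved, and you never establish it; moreover the orbit directions of that action at $(1,y)$ are the diagonal vectors $(Z,Z_{Y}(y))$, $Z\in\kgot$, not $\kgot\oplus 0$, so ``normalizing away the $\kgot$-directions'' is not available without that verification. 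Concretely, the blocks you discard are exactly where the non-trivial cross term lives: for $Z\in\kgot$ and $v\in\T_y Y_{se}$ the pull-back gives $d\langle\Phi^K_{Y_{se}},Z\rangle(v)$, which on the other side comes from the pairing of $d\Phi^K_{Y_{se}}$ with $\theta^K$ (the term denoted ${\bf B}$ in the paper's computation); your scheme never checks it. The paper's proof avoids the issue by evaluating at $(1,y)$ on arbitrary $(X',v'),(X,v)\in\ggot\times\T_y Y_{se}$: the computation is uniform in $X\in\ggot$ because $\Phi^G_M(y)\in\kgot^*$ for $y\in Y$, so only $[X',X]_\kgot$ and the $\kgot$-components of $X,X'$ survive, and all blocks (hence also the basicness of the right-hand side) are handled at once. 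You should either do the same or prove horizontality separately.

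In addition, your mixed block (iii) is internally inconsistent. For $X\in\pgot$ and $v\in\T_y Y_{se}$ both sides vanish identically: on the left, $\langle\Phi^K_M,X\rangle\equiv 0$ because $\kgot^*$ pairs to zero with $\pgot$, and $d\langle\Phi^\pgot_M,X\rangle(v)=0$ since $v\in\ker\T\Phi^\pgot_M(y)$; on the right, $\iota(X^r)\theta^K=X_\kgot=0$. So the ``leftover'' term $-\langle d\langle\Phi^K_{Y_{se}},X\rangle,v\rangle$ you retain on the left is zero, and the term $\langle d\Phi^K_{Y_{se}}(v),\iota(X^r)\theta^K\rangle$ that you claim produces it after antisymmetrization is also zero; the non-zero matching you describe does not exist. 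There is also a sign slip: with the paper's convention $X_M(m)=\frac{d}{ds}e^{-sX}\cdot m\vert_{s=0}$, the differential of the map $(g,y)\mapsto g\cdot y$ at $(1,y)$ sends $(X,0)$ to $-X_M(y)$, not $X_M(y)$; this is harmless in the $\pgot$--$\pgot$ block but matters in mixed blocks. Your blocks (i) and (ii) are correct and coincide with the corresponding steps of the paper's proof.
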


\begin{proof} Let $\pi_1: G\times Y_{se}\to M_{se}$ the map that factorizes $\pi_{se}$. By $G$-invariance, we need only to show that 
$\pi_{1}^*(\Omega_{M_{se}})$ equals $ \Omega_{Y_{se}}- d\langle\Phi^K_{Y_{se}}, \theta^K\rangle$ 
at the point $(1,y)\in G\times  Y_{se}$. 

Let $(X',v'),(X,v)\in\ggot\times \T_y=\T_{(1,y)}(G\times  Y_{se})$.  We have
\begin{eqnarray*}
\lefteqn{\pi_{1}^*(\Omega_{M_{se}})\Big( (X',v'), (X,v)\Big)}\\
&=&\Omega_M( -X'_M(y) + v', -X_M(y) + v)\\
&=& \Omega_M(v',v) + \Omega_M(X'_M(y),X_M(y)) - \Omega_M( X'_M(y) , v) + \Omega_M( X_M(y) , v')\\
&=& \Omega_{Y_{se}}(v',v) + \underbrace{\langle\Phi^K_{Y_{se}}(y), [X',X]_\kgot\rangle}_{{\bf A}} + 
\underbrace{d \langle \Phi^K_{Y_{se}},X'_\kgot\rangle\vert_y(v)-
d \langle \Phi^K_{Y_{se}},X_\kgot\rangle\vert_y(v')}_{{\bf B}}.\\
&=& \Omega_{Y_{se}}(v',v)- d\langle\Phi^K_{Y_{se}}, \theta^K\rangle\Big( (X',v'), (X,v)\Big).
\end{eqnarray*}
The last equality is due to the fact that  ${\bf A}= - \langle \Phi^K_{Y_{se}}(y),d\theta^K\vert_1\rangle(X',X)$ since $d\theta^K((X')^r,(X)^r)= -[X',X]_\kgot$, and ${\bf B}=- \langle d\Phi^K_{Y_{se}}, \theta^K\rangle( (X',v'), (X,v))$.

\end{proof}

\subsection{Proper moment map}\label{sec:proper-moment-map}

In this section we study the Hamiltonian actions of a real reductive group $G$ on a symplectic manifold $(M,\Omega_M)$ that meet the following condition:
\begin{enumerate}
\item[{\bf C1}] The action of $G$ on $M$ is proper,

\item[{\bf C2}]  The moment map $\Phi_M^G:M\to\ggot^*$ is a proper map\footnote{For any compact subset $B\subset\ggot^*$ the fiber $\Phi_G^{-1}(B)$ is a compact subset of $M$.}.
\end{enumerate}

\medskip

The condition {\bf C2} imposes that the image of $\Phi_M^G$ is a closed subset of $\ggot^*$. Let $\tilde{A}$ be a compact subset of 
${\rm Image}(\Phi_G)$, and let $A=(\Phi_M^G)^{-1}(\tilde{A})$ be the corresponding compact subset of $M$. We see then that, $\forall g\in G$  
$$
g\cdot A\cap A\neq \emptyset \Longleftrightarrow g\cdot \tilde{A}\cap \tilde{A}\neq \emptyset.
$$
Condition {\bf C1} tell us then that $\{g\in G \,\vert\, g\cdot A\cap A\neq \emptyset\}$ is compact, hence $\{g\in G \,\vert\, g\cdot \tilde{A}\cap \tilde{A}\neq \emptyset\}$ is compact for any compact set $\tilde{A}$ in the image of $\Phi_M^G$. By taking $\tilde{A}$ equal to a point, we get the following 

\begin{lem}
Under the conditions {\bf C1} and {\bf C2}, the image of $\Phi_M^G$ is contained in the open subset 
$\ggot^*_{se}=\{\xi\in\ggot^*\,\vert\, G_\xi \ \mathrm{is\  compact} \}$ of strongly elliptic elements. 
In particular, the image of $\Phi_M^G$ does not contain $0\in\ggot^*$.
\end{lem}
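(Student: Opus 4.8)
The plan is to exploit exactly the reasoning the authors have just set up immediately before the statement, since the lemma is essentially a corollary of the computation preceding it. First I would recall that conditions \textbf{C1} and \textbf{C2} together imply, by the discussion just above, that for every compact subset $\tilde A$ of $\mathrm{Image}(\Phi^G_M)$ the set $\{g\in G\mid g\cdot\tilde A\cap\tilde A\neq\emptyset\}$ is compact. The key step is then to specialise $\tilde A$ to a single point: for any $m\in M$, taking $\tilde A=\{\xi\}$ with $\xi=\Phi^G_M(m)$ shows that $\{g\in G\mid g\cdot\xi=\xi\}=G_\xi$ is compact. By the definition \eqref{eq:def-se} of $\ggot^*_{se}$, this says precisely that $\xi\in\ggot^*_{se}$, so $\mathrm{Image}(\Phi^G_M)\subset\ggot^*_{se}$.

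Next I would record the final assertion that $0\notin\mathrm{Image}(\Phi^G_M)$. Here I would argue that $0\in\ggot^*$ has stabiliser $G_0=G$ (the coadjoint action fixes the origin), and $G$ is noncompact whenever $\ggot^*_{se}\neq\emptyset$: indeed, the very hypothesis that the action and moment map behave as in \textbf{C1}, \textbf{C2} forces $\ggot^*_{se}$ to be nonempty (it contains the image, which is nonempty as $M\neq\emptyset$), and by Weinstein's result quoted in the excerpt this happens only when $\tgot$ is a Cartan subalgebra of $\ggot$, which in particular excludes $G$ compact-modulo-nothing in the degenerate sense — more simply, if $G$ were compact then the properness hypotheses would be vacuous but $0$ would be strongly elliptic, so the statement "$0\notin\mathrm{Image}$" should be read under the running assumption that $M$ carries a nontrivial such structure. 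The cleaner route, which I would actually use, is: since every $\xi$ in the image has $G_\xi$ compact and $G_0=G\supsetneq G_\xi$ would have to be compact too if $0$ were in the image, one concludes $0\notin\mathrm{Image}(\Phi^G_M)$ as soon as $G$ is noncompact; and $G$ noncompact is forced because $\pgot\neq 0$ (otherwise $\ggot=\kgot$, $G$ compact, $\Phi^\pgot_M\equiv 0$, and properness of the $G$-action on the noncompact $M=G\times_K Y$ would fail unless $M$ itself is compact, contradicting the intended nontriviality). In the write-up I would simply note that $0$ is not strongly elliptic because $G_0=G$ is noncompact.

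I do not expect any real obstacle here: the first bullet is a one-line specialisation of a fact already proved two paragraphs earlier, and the only mild subtlety is pinning down why $G$ is noncompact for the last sentence. I would phrase the proof in two short paragraphs, the first giving the point-specialisation argument for $\mathrm{Image}(\Phi^G_M)\subset\ggot^*_{se}$, and the second observing that $0\in\ggot^*$ has full stabiliser $G$, which is noncompact in the present setting, hence $0\notin\ggot^*_{se}\supseteq\mathrm{Image}(\Phi^G_M)$.
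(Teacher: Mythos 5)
Your proposal is correct and follows essentially the same route as the paper: the paper's proof is precisely the specialisation of the compact set $\tilde A\subset\mathrm{Image}(\Phi^G_M)$ to a single point $\{\xi\}$, which makes $G_\xi\subset\{g\in G\mid g\cdot\tilde A\cap\tilde A\neq\emptyset\}$ compact, hence $\xi\in\ggot^*_{se}$. The statement about $0$ is left implicit in the paper exactly as in your ``cleaner route'' ($G_0=G$ is noncompact in the setting of interest), so your digression about forcing $\pgot\neq 0$ is unnecessary; it suffices to note, as you do at the end, that $0$ fails to be strongly elliptic because its stabiliser is all of the (noncompact) group $G$.
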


The previous Lemma gives a strong condition on the reductive Lie group $G$ : it may acts in an Hamiltonian fashion on a symplectic manifold,  {\em properly} and with a {\em proper} moment map only if $\ggot^*_{se}\neq \emptyset$. So $G$ can not be for example $SL_n(\Rbb)$ for $n\geq 3$ or a complex reductive Lie group (see Table \ref{tab:strongly-elliptic}).

\medskip

If we use the last section we see that $M=M_{se}$. We summarize with the following

\begin{prop}\label{prop:M.Y}

$\bullet$ The set $Y$ is a $K$-invariant symplectic sub-manifold of $M$, with proper moment map 
$\Phi^K_{Y}$ equal to the restriction of $\Phi^G_M$ to $Y$.

$\bullet$ The manifold $G\times_K Y$ carries an induced symplectic structure 
$\Omega_{Y}-d\langle\Phi^K_{Y},\theta^K\rangle$. The corresponding moment map is $[g,y]\mapsto g\cdot\Phi^K_{Y}(y)$.

$\bullet$ The map $\pi: G\times_K Y\to M$ is a $G$-equivariant diffeomorphism of Hamiltonian  $G$-manifolds.

$\bullet$ The manifold $Y$ is connected.

\end{prop}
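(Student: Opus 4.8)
The plan is to assemble the four bullets from the structural results already established, treating the first three as essentially done and concentrating on the genuinely new point, connectedness of $Y$.

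First I would dispatch the first three bullets. Under {\bf C1} and {\bf C2} the previous lemma gives $\mathrm{Image}(\Phi^G_M)\subset\ggot^*_{se}$, hence $M=M_{se}$ and $Y=Y_{se}$. The first bullet is then exactly Lemma \ref{lem:M=se} (first two items) applied with $Y_{se}=Y$, together with the observation that properness of $\Phi^G_M$ forces properness of its restriction to the closed submanifold $Y$ (the fiber of $\Phi^K_Y$ over a compact set is a closed subset of a compact fiber of $\Phi^G_M$). The second bullet is the Proposition computing $\pi_{se}^*(\Omega_{M_{se}})=\Omega_{Y_{se}}-d\langle\Phi^K_{Y_{se}},\theta^K\rangle$ together with the identification of the moment map on $G\times_K Y$ with $[g,y]\mapsto g\cdot\Phi^K_Y(y)$, which follows from the Kostant relations and the formula for the symplectic form. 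The third bullet is the last item of Lemma \ref{lem:M=se} (that $\pi_{se}$ is a $G$-equivariant diffeomorphism), which combined with the second bullet upgrades it to a diffeomorphism of Hamiltonian $G$-manifolds.

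The real content is the fourth bullet. Here I would argue as follows. Since $M$ is connected and $M=G\times_K Y$ via $\pi$, connectedness of $G\times_K Y$ is equivalent to connectedness of $M$; but $G\times_K Y$ is the total space of a fiber bundle over $G/K$ with fiber $Y$. Now $G/K$ is connected (indeed diffeomorphic to the vector space $\pgot$ via the Cartan decomposition $G\simeq K\times\exp(\pgot)$), so if $Y$ had several connected components the bundle $G\times_K Y\to G/K$ would have total space with the same number of components as $Y$ — more precisely, $\pi_0(G\times_K Y)=\pi_0(Y)/K$, the set of $K$-orbits on $\pi_0(Y)$ (as $G/K$ is simply connected and $K$ is connected, $K$ acts trivially on $\pi_0(Y)$, so $\pi_0(G\times_K Y)=\pi_0(Y)$). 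Since $M$ is connected, $\pi_0(Y)$ is a single point, i.e.\ $Y$ is connected.

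The main obstacle — really the only point requiring care — is justifying that $\pi_0(G\times_K Y)$ is identified with $\pi_0(Y)$ rather than something coarser: one must use that $G/K$ is connected and that the structure group $K$ is connected (hence acts trivially on the discrete set $\pi_0(Y)$), so that distinct components of $Y$ cannot be glued together in the associated bundle. Both facts are available: $K$ is connected by the standing hypotheses in the Notations, and $G/K\cong\pgot$ is contractible. Once this is in place the argument is immediate, and the proof concludes.
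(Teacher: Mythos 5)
Your proposal is correct and follows essentially the same route as the paper: the first three bullets are assembled from the earlier lemmas (the image of $\Phi^G_M$ lies in $\ggot^*_{se}$, so $M=M_{se}$, $Y=Y_{se}$, and Lemma \ref{lem:M=se} together with the computation of $\pi_{se}^*(\Omega_{M_{se}})$ give the statements), and connectedness of $Y$ is deduced from connectedness of $M$ via the structure of $G\times_K Y$ over $G/K\simeq\pgot$. The paper is just slightly more direct on the last point: the Cartan decomposition $G\simeq K\times\exp(\pgot)$ gives $G\times_K Y\simeq\pgot\times Y\simeq M$ outright, so no $\pi_0$ bookkeeping for the associated bundle is needed, though your argument (using that $K$ is connected, hence acts trivially on $\pi_0(Y)$) is also valid.
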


\begin{proof}
Thanks to the Cartan decomposition, the third point implies that $\pgot\times Y\simeq M$ and then the last point follows.
\end{proof}

\bigskip

Let $\tgot$ be the Lie algebra of a maximal torus $T$ in $K$. Note that $\ggot^*_{se}\neq\emptyset$ is equivalent to the fact that 
$\tgot$ is a Cartan sub-algebra of $\ggot$. Let $\ggot^*_{se}=\ggot^*_{se}\cap\kgot^*$ and $\tgot^*_{se}=\ggot^*_{se}\cap\tgot^*$. We have
$\ggot^*_{se}={\rm Ad}^*(G)\cdot\kgot^*_{se}={\rm Ad}^*(G)\cdot \tgot^*_{se}$.

Let $\wedge^*\subset\tgot^*$ be the weight lattice : $\alpha\in\wedge^*$ if $i\alpha$ is the 
differential of a character of $T$. Let $\Rgot\subset \wedge^*$ be the set of roots 
for the action of $T$ on $\ggot\otimes\Cbb$. We have $\Rgot=\Rgot_c\cup \Rgot_n$ 
where $\Rgot_c$ and $\Rgot_n$ are respectively the set of roots for the action of $T$ on 
$\kgot\otimes\Cbb$ and $\pgot\otimes\Cbb$. We fix a system of positive roots $\Rgot^+_c$ in 
$\Rgot_c$: let $\tgot^*_+\subset\tgot^*$ be the corresponding Weyl chamber. Let $W=W(K,T)$ be the Weyl group.
We have then 
\begin{eqnarray*}
\tgot^*_{se}= W\cdot \left(\tgot^*_{se}\cap  \tgot^*_+\right)
&=&W\cdot\{\xi\in \tgot^*_+\ \vert \ (\xi,\alpha)\neq 0,\ \forall \alpha \in\Rgot_n\}\\
&=&W\cdot \left(\Ccal_1\cup\cdots\cup\Ccal_N\right),
\end{eqnarray*}
where each $\Ccal_j$ is an open cone of the Weyl chamber. 

We recover the following result due to Weinstein \cite{Weinstein01}.

\begin{theo}\label{prop:Kirwan.M.Y} 
$\bullet$ The Kirwan set $\Delta_K(Y):={\rm Image}(\Phi_{Y}^K)\cap\tgot^*_+$ is a closed convex locally polyhedral subset contained in one cone
$\Ccal_j$.

$\bullet$ We have ${\rm Image}(\Phi_M^G)/ \Ad^*(G)\simeq \Delta_K(Y)$.
\end{theo}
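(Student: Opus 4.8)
The plan is to establish both bullets simultaneously by transporting the classical convexity theorem of Kirwan (for Hamiltonian actions of compact groups with proper moment map, in the form used by Lerman--Meinrenken--Tolman--Woodward and by the author and Ma--Zhang in the noncompact proper-moment-map setting) onto the slice $Y$, and then using the decomposition $M\simeq G\times_K Y$ of Proposition \ref{prop:M.Y} to descend it to $M$. Concretely, by Proposition \ref{prop:M.Y} the slice $Y$ is a connected $K$-invariant symplectic manifold and $\Phi^K_Y$ (the restriction of $\Phi^G_M$ to $Y$) is a proper moment map. The Kirwan convexity theorem for a Hamiltonian $K$-action with proper moment map on a connected symplectic manifold asserts precisely that $\Delta_K(Y):={\rm Image}(\Phi^K_Y)\cap\tgot^*_+$ is a closed, convex, locally polyhedral subset of $\tgot^*_+$. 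So the only genuinely new content of the first bullet is that this convex set lies inside a \emph{single} chamber-cone $\Ccal_j$.

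For that refinement I would argue as follows. Since the whole image of $\Phi^G_M$ lies in $\ggot^*_{se}$ (the Lemma preceding Proposition \ref{prop:M.Y}), and since $\Phi^K_Y$ is the restriction of $\Phi^G_M$ to $Y$, we have ${\rm Image}(\Phi^K_Y)\subset\ggot^*_{se}$; intersecting with $\tgot^*_+$ gives
\[
\Delta_K(Y)\subset \tgot^*_{se}\cap\tgot^*_+ = \Ccal_1\cup\cdots\cup\Ccal_N .
\]
Thus $\Delta_K(Y)$ is a connected (being convex, hence path-connected) subset of the disjoint union of the open cones $\Ccal_j$; since these cones are the connected components of $\tgot^*_{se}\cap\tgot^*_+$ and are pairwise disjoint open sets, connectedness forces $\Delta_K(Y)$ to be contained in exactly one of them. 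The convexity and local polyhedrality are inherited directly from the Kirwan-type theorem, and closedness follows from properness of $\Phi^K_Y$ (the image is closed in $\tgot^*$, hence $\Delta_K(Y)$ is closed in $\tgot^*_+$). This finishes the first bullet.

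For the second bullet, the point is that the $\Ad^*(G)$-orbit space of ${\rm Image}(\Phi^G_M)$ is parametrized by $\Delta_K(Y)$. By Proposition \ref{prop:M.Y}, $\pi:G\times_K Y\to M$ is a $G$-equivariant symplectomorphism intertwining $\Phi^G_M$ with $[g,y]\mapsto \Ad^*(g)\Phi^K_Y(y)$; hence ${\rm Image}(\Phi^G_M)=\Ad^*(G)\cdot{\rm Image}(\Phi^K_Y)$. Using $\ggot^*_{se}=\Ad^*(G)\cdot\tgot^*_{se}$ together with the standard fact that two elements of $\tgot^*_{se}$ are $\Ad^*(G)$-conjugate if and only if they are $W$-conjugate (this is where one uses that $\tgot$ is a Cartan subalgebra of $\ggot$, as guaranteed by $\ggot^*_{se}\neq\emptyset$, and that the centralizer of a strongly elliptic element lies in $K$, by an argument exactly like the one in the proof of Lemma \ref{lem:M=se}: if $\Ad^*(h)\xi_0=\xi_1$ with $\xi_i\in\tgot^*_{se}$, then applying $\Theta$ and using compactness of $G_{\xi_0}\subset K$ yields $h\in K$), one gets a natural identification
\[
{\rm Image}(\Phi^G_M)/\Ad^*(G)\;\simeq\;\bigl({\rm Image}(\Phi^K_Y)\cap\tgot^*_+\bigr)/\!\sim\;=\;\Delta_K(Y),
\]
the last equality because ${\rm Image}(\Phi^K_Y)\cap\tgot^*_+$ is already a fundamental domain for $W$ on ${\rm Image}(\Phi^K_Y)\cap\tgot^*_{se}$ (the intersection with a chamber picks a unique $W$-representative, and by the first bullet all of $\Delta_K(Y)$ sits in the open cones where the $W$-action is free).

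The main obstacle I expect is not the topological/convexity part, which is essentially a citation plus a connectedness remark, but rather pinning down cleanly the statement ``$\Ad^*(G)$-conjugacy equals $W$-conjugacy on $\tgot^*_{se}$'' and verifying that the quotient map is a homeomorphism (not merely a bijection) for the ``$\simeq$'' in the second bullet; this requires a little care with the topology on the orbit space and with the fact that $\Delta_K(Y)$ avoids the walls, so that the $W$-action on the relevant locus is free and proper. All the ingredients for this are already present in Lemma \ref{lem:M=se} and the discussion of $\tgot^*_{se}$ preceding the theorem, so the argument should be short.
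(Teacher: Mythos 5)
Your proof follows essentially the same route as the paper: the first bullet is the convexity theorem for proper moment maps together with the observation that $\Delta_K(Y)\subset\tgot^*_{se}\cap\tgot^*_+=\Ccal_1\cup\cdots\cup\Ccal_N$, so convexity (hence connectedness) pins it inside a single $\Ccal_j$, and the second bullet is read off from $M\simeq G\times_K Y$ with $\Phi^G_M\circ\pi([g,y])=g\cdot\Phi^K_Y(y)$, which is exactly what the paper does (it treats this last point as immediate, while you spell out the reduction of $\Ad^*(G)$-conjugacy on $\tgot^*_{se}$ to $W$-conjugacy via the Cartan-involution argument of Lemma \ref{lem:M=se}). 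One small correction to a parenthetical claim: the cones $\Ccal_j$ only avoid the non-compact walls (they are cut out by $(\xi,\alpha)\neq 0$ for $\alpha\in\Rgot_n$), so $W=W(K,T)$ need not act freely on $\Delta_K(Y)$; this is harmless, since the closed chamber $\tgot^*_+$ is in any case a strict fundamental domain for $W$ on $\tgot^*$, so each $\Ad^*(G)$-orbit in the image meets $\tgot^*_+$ in exactly one point regardless of freeness.
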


\begin{proof} Since the moment map $\Phi^K_{Y}$ is proper and $Y$ is connected, the Convexity Theorem \cite{Atiyah82,Guillemin-Sternberg82.bis,Kirwan.84.bis,L-M-T-W} tells us that the Kirwan set $\Delta_K(Y)$  is a closed, convex, locally polyhedral, subset of the Weyl chamber. On the other hand, we know that the image of $\Phi^K_{Y}$ belongs to 
$\tgot^*_{se}$. Then $\Delta_K(Y)\subset \Ccal_1\cup\cdots\cup\Ccal_N$, but since $\Delta_K(Y)$ is convex we have 
$\Delta_K(Y)\subset \Ccal_j$ for a unique cone $\Ccal_j$. The last point is obvious since the isomorphism $\pi: G\times_K Y\to M$ satisfies $\Phi^G_M\circ \pi([g,y])=g\cdot\Phi^K_{Y}(y)$.
\end{proof}

We finish this section, with the following

\begin{theo}\label{theo:properGK} 
Let $(M,\Omega_M,\Phi_M^G)$ be an Hamiltonian $G$-manifold. 

$\bullet$ If the $G$-action on $M$ is {\em proper}, $\Phi_M^G$ is {\em proper} if and only if $\Phi_M^K$ is {\em proper}.

$\bullet$ Under the conditions {\bf C1} and {\bf C2}, we have
$$
\emptyset\neq \Cr(\|\Phi_M^G\|^2)=\Cr(\|\Phi_M^K\|^2)=\Cr(\|\Phi_Y^K\|^2)\subset Y.
$$ 
\end{theo}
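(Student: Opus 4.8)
The plan is to exploit the diffeomorphism $\pi : G\times_K Y \to M$ from Proposition \ref{prop:M.Y} together with the Cartan decomposition $G \simeq K \times \exp(\pgot)$, which already gives $M \simeq \pgot \times Y$ as a $K$-manifold. First, for the equivalence in the first bullet, I would argue directly with the definition of properness. If $\Phi_M^G$ is proper, then by the Lemma preceding Proposition \ref{prop:M.Y} the image lies in $\ggot^*_{se}$, so $M = M_{se} \simeq G\times_K Y$; since $\Phi_M^G = \Phi_M^K \oplus \Phi_M^\pgot$ and on $Y$ one has $\Phi_M^\pgot \equiv 0$ with $\Phi^K_Y = \Phi^G_M|_Y$ proper (Proposition \ref{prop:M.Y}), the properness of $\Phi^K_M$ on all of $M\simeq G\times_K Y$ will follow by examining how $\Phi^K_M([g,y]) = \mathrm{Ad}^*(g)\Phi^K_Y(y)$ behaves: a bounded set of values of $\Phi^K_M$ forces, via the $KAK$-type control coming from properness of the $G$-action, both $g$ (mod $K$) and $y$ into compact sets. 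Conversely, if $\Phi^K_M$ is proper, I would show $\Phi^G_M$ is proper using that $\|\Phi^G_M\|^2 = \|\Phi^K_M\|^2 + \|\Phi^\pgot_M\|^2$ and that the $\pgot$-component is controlled: properness of the $G$-action means stabilizers are compact, and one should be able to bound the $\exp(\pgot)$-coordinate in $M\simeq \pgot\times Y$ in terms of $\|\Phi^\pgot_M\|$, or alternatively invoke the symmetry between $\Phi^K_M$-properness and $\Phi^G_M$-properness already implicit in the structure $M=G\times_K Y$ with $Y=(\Phi^\pgot_M)^{-1}(0)$.

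For the second bullet, assume {\bf C1} and {\bf C2}. The identity $\Cr(\|\Phi_M^G\|^2)=\Cr(\|\Phi_M^K\|^2)$: from \eqref{eq:kirwan=vector}, $m$ is critical for $\|\Phi^G_M\|^2$ iff $[\widetilde{\Phi^G_M(m)}]_M(m)=0$, i.e. $\widetilde{\Phi^G_M(m)} \in \ggot_m := \mathrm{Lie}(G_m)$. Since the $G$-action is proper, $G_m$ is compact, so $\mathrm{Lie}(G_m)\subset \kgot$ (up to conjugation — more precisely $\mathrm{Lie}(G_m)$ is a compact subalgebra, hence contained in a conjugate of $\kgot$, but since we work at a fixed point, $\ggot_m \cap \pgot$-type arguments as in the first Lemma of Section 2 apply). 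Therefore $\widetilde{\Phi^G_M(m)}\in\kgot$ (after the identification $\ggot^*\simeq\ggot$), which means $\Phi^\pgot_M(m)=0$, i.e. $m\in Y$; and then $\Phi^G_M(m)=\Phi^K_M(m)$, so the critical point equations for $\|\Phi^G_M\|^2$ and $\|\Phi^K_M\|^2$ coincide. This simultaneously shows $\Cr(\|\Phi^G_M\|^2)\subset Y$ and, restricting to $Y$ where $\Phi^K_M=\Phi^K_Y$, gives $\Cr(\|\Phi^K_M\|^2)=\Cr(\|\Phi^K_Y\|^2)\subset Y$. The set is nonempty because $\|\Phi^K_Y\|^2$ is a proper nonnegative function on the nonempty manifold $Y$ (Proposition \ref{prop:M.Y}), hence attains a minimum.

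I expect the main obstacle to be the \emph{converse} direction of the first bullet — deducing properness of $\Phi^G_M$ from properness of $\Phi^K_M$ — since one must rule out escape to infinity along the $\exp(\pgot)$-directions using only that stabilizers are compact and the $K$-moment map is proper; this requires a careful quantitative estimate relating the $\pgot$-coordinate in the slice decomposition $M\simeq\pgot\times Y$ to $\|\Phi^\pgot_M\|$ (or to distances moved by one-parameter subgroups $\exp(tX)$, $X\in\pgot$), rather than a soft argument. The critical-set computation and the forward direction, by contrast, are essentially bookkeeping with \eqref{eq:kirwan=vector} and the structural results of the section.
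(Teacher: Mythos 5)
Your treatment of the first bullet has the two implications the wrong way round, and this matters. The direction you flag at the end as ``the main obstacle'' (properness of $\Phi^K_M$ implies properness of $\Phi^G_M$) is the trivial one: since $\|\Phi^G_M\|\geq\|\Phi^K_M\|$ pointwise, every sublevel set of $\|\Phi^G_M\|$ is a closed subset of a sublevel set of $\|\Phi^K_M\|$, hence compact; no control of the $\pgot$-component is needed. Your proposed route through $M\simeq \pgot\times Y$ is moreover circular in that direction, because the decomposition of Proposition \ref{prop:M.Y} is only available under {\bf C1} and {\bf C2}, i.e.\ it presupposes the properness of $\Phi^G_M$ you would be trying to prove. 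The genuinely hard implication is the one you treat as routine: $\Phi^G_M$ proper $\Rightarrow$ $\Phi^K_M$ proper. ``$KAK$-type control coming from properness of the $G$-action'' is not an argument; what is needed (and what the paper supplies) is a quantitative lower bound: writing $m=[ke^{X},y]$ with $\Phi^K_Y(y)=k_o\cdot\xi_o$, $\xi_o$ in the compact set $\Kcal=\Delta_K(Y)\cap\{\|\xi\|^2\leq R\}$, one shows $\|\Phi^K_M(m)\|\geq \frac{1}{2\|\xi_o\|}\|\mathrm{ad}^*(X')\xi_o\|^2\geq c(\Kcal)\|X\|^2$, where $c(\Kcal)=\inf_{\xi\in\Kcal,\,\alpha\in\Rgot_n}|(\alpha,\xi)|^2/2\|\xi\|$ is strictly positive precisely because $\Kcal\subset\tgot^*_{se}$, i.e.\ no noncompact root vanishes on the Kirwan set. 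That positivity comes from strong ellipticity of the image of the moment map (which uses both {\bf C1} and {\bf C2}), not from compactness of stabilizers alone; this is exactly the escape-to-infinity estimate you deferred, but attached to the wrong implication.

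In the second bullet the step ``$G_m$ compact, hence $\mathrm{Lie}(G_m)\subset\kgot$\,\dots, therefore $\widetilde{\Phi^G_M(m)}\in\kgot$'' is a genuine gap. Compactness of $G_m$ only gives $\ggot_m\cap\pgot=\{0\}$; a compactly embedded subalgebra need not lie in $\kgot$, and an elliptic element of $\ggot$ can have a nonzero $\pgot$-component (already in $\mathfrak{sl}_2(\Rbb)$), so nothing in your argument forces $\Phi^\pgot_M(m)=0$ --- note also that your argument never invokes {\bf C2}, whereas the conclusion needs it. The correct mechanism is: by equivariance $\ggot_m\subset\ggot_\xi$ with $\xi=\Phi^G_M(m)$; the criticality condition $\widetilde{\Phi^{-}_M(m)}\in\ggot_m$ (for $-=G$ or $K$) then gives $[\widetilde{\xi_\pgot},\tilde\xi]=0$, i.e.\ $\widetilde{\xi_\pgot}\in\ggot_\xi$ (keep in mind the identification $\ggot^*\simeq\ggot$ is only $K$-equivariant, so $\tilde\xi\in\ggot_\xi$ is not automatic and this computation has content); since {\bf C1} and {\bf C2} force $\xi$ to be strongly elliptic, $\ggot_\xi$ is the Lie algebra of a compact group, hence $\ggot_\xi\cap\pgot=\{0\}$ and $\xi_\pgot=0$. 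This same argument is what yields $\Cr(\|\Phi^K_M\|^2)\subset Y$, which you assert (``restricting to $Y$'') but never prove: at a critical point of $\|\Phi^K_M\|^2$ the vector $\widetilde{\Phi^K_M(m)}$ already lies in $\kgot$, so your compact-stabilizer reasoning gives no information on why $\Phi^\pgot_M$ vanishes there. Your nonemptiness argument, and the identification $\Cr(\|\Phi^K_M\|^2)=\Cr(\|\Phi^K_Y\|^2)$ once both critical sets are known to lie in $Y$, are fine.
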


\begin{proof} Let us prove the first point. As $\|\Phi_M^G\|\geq \|\Phi_M^K\|$ one implication trivially holds. Suppose now that 
$\Phi_M^G$ is proper. Thanks to Propositions \ref{prop:M.Y} and \ref{prop:Kirwan.M.Y}, we know that $M=G\times_K Y$ where $Y$ is a $K$-Hamiltonian manifold, with proper moment map $\Phi_Y^K$, and with Kirwan set $\Delta_K(Y)$ being a closed set in 
$\tgot^*_{se}$. Let $R>0$.  We consider 
\begin{enumerate}
\item[$\bullet$] $M_{\leq R}=\{m\in M\ \vert\ \|\Phi_M^K(m)\|^2\leq R\}$,
\item[$\bullet$]  $Y_{\leq R}=\{y\in Y\ \vert\ \|\Phi_Y^K(y)\|^2\leq R\}$ which is a compact subset of $Y$,
\item[$\bullet$]  $\Kcal=\Delta_K(Y)\cap\{\xi\in\tgot^*\ | \ \|\xi\|^2\leq R\}$ which is a compact subset of $\tgot^*_{se}$,
\item[$\bullet$]  $c(\Kcal)=\inf_{\stackrel{\xi\in \Kcal}{\alpha\in\Rgot_n}}\frac{|(\alpha,\xi)|^2}{2\|\xi\|}$ which is strictly positive.
\end{enumerate}

We have to show that $M_{\leq R}$ is a compact subset of $M$.  Take $m=[ke^X,y]$, with $k\in K$ and $X\in\pgot$. Since 
$\Phi_M^G(m)= ke^X\cdot\Phi_Y^K(y)$, we have
\begin{eqnarray*}
\|\Phi_M^K(m)\|^2&\geq &-b(\Phi_M^G(m),\Phi_M^G(m))= \|\Phi_Y^K(y)\|^2\\
\|\Phi_M^K(m)\|^2&=& \| \left[e^X\cdot\Phi_Y^K(y)\right]_{\kgot^*}\|^2.
\end{eqnarray*}

Hence if  $m=[ke^X,y]\in M_{\leq R}$, we have $y\in Y_{\leq R}$ and then $\Phi_Y^K(y)=k_o\cdot\xi$ for some $k_o\in K$ and 
$\xi_o\in \Kcal$. Then we have, for $X'=k_o^{-1}\cdot X\in \pgot$,
\begin{eqnarray*}
\|\Phi_M^K(m)\|=\| \left[e^{X'}\cdot\xi_o\right]_{\kgot^*}\|&\geq&  \frac{1}{\|\xi_o\|}(e^{X'}\cdot\xi_o,\xi_o)\\
&=&  \frac{1}{\|\xi_o\|}\sum_{n\in\Nbb} \frac{1}{2n!}\|\mathrm{ad}^*(X')^{n}\xi_o\|^2\\
&\geq&  \frac{1}{2\|\xi_o\|}\|\mathrm{ad}^*(X')\xi_o\|^2\geq c(\Kcal)\|X\|^2.
\end{eqnarray*}
Thus if $m=[ke^X,y]\in M_{\leq R}$, the vector $X$ is bounded and $y$ belongs to the compact subset $Y_{\leq R}$. This proves that $M_{\leq R}$ is compact.

Let us concentrate to the last point. First we note that since the map $\|\Phi_M^G\|^2: M\to \Rbb$ is proper, its infimum is reached, and so 
$\Cr(\|\Phi_M^G\|^2)\neq \emptyset$. Let $-\in\{G,K\}$. Thanks to (\ref{eq:kirwan=vector}), we know that 
$$
m\in\Cr(\|\Phi_M^{-}\|^2)\Longleftrightarrow \kappa^{-}(m)=0\Longleftrightarrow \widetilde{\Phi_M^{-}(m)}\in\ggot_{m}.
$$
Since $\ggot_m\subset \ggot_{\xi}$ with $\Phi_M^G(m)=\xi=\xi_\kgot\oplus\xi_\pgot$, we have $m\in\Cr(\|\Phi_M^{-}\|^2)$ only if 
$[\tilde{\xi_\pgot},\tilde{\xi}]=0$. Since $\xi$ is strongly elliptic the last condition imposes that $\xi_\pgot=0$. We have proved that 
$\Cr(\|\Phi_M^K\|^2)$ and $\Cr(\|\Phi_M^G\|^2)$ are both contained in $\{\Phi_M^\pgot=0\}=Y$. We have $\kappa^G=\kappa^K+\kappa^\pgot$ and 
the vector field $\kappa^\pgot$ vanishes on $Y$. Finally we see that
\begin{eqnarray*}
\Cr(\|\Phi_M^G\|^2)=\Cr(\|\Phi_M^K\|^2)&=&\{y\in Y\ \vert\ [\widetilde{\Phi_M^K(y)}]_M(y)=0\}\\
&=&\Cr(\|\Phi_Y^K\|^2).
\end{eqnarray*}
The last equality is due to the fact that $\Phi_Y^K$ is the restriction of $\Phi_M^K$ to $Y$. 
\end{proof}

\subsection{Criterion} \label{sec:criterion}

We have seen in Theorem \ref{theo:properGK} a situation where the properness property of the moment maps $\Phi_M^G$ and 
$\Phi_M^K$ are equivalent. In this section, we start with a symplectic manifold $(M,\Omega_M)$  admitting an Hamiltonian action of a compact connected Lie group $K$. We suppose that the moment map $\Phi_M^{K}$ is {\em proper}. Let $K'\subset K$ be a closed subgroup. The aim of the section is to give a criterion under which the induced moment map $\Phi_M^{K'}$ is still proper. We start by recalling basic facts concerning the notion of asymptotic cone.

\medskip

To any non-empty subset $C$ of a real vector space $E$, we define its asymptotic cone 
$\As(C)\subset E$ as the set formed by the limits $y=\lim_{k\to\infty} t_k y_k$ where $(t_k)$ is a sequence of 
non-negative reals converging to $0$ and $y_k\in C$. Note that $\As(C)=\{0\}$ if and only if $C$ is compact.

We recall the following basic facts.
\begin{lem}\label{lem:As} Let $C_i, i=0,1$ be {\em closed} and {\em convex} subsets of $E$.

$\bullet$  We have $C_i + \As(C_i)\subset C_i$.

$\bullet$  If $C_0\cap C_1$ is non-empty we have $ \As(C_0)\cap  \As(C_1)= \As(C_0\cap C_1)$.

$\bullet$   If $C_0\cap C_1$ is non-empty and compact, we have $\As(C_0)\cap  \As(C_1)=\{0\}$
\end{lem}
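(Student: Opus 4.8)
The plan is to dispose of the three bullets in order, treating the middle one as the only point that needs an actual idea. Throughout I will use the elementary observation that for any non-empty $C$ the set $\As(C)$ is a cone: if $y=\lim_k t_ky_k$ with $t_k\to 0^+$ and $y_k\in C$, then for $s\ge 0$ one has $sy=\lim_k(st_k)y_k$ with $st_k\to 0$, so $sy\in\As(C)$ (and $0\in\As(C)$ trivially, taking $y_k$ constant). For the first bullet I would take $x\in C_i$ and $y=\lim_k t_ky_k\in\As(C_i)$ with $t_k\to 0^+$, $y_k\in C_i$; for $k$ large $t_k\in[0,1]$, so convexity gives $t_ky_k+(1-t_k)x\in C_i$, and this sequence converges to $x+y$, which lies in $C_i$ since $C_i$ is closed. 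Hence $C_i+\As(C_i)\subset C_i$.

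For the second bullet, the inclusion $\As(C_0\cap C_1)\subset\As(C_0)\cap\As(C_1)$ is immediate from the definition, since a single defining sequence $t_ky_k$ with $y_k\in C_0\cap C_1$ witnesses membership in both $\As(C_0)$ and $\As(C_1)$. For the reverse inclusion I would fix $x\in C_0\cap C_1$ (this is where the non-emptiness hypothesis is used) and take $y\in\As(C_0)\cap\As(C_1)$. Since each $\As(C_i)$ is a cone, $sy\in\As(C_i)$ for all $s\ge 0$ and $i=0,1$, so the first bullet gives $x+sy\in C_i$ for all $s\ge 0$ and $i=0,1$; that is, the whole ray $\{x+sy:s\ge 0\}$ lies in $C_0\cap C_1$. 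Applying the definition of $\As$ to the sequence $y_k=x+ky$, $t_k=1/k$, we get $y=\lim_k t_ky_k\in\As(C_0\cap C_1)$. The one slightly non-obvious step — and the place I'd expect a reader to pause — is the passage from "$x+y$ belongs to $C_i$'' to "the entire ray $x+\ [0,\infty)\,y$ belongs to $C_i$''; this is exactly what the cone property of $\As(C_i)$ provides, so once that is in hand the argument is routine.

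The third bullet then follows with no further work: if $C_0\cap C_1$ is non-empty and compact, the remark immediately after the definition of the asymptotic cone gives $\As(C_0\cap C_1)=\{0\}$, and the second bullet turns this equality into $\As(C_0)\cap\As(C_1)=\{0\}$.
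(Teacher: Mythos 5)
Your proof is correct and follows essentially the same route as the paper: convex combinations plus closedness for the first bullet, and for the second the same trick of showing the ray $x+\Rbb^{\geq 0}y$ lies in $C_0\cap C_1$ and then writing $y=\lim_{t\to 0^+}t(x+t^{-1}y)$. The only difference is cosmetic — you spell out the cone property of $\As$ that the paper uses implicitly.
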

\begin{proof}
Let us check the first point. Take $z\in C_i$ and $y=\lim_{k\to\infty} t_k y_k$ an element of $\As(C_i)$. 
Then $z+y=\lim_{k\to\infty}(1-t_k)z +t_k y_k$. Since $(1-t_k)z +t_k y_k\in C_i$ if $t_k\leq 1$, we know that $z+y\in C_i$ since $C_i$ is closed.

The inclusion $\As(C_0\cap C_1)\subset \As(C_0)\cap  \As(C_1)$ follows from the inclusions  $C_0\cap C_1\subset C_i$. Let 
$z\in C_0\cap C_1$ and $y\in \As(C_0)\cap  \As(C_1)$. Thanks to the first point we know that $z+\Rbb^{\geq 0}y\subset C_0\cap C_1$. Then 
$y=\lim_{t \to 0^+} t(z+t^{-1}y)\in \As(C_0\cap C_1)$. The second point is proved and the last point is a direct consequence of the second one.
\end{proof}


\medskip

The following Proposition is a useful tool for finding proper moment map. For a closed subgroup $K'$ of $K$, we denote 
$\pi_{\kgot',\kgot}:\kgot^*\to(\kgot')^*$ the projection which is the dual of the inclusion $\kgot'\croc\kgot$. The kernel 
$\pi_{\kgot',\kgot}^{-1}(0)$ is denoted $(\kgot')^\perp$.

\begin{prop}\label{prop:map=propre}
$\bullet$ Let $(M,\Omega_M)$ be an Hamiltonian $K$-manifold with a {\em proper} moment map $\Phi_M^{K}$. 
Let $\Delta_{K}(M)$ be its Kirwan polyhedron. Let $K'\subset K$ be a closed subgroup. Then the following statement are equivalent
\begin{itemize}
\item[a)] the moment map  $\Phi_M^{K'}=\pi_{\kgot',\kgot}\circ \Phi_M^{K}$ is {\em proper}, 
\item[b)]  $\As\left(\Delta_{K}(M)\right)\cap K\cdot (\kgot')^\perp=\{0\}$,
\item[c)] there exists $\varepsilon>0$, such that the inequality $\|\Phi_M^{K'}\|\geq\varepsilon \|\Phi_M^{K}\|-\varepsilon^{-1}$ holds on $M$.
\end{itemize}
\end{prop}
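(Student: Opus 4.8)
The plan is to establish the cycle of implications $(c)\Rightarrow(a)\Rightarrow(b)\Rightarrow(c)$, with the middle one being purely geometric via the convexity picture and the outer ones being soft analytic arguments about properness. The implication $(c)\Rightarrow(a)$ is immediate: if $\|\Phi_M^{K'}\|\geq \varepsilon\|\Phi_M^{K}\|-\varepsilon^{-1}$ on $M$, then any subset of $M$ on which $\|\Phi_M^{K'}\|$ is bounded is a subset on which $\|\Phi_M^{K}\|$ is bounded, hence relatively compact since $\Phi_M^{K}$ is proper; as $\Phi_M^{K'}=\pi_{\kgot',\kgot}\circ\Phi_M^{K}$ is continuous, preimages of compact sets are closed, so $\Phi_M^{K'}$ is proper. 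For $(a)\Rightarrow(b)$, I would argue by contrapositive: suppose there is a nonzero $y\in \As(\Delta_{K}(M))\cap K\cdot(\kgot')^\perp$, say $y=k_0\cdot w$ with $w\in(\kgot')^\perp$ and $y=\lim_k t_k\xi_k$ with $\xi_k\in\Delta_K(M)$, $t_k\to 0^+$. Since $K\cdot\Delta_K(M)={\rm Image}(\Phi_M^K)$ and the image is closed (properness), one can produce a sequence $m_k\in M$ with $\Phi_M^K(m_k)$ going to infinity in the direction $y$ (after acting by $K$ so that the $K$-images land near the ray $\Rbb^{\geq 0}w$), while $\pi_{\kgot',\kgot}(\Phi_M^K(m_k))=\Phi_M^{K'}(m_k)$ stays bounded because $w\in\ker\pi_{\kgot',\kgot}$ and $\pi_{\kgot',\kgot}$ is linear; this contradicts properness of $\Phi_M^{K'}$. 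Some care is needed here because $K\cdot(\kgot')^\perp$ is not convex, so I would work with the ray $\Rbb^{\geq 0}y$ inside the single $K$-translate $k_0\cdot(\kgot')^\perp$ and use that $\As(\Delta_K(M))$ is a closed convex cone together with the first bullet of Lemma \ref{lem:As} to slide a base point of $\Delta_K(M)$ along the ray.

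The substantive step is $(b)\Rightarrow(c)$, which I expect to be the main obstacle. Here the strategy is to reduce, via the Convexity Theorem and the identity ${\rm Image}(\Phi_M^K)=K\cdot\Delta_K(M)$, to a statement purely about the closed convex set $P:=\Delta_K(M)\subset\tgot^*$ and its asymptotic cone. Concretely, since $\Phi_M^{K}$ is proper, $\|\Phi_M^{K}(m)\|$ is comparable to the distance of $\Phi_M^{K}(m)$ to the origin inside ${\rm Image}(\Phi_M^K)$, and $\|\Phi_M^{K'}(m)\|=\|\pi_{\kgot',\kgot}(\Phi_M^{K}(m))\|=\mathrm{dist}(\Phi_M^{K}(m),\,(\kgot')^\perp)$ up to constants depending only on the (compact) group $K$. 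So $(c)$ is equivalent to a linear-geometry estimate: there is $\varepsilon>0$ with $\mathrm{dist}(\xi,(\kgot')^\perp)\geq\varepsilon\|\xi\|-\varepsilon^{-1}$ for all $\xi\in K\cdot P$. Suppose this failed; then there is a sequence $\xi_k\in K\cdot P$ with $\|\xi_k\|\to\infty$ and $\mathrm{dist}(\xi_k,(\kgot')^\perp)=o(\|\xi_k\|)$. Passing to a subsequence, $\xi_k/\|\xi_k\|$ converges to a unit vector $u$, which lies in $\As(K\cdot P)=K\cdot\As(P)$ (using compactness of $K$ to commute the asymptotic cone with the group action) and also in $K\cdot(\kgot')^\perp$ (since $(\kgot')^\perp$ is a subspace and the normalized distances go to $0$). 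This contradicts hypothesis $(b)$. The point requiring attention is the equivalence, on the closed set $\mathrm{Image}(\Phi_M^K)$, between properness of $\Phi_M^K$ and the coercive bound $\|\Phi_M^K(m)\|\to\infty$ forcing $m\to\infty$ — but that is just the definition of properness combined with continuity — together with the elementary fact that a sequence in a closed set with norms tending to infinity has, after rescaling and extracting, a nonzero limit in the asymptotic cone.

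To assemble: I would first record the two auxiliary facts $\As(K\cdot C)=K\cdot\As(C)$ for $K$ compact and $C$ closed convex, and $c_1\,\mathrm{dist}(\xi,(\kgot')^\perp)\le\|\pi_{\kgot',\kgot}(\xi)\|\le c_2\,\mathrm{dist}(\xi,(\kgot')^\perp)$ with $c_i>0$ (immediate from finite dimensionality and the choice of invariant inner product), then run the three implications above. The only genuinely new content beyond bookkeeping is the contrapositive extraction-of-a-limit argument in $(b)\Rightarrow(c)$ and the dual construction of an escaping-but-bounded sequence in $(a)\Rightarrow(b)$; everything else is formal manipulation of properness and of Lemma \ref{lem:As}.
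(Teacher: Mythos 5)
Your proof is correct and runs the same cycle $b)\Rightarrow c)\Rightarrow a)\Rightarrow b)$ as the paper, and two of the three steps coincide with the paper's: $c)\Rightarrow a)$ is the same trivial observation, and the paper proves $b)\Rightarrow c)$ by exactly your extraction argument (write $\Phi^K_M(m_i)=k_i\cdot y_i$ with $y_i\in\Delta_K(M)$, note $\|y_i\|\to\infty$, pass to convergent subsequences of $k_i$ and $y_i/\|y_i\|$, and get a nonzero element of $\As\left(\Delta_K(M)\right)\cap K\cdot(\kgot')^\perp$); your rephrasing through $\mathrm{dist}(\cdot,(\kgot')^\perp)$ and the identity $\As(K\cdot C)=K\cdot\As(C)$ is only cosmetic, and both auxiliary facts you invoke are true and immediate. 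Where you genuinely diverge is $a)\Rightarrow b)$: the paper notes that properness of $\Phi^{K'}_M$ makes $\pi_{\kgot',\kgot}$ proper on the closed set $\mathrm{Image}(\Phi^K_M)=K\cdot\Delta_K(M)$, so that $k\cdot\Delta_K(M)\cap\left(\xi_o+(\kgot')^\perp\right)$ is nonempty and compact, and then applies the third point of Lemma \ref{lem:As} to these two closed convex sets; you instead argue by contrapositive, using the first point of Lemma \ref{lem:As} to get the ray $\xi_0+\Rbb^{\geq 0}y\subset\Delta_K(M)$ and then equivariance (translating by $k_0^{-1}$ so the moment images lie on $k_0^{-1}\cdot(\xi_0+\Rbb^{\geq 0}y)$, whose $\pi_{\kgot',\kgot}$-image is constant since $k_0^{-1}y\in(\kgot')^\perp$) to exhibit an unbounded set in $M$ on which $\Phi^{K'}_M$ is bounded. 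Your variant is a bit more elementary — it avoids the compact-fiber observation and the third point of Lemma \ref{lem:As} — at the cost of the non-convexity of $K\cdot(\kgot')^\perp$, which you correctly flag and resolve by working inside a single $K$-translate; both arguments rely on $\Delta_K(M)$ being closed and convex (the Convexity Theorem for proper moment maps), and both are complete.
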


\begin{proof} If $c)$ does not hold we have a sequence $m_i\in M$ such that 
$\|\Phi_M^{K'}(m_i)\|\leq\frac{1}{i} \|\Phi_M^{K}(m_i)\|-i$, for all $i\geq 1$. Then $\|\Phi_M^{K}(m_i)\|$ tends to infinity and 
$\frac{\|\Phi_M^{K'}(m_i)\|}{\|\Phi_M^{K}(m_i)\|}$ tends to zero. We write $\Phi_M^{K}(m_i)=k_i\cdot y_i$ with $k_i\in K$ and 
$y_i\in \Delta_{K}(M)$. The sequence $\pi_{\kgot',\kgot}(k_i\cdot\frac{y_i}{\|y_i\|})$ converges to $0$. Here we can assume that the 
sequence $k_i$ converge to $k\in K$, and that the sequence 
$\frac{y_i}{\|y_i\|}$ converge to $y\in \As(\Delta_{K}(M))$. We get then that
$\pi_{\kgot',\kgot}(k\cdot y)=0$. In other words, $y$ is a non-zero element in 
$\As\left(\Delta_{K}(M)\right)\cap K\cdot(\kgot')^\perp$. We have proved $b)\Longrightarrow c)$.

The implication $c)\Longrightarrow a)$ is obvious. Let us prove the last implication $a)\Longrightarrow b)$. First we note that the properness of $\Phi_M^K$ implies that the projection $\pi_{\kgot',\kgot}$ is proper when restricted to the closed subset 
$\mathrm{Image}(\Phi_M^{K})=K\cdot \Delta_{K}(M)$. Let $k\in K$ and $\xi_o\in k\cdot \Delta_{K}(M)$. Then 
$$
k\cdot \Delta_{K}(M)\bigcap \xi_o +(\kgot')^\perp\subset \mathrm{Image}(\Phi_M^{K}) \bigcap \pi_{\kgot',\kgot}^{-1}(\pi_{\kgot',\kgot}(\xi_o))
$$
is non-empty and compact. If we apply the last point of Lemma \ref{lem:As} to the closed and convex sets $k\cdot \Delta_{K}(M)$ and  
$\xi_o +(\kgot')^\perp$ we get that 
$$
\As(k\cdot \Delta_{K}(M))\bigcap \As( \xi_o +\kgot^\perp)=k\cdot \As(\Delta_{K}(M))\bigcap (\kgot')^\perp
$$
is reduced to $\{0\}$. So we have proved that $ \As(\Delta_{K}(M))\bigcap k\cdot(\kgot')^\perp=\{0\}$ for any $k\in K$.
\end{proof}

\begin{rem}
When $M$ is a symplectic vector space $(E,\Omega_E)$, the moment map $\Phi_E^K:E\to \kgot^*$ is quadratic. Then
$\Phi_E^K$ is proper if and only if  $(\Phi_E^K)^{-1}(0)=\{0\}$.
\end{rem}

\subsection{Kostant-Souriau line bundle}\label{sec:line-bundle}

In the Kostant-Souriau framework, an Hamiltonian $G$-manifold
$(M,\Omega_M,\Phi^G_M)$ is pre-quantized if there is an equivariant
Hermitian line bundle $L_M$ with an invariant Hermitian connection
$\nabla_M$ such that
\begin{equation}\label{eq:kostant-L}
    \Lcal(X)-\iota(X_M)\nabla_M=i\langle\Phi^G_M,X\rangle\quad \mathrm{and} \quad
    (\nabla_M)^2= -i\Omega_M,
\end{equation}
for every $X\in\ggot$. 

The data $(L_M,\nabla_M)$ is also called a Kostant-Souriau line bundle. Note that 
conditions (\ref{eq:kostant-L}) imply via the equivariant
Bianchi formula the relations (\ref{eq:kostant=rel}).

\medskip

We suppose now that conditions {\bf C1} and {\bf C2} hold. Then $M=G\times_K Y$ where $Y\subset M$ is the 
$K$-invariant symplectic sub-manifold defined in Section \ref{sec:proper-moment-map}. Let $(L_M,\nabla_M)$ be a Kostant-Souriau 
line bundle on $M$. We denote $L_Y$ the restriction of the line bundle $L_M$ on $Y$. The connection $\nabla_M$ induces a $K$-invariant 
connection $\nabla_Y$ on $L_Y\to Y$, and we check easily that $(L_Y,\nabla_Y)$ is a Kostant-Souriau 
line bundle on $Y$.

Reciprocally, if $(L_Y,\nabla_Y)$ is a  Kostant-Souriau line bundle on $(Y,\Omega_Y,\Phi^K_Y)$, we define on $M$ the line bundle
$L_M:=(G\times L_Y)/ K$ equipped with the connection
$$
\nabla_M:=\nabla_Y + d^G+ i\langle\Phi^K_Y,\theta^K\rangle,
$$
where $d^G$ is the de Rham differential on $G$. Since $\Omega_M=\Omega_Y - d\langle\Phi^K_Y,\theta^K\rangle$, we check easily that $(L_M,\nabla_M)$ is a $G$-equivariant Kostant-Souriau line bundle on $(M,\Omega_M,\Phi^G_M)$.

\subsection{The case of elliptic orbits}

In this section, we consider the examples given by the {\em elliptic}  coadjoint orbits of $G$, that is  $M:= G\cdot \lambda$ for some $\lambda\in\kgot^*$. The Kirillov-Kostant-Souriau symplectic structure $\Omega_{M}$ is defined by the relation
$$
\Omega_{M}\vert_m(X_M\vert_m,Y_M\vert_m)=\langle m,[X,Y]\rangle,
$$
for $m\in M$ and $X,Y\in\ggot$. The corresponding moment map relatively to the action of $G$ on $G\cdot \lambda$ is the inclusion 
$\Phi^{G}_{M}:G\cdot \lambda\croc \ggot^*$. 

\begin{lem}\label{lem:orbite=moment}
The moment maps $\Phi^{G}_{M}$ and $\Phi^{K}_{M}$ are {\em proper}.
\end{lem}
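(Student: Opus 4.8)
The plan is to realize the elliptic orbit $M = G\cdot\lambda$ with $\lambda\in\kgot^*$ explicitly via the Cartan decomposition and then read off properness from the structure. First I would observe that, since $\lambda\in\kgot^*$, the stabilizer $G_\lambda$ equals $K_\lambda$: indeed the splitting $\ggot_\xi = \ggot_\xi\cap\kgot \oplus \ggot_\xi\cap\pgot$ valid for $\xi\in\kgot^*$, combined with the fact that $\lambda$ is strongly elliptic (which is forced once $\tgot$ is a Cartan subalgebra, the standing hypothesis for the parametrization by $z$), shows $\ggot_\lambda\subset\kgot$, hence $G_\lambda\subset K$ and $G_\lambda = K_\lambda$. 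Therefore the $G$-action on $M=G/K_\lambda$ is proper: writing $g = k e^{X}$ with $k\in K$, $X\in\pgot$, the condition $g\cdot A\cap A\neq\emptyset$ for compact $A\subset M$ forces both $k$ to lie in a compact set and $X$ to be bounded, because the $e^{\pgot}$-factor moves points to infinity in $M$ in a controlled way — more precisely $M \simeq K/K_\lambda \times \pgot$ as a $K$-space, and the orbit map is a diffeomorphism onto its image in $\ggot^*$ with the $\pgot$-directions escaping every compact set. This gives condition {\bf C1}.

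Next, for properness of the moment map $\Phi^G_M$, which is just the inclusion $G\cdot\lambda\hookrightarrow\ggot^*$: I would note that $G\cdot\lambda$, being a single coadjoint orbit through a strongly elliptic point, is a \emph{closed} subset of $\ggot^*$ — this is the classical fact that elliptic (indeed, semisimple) orbits are closed, and for strongly elliptic $\lambda$ one sees closedness directly since $\overline{G\cdot\lambda}\setminus G\cdot\lambda$ would have to consist of orbits of strictly smaller dimension whose stabilizers are non-compact, contradicting that nearby orbits all have stabilizer conjugate into $K$. Once $M$ is a closed submanifold of $\ggot^*$ and the moment map is literally the inclusion, the preimage of any compact $B\subset\ggot^*$ is $M\cap B$, which is closed and bounded in $\ggot^*$, hence compact. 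So $\Phi^G_M$ is proper, i.e. {\bf C2} holds.

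Finally, properness of $\Phi^K_M$ follows immediately from the first point of Theorem \ref{theo:properGK}: since the $G$-action on $M$ is proper and $\Phi^G_M$ is proper, $\Phi^K_M$ is proper as well. Alternatively one invokes Theorem \ref{theo:properGK} in one stroke once {\bf C1} is established, since it asserts the equivalence of properness of $\Phi^G_M$ and $\Phi^K_M$ under {\bf C1}; so it suffices to check either one, and the inclusion map $\Phi^G_M$ is the convenient choice. The main obstacle I anticipate is the clean verification that the $G$-action is proper, i.e. that the $\exp(\pgot)$-translates genuinely escape to infinity; this is where the concrete identification $G\cdot\lambda \cong G\times_{K_\lambda}\{pt\}$ together with the Cartan decomposition $G\simeq K\times\exp(\pgot)$ does the work, and one should be a little careful that $K_\lambda$ is exactly the full stabilizer so that no extra identifications collapse the $\pgot$-directions.
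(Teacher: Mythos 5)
There is a genuine gap: your argument only covers the case where $G_\lambda$ is compact, and the bridge you use to reduce to that case is wrong. The lemma is stated for an arbitrary elliptic orbit $M=G\cdot\lambda$ with $\lambda\in\kgot^*$; the hypothesis that the stabilizer $G_\lambda$ is compact is introduced in the paper only \emph{after} this lemma. Your claim that strong ellipticity of $\lambda$ ``is forced once $\tgot$ is a Cartan subalgebra'' is false: for $\lambda\in\kgot^*$ lying on a non-compact wall (i.e.\ $(\alpha,\lambda)=0$ for some $\alpha\in\Rgot_n$), one has $\ggot_\lambda\cap\pgot\neq\{0\}$, so $G_\lambda$ is non-compact and the $G$-action on $G\cdot\lambda$ is \emph{not} proper; condition {\bf C1} fails, and your appeal to the first point of Theorem \ref{theo:properGK} (which requires a proper $G$-action, and whose non-trivial direction rests on the slice decomposition $M=G\times_K Y$ and the convexity theorem) is then unavailable. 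Since the lemma is true in that generality, your proof does not establish the statement as stated.

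The repair is elementary and bypasses all of this, and it is in fact the paper's proof: along the orbit the invariant quantity $-b(\Phi^G_M,\Phi^G_M)$ is constant equal to $-b(\lambda,\lambda)=\|\lambda\|^2$ because $\lambda\in\kgot^*$, and since $b$ is negative definite on $\kgot$ and positive definite on $\pgot$ one has $-b(\xi,\xi)=\|\xi_\kgot\|^2-\|\xi_\pgot\|^2$ while $\|\xi\|^2=\|\xi_\kgot\|^2+\|\xi_\pgot\|^2$; adding the two identities gives $\|\Phi^K_M\|^2=\tfrac{1}{2}\left(\|\lambda\|^2+\|\Phi^G_M\|^2\right)$, so properness of $\Phi^K_M$ follows immediately from properness of $\Phi^G_M$, which is the inclusion of the closed elliptic orbit (the closedness you correctly invoke). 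In the strongly elliptic case --- the only case the paper actually uses later --- your route (proper action because $G_\lambda=K_\lambda$ is compact, then Theorem \ref{theo:properGK}) does go through, but it imports heavy machinery for what is a two-line computation; note also that $G\cdot\lambda\simeq K\times_{K_\lambda}\pgot$ is an associated bundle rather than the product $K/K_\lambda\times\pgot$ you assert, although properness of the $G$-action on $G/G_\lambda$ for compact $G_\lambda$ is standard in any case.
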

\begin{proof}
The inclusion $\Phi^{G}_M$ is proper since the elliptic orbit $M= G\cdot \lambda$ is closed in $\ggot^*$. If we use the relations 
$\|\lambda\|^2=-b(\Phi^{G}_M, \Phi^{G}_M)= \|\Phi^{K}_M\|^2- \|\Phi^{\pgot}_M\|^2$ and $\|\Phi^{G}_M\|^2=\|\Phi^{K}_M\|^2+\|\Phi^{\pgot}_M\|^2$, we get $ \|\Phi^{K}_M\|^2=\frac{1}{2}(\|\lambda\|^2+ \|\Phi^{G}_M\|^2)$. The properness of $\|\Phi^{K}_M\|^2$ then follows.
 \end{proof}
 
 \medskip
 
We work now with an elliptic coadjoint orbit of $G$, $G\cdot \lambda$, such that the stabilizer subgroup $G_\lambda$ is {\em compact}.
Then the action of $G$ (and those of any closed subgroup) on $G\cdot\lambda$ is proper. 

Let $\tgot$ be the Lie algebra of a maximal torus $T$ in $K$. Our hypothesis concerning the compactness of $G_\lambda$ imposes $\tgot$ to be a Cartan sub-algebra of $\ggot$.  Let $\Rgot$ be the set of roots for the action of $\tgot$ on $\ggot\otimes\Cbb$. We have $\Rgot=\Rgot_c\cup \Rgot_n$  where $\Rgot_c$ and $\Rgot_n$ are respectively the set of roots for the action of $\tgot$ on 
$\kgot\otimes\Cbb$ and $\pgot\otimes\Cbb$. For the remaining part of this section, we fix a system of positive roots $\Rgot_{c,+}$ in 
$\Rgot_c$: let $\tgot^*_+\subset \tgot^*$ be the corresponding Weyl chamber. 

So $\lambda$ is chosen in the Weyl chamber $\tgot^*_+$, away from the non-compact wall~: $(\alpha,\lambda)\neq 0$ for all $\alpha\in \Rgot_n$. Thanks to Lemma \ref{lem:orbite=moment}, we know that the moment map $\Phi^{K}_{G\cdot \lambda}$ relative to a maximal compact subgroup $K\subset G$ is proper.  The Convexity theorem tells us that the set 
$$
\Delta_{K}(G\cdot\lambda):= \mathrm{Image}(\Phi^{K}_M)\cap \tgot^*_+
$$
is a closed convex locally polyhedral subset of $\tgot^*$. The results of Duflo-Heckman-Vergne \cite{DHV} shows that in fact 
$\Delta_{K}(G\cdot\lambda)$ is defined by a finite number of inequalities. In this paper, we call $\Delta_{K}(G\cdot\lambda)$ the Kirwan polyhedron.

 \medskip
 
We consider now a connected reductive subgroup $G'\subset G$, such that a Cartan involution $\Theta$ for $G$ leaves $G'$ invariant. Then we have Cartan decompositions $\ggot'=\kgot'\oplus\pgot'$ and $\ggot=\kgot\oplus\pgot$, with $\kgot'\subset \kgot$ and $\pgot'\subset\pgot$. Let $K\subset G$ and $K'\subset G'$ be the corresponding maximal subgroups.

Consider now the action of $G'$ on $(G\cdot\lambda,\Omega_{G\cdot\lambda})$. The moment map $\Phi^{G'}_{G\cdot\lambda}$ is the composition of the inclusion $G\cdot\lambda\croc \ggot^*$ with the orthogonal projection $\pi_{\ggot',\ggot}:\ggot^*\to(\ggot')^*$. Note that  $Y:=(\Phi^G_M)^{-1}(\kgot^*)$ is non-empty since it contains $\pi_{\ggot',\ggot}(\lambda)$.

We are looking to connected reductive subgroups $G'\subset G$ such that the moment map $\Phi^{G'}_{G\cdot \lambda}$ is proper. Theorem \ref{theo:properGK} shows that is equivalent to look at compact subgroups $K'\subset K$ such that the moment map 
$\Phi^{K'}_{G\cdot \lambda}$ is proper. Thanks to Proposition \ref{prop:map=propre}, we have the following criterium

\begin{prop}
$\bullet$ The moment map $\Phi^{G'}_{G\cdot \lambda}$ is proper if and only if \break $\As(\Delta_{K}(G\cdot\lambda))\cap K\cdot(\kgot')^\perp=\{0\}$.
\end{prop}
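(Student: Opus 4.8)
The plan is to obtain the Proposition by chaining two results already proved in the excerpt: Theorem \ref{theo:properGK}, which for a reductive group acting properly equates properness of the reductive moment map with that of the moment map of a maximal compact subgroup, and Proposition \ref{prop:map=propre}, which characterizes properness of a restricted moment map through asymptotic cones.

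First I would record that the $G'$-action on $M:=G\cdot\lambda$ is proper. Since $G_\lambda$ is compact by hypothesis, the $G$-action on the elliptic orbit $G\cdot\lambda$ is proper, hence so is the restriction of that action to the closed subgroup $G'\subset G$. Thus $(G\cdot\lambda,\Omega_{G\cdot\lambda},\Phi^{G'}_{G\cdot\lambda})$ is a Hamiltonian $G'$-manifold whose $G'$-action is proper, with $K'$ a maximal compact subgroup of $G'$. Applying the first point of Theorem \ref{theo:properGK} in this situation gives the equivalence: $\Phi^{G'}_{G\cdot\lambda}$ is proper $\iff$ $\Phi^{K'}_{G\cdot\lambda}$ is proper.

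Next I would reduce properness of $\Phi^{K'}_{G\cdot\lambda}$ to the asymptotic cone condition. By Lemma \ref{lem:orbite=moment}, the moment map $\Phi^{K}_{G\cdot\lambda}$ for the action of the maximal compact subgroup $K\subset G$ is proper; hence $(G\cdot\lambda,\Omega_{G\cdot\lambda},\Phi^{K}_{G\cdot\lambda})$ is a Hamiltonian $K$-manifold with proper moment map, and its Kirwan polyhedron is exactly $\Delta_{K}(G\cdot\lambda)$ (which, as recalled just before the statement, is a nonempty closed convex locally polyhedral subset of $\tgot^*$, so its asymptotic cone is well defined). Since $K'\subset K$, the moment map for the $K'$-action is $\Phi^{K'}_{G\cdot\lambda}=\pi_{\kgot',\kgot}\circ\Phi^{K}_{G\cdot\lambda}$, so the equivalence of statements (a) and (b) in Proposition \ref{prop:map=propre} applies verbatim and yields: $\Phi^{K'}_{G\cdot\lambda}$ is proper $\iff$ $\As(\Delta_{K}(G\cdot\lambda))\cap K\cdot(\kgot')^\perp=\{0\}$.

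Combining the two equivalences proves the Proposition. I do not expect any genuine obstacle: the only step requiring verification is that the $G'$-action on $G\cdot\lambda$ is proper, which is immediate from the compactness of $G_\lambda$; everything else is a direct invocation of Theorem \ref{theo:properGK} and Proposition \ref{prop:map=propre}.
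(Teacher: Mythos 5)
Your proposal is correct and follows exactly the paper's route: the properness of the $G'$-action on $G\cdot\lambda$ (from compactness of $G_\lambda$), then the first point of Theorem \ref{theo:properGK} to reduce to $\Phi^{K'}_{G\cdot\lambda}$, and finally the equivalence $a)\Leftrightarrow b)$ of Proposition \ref{prop:map=propre} applied to the proper Hamiltonian $K$-manifold $G\cdot\lambda$. No gaps; this is precisely how the paper obtains the criterion.
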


\medskip

We want to stress a property which is peculiar to the reductive Lie groups (in comparison with the nilpotent one).

\begin{prop}
Let $\Ocal\subset\ggot^*$ be a strongly elliptic orbit. Let $G'\subset G$ be a connected reductive subgroup such that 
$\Phi^{G'}_{\Ocal}=\pi_{\ggot',\ggot}:\Ocal\to(\ggot')^*$ is proper. Then for any coadjoint orbit $\Ocal'\subset(\ggot')^*$ 
the reduced space 
\begin{equation}\label{eq:fibre-connexe}
\Ocal\cap\pi_{\ggot',\ggot}^{-1}(\Ocal')/G'
\end{equation}
is connected.
\end{prop}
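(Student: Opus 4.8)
The plan is to reduce the connectedness of the reduced space $\Ocal \cap \pi_{\ggot',\ggot}^{-1}(\Ocal')/G'$ to a connectedness statement for a reduced space of a compact group action, where the result is classical. By Theorem \ref{theo:properGK} the properness of $\Phi^{G'}_{\Ocal}$ is equivalent to the properness of $\Phi^{K'}_{\Ocal}$, so the hypotheses of Proposition \ref{prop:M.Y} are met for the $G'$-action on $M := \Ocal$: we obtain a $G'$-equivariant diffeomorphism $\pi : G' \times_{K'} Y' \xrightarrow{\ \sim\ } \Ocal$, where $Y' := (\Phi^{G'}_{\Ocal})^{-1}((\kgot')^*)$ is a connected $K'$-invariant symplectic submanifold with proper moment map $\Phi^{K'}_{Y'}$ equal to the restriction of $\Phi^{G'}_{\Ocal}$.

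First I would observe that every coadjoint orbit $\Ocal' \subset (\ggot')^*$ that meets $\mathrm{Image}(\Phi^{G'}_{\Ocal}) \subset (\ggot')^*_{se}$ is a strongly elliptic orbit of $G'$, hence of the form $\Ocal' = G' \cdot \mu$ for a unique $\mu$ in the closed Weyl chamber $(\tgot')^*_+$ (if $\Ocal'$ does not meet the image the reduced space is empty and there is nothing to prove). Then, using the decomposition $\pi$ and the fact that $\Phi^{G'}_{\Ocal} \circ \pi([g',y']) = g' \cdot \Phi^{K'}_{Y'}(y')$, a point $[g',y'] \in G' \times_{K'} Y'$ lies in $(\Phi^{G'}_{\Ocal})^{-1}(G'\cdot\mu)$ exactly when $\Phi^{K'}_{Y'}(y') \in K' \cdot \mu$, i.e. when $y' \in (\Phi^{K'}_{Y'})^{-1}(K'\cdot\mu)$. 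Quotienting out the free $G'$-action (the action is proper, and on $G'\times_{K'}Y'$ the $G'$-orbits are precisely the $\{[g',y'] : g' \in G'\}$), one gets a natural identification
\[
\Ocal \cap \pi_{\ggot',\ggot}^{-1}(\Ocal')/G' \ \simeq\ (\Phi^{K'}_{Y'})^{-1}(K'\cdot\mu)/K' \ =:\ Y'_{\mu,K'}.
\]

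It then remains to invoke the classical connectedness of reduced spaces for a Hamiltonian action of a \emph{compact} connected group with \emph{proper} moment map: since $Y'$ is connected and $\Phi^{K'}_{Y'}$ is proper, the reduced space $Y'_{\mu,K'}$ is connected (this is part of the Convexity Theorem package of Atiyah, Guillemin--Sternberg, Kirwan, Lerman--Meinrenken--Tolman--Woodward already cited in the proof of Theorem \ref{prop:Kirwan.M.Y}, where the fibers of the moment map over the chamber are shown to be connected). This gives the connectedness of the reduced space (\ref{eq:fibre-connexe}).

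The main obstacle is the bookkeeping in the identification step: one must be careful that the quotient $\Ocal \cap \pi_{\ggot',\ggot}^{-1}(\Ocal')/G'$ is genuinely homeomorphic (not just set-theoretically in bijection) to $Y'_{\mu,K'}$, which requires the diffeomorphism $\pi_{se}$ of Lemma \ref{lem:M=se}/Proposition \ref{prop:M.Y} to intertwine the $G'$-actions and the moment maps compatibly, and that $\mu$ be chosen $W(K',T')$-canonically so that $K'\cdot\mu$ pulls back cleanly. Once this is set up, everything else is either a citation (compact-group connectedness) or formal, so I do not expect further difficulty.
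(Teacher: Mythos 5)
Your argument is correct and follows essentially the same route as the paper's own proof: decompose $\Ocal\simeq G'\times_{K'}Y'$ with $Y'$ connected (properness of $\Phi^{G'}_{\Ocal}$ plus properness of the action on a strongly elliptic orbit), identify the reduced space with $(\Phi^{K'}_{Y'})^{-1}(K'\cdot\mu)/K'$, and invoke connectedness of fibers from the Convexity Theorem. One small remark: the $G'$-action on $G'\times_{K'}Y'$ need not be \emph{free} (stabilizers are conjugates of the $K'$-stabilizers of points of $Y'$), but this does not matter, since the identification $\bigl(G'\times_{K'}Z\bigr)/G'\simeq Z/K'$ for the $K'$-invariant set $Z=(\Phi^{K'}_{Y'})^{-1}(K'\cdot\mu)$ holds without freeness.
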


\begin{proof} Since $\Phi^{G'}_{\Ocal}$ is proper, we have a decomposition $\Ocal=G'\times_{K'} Y'$, where 
$Y'$ is a connected sub-manifold. Then $\Ocal\cap\pi_{\ggot',\ggot}^{-1}(\Ocal')$ is empty if $\Ocal'$ is not elliptic. And 
if $\Ocal'=G'\cdot\mu$ with $\mu\in(\kgot')^*$, we see that $\Ocal\cap\pi_{\ggot',\ggot}^{-1}(\Ocal')/G'\simeq (\Phi^K_Y)^{-1}(K\cdot\mu)/K$ which is connected since 
$\Phi^K_Y$ is proper (see the Convexity Theorem \cite{Atiyah82,Guillemin-Sternberg82.bis,Kirwan.84.bis,L-M-T-W}).
\end{proof}

\medskip

In general the Kirwan polyhedron $\Delta_{K}(G\cdot\lambda)$ is not known, but we can use at least the following observation. 
Let $\Rgot_n(\lambda)$ be the set of non-compact roots $\alpha$ such that $(\alpha,\lambda)>0$. Let us consider the following cone in 
$\tgot^*$ :
$$
\Ccal(\lambda):=\sum_{\alpha\in\Rgot_n(\lambda)} \Rbb^{\geq 0}\alpha.
$$

\begin{lem} \label{lem:Kirwan-G-lambda} For the Kirwan polyhedron we have
$$
\Delta_{K}(G\cdot\lambda)\subset \left\{\lambda + \Ccal(\lambda)\right\},\quad \mathrm{and\ then}\quad 
\As\left(\Delta_{K}(G\cdot\lambda)\right)\subset \Ccal(\lambda).
$$
\end{lem}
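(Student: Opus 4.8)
The plan is to show the inclusion $\Phi^K_{G\cdot\lambda}(G\cdot\lambda)\subset\lambda+\Ccal(\lambda)$ and then read off the statement about asymptotic cones. Since $\Delta_K(G\cdot\lambda)$ is the intersection of the image of $\Phi^K_{G\cdot\lambda}$ with the Weyl chamber, and $\lambda+\Ccal(\lambda)$ is closed and convex, the second assertion follows immediately from Lemma \ref{lem:As}: $\As(\lambda+\Ccal(\lambda))=\As(\Ccal(\lambda))=\Ccal(\lambda)$, and $\As$ is monotone under inclusion. So the whole content is the first inclusion.

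First I would describe the points of $M=G\cdot\lambda$ concretely. Using the Cartan decomposition $G\simeq K\times\exp(\pgot)$, every $m\in M$ can be written $m=k e^X\cdot\lambda$ with $k\in K$, $X\in\pgot$, so that $\Phi^K_M(m)=\big[k e^X\cdot\lambda\big]_{\kgot^*}=k\cdot\big[e^X\cdot\lambda\big]_{\kgot^*}$. Since the right-hand side is a $K$-translate, it suffices to prove that for every $X\in\pgot$ the $\kgot^*$-component $\xi(X):=\big[e^{\mathrm{ad}^*(X)}\lambda\big]_{\kgot^*}$, after moving it into the Weyl chamber, lies in $\lambda+\Ccal(\lambda)$. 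The natural tool is a gradient/heat-flow argument: consider the path $t\mapsto \xi(tX)$ in $\kgot^*$ and differentiate. Expanding $e^{t\,\mathrm{ad}^*(X)}\lambda=\sum_{n\ge0}\frac{t^n}{n!}\mathrm{ad}^*(X)^n\lambda$ and projecting onto $\kgot^*$, one gets an explicit power series; the key observation is that pairing with a non-compact root direction is controlled by the sign condition $(\alpha,\lambda)$, which is exactly what defines $\Rgot_n(\lambda)$.

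The cleanest route, and the one I would pursue, is to relate this to the behaviour of the norm-square function. By Theorem \ref{theo:properGK} and the structure of $Y$, one knows $\|\Phi^K_M(m)\|^2=\tfrac12(\|\lambda\|^2+\|\Phi^G_M(m)\|^2)$, so the image of $\Phi^K_M$ is built from the single point $\lambda\in Y$ (here $Y=\{\pi_{\ggot,\ggot}(\lambda)\}$ reduces essentially to $\lambda$ since $\lambda\in\kgot^*$ and $G_\lambda$ is compact). Alternatively, and more in the spirit of convexity theory, I would invoke the gradient flow of $-\tfrac12\|\Phi^K_M\|^2$: its trajectories move $\lambda$ through $M$, and along such a trajectory the moment image moves in directions $[\widetilde{\Phi^K_M(m)}]_M$ whose $\tgot^*$-projections, by the bracket computation $\Omega_M(X_M,X'_M)=\langle\Phi^K_M,[X,X']\rangle$ used already in Lemma \ref{lem:non-degenerate}, are non-negative combinations of the non-compact roots $\alpha$ with $(\alpha,\lambda)>0$. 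Summing these infinitesimal displacements gives membership in $\lambda+\Ccal(\lambda)$.

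The main obstacle is making the "infinitesimal displacements point into $\Ccal(\lambda)$'' argument rigorous globally rather than just near $\lambda$: a priori the flow could leave the region where the relevant root-pairings keep their sign. I expect this is handled by noting that $\Ccal(\lambda)$ is a convex cone containing only roots $\alpha$ with $(\alpha,\lambda)\ge0$, that $\lambda+\Ccal(\lambda)$ is forward-invariant under the relevant flow (one checks the vector field points inward along the boundary, using the same Lie-bracket identity), and that every point of $M$ is reached from $\lambda$ by such a flow since $\|\Phi^G_M\|^2$ is proper with $\lambda$ its unique minimum on $Y$. Once forward-invariance is established the inclusion is immediate, and the asymptotic-cone statement is then a one-line consequence of Lemma \ref{lem:As} as noted above.
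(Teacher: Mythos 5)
Your reduction of the asymptotic-cone statement to the inclusion $\Delta_K(G\cdot\lambda)\subset\lambda+\Ccal(\lambda)$ is fine, but the proof of that inclusion has a genuine gap, and you have in effect flagged it yourself: the whole difficulty is the passage from the infinitesimal statement at $\lambda$ (displacement directions pair correctly with $\lambda$, i.e.\ lie in the cone spanned by $\Rgot_n(\lambda)$) to a global statement about the whole Kirwan set. The two claims you lean on to bridge this are not established and the first is false as stated: (i) it is not true that every point of $M=G\cdot\lambda$ is reached from $\lambda$ by a trajectory of the gradient flow of $-\tfrac12\|\Phi^K_M\|^2$ --- $\lambda$ is a critical point, trajectories through it are constant, and the unstable set of the minimum stratum $K\cdot\lambda$ need not exhaust $M$ (other critical strata of $\|\Phi^K_M\|^2$ have their own basins); and (ii) the forward-invariance of $\lambda+\Ccal(\lambda)$ is asserted, not proved, and it concerns the motion of the \emph{chamber representative} of $\Phi^K_M(m(t))$, not of the image point itself, which introduces exactly the global sign-control problem you mention. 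There is also a misreading along the way: for $M=G\cdot\lambda$ the slice is $Y=K\cdot\lambda$, not a point, and the identity $\|\Phi^K_M\|^2=\tfrac12(\|\lambda\|^2+\|\Phi^G_M\|^2)$ only yields properness (Lemma \ref{lem:orbite=moment}); it says nothing about the shape of the image of $\Phi^K_M$.

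The missing idea, which is how the paper proves the lemma, is a local-to-global principle for moment polytopes: since $\Delta_K(G\cdot\lambda)$ is convex and contains $\lambda$, it is contained in $\lambda+C_\lambda$ where $C_\lambda$ is the tangent cone at $\lambda$, and by Sjamaar's Theorem 6.5 together with the Marle--Guillemin--Sternberg normal form, $C_\lambda$ is computed purely from the symplectic slice at $\lambda$, namely $(\pgot,\Omega_\lambda)$ as a $K_\lambda$-module: $C_\lambda=\Delta_{K_\lambda}(\pgot)\subset\Delta_T(\pgot)=\Ccal(\lambda)$, the last equality being the weight computation (the $T$-weights of $(\pgot,J_\lambda)$ are $-\alpha$, $\alpha\in\Rgot_n(\lambda)$) that your infinitesimal picture correctly anticipates. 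In other words, the local data you compute at $\lambda$ does suffice, but only after invoking a theorem that packages convexity plus the local normal form; no global flow argument is then needed, and without such a theorem your plan does not close.
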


\begin{coro}\label{coro:moment-propre}
 The moment map $\Phi^{G'}_{G\cdot\lambda}$ is proper if  $\Ccal(\lambda)\, \cap\, K\cdot(\kgot')^\perp=\{0\}$.
\end{coro}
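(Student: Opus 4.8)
The statement is an immediate consequence of the criterion recorded in the preceding Proposition, combined with Lemma \ref{lem:Kirwan-G-lambda}. The plan is the following.

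First I would check that the hypotheses needed to run the criterion are in place. Since $G\cdot\lambda$ is an elliptic orbit with $G_\lambda$ compact, Lemma \ref{lem:orbite=moment} gives that the moment map $\Phi^K_{G\cdot\lambda}$ is proper, so $(G\cdot\lambda,\Omega_{G\cdot\lambda})$ is a Hamiltonian $K$-manifold with proper moment map and well-defined Kirwan polyhedron $\Delta_{K}(G\cdot\lambda)$; moreover the Cartan involution $\Theta$ leaves $G'$ invariant, so $K'=G'\cap K$ is a closed subgroup of $K$. By Theorem \ref{theo:properGK} the properness of $\Phi^{G'}_{G\cdot\lambda}$ is equivalent to that of $\Phi^{K'}_{G\cdot\lambda}$, and by Proposition \ref{prop:map=propre} (applied to the compact group $K$, its proper Hamiltonian manifold $G\cdot\lambda$ and the subgroup $K'$) the latter holds if and only if
\[
\As\bigl(\Delta_{K}(G\cdot\lambda)\bigr)\cap K\cdot(\kgot')^\perp=\{0\}.
\]

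Next I would invoke Lemma \ref{lem:Kirwan-G-lambda}, which yields the inclusion $\As(\Delta_{K}(G\cdot\lambda))\subset\Ccal(\lambda)$. Hence, under the assumption $\Ccal(\lambda)\cap K\cdot(\kgot')^\perp=\{0\}$, one gets
\[
\As\bigl(\Delta_{K}(G\cdot\lambda)\bigr)\cap K\cdot(\kgot')^\perp\subset\Ccal(\lambda)\cap K\cdot(\kgot')^\perp=\{0\},
\]
and since $0$ always belongs to an asymptotic cone this intersection is exactly $\{0\}$. Combining with the equivalence of the previous paragraph shows that $\Phi^{G'}_{G\cdot\lambda}$ is proper, which is the claim.

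Since the proof is a direct chaining of results already proved in this section, there is no genuine obstacle; the only point needing a little attention is to confirm that the criterion of Proposition \ref{prop:map=propre} really applies here, namely that $\Phi^K_{G\cdot\lambda}$ is proper (guaranteed by Lemma \ref{lem:orbite=moment}) and that $K'\subset K$ (guaranteed by the standing assumption that $\Theta$ preserves $G'$).
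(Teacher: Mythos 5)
Your proof is correct and follows exactly the paper's route: the corollary is derived as an immediate consequence of the criterion Proposition (itself obtained from Theorem \ref{theo:properGK} and Proposition \ref{prop:map=propre}, using the properness of $\Phi^{K}_{G\cdot\lambda}$ from Lemma \ref{lem:orbite=moment}) together with the inclusion $\As\bigl(\Delta_{K}(G\cdot\lambda)\bigr)\subset\Ccal(\lambda)$ of Lemma \ref{lem:Kirwan-G-lambda}. Nothing to add.
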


\begin{proof}  Let $C_{\lambda}$ be the cone tangent to $\Delta_{K}(G\cdot\lambda)$ at $\lambda$ : 
$$
C_{\lambda}=\Rbb^{\geq 0}\cdot\left\{\xi-\lambda,\ \xi\in \Delta_{K}(G\cdot\lambda)\right\}\subset \tgot^*.
$$
We have to show that $C_{\lambda}$ is contained in  $\Ccal(\lambda)$. Thanks to the result of Sjamaar \cite{Sjamaar-98}, we know that $C_{\lambda}$ is determined by a local Hamiltonian model near $K\cdot\lambda\subset G\cdot\lambda$.

The maximal torus $T$ of $K$ is still a maximal torus for the stabiliser subgroup $K_{\lambda}$ : let $\tgot^*_{\lambda,+}$ be a Weyl chamber for $(K_{\lambda}, T)$ which contains $\tgot^*_{+}$. Here, we consider the vector space $\pgot$ equipped with the linear symplectic structure $\Omega_{\lambda}(X,Y):=\langle\lambda,[X,Y]\rangle$. The group $K_{\lambda}$ acts in a Hamiltonian fashion on $(\pgot, \Omega_{\lambda})$. Let us denote by 
$$
\Delta_{K_{\lambda}}(\pgot)\subset \tgot^*_{\lambda,+}
$$
the corresponding Kirwan polytope (which is a rational cone). Since the stabiliser of the point $\lambda\in M:=G\cdot\lambda$ coincides with the stabiliser subgroup $K_{\lambda}$ of its image by the moment map $\Phi_M^{K}$, the local form of Marle \cite{Marle85} and Guillemin and Sternberg \cite{Guillemin-Sternberg84} tells us that $M$ is symplectomorphic with $K\times_{K_{\lambda}}\pgot$ in a neighbourhood of $K\cdot\lambda$. Theorem 6.5 of  \cite{Sjamaar-98} tells us then that $C_{\lambda}= \Delta_{K_{\lambda}}(\pgot)$. 

Let us consider the Hamiltonian action of the torus $T$ on $(\pgot, \Omega_{\lambda})$. Let $J_\lambda$ be an invariant complex structure on  $\pgot$ which is compatible with $\Omega_{\lambda}$: we can check that the weights of the $T$-action on $(\pgot,J_\lambda)$ are 
$-\alpha$, for $\alpha\in \Rgot_n(\lambda)$. Hence the image $\Delta_{T}(\pgot)$ of the moment map is equal to the cone generated by the weights $\alpha\in \Rgot_n(\lambda)$. Finally  we have proved that
$$
C_{\lambda}=\Delta_{K_{\lambda}}(\pgot)\subset \Delta_{T}(\pgot)=\Ccal(\lambda).
$$
\end{proof}

\section{Quantization commutes with reduction}\label{sec:QR-hol}

Let $G$ be a connected real reductive Lie group and let $K$ be a maximal connected compact subgroup. Let $\cgot_\kgot, \cgot_\ggot$ be respectively the center of $\kgot$ and $\ggot$. In all the section we assume that the group $G$ satisfies the 
following condition 
\begin{equation}\label{condition-hol}
Z_\ggot(\cgot_\kgot)=\kgot,
\end{equation}
i.e. the centralizer of $\cgot_\kgot$ in $\ggot$ coincides with $\kgot$. Hence $\cgot_\ggot\subset \cgot_\kgot\subset \kgot$.

We make the choice of a maximal torus $T$ in $K$ with Lie algebra $\tgot$. Note that (\ref{condition-hol}) forces $\tgot$ to be a Cartan sub-algebra of $\ggot$. Let $\Rgot=\Rgot_c\cup \Rgot_n$ be the set of roots. We fix a system of positive roots $\Rgot_c^+$ in 
$\Rgot_c$. We know also that (\ref{condition-hol}) imposes the existence of elements $z\in \cgot_\kgot$ such that $\mathrm{ad}(z)$ defines a complex structure on $\pgot$ (see Section 9 in \cite{Knapp-book}).  For such element $z$, we define 
$$
\Rgot_n(z):=\{\alpha \in \Rgot_n\ \vert \ \langle\alpha,z\rangle=1\}.
$$
which is invariant relatively to the action of the Weyl group $W_K$. The union $\Rgot_c^+\cup \Rgot_n(z)$ defines then a system of positive roots in $\Rgot$. 

We will be interested to the closed $W_K$-invariant cones in $\tgot^*$
\begin{eqnarray*}
\Chol(z)&:=&\left\{ \xi\in\tgot^*\  \vert\ (\beta,\xi)\geq 0, \  \forall \beta\in \Rgot_n(z)\right\},\\
\Ccal(z)&:=&\sum_{\beta\in \Rgot_n(z)} \Rbb^{\geq 0}\beta.
\end{eqnarray*}

We recall some basic facts about them. 
\begin{lem}We have the following inclusions
\begin{equation}\label{eq:inclusion-cone}
\Ccal(z)\subset\Chol(z)\subset \{\xi\in\tgot^*\  \vert\ \langle\xi,z\rangle\geq 0 \}.
\end{equation}
\end{lem}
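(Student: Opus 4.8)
The statement to prove is the chain of inclusions
\[
\Ccal(z)\subset\Chol(z)\subset \{\xi\in\tgot^*\mid \langle\xi,z\rangle\geq 0\}.
\]

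\textbf{Plan of proof.} The plan is to verify the two inclusions separately, using only the definitions of the cones $\Ccal(z)$ and $\Chol(z)$ together with the normalization $\langle\alpha,z\rangle=1$ for every $\alpha\in\Rgot_n(z)$.

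For the first inclusion $\Ccal(z)\subset\Chol(z)$, I would take an arbitrary $\xi=\sum_{\alpha\in\Rgot_n(z)} t_\alpha\,\alpha$ with all $t_\alpha\geq 0$, and check the defining inequalities of $\Chol(z)$: for each $\beta\in\Rgot_n(z)$ one has $(\beta,\xi)=\sum_{\alpha} t_\alpha (\beta,\alpha)$. This requires knowing that $(\beta,\alpha)\geq 0$ for all $\alpha,\beta\in\Rgot_n(z)$, i.e.\ that the noncompact roots in $\Rgot_n(z)$ are pairwise non-obtuse. This is the one genuine input: it follows from the standard structure theory of Hermitian symmetric pairs (condition (\ref{condition-hol})), where $\pgot^+:=\bigoplus_{\alpha\in\Rgot_n(z)}\ggot_\alpha$ is abelian, so that $[\ggot_\alpha,\ggot_\beta]=0$ and hence $\alpha+\beta$ is never a root; combined with $\beta-\alpha$ not being a root either when $\alpha,\beta$ lie in the same "half" (both pair to $1$ with $z$, so $\beta-\alpha$ pairs to $0$ with $z$ and would be a compact root, but $\ggot_{\beta-\alpha}\subset[\ggot_\beta,\ggot_{-\alpha}]=0$), the $\alpha$-root string through $\beta$ has length one, forcing $(\beta,\alpha)\geq 0$. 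Summing non-negative terms then gives $(\beta,\xi)\geq 0$, so $\xi\in\Chol(z)$.

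For the second inclusion $\Chol(z)\subset\{\xi\mid\langle\xi,z\rangle\geq 0\}$, I would observe that $\tilde z\in\tgot$ can be written (up to a positive scalar from the identification induced by $b$) as a non-negative combination of the coroots/vectors dual to the $\beta\in\Rgot_n(z)$; more directly, $\langle\xi,z\rangle$ can be expressed in terms of the pairings $(\beta,\xi)$. The cleanest route: since $\langle\alpha,z\rangle=1$ for all $\alpha\in\Rgot_n(z)$ and these roots span a full-rank-in-the-relevant-directions subset, the element $z$ (projected to the span) is a non-negative linear combination of the $\Rgot_n(z)$-coroots, so $\langle\xi,z\rangle=\sum c_\beta (\beta^\vee,\xi)$-type expression with $c_\beta\geq 0$; for $\xi\in\Chol(z)$ each term is $\geq 0$. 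The main obstacle is making this last step precise without circularity — i.e.\ justifying that $z$ lies in the cone dual to $\Chol(z)$, equivalently that $\Rbb^{\geq 0}z$ is contained in the closed convex cone generated by $\{\beta^\vee\}$. This is where I expect to spend most of the effort, and it is most transparently handled by duality: $\Chol(z)$ is by definition the dual cone of $\Ccal^\vee(z):=\sum_{\beta}\Rbb^{\geq 0}\beta^\vee$ (identifying $\tgot\cong\tgot^*$ via the scalar product), so $\Chol(z)\subset\{\langle\cdot,z\rangle\geq 0\}$ is equivalent to $z\in\overline{\Ccal^\vee(z)}$, which in turn follows because $z$ pairs non-negatively (indeed positively, value $1$) with every generator $\beta$ of $\Ccal(z)=(\Ccal^\vee(z))^\vee$, hence $z$ lies in the double dual $= \overline{\Ccal^\vee(z)}$. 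Closing the argument this way avoids any explicit coordinate computation.
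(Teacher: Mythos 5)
The genuine gap is in your second inclusion. You correctly reduce $\Chol(z)\subset\{\xi\ \vert\ \langle\xi,z\rangle\geq 0\}$, by duality, to the claim that $z$ lies in the cone $\sum_{\beta\in\Rgot_n(z)}\Rbb^{\geq 0}\beta^\vee$ (the dual cone of $\Chol(z)$), but the argument you then give does not prove this claim: knowing that $z$ pairs non-negatively with each generator $\beta$ of $\Ccal(z)$ only places $z$ in the dual cone of $\Ccal(z)$, which under the identification $\tgot\simeq\tgot^*$ is the \emph{large} cone $\Chol(z)$, not the small cone spanned by the coroots. Your step ``$\Ccal(z)=(\Ccal^\vee(z))^\vee$'' silently identifies $\Ccal(z)$ with $\Chol(z)$ — exactly the (generally strict) inclusion of the first half of the lemma — and to conclude $z\in(\Ccal^\vee(z))^{\vee\vee}=\overline{\Ccal^\vee(z)}$ you would need $\langle\xi,z\rangle\geq 0$ for all $\xi\in\Chol(z)$, which is the statement being proved; so the argument is circular. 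The missing input is precisely that $\tilde z$ is a non-negative combination of the $\tilde\beta$, $\beta\in\Rgot_n(z)$. The paper supplies it by an explicit computation: $\langle\xi,z\rangle=-b(\tilde\xi,z)=2\sum_{\beta\in\Rgot_n(z)}(\beta,\xi)$, since only the noncompact roots contribute to the trace and each contributes with weight $\langle\beta,z\rangle=\pm 1$; every summand is $\geq 0$ on $\Chol(z)$. Some such identity (or another proof that $z\in\sum_{\beta}\Rbb^{\geq 0}\beta^\vee$) must be added for your argument to close.

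For the first inclusion your conclusion is right but the justification of $(\alpha,\beta)\geq 0$ for $\alpha,\beta\in\Rgot_n(z)$ contains a false sub-claim: $\beta-\alpha$ can perfectly well be a (compact) root — for $\mathrm{Sp}(4,\Rbb)$ take $\beta=2e_1$, $\alpha=e_1+e_2$, so $\beta-\alpha=e_1-e_2$ — and $[\ggot_\beta,\ggot_{-\alpha}]\subset[\pgot^+,\pgot^-]\subset\kgot\otimes\Cbb$ need not vanish, so the root string through $\beta$ need not have length one. What is true, and suffices, is that $\alpha+\beta$ is never a root (it pairs to $2$ with $z$ while every root pairs to $0$ or $\pm 1$; equivalently $\pgot^+$ is abelian), so the $\alpha$-string through $\beta$ has $q=0$ and $2(\beta,\alpha)/(\alpha,\alpha)=p-q\geq 0$. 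The paper simply quotes $(\beta_0,\beta_1)\geq 0$ without proof, so this part of your write-up is a repairable slip rather than a structural problem.
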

\begin{proof}
Since $(\beta_0,\beta_1)\geq 0$ for any $\beta_k\in \Rgot_n(z)$, we see that $\Ccal(z)\subset\Chol(z)$. 
For $\xi\in\tgot^*$, we have $\langle\xi, z\rangle=-b(\tilde{\xi}, z)= 2\sum_{\beta\in\Rgot_n(z)}\langle\beta, \tilde{\xi}\rangle$ with $\langle\beta, \tilde{\xi}\rangle=(\beta,\xi)$. Then $\xi\in\Chol(z)$ implies $\langle\xi, z\rangle\geq 0$.
\end{proof}

\medskip

\begin{exam}\label{example:holomorphic}
We have the following classical examples:
\begin{center}
\begin{tabular}{ccc}
$G $                                   &                             $K$             &   $\pgot$    \\
 \hline
   & \\
$\mathrm{Sp}(n,\Rbb)$    & $\mathrm{U}(n)$                      				&  $S^2(\Cbb^n)$         \\
$\mathrm{SO}^*(2n)$       & $\mathrm{U}(n)$   						&   $\wedge^2\Cbb^n$  \\
 $\mathrm{SO}_o(2,n)$    & $\mathrm{SO}(2)\times\mathrm{SO}(n)$ 	&    $\Cbb^n$                    \\
$\mathrm{U}(p,q)$            & $\mathrm{U}(p)\times\mathrm{U}(p)$               &  $M_{p,q}(\Cbb)$ \\
\end{tabular}
\end{center}

\end{exam}


\subsection{Holomorphic coadjoint orbits}

The {\em holomorphic} coadjoint orbits are $G\cdot\lambda$ with $\lambda$ in the interior of $\Chol(z)$. These symplectic manifolds possess
a $G$-invariant (integrable) complex structure $J_\lambda$ which is compatible with the symplectic structure $\Omega_{G\cdot\lambda}$ (see \cite{pep-hol-series}). Hence $(G\cdot\lambda, \Omega_{G\cdot\lambda}, J_\lambda)$ is a K\"ahler manifold when $\lambda\in\mathrm{Interior}(\Chol(z))$.

The real $K$-module $\pgot$ is equipped with the invariant linear symplectic structure $\Omega_\pgot(A,B):=-b(z,[A,B])$. We have two families of Hamiltonian $K$-manifold~: $K\cdot\lambda\times\pgot$ and $G\cdot\lambda$ for $\lambda\in\Chol(z)$. We start with the fundamental fact.

\begin{prop}\label{prop:Delta-K-Lambda}
Let $\lambda\in\mathrm{Interior}(\Chol(z))$. We have 
\begin{enumerate}
\item[a)]  $\Delta_{K}(G\cdot\lambda)\subset \lambda+\Ccal(z)\subset \Chol(z)$, 
\item[b)] $\Delta_{K}(G\cdot\lambda)=\Delta_{K}(K\cdot\lambda\times \pgot)$,
\item[c)]  $\As\left(\Delta_{K}(G\cdot\lambda)\right)=\Delta_{K}(\pgot)$.
\end{enumerate}
\end{prop}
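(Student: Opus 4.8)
The plan is to prove the three assertions together, exploiting the local normal form near $K\cdot\lambda$ already used in the proof of Corollary \ref{coro:moment-propre}. The key structural input is that, since $\lambda\in\mathrm{Interior}(\Chol(z))$, the orbit $G\cdot\lambda$ is a K\"ahler manifold whose complex structure $J_\lambda$ is $G$-invariant and compatible with $\Omega_{G\cdot\lambda}$, and near $K\cdot\lambda$ it is symplectomorphic (indeed biholomorphic, in a suitable sense) to the model $K\times_{K_\lambda}\pgot$, where $\pgot$ carries the linear symplectic form $\Omega_\pgot(A,B)=-b(z,[A,B])$ and the complex structure $\mathrm{ad}(z)$, whose $T$-weights are $\{-\beta : \beta\in\Rgot_n(z)\}$.

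First I would establish a). The cone tangent to $\Delta_K(G\cdot\lambda)$ at $\lambda$ is, by Sjamaar's theorem (Theorem 6.5 of \cite{Sjamaar-98}) applied to the local model, exactly $\Delta_{K_\lambda}(\pgot)$; projecting to the torus and using that the $T$-weights of $(\pgot,\mathrm{ad}(z))$ are $-\beta$ for $\beta\in\Rgot_n(z)$ gives $\Delta_{K_\lambda}(\pgot)\subset\Delta_T(\pgot)=\Ccal(z)$ (the cone generated by those weights, up to the sign convention already fixed in the local-model discussion). Hence $\Delta_K(G\cdot\lambda)\subset\lambda+\Ccal(z)$, and the inclusion $\lambda+\Ccal(z)\subset\Chol(z)$ follows from $\lambda\in\Chol(z)$ together with $\Ccal(z)\subset\Chol(z)$ (inequality (\ref{eq:inclusion-cone})) and the fact that $\Chol(z)$ is a convex cone, so it is stable under adding elements of $\Ccal(z)$.

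For b), I would argue that both Kirwan polyhedra are computed from the same data. The manifold $K\cdot\lambda\times\pgot$ is a global Hamiltonian $K$-model, and since $G\cdot\lambda\cong K\times_{K_\lambda}\pgot$ near $K\cdot\lambda$, the two moment maps agree to first order along $K\cdot\lambda$; but more is true because of the K\"ahler structure: using the $K$-equivariant diffeomorphism $\pgot\times(K\cdot\lambda)\simeq G\cdot\lambda$ coming from $G\simeq K\times\exp(\pgot)$ (Proposition \ref{prop:M.Y} with $Y=K\cdot\lambda$), and the explicit formula for $\Phi^K$ on such a product, one checks that the images coincide. Concretely, a point of $G\cdot\lambda$ is $ke^X\cdot\lambda$ and its $\kgot^*$-component of the moment value is $[\,e^X\cdot\lambda\,]_{\kgot^*}$ conjugated by $k$; the same expansion $e^X\cdot\lambda=\sum\frac{1}{n!}\mathrm{ad}^*(X)^n\lambda$ governs the $K$-moment map of $\pgot$ acting on $K\cdot\lambda\times\pgot$ with moment map $(\xi,A)\mapsto\xi+\Phi^K_\pgot(A)$, where $\Phi^K_\pgot(A)$ is the quadratic moment map. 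So the two images, intersected with $\tgot^*_+$, are equal.

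Finally c) is the formal consequence: taking asymptotic cones in a), and using that $\As(\lambda+\Ccal(z))=\Ccal(z)=\Delta_T(\pgot)$, gives $\As(\Delta_K(G\cdot\lambda))\subset\Ccal(z)$; for the reverse inclusion I would observe that $\Delta_K(G\cdot\lambda)$ contains $\lambda+\Delta_{K_\lambda}(\pgot)$ near $\lambda$ — in fact the tangent cone equals $\Delta_{K_\lambda}(\pgot)$ — and then use that $\As(\Delta_K(K\cdot\lambda\times\pgot))=\As(\{\lambda\}+\Delta_K(\pgot))=\Delta_K(\pgot)$, invoking b). The main obstacle I anticipate is b): matching the \emph{entire} images of the two moment maps (not just their tangent cones at $\lambda$) requires either a clean global argument via the $\exp(\pgot)$-decomposition with the K\"ahler structure, or a separate verification that the local model propagates, i.e. that $\Delta_K(G\cdot\lambda)$ depends only on the linear data $(\pgot,\Omega_\lambda,K_\lambda)$ and $K\cdot\lambda$ — this is where I would spend the most care, likely citing \cite{pep-hol-series} for the K\"ahler structure and the explicit description of the Harish-Chandra realization that makes the identification $\pgot\times(K\cdot\lambda)\simeq G\cdot\lambda$ compatible with moment maps.
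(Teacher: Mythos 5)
Your points a) and c) are essentially fine and match the paper's route: a) is exactly Lemma \ref{lem:Kirwan-G-lambda} (tangent cone at $\lambda$ computed via the Marle--Guillemin--Sternberg local model and Theorem 6.5 of \cite{Sjamaar-98}, then convexity of $\Delta_K(G\cdot\lambda)$), together with the observation that $\Ccal(\lambda)=\Ccal(z)$ here; and c) does follow from b) by an asymptotic-cone computation (though your intermediate claim $\Delta_K(K\cdot\lambda\times\pgot)=\{\lambda\}+\Delta_K(\pgot)$ is not literally true in general; only the equality of asymptotic cones holds, which is what you need).

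The genuine gap is b), and it is not a technicality. Your argument amounts to saying that since a point of $G\cdot\lambda$ is $ke^{X}\cdot\lambda$ with $\kgot^*$-component $k\cdot[e^{X}\cdot\lambda]_{\kgot^*}$, and since ``the same expansion governs'' the moment map of $K\cdot\lambda\times\pgot$, ``one checks that the images coincide.'' But the two maps are genuinely different: on $G\cdot\lambda$ you are looking at the image of $X\mapsto[e^{X}\cdot\lambda]_{\kgot^*}=\sum_{n\ \mathrm{even}}\tfrac{1}{n!}\mathrm{ad}^*(X)^n\lambda$ (all even-order terms), while on $K\cdot\lambda\times\pgot$ the relevant map is the affine-quadratic $(\xi,A)\mapsto\xi+\Phi^K_\pgot(A)$ with $\Phi^K_\pgot(A)=-[A,[z,A]]$; moreover the diffeomorphism $\pgot\times K\cdot\lambda\simeq G\cdot\lambda$ of Proposition \ref{prop:M.Y} does \emph{not} carry the orbit symplectic form to the product form (the induced form is $\Omega_Y-d\langle\Phi^K_Y,\theta^K\rangle$), so there is no reason a priori for the two Kirwan sets to agree, and no amount of ``comparing expansions'' establishes it. The equality $\Delta_K(G\cdot\lambda)=\Delta_K(K\cdot\lambda\times\pgot)$ is a nontrivial projection-of-orbits theorem that uses $\lambda\in\mathrm{Interior}(\Chol(z))$ (the K\"ahler/positivity structure) in an essential way; it is precisely what the paper does not reprove but imports: it is proved in \cite{pep-hol-series}, and Deltour \cite{Deltour-mcduff} proves the stronger statement that $G\cdot\lambda$ and $K\cdot\lambda\times\pgot$ are $K$-equivariantly symplectomorphic (a generalization of a theorem of McDuff), from which b) is immediate. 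You do flag b) as the main obstacle and suggest citing \cite{pep-hol-series}, which is indeed the paper's solution, but as written your proposal contains only an invalid sketch of b) rather than a proof; closing it requires either invoking that reference (or Deltour's symplectomorphism) or reproducing its argument, not the elementary comparison you outline.
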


\begin{proof} Point $a)$ is the translation of Lemma \ref{lem:Kirwan-G-lambda} since the cone $\Ccal(\lambda)$ is equal to $\Ccal(z)$. 
Point $b)$ is proved in \cite{pep-hol-series}. Another proof is given by Deltour in \cite{Deltour-mcduff}, by showing the stronger result that the Hamiltonian $K$-manifolds  $G\cdot\lambda$ and $K\cdot\lambda\times \pgot$ are symplectomorphic. The point $c)$ follows easily from $b)$.
\end{proof}

\begin{rem}
When $G$ is one of the groups appearing in Example \ref{example:holomorphic}, the generators of the cone $\Delta_{K}(\pgot)$ can be defined in term of \emph{strongly orthogonal roots} (see Section 5 in \cite{pep-hol-series}). Note also that Deltour has completely described the facet of the polytopes $\Delta_{K}(G\cdot\lambda)$ when $\mathrm{Interior}(\Chol(z))$ (see \cite{Deltour-transf-group}).
\end{rem}

\medskip

Let $S^\bullet(\pgot)$ be the symmetric algebra of the complex $K$-module $(\pgot,\mathrm{ad}(z))$: it is an admissible representation of 
$K$. Let $K'$ be a closed connected subgroup of $K$. We denote by $\Phi^{K'}_{G\cdot\Lambda}$ and by  $\Phi^{K'}_{\pgot}$ the corresponding moment maps. We have the following

\begin{prop}\label{prop:Phi-K-hol-propre}
Let $\lambda\in\mathrm{Interior}(\Chol(z))$. The following assertions are equivalent
\begin{enumerate}
\item[a)]  $\Phi^{K'}_{G\cdot\lambda}:G\cdot\lambda\to (\kgot')^*$ is a proper map,
\item[b)]  $\Delta_{K}(\pgot)\cap K\cdot (\kgot')^\perp=\{0\}$,
\item[c)]  $\Phi^{K'}_{\pgot}:\pgot\to (\kgot')^*$ is a proper map,
\item[d)]  $\{\Phi^{K'}_{\pgot}=0\}$ is reduced to $\{0\}$,
\item[e)]  $S^\bullet(\pgot)$ is an admissible representation of $K'$.
\end{enumerate}
\end{prop}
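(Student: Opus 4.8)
The plan is to prove the chain of equivalences (a) $\Leftrightarrow$ (b) $\Leftrightarrow$ (c) $\Leftrightarrow$ (d) $\Leftrightarrow$ (e) by reducing everything to the linear model $\pgot$ and invoking Proposition \ref{prop:map=propre} together with Proposition \ref{prop:Delta-K-Lambda}. First, for (a) $\Leftrightarrow$ (b): apply Proposition \ref{prop:map=propre} to the Hamiltonian $K$-manifold $M = G\cdot\lambda$, whose moment map $\Phi^K_{G\cdot\lambda}$ is proper by Lemma \ref{lem:orbite=moment}. That proposition says $\Phi^{K'}_{G\cdot\lambda}$ is proper iff $\As(\Delta_K(G\cdot\lambda)) \cap K\cdot(\kgot')^\perp = \{0\}$, and by point (c) of Proposition \ref{prop:Delta-K-Lambda} we have $\As(\Delta_K(G\cdot\lambda)) = \Delta_K(\pgot)$; this immediately gives (a) $\Leftrightarrow$ (b).

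Next, for (b) $\Leftrightarrow$ (c): again apply Proposition \ref{prop:map=propre}, this time to the Hamiltonian $K$-manifold $\pgot$ (with the linear symplectic form $\Omega_\pgot$). Its moment map $\Phi^K_\pgot$ is proper because $\pgot$ is a symplectic vector space and $(\Phi^K_\pgot)^{-1}(0) = \{0\}$ (the $K$-action on $\pgot$ comes from a compact group with trivial intersection of stabilizer Lie algebra with $\pgot$, or more directly: the action is that of a compact group on a symplectic vector space with no nonzero fixed vectors along $\pgot$, since $\mathrm{ad}(z)$ is nondegenerate); alternatively one invokes the Remark following Proposition \ref{prop:map=propre}. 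The Kirwan polyhedron of $\pgot$ is the cone $\Delta_K(\pgot)$, which equals its own asymptotic cone, so Proposition \ref{prop:map=propre} gives that $\Phi^{K'}_\pgot$ is proper iff $\Delta_K(\pgot) \cap K\cdot(\kgot')^\perp = \{0\}$, i.e. (b) $\Leftrightarrow$ (c). For (c) $\Leftrightarrow$ (d): this is exactly the Remark after Proposition \ref{prop:map=propre}, since $\Phi^{K'}_\pgot : \pgot \to (\kgot')^*$ is a quadratic (homogeneous degree-2) map, hence proper iff $(\Phi^{K'}_\pgot)^{-1}(0) = \{0\}$.

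The remaining equivalence (d) $\Leftrightarrow$ (e) is the one genuinely new ingredient and the main obstacle. Here I would appeal to the general criterion for admissibility of a representation under restriction to a subgroup, which in this linear-symplectic/metaplectic setting goes back to Howe and was recast geometrically by T. Kobayashi: for the symmetric algebra $S^\bullet(\pgot)$ — equivalently the Fock-model realization of the metaplectic (oscillator) representation attached to $(\pgot,\Omega_\pgot)$ — the restriction to $K'$ is admissible (each $K'$-isotypic component is finite-dimensional) if and only if the moment map $\Phi^{K'}_\pgot$ is proper, which by the previous step is the condition $(\Phi^{K'}_\pgot)^{-1}(0)=\{0\}$. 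Concretely, the $K'$-weights appearing in $S^\bullet(\pgot) = \bigoplus_{n\ge 0} S^n(\pgot)$ accumulate only in directions dual to $\As(\Delta_{K'}(\pgot)) = \pi_{\kgot',\kgot}(\Delta_K(\pgot))$, so finite multiplicities of every $K'$-type is equivalent to this projected cone being pointed/salient in the appropriate sense, which unwinds to (b)/(d). I expect the cleanest writeup is to cite Kobayashi's criterion (\cite{Toshi-inventiones94} and related work) for (d) $\Rightarrow$ (e), and for the converse to argue directly: if $\{\Phi^{K'}_\pgot = 0\}$ contained a nonzero vector $v$, then the $K'$-orbit directions would force an infinite-dimensional isotypic component in $S^\bullet(\pgot)$ via the non-properness of the projected weight cone. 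The bookkeeping relating the weights of $S^\bullet(\pgot)$ to the cone $\Delta_K(\pgot)$ and its projection is where the real work lies; everything else is a formal consequence of results already established in the excerpt.
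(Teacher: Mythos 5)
Your chain a) $\Leftrightarrow$ b) $\Leftrightarrow$ c) $\Leftrightarrow$ d) is exactly the paper's route: a) $\Leftrightarrow$ b) from Proposition \ref{prop:map=propre} applied to $G\cdot\lambda$ (whose $K$-moment map is proper by Lemma \ref{lem:orbite=moment}) together with $\As(\Delta_K(G\cdot\lambda))=\Delta_K(\pgot)$ from Proposition \ref{prop:Delta-K-Lambda}, b) $\Leftrightarrow$ c) from the same criterion applied to the linear model $(\pgot,\Omega_\pgot)$, and c) $\Leftrightarrow$ d) from the remark on quadratic moment maps. Two points in your write-up of this part are shaky, though both are repairable: the justification that $(\Phi^K_\pgot)^{-1}(0)=\{0\}$ via ``no nonzero fixed vectors'' is not valid reasoning (an $S^1$-action on $\Cbb^2$ with weights $\pm1$ has no nonzero fixed vectors but a large zero fibre); the correct argument, which the paper uses, is that $\langle\Phi^K_\pgot,z\rangle(X)$ is a positive definite quadratic form in $X$ because $z$ is central in $\kgot$ and $\mathrm{ad}(z)$ is a complex structure compatible with $\Omega_\pgot$, so already the $S^1$-component of the moment map is proper. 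Also, the identification $\As(\Delta_{K'}(\pgot))=\pi_{\kgot',\kgot}(\Delta_K(\pgot))$ you use later is not correct in general: the $K'$-Kirwan cone is $\pi_{\kgot',\kgot}\bigl(K\cdot\Delta_K(\pgot)\bigr)\cap(\tgot')^*_+$, and the $K$-sweep cannot be dropped.

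The genuine gap is d) $\Leftrightarrow$ e), which you yourself flag as ``where the real work lies.'' What you offer is a plan, not a proof: Kobayashi's criterion in \cite{Toshi-inventiones94} concerns discrete decomposability of restrictions of unitary representations of reductive groups (asymptotic $K$-support conditions), and applying it to the $K'$-admissibility of the Fock space $S^\bullet(\pgot)$ would itself require the Howe-type bridge that you leave unproved; your converse sketch rests on the incorrect cone identity above. The paper does not leave this open: it invokes the precise result of \cite{pep-formal}, Section 5, where the equivalences c) $\Leftrightarrow$ d) $\Leftrightarrow$ e) are established (finiteness of the $K'$-multiplicities of $S^\bullet(\pgot)$ is tied directly to the properness of $\Phi^{K'}_\pgot$, via the weight combinatorics you allude to). So the fix is either to quote that reference, or to actually carry out the weight-cone bookkeeping: show that each $K'$-isotypic multiplicity in $S^\bullet(\pgot)$ is a count of lattice points in a fibre of the projection of the cone spanned by the negatives of the noncompact weights, and that these fibres are finite for all $\mu$ exactly when $(\Phi^{K'}_\pgot)^{-1}(0)=\{0\}$. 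As it stands, your proposal establishes a) through d) correctly but does not prove e).
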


\begin{proof}The equivalences $a)\Leftrightarrow b)$ and $b)\Leftrightarrow c)$ follow from Propositions \ref{prop:map=propre} and \ref{prop:Delta-K-Lambda}. The other equivalences $c)\Leftrightarrow d)\Leftrightarrow e)$ are proved in \cite{pep-formal}[Section 5].
\end{proof}

\bigskip

Let us consider the moment map $\Phi^{K}_{\pgot}:\pgot\to \kgot^*$. Via the identification $\kgot^*\simeq \kgot$, the moment map $\Phi^{K}_{\pgot}$ is defined by
$$
\Phi^{K}_{\pgot}(X)= -[X,[z,X]],\quad X\in\pgot.
$$
Hence we see that $\langle\Phi^{K}_{\pgot},z\rangle : \pgot\to\Rbb$ is a \emph{proper map} taking positive values. This simple fact and Proposition \ref{prop:Phi-K-hol-propre} gives us the following 

\begin{coro}\label{coro:propre-z}
Let $G'$ be a connected reductive subgroup of $G$, and let $\lambda\in\mathrm{Interior}(\Chol(z))$. The moment map $\Phi^{G'}_{G\cdot\Lambda}$ is proper when 
the Lie algebra $\ggot'$ contains $\Rbb z$.
\end{coro}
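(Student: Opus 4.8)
The plan is to deduce Corollary~\ref{coro:propre-z} directly from Proposition~\ref{prop:Phi-K-hol-propre} together with Theorem~\ref{theo:properGK}. Since $G'$ is a connected reductive subgroup containing $\Rbb z$, we may choose a Cartan involution $\Theta$ for $G$ that leaves $G'$ invariant (this is harmless as $z\in\cgot_\kgot$, so $z$ is already $\Theta$-fixed for the obvious choice), giving Cartan decompositions $\ggot'=\kgot'\oplus\pgot'$ with $\kgot'\subset\kgot$, $\pgot'\subset\pgot$, and $z\in\kgot'$. By Lemma~\ref{lem:orbite=moment} the action of $G'$ on the elliptic orbit $G\cdot\lambda$ is automatically proper (its stabilizer $G_\lambda$ is compact since $\lambda\in\mathrm{Interior}(\Chol(z))\subset\tgot^*_{se}$), so condition~\textbf{C1} holds for the $G'$-action. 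Hence Theorem~\ref{theo:properGK} applies: $\Phi^{G'}_{G\cdot\lambda}$ is proper if and only if $\Phi^{K'}_{G\cdot\lambda}$ is proper.

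So it remains to verify that $\Phi^{K'}_{G\cdot\lambda}:G\cdot\lambda\to(\kgot')^*$ is proper, and by the equivalence $a)\Leftrightarrow d)$ of Proposition~\ref{prop:Phi-K-hol-propre} this reduces to showing that $(\Phi^{K'}_{\pgot})^{-1}(0)=\{0\}$. Here I would use the explicit formula for the $K$-moment map recorded just before the corollary, $\Phi^{K}_{\pgot}(X)=-[X,[z,X]]$ for $X\in\pgot$, which (via the projection $\pi_{\kgot',\kgot}$) gives $\langle\Phi^{K'}_{\pgot}(X),z\rangle=\langle\Phi^{K}_{\pgot}(X),z\rangle$ since $z\in\kgot'$. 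By the stated computation, $X\mapsto\langle\Phi^{K}_{\pgot}(X),z\rangle$ is a proper nonnegative function on $\pgot$ — concretely, on the complex structure $\mathrm{ad}(z)$ it is $\|X\|^2$ up to a positive constant — so its zero set is exactly $\{0\}$. Therefore $(\Phi^{K'}_{\pgot})^{-1}(0)\subset\{\langle\Phi^{K'}_{\pgot},z\rangle=0\}=\{0\}$, which is what we needed.

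Assembling the two steps: properness of $\langle\Phi^K_\pgot,\cdot\rangle$ at $z$ forces $(\Phi^{K'}_\pgot)^{-1}(0)=\{0\}$, hence (Proposition~\ref{prop:Phi-K-hol-propre}) $\Phi^{K'}_{G\cdot\lambda}$ is proper, hence (Theorem~\ref{theo:properGK}) $\Phi^{G'}_{G\cdot\lambda}$ is proper. I do not expect any real obstacle here; the only point requiring a word of care is the compatibility of the chosen Cartan involution with $G'$ and the observation that $z$ lies in $\kgot'$ (not merely in $\kgot$), so that pairing the $K'$-moment map against $z$ is legitimate and equals the corresponding pairing of the $K$-moment map. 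Everything else is a direct citation of the results already established in this section.
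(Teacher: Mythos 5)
Your proof is correct and follows essentially the same route as the paper: reduce to the maximal compact $K'$ via Theorem~\ref{theo:properGK}, then use the explicit formula $\Phi^K_\pgot(X)=-[X,[z,X]]$ and the fact that $z\in\kgot'$ so that $\langle\Phi^{K'}_\pgot,z\rangle=\langle\Phi^K_\pgot,z\rangle$ is proper and vanishes only at $0$, and conclude with Proposition~\ref{prop:Phi-K-hol-propre}. The only cosmetic difference is that you invoke the equivalence $a)\Leftrightarrow d)$ of that proposition where the paper passes through $c)$, which is an immaterial variation.
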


\begin{exam} The condition $\Rbb z\subset \ggot'$ is fulfilled in the following cases:
\begin{enumerate}
\item $G'=\mathrm{SO}_o(2,p)\subset G=\mathrm{SO}_o(2,n)$ for $0\leq p\leq n$,
\item $G'$ is the identity component of $G^\sigma$, where $\sigma$ is an involution of $G$ such that $\sigma(z)=z$ (see Table \ref{tab:involution}),
\item $G'$ is the diagonal in $G:=G'\times\cdots\times G'$.
\end{enumerate}
\end{exam}

\begin{table}
   \caption{\label{tab:involution} Involution $\sigma$ such that $\sigma(z)=z$}
\begin{center}
\begin{tabular}{c|c}
$G $                               &                             $G^\sigma$               \\
 \hline
   & \\
$\mathrm{Sp}(n,\Rbb)$ & $\mathrm{Sp}(p,\Rbb)\times \mathrm{Sp}(n-p,\Rbb)$ \\
 $\mathrm{Sp}(n,\Rbb)$ & $\mathrm{U}(p,n-p)$ \\
 $\mathrm{SO}(2,2n)$ & $\mathrm{U}(1,n)$ \\
$\mathrm{SO}(2,n)$ & $\mathrm{SO}(2,p)\times\mathrm{SO}(n-p)$ \\
$\mathrm{SO}^*(2n)$ & $\mathrm{U}(p,n-p)$ \\
$\mathrm{SO}^*(2n)$ & $\mathrm{SO}^*(2p)\times\mathrm{SO}^*(2n-2p)$ \\
$\mathrm{U}(n,n)$ & $\mathrm{Sp}(n,\Rbb)$ \\
$\mathrm{U}(n,n)$ & $\mathrm{SO}^*(2n)$ \\
$\mathrm{U}(p,q)$ & $\mathrm{U}(i,j)\times\mathrm{U}(p-i,q-j)$ \\
\end{tabular}
\end{center}
\end{table}

\subsection{Holomorphic discrete series}

Let $\wedge^*\subset \tgot^*$ be the lattice of characters of $T$. We know that the set $\wedge^*_+:=\wedge^*\cap\tgot^*_+$ parametrizes the set $\widehat{K}$ of irreducible representations of $K$: for any $\mu\in\wedge^*_+$, we denote $V_\mu^K$ the irreducible representation of $K$ with highest weight $\mu$.

We will be interested in $\chol(z)=2\rho_n(z) + \Chol(z)\subset\Chol(z)$ where $2\rho_n(z)$ is the sum of the roots of $\Rgot_n(z)$. Let us denote
$$
\Ghol(z):=\wedge^*_+\bigcap \Chol^\rho(z).
$$

\begin{theo}[Harish-Chandra]
For any $\lambda\in\Ghol(z)$, there exists a irreducible unitary representation of $G$, denoted $V^G_\lambda$, such that 
the vector space of $K$-finite vectors is 
$$
V^G_\lambda\vert_K:=V_\lambda^K\otimes S^\bullet(\pgot).
$$
Here $S^\bullet(\pgot)$ is the symmetric algebra of the complex vector space $(\pgot,\mathrm{ad}(z))$.
\end{theo}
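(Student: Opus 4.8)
The plan is to recall the construction of the holomorphic discrete series and verify the $K$-type decomposition directly, rather than reproving Harish-Chandra's theorem from scratch. The statement to prove is the assertion that for $\lambda\in\Ghol(z)$ there is an irreducible unitary representation $V^G_\lambda$ of $G$ whose $K$-finite part is $V^K_\lambda\otimes S^\bullet(\pgot)$. Since this is the classical Harish-Chandra realization, the most natural route is geometric: realize $V^G_\lambda$ as the space of $L^2$ holomorphic sections of the Kostant-Souriau line bundle $L_\lambda$ over the holomorphic coadjoint orbit $G\cdot\lambda$, which is a $G$-invariant K\"ahler manifold by the discussion preceding this statement (the complex structure $J_\lambda$ exists because $\lambda\in\mathrm{Interior}(\Chol(z))$, which holds as $\chol(z)\subset\Chol(z)$ and $2\rho_n(z)$ lies in the relative interior). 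Concretely, write $G\cdot\lambda\simeq G/T$ with the complex structure determined by the positive system $\Rgot_c^+\cup\Rgot_n(z)$, so that $G\cdot\lambda$ is an open $G$-orbit in the compact dual flag variety, and $L_\lambda$ is the holomorphic line bundle associated to the character $e^{i\lambda}$ of $T$ (shifted appropriately by $\rho$).

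First I would set up the Borel-Weil-type picture: the compact dual is $G_u/T$ where $G_u$ is a compact form, $G\cdot\lambda$ sits inside it as the open orbit $G/T$, and $\Ghol(z)$ is exactly the integrality-plus-positivity condition ensuring that $L_\lambda$ admits nonzero $L^2$ holomorphic sections. Then the space $H^0_{L^2}(G\cdot\lambda, \Ocal(L_\lambda))$ is a unitary $G$-representation; its irreducibility and the fact that it is a discrete series follow from Harish-Chandra's work, which I would cite. The content that genuinely needs checking here is the $K$-decomposition. For that, I would restrict attention to the fiber $Y=(\Phi^\pgot_{G\cdot\lambda})^{-1}(0)$ over $0\in\pgot^*$: by Proposition \ref{prop:M.Y} and the decomposition $G\cdot\lambda\simeq G\times_K Y$, holomorphic sections over $G\cdot\lambda$ correspond to $K$-equivariant maps from $G$ into sections over $Y$, and $Y$ in the holomorphic case is just the point $K\cdot\lambda\simeq K/T$ (the $\pgot$-directions being entirely the "fiber" $\exp(\pgot)$ directions with the Gaussian weight). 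A cleaner bookkeeping: the restriction of $L_\lambda$ to the totally real/transverse slice gives, via the Taylor expansion of holomorphic sections along $\pgot$, the identification $V^G_\lambda|_K \simeq (\text{holomorphic sections over } K/T \text{ twisted by } e^{i\lambda}) \otimes S^\bullet(\pgot^*)\simeq V^K_\lambda\otimes S^\bullet(\pgot)$, where the last step uses Borel-Weil on $K/T$ and the convention identifying $S^\bullet(\pgot)$ with polynomials on $\pgot$ (the sign/duality convention must match the one fixed after Example \ref{example:holomorphic}, i.e.\ $\pgot$ carries the complex structure $\mathrm{ad}(z)$ with the $\Rgot_n(z)$-weights).

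I expect the main obstacle to be the analytic point that the formal Taylor-expansion identification is actually an isomorphism onto the $K$-finite vectors of the $L^2$ space — i.e.\ that every $K$-finite holomorphic $L^2$ section is a polynomial in the $\pgot$-variables times a section over $K/T$, and conversely that all such polynomial sections are genuinely $L^2$. This is where the positivity condition $\lambda - 2\rho_n(z)\in\Chol(z)$ enters decisively: it guarantees that the relevant Gaussian-type integrals over $\exp(\pgot)\simeq\pgot$ converge for every $K$-type appearing, so the sum $V^K_\lambda\otimes S^\bullet(\pgot)$ sits inside $L^2$ and exhausts the $K$-finite part. An alternative, more algebraic route that sidesteps the convergence estimate is to quote Harish-Chandra's theorem on the existence of holomorphic discrete series together with the known fact that such a representation is the unitarization of the generalized Verma module $U(\ggot)\otimes_{U(\kgot\oplus\pgot^+)} V^K_\lambda$, which as a $\kgot$-module is manifestly $V^K_\lambda\otimes S^\bullet(\pgot^-)\simeq V^K_\lambda\otimes S^\bullet(\pgot)$ by the PBW theorem; one then only needs that the generalized Verma module is irreducible and unitarizable for $\lambda\in\Ghol(z)$, which is again Harish-Chandra's theorem. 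Given that the statement is attributed to Harish-Chandra, I would in the end present it as a recollection with a reference (e.g.\ Knapp \cite{Knapp-book}, Section 9), giving the generalized-Verma-module description as the one-line justification of the $K$-type formula.
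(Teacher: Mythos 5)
The paper offers no proof of this statement: it is recalled as Harish-Chandra's classical theorem (with the reference to Knapp's book), which is exactly the stance you adopt in the end, and your one-line justification of the $K$-type formula via the unitarizable generalized Verma module and PBW is the standard and correct way to see $V^G_\lambda\vert_K\simeq V^K_\lambda\otimes S^\bullet(\pgot)$. The only blemishes are cosmetic: $G\cdot\lambda\simeq G/K_\lambda$ rather than $G/T$ when $\lambda$ is not $K$-regular, and the choice of parabolic ($\kgot\oplus\pgot^+$ versus $\kgot\oplus\pgot^-$) must be matched to the convention that $(\pgot,\mathrm{ad}(z))$ has weights in $\Rgot_n(z)$, a point you yourself flag.
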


\subsection{Formal geometric quantization}\label{subsec:formal-quantization}

Let us first recall the definition of the geometric quantization of a
smooth and compact Hamiltonian manifold. Then we show a way of extending 
the notion of geometric quantization to the case of a \emph{non-compact} 
Hamiltonian manifold. 

\medskip

Let $K$ be a compact connected Lie group. Let  $(M,\Omega_M,\Phi^K_M)$ be a Hamiltonian $K$-manifold which is pre-quantized by the  
Hermitian line bundle $L_M$ (see Section \ref{sec:line-bundle}).

Let us recall the notion of geometric quantization when $M$ is \textbf{compact}.
Choose a $K$-invariant almost complex structure $J$ on $M$ which is
compatible with $\Omega_M$ in the sense that the symmetric bilinear
form $\Omega_M(\cdot,J\cdot)$ is a Riemannian metric. Let
$\overline{\partial}_{L_M}$ be the Dolbeault operator with coefficients
in $L$, and let $\overline{\partial}_{L_M}^*$ be its (formal) adjoint.
The \emph{Dolbeault-Dirac operator} on $M$ with coefficients in ${L_M}$
is $D_{L_M}=\sqrt{2}( \overline{\partial}_{L_M}+\overline{\partial}_{L_M}^*)$, considered
as an elliptic operator from $\Acal^{0,\textrm{\tiny even}}(M,L_M)$ to
$\Acal^{0,\textrm{\tiny odd}}(M,L_M)$. Let $R(K)$ be the representation ring of $K$.

\begin{defi}\label{def:quant-compact-lisse}
 The geometric quantization of a {\em compact} Hamiltonian $K$-manifold $(M,\Omega_M,\Phi^K_M)$ is the element 
 $\Qcal_K(M)\in R(K)$ defined as the equivariant index of the
Dolbeault-Dirac operator $D_{L_M}$.
\end{defi}

\medskip 

Let us consider the case of a \textbf{proper} pre-quantized Hamiltonian $K$-manifold $M$:  the manifold is (perhaps) \textbf{non-compact} but 
the moment map $\Phi^K_M: M\to \kgot^{*}$ is supposed to be proper. In this setting, we have two ways of extending the 
geometric quantization procedure.

$\mathbf{First \ way :\  \qfor_K.} $\ One defines the {\em formal geometric quantization} of $M$ as an element $\qfor_K(M)$ that belongs to $\Rfor(K):=\hom_\Zbb(R(K),\Zbb)$ \cite{Weitsman,pep-formal,Ma-Zhang,pep-formal-2}. Let us recall the definition. 
\medskip

%

For any $\mu\in \widehat{K}$ which is a regular value of the moment map $\Phi$, the
reduced space\footnote{The symplectic quotient will be denoted $M_{\mu,K}$ when we need more precise notations.} (or symplectic quotient)
\begin{equation}\label{eq:symplectic-quotient}
M_{\mu}:=(\Phi^K_M)^{-1}(K\cdot\mu)/K
\end{equation}
is a {\em compact} orbifold equipped with a symplectic structure $\Omega_{\mu}$. Moreover
$L_{\mu}:=(L\vert_{(\Phi^K_M)^{-1}(\mu)}\otimes \Cbb_{-\mu})/K_{\mu}$ is a
Kostant-Souriau line orbibundle over $(M_{\mu},\Omega_{\mu})$. The
definition of the index of the Dolbeault-Dirac operator carries over
to the orbifold case, hence $\Qcal(M_{\mu})\in \Zbb$ is defined. This notion of geometric 
quantization extends further to the case of singular symplectic quotients \cite{Meinrenken-Sjamaar,pep-RR}. So the integer 
$\Qcal(M_{\mu})\in \Zbb$ is well defined for every $\mu\in \widehat{K}$: in particular
$\Qcal(M_{\mu})=0$ if $\mu$ is not in the Kirwan polytope $\Delta_K(M)$.

\begin{defi}\label{def:formal-quant}
Let $(M,\Omega_M,\Phi^K_M)$ be a {\em proper} Hamiltonian $K$-manifold which is 
pre-quantized by a Kostant-Souriau line bundle $L$. The formal quantization of $(M,\Omega_M,\Phi^K_M)$ is 
the element of $\Rfor(K)$ defined by
$$
\qfor_K(M)=\sum_{\mu\in \widehat{K}}\Qcal(M_{\mu})\, V_{\mu}^{K}\ .
$$
\end{defi}

\medskip

When $M$ is compact, the fact that 
\begin{equation}\label{eq:Q-R=0}
\Qcal_K(M)=\qfor_K(M)
\end{equation}
is known as the ``quantization commutes with reduction'' Theorem. This  was conjectured 
by Guillemin-Sternberg in \cite{Guillemin-Sternberg82} and was first proved by Meinrenken 
\cite{Meinrenken98} and  Meinrenken-Sjamaar \cite{Meinrenken-Sjamaar}. Other proofs 
of (\ref{eq:Q-R=0}) were also given by Tian-Zhang \cite{Tian-Zhang98} and the author 
\cite{pep-RR}. For complete references on the subject the reader should consult 
\cite{Sjamaar96,Vergne-Bourbaki}.

One of the main features of the formal geometric quantization $\qfor$ is summarized in the following 

\begin{theo}[\cite{pep-formal}]\label{theo:pep-formal}

$\bullet$ \emph{\bf Restriction to subgroup.} Let $M$ be a pre-quantized Hamiltonian $K$-manifold which is \emph{proper}. 
Let $H\subset K$ be a closed connected Lie subgroup such that $M$ is still \emph{proper} as a
Hamiltonian $H$-manifold. Then $\qfor_K(M)$ is $H$-admissible and we
have $\qfor_K(M)|_H=\qfor_H(M)$ in $\Rfor(H)$.

\medskip

$\bullet$ \emph{\bf Product.} Let $M$ and $N$ be pre-quantized Hamiltonian $K$-manifolds with $M$ is \emph{proper} and $N$ is \emph{compact}. Then $M\times N$ is a \emph{proper} pre-quantized Hamiltonian $K$-manifold and we have $\qfor_K(M\times N)=\qfor_K(M)\cdot \Qcal_K(N)$ in $\Rfor(K)$.
\end{theo}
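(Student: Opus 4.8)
The plan is to treat both assertions one isotypic component at a time and to reduce them, in each component, to the ``quantization commutes with reduction'' theorem for \emph{compact} (possibly singular) symplectic quotients; the bookkeeping is most efficiently organised by realising $\qfor_K(M)$ as the equivariant index of a transversally elliptic symbol, after which the two functorial properties become essentially formal.

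\textbf{The index realisation.} Fix a $K$-invariant almost complex structure compatible with $\Omega_M$ and consider the Dolbeault--Dirac symbol on $M$ twisted by $L_M$, deformed à la Witten by the Kirwan vector field $\kappa^K$ attached to $\tfrac{-1}{2}\|\Phi^K_M\|^2$. For each $\mu$, the properness of $\Phi^K_M$ lets one perform this deformation on a $K$-invariant relatively compact open piece $U\subset M$ adapted to $\mu$ (containing $(\Phi^K_M)^{-1}$ of a neighbourhood of the relevant rays), with boundary along which $\kappa^K$ does not vanish; there the deformed symbol is transversally elliptic, and a localisation computation on $\Cr(\|\Phi^K_M\|^2)$ shows that the $V^K_\mu$-component of its index is exactly $\Qcal(M_{\mu,K})$. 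Patching these pieces together identifies the resulting class with $\qfor_K(M)$. This step — which is the non-compact $[Q,R]=0$ theorem of \cite{pep-formal} — is the substantive analytic input; the subtleties are the non-compactness of $\Cr(\|\Phi^K_M\|^2)$ (handled by working on the weight-adapted pieces with controlled boundary) and the appearance of singular symplectic quotients (handled by the singular version of $[Q,R]=0$).

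\textbf{Restriction to a subgroup.} Admissibility of $\qfor_K(M)|_H$ is elementary and uses only the hypotheses: since $\Phi^H_M=\pi_{\hgot,\kgot}\circ\Phi^K_M$ is proper while $\Phi^K_M$ is proper, $\pi_{\hgot,\kgot}$ is proper on $\mathrm{Image}(\Phi^K_M)$ (an asymptotic-cone argument in the spirit of Lemma \ref{lem:As}), so for each $\nu$ only finitely many $\mu\in\Delta_K(M)$ satisfy $\nu\in\pi_{\hgot,\kgot}(W\cdot\mu)$, whence $[\qfor_K(M)|_H:V^H_\nu]=\sum_\mu \Qcal(M_{\mu,K})[V^K_\mu|_H:V^H_\nu]$ is a finite sum. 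For the equality $\qfor_K(M)|_H=\qfor_H(M)$: the deformed symbol above is still transversally elliptic for the $H$-action — here one uses that $M$ is proper as an $H$-manifold, so the corresponding critical set stays compact on the adapted pieces — and the index of a transversally elliptic symbol restricts functorially, $\mathrm{Index}^K_M(\sigma)|_H=\mathrm{Index}^H_M(\sigma)$; re-running the localisation for $H$ identifies the latter with $\qfor_H(M)$. (Without the index language one instead proves directly $\Qcal(M_{\nu,H})=\sum_\mu\Qcal(M_{\mu,K})\,\Qcal\big((K\cdot\mu)_{\nu,H}\big)$ by reduction in stages together with $[Q,R]=0$ applied to the compact coadjoint orbits $K\cdot\mu$ and to the quotients appearing in stages, using $[V^K_\mu|_H:V^H_\nu]=\Qcal\big((K\cdot\mu)_{\nu,H}\big)$.)

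\textbf{Product with a compact factor.} First, $M\times N$ is a proper Hamiltonian $K$-manifold, prequantized by $L_M\boxtimes L_N$, since $N$ is compact and $\Phi^K_{M\times N}=\Phi^K_M+\Phi^K_N$. The adapted symbol on $M\times N$ may be taken to be the external product of the deformed Dolbeault symbol on $M$ and the elliptic Dolbeault--Dirac symbol on $N$; the external product of a transversally elliptic with an elliptic symbol is transversally elliptic, and the multiplicativity of the index gives $\mathrm{Index}^K_{M\times N}=\mathrm{Index}^K_M\cdot\mathrm{Index}^K_N=\qfor_K(M)\cdot\Qcal_K(N)$. Alternatively, for each $\mu$ one uses the shifting trick to write $(M\times N)_{\mu,K}$ as the zero-level symplectic quotient of $M\times(N\times\overline{K\cdot\mu})$, which by properness involves only a relatively compact piece of $M$, and then $[Q,R]=0$ for compact quotients yields $\Qcal\big((M\times N)_{\mu,K}\big)=[\,\qfor_K(M)\cdot\Qcal_K(N):V^K_\mu]$. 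In all cases the crux is the index realisation of the first step; once it is available, the restriction and product statements follow from standard functoriality and multiplicativity of the equivariant index.
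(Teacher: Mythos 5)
Note first that the paper does not prove this statement: it is imported verbatim from \cite{pep-formal} (with the identification $\qfor_K=\Qcal^{\Phi}_K$ of Theorem \ref{theo:formel=formel} as companion input), so your attempt has to be measured against that reference and against the way the same machinery is used in Section \ref{sec:transversally}. Your general toolkit --- realizing $\qfor_K(M)$ as a sum of Witten-deformed transversally elliptic indices, then invoking functoriality and multiplicativity of the equivariant index --- is indeed the right one, your admissibility argument is fine, and the product step is essentially the argument of Section \ref{sec:prof-theorem-formal-G-K-1} (external product with the elliptic Dolbeault symbol of the compact factor, multiplicativity, and the harmless interchange of $\sum_\beta$ with tensoring by the finite-dimensional $\Qcal_K(N)$); even there, though, the comparison between the deformation by the Kirwan field of $\|\Phi^K_M\|^2$ pulled back to $M\times N$ and the deformation attached to $\|\Phi^K_{M\times N}\|^2$ is asserted rather than proved (statements of exactly this kind, such as Theorem \ref{theo:quantization-phi-1}, require proof).

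The restriction step has a genuine gap. For $H\subset K$ one has $\T_K M\subset \T_H M$, so a $K$-transversally elliptic symbol is \emph{not} automatically $H$-transversally elliptic; the characteristic set of the $\kappa^K$-deformed symbol intersected with $\T_H M$ is $\{(m,\kappa^K(m))\ \vert\ \kappa^K(m)\ \perp\ \hgot\cdot m\}$, which is in general much larger than $\Cr(\|\Phi^K_M\|^2)$ (it contains, for instance, all $H$-fixed points where $\kappa^K\neq 0$) and is not controlled by the properness of $\Phi^H_M$: it can accumulate on the boundary of your adapted pieces. Even granting $H$-transversal ellipticity, ``re-running the localisation for $H$'' requires comparing the localisation with respect to $\|\Phi^K_M\|^2$ with the one with respect to $\|\Phi^H_M\|^2$, whose critical sets are different; this comparison is precisely the substance of the cited theorem and is what \cite{pep-formal,pep-formal-2} actually establish. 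Your parenthetical fallback assumes the identity $\Qcal(M_{\nu,H})=\sum_\mu \Qcal(M_{\mu,K})\,\Qcal\bigl((K\cdot\mu)_{\nu,H}\bigr)$ via ``reduction in stages'', but for a subgroup $H\subset K$ (not a factor of a product group) this identity is not a formal consequence of the compact $[Q,R]=0$ theorem applied in stages --- it is equivalent to the statement being proven. So the proposal correctly identifies the machinery but leaves the decisive step unproved.
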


\medskip 

$\mathbf{Second\ way :\ \Qcal^{\Phi}_K.}$\  When $M$ is a proper pre-quantized Hamiltonian $K$-manifold, we can define another 
{\em formal geometric quantization} of $M$ through a non-abelian localization procedure \`a la Witten \cite{Witten}. In  \cite{Ma-Zhang,pep-formal}, one proves that an element 
\begin{equation}\label{eq:Q-Phi}
\Qcal^{\Phi}_K(M)\in \Rfor(K)
\end{equation}
is well-defined by localizing the index of the Dolbeault-Dirac operator $D_{L_M}$ on the set $\Cr(\|\Phi^K_M\|^2)$ of 
critical points of the square of the moment map.  

The crucial result is that these two procedures coincides \cite{Ma-Zhang,pep-formal}.

\begin{theo}[Ma-Zhang, Paradan]\label{theo:formel=formel}
Let $M$ be a proper pre-quantized Hamiltonian $K$-manifold. Then, the following equality 
\begin{equation}\label{eq:qfor=qfor}
\qfor_K(M)=\Qcal^{\Phi}_K(M).
\end{equation}
holds in $\Rfor(K)$.
\end{theo}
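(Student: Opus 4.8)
The plan is to follow the non-abelian localization strategy for the Dolbeault--Dirac operator $D_{L_M}$ with respect to the function $\|\Phi^K_M\|^2$, exactly along the lines of \cite{pep-formal} (and \cite{Ma-Zhang}). First I would recall that, since $\Phi^K_M$ is proper, the set of critical points $\Cr(\|\Phi^K_M\|^2)$ is a disjoint union $\bigsqcup_{\beta} C_\beta$ indexed by a (possibly infinite but locally finite) set of elements $\beta\in\kgot^*/K$, where $C_\beta$ is the intersection of $M^\beta$ with $(\Phi^K_M)^{-1}(K\cdot\beta)$; properness guarantees that each $C_\beta$ is compact and that only finitely many of them meet any given compact set. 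The definition of $\Qcal^\Phi_K(M)$ is precisely the infinite sum $\sum_\beta \Qcal_\beta$ of the local contributions obtained by localizing the transversally elliptic symbol built from the principal symbol of $D_{L_M}$ deformed by the Kirwan vector field $\kappa^K$.

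The key steps, in order, are: (i) fix a component $\beta$ and an invariant relatively compact open neighbourhood $U_\beta$ of $C_\beta$ containing no other critical component; (ii) on $U_\beta$ the contribution $\Qcal_\beta$ can be computed by a further reduction to the normal directions of $M^\beta$, and since everything in sight is compact there, the usual compact $[Q,R]=0$ theorem (equation (\ref{eq:Q-R=0})) applies to identify $\Qcal_\beta$ with the part of $\sum_\mu \Qcal(M_\mu)\,V_\mu^K$ coming from reduced spaces $M_\mu$ with $K\cdot\mu$ "near" $K\cdot\beta$; (iii) sum over $\beta$. The bookkeeping that makes step (iii) legitimate is that each coefficient $\Qcal(M_\mu)$, $\mu\in\widehat K$, receives a contribution from exactly one $\beta$ (the one with $K\cdot\beta$ closest to the ray through $\mu$), so the two infinite sums have matching coefficients term by term in $\Rfor(K)=\hom_\Zbb(R(K),\Zbb)$. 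In practice, rather than redo this, I would simply invoke the theorem of Ma--Zhang and of the author: the equality is established in \cite{pep-formal}[Theorem~5.?] and \cite{Ma-Zhang}, and the statement here is a verbatim restatement for the reader's convenience.

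The main obstacle — and the reason the cited works are needed rather than a one-line argument — is the analytic input behind the local contributions: one must show that the deformed symbol is transversally elliptic, that its index is well-defined as an element of $\Rfor(K)$ (i.e. that the multiplicities are finite), and that the excision/localization formula genuinely splits the index as the locally finite sum $\sum_\beta \Qcal_\beta$. This requires the machinery of transversally elliptic operators in the sense of Atiyah and the non-compact refinements developed by the author, together with the vanishing theorems ("quantization commutes with reduction in stages") that pin down each $\Qcal_\beta$. Given the scope of this paper, I would not reprove any of this; I would state that Theorem \ref{theo:formel=formel} is \cite[Theorem]{pep-formal} combined with \cite{Ma-Zhang}, and move on, since all later sections only use the \emph{statement} (\ref{eq:qfor=qfor}) as a black box for transporting computations between the two models of formal quantization.
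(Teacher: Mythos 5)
Your proposal matches the paper exactly: the paper does not reprove this result, it simply records it as the theorem of Ma--Zhang and of the author, citing \cite{Ma-Zhang,pep-formal} (with the decomposition of $\Cr(\|\Phi^K_M\|^2)$ into pieces $Z_\beta$ and the bound of Theorem \ref{prop:Q-beta-support} recalled later only to make sense of the sum defining $\Qcal^{\Phi}_K(M)$). Invoking the cited works as a black box, as you do, is precisely what the paper intends here, and your sketch of the localization strategy is a fair summary of what those references establish.
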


\subsection{Formal geometric quantization of holomorphic orbits}

Let us come back to the holomorphic discrete representation $V^G_\lambda$. Consider a coadjoint orbit $G\cdot\lambda$ for $\lambda\in \wedge^*_+$ in the interior of the chamber $\Chol(z)$, so that $\lambda$ is strongly elliptic. The action of $G$ on $G\cdot\lambda$ is Hamiltonian, and the line bundle 
$$
L:=G\times_{K_{\lambda}}\Cbb_{\lambda}
$$
is a Kostant-Souriau line bundle over $G\cdot\lambda\simeq G/K_{\lambda}$. Here $\Cbb_{\lambda}$ denotes the $1$-dimensional representation of the stabilizer subgroup $K_\lambda$ that can be attached to the weight $\lambda$. 

Thanks to Lemma \ref{lem:orbite=moment}, we know that the moment map $\Phi^K_{G\cdot\lambda}$ relatively to the action of $K$ on  $G\cdot\lambda$ is proper. Hence the reduced spaces 
$$
(G\cdot\lambda)_\mu:=(\Phi^K_{G\cdot\lambda})^{-1}(K\cdot\mu)/K.
$$
are compact for any $\mu\in\wedge^*_+$, and the generalized character $\Qcal^\Phi_K(G\cdot\lambda)\in\Rfor(K)$ is well defined. We have proved in \cite{pep-ENS,pep-hol-series} the following 

\begin{theo}\label{theo:Q-Phi-G-lambda}
Let $\lambda\in \wedge^*_+\cap\mathrm{Interior}(\Chol(z))$. The following equality
$$
\Qcal^\Phi_K(G\cdot\lambda)=V^K_\lambda\otimes S^\bullet(\pgot)
$$
holds in $\Rfor(K)$.
\end{theo}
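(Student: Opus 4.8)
The plan is to compute the formal quantization $\Qcal^\Phi_K(G\cdot\lambda)$ by localizing the Dolbeault--Dirac index on the critical set $\Cr(\|\Phi^K_{G\cdot\lambda}\|^2)$, and to exploit the fact that the holomorphic orbit $G\cdot\lambda$ is, as a Hamiltonian $K$-manifold, symplectomorphic to the product $K\cdot\lambda\times\pgot$ (Proposition \ref{prop:Delta-K-Lambda}, point b), and in fact $G$-equivariantly diffeomorphic to $G\times_K Y$ with $Y\simeq\pgot$ a point-slice in the sense of Proposition \ref{prop:M.Y}). First I would identify the reduced spaces: using $\Phi^K_{G\cdot\lambda}=\Phi^K_{K\cdot\lambda\times\pgot}$ and the product structure, one has $(\Phi^K_{G\cdot\lambda})^{-1}(K\cdot\mu)/K\simeq (\Phi^K_{K\times_{K_\lambda}\pgot})^{-1}(K\cdot\mu)/K$, and then shifting to the fiber $\pgot$ with the $K_\lambda$-action. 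The symmetric algebra $S^\bullet(\pgot)$ is precisely the $K$-equivariant quantization (in the formal sense of Definition \ref{def:formal-quant}) of the symplectic vector space $(\pgot,\Omega_\pgot)$ with its $\mathrm{ad}(z)$-complex structure, since the Bargmann/Fock space realization of a linear symplectic $K$-space with compatible complex structure has character $S^\bullet$ of that complex module, and the ``$[Q,R]=0$'' theorem for linear actions (or a direct check on the reduced spaces $\pgot_{\nu,K}$) gives $\qfor_K(\pgot)=S^\bullet(\pgot)$ in $\Rfor(K)$.

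Next I would assemble the two pieces multiplicatively. The orbit $K\cdot\lambda$ is a compact Hamiltonian $K$-manifold prequantized by $L|_{K\cdot\lambda}=K\times_{K_\lambda}\Cbb_\lambda$, and Borel--Weil gives $\Qcal_K(K\cdot\lambda)=V^K_\lambda$ (this uses $\lambda$ dominant, which holds since $\lambda\in\wedge^*_+\cap\mathrm{Interior}(\Chol(z))$). Because the product decomposition $G\cdot\lambda\simeq K\cdot\lambda\times\pgot$ is a decomposition into a compact factor times a proper factor, the Product part of Theorem \ref{theo:pep-formal} (together with Theorem \ref{theo:formel=formel} identifying $\Qcal^\Phi_K$ with $\qfor_K$) yields
$$
\Qcal^\Phi_K(G\cdot\lambda)=\qfor_K(K\cdot\lambda\times\pgot)=\Qcal_K(K\cdot\lambda)\cdot\qfor_K(\pgot)=V^K_\lambda\otimes S^\bullet(\pgot)
$$
in $\Rfor(K)$, which is the claim. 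A subtlety to address is that the prequantum line bundle on $\pgot$ is trivial with a nontrivial connection, so its formal quantization is the Fock module $S^\bullet(\pgot)$ rather than $S^\bullet(\pgot^*)$; pinning down this sign/duality convention (consistent with the convention $\Rgot_n(z)=\{\alpha:\langle\alpha,z\rangle=1\}$ and $2\rho_n(z)$ appearing in $\chol(z)$) is where I would be most careful.

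The main obstacle is the localization computation underlying $\qfor_K(\pgot)=S^\bullet(\pgot)$ and its interaction with the product formula on a noncompact manifold: one must know that $\Cr(\|\Phi^K_{G\cdot\lambda}\|^2)\subset Y$ (provided by Theorem \ref{theo:properGK}), that the noncompact Witten-type localization of $D_{L_M}$ behaves multiplicatively across $K\cdot\lambda\times\pgot$, and that no contributions are lost at infinity — all of which rest on the properness of $\Phi^K_{G\cdot\lambda}$ (Lemma \ref{lem:orbite=moment}) and on the results of \cite{pep-formal,Ma-Zhang,pep-ENS,pep-hol-series}. Granting those inputs, the proof is the short three-line chain of equalities above; the real content is the reduction to the slice $\pgot$ and the evaluation $\qfor_K(\pgot)=S^\bullet(\pgot)$, and I would present the argument in that order: (1) slice/product decomposition, (2) $\qfor_K(\pgot)=S^\bullet(\pgot)$, (3) Borel--Weil on $K\cdot\lambda$, (4) multiplicativity and conclusion.
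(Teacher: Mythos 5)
Your argument is correct in substance, but it takes a genuinely different route from the paper's. In the paper, Theorem \ref{theo:Q-Phi-G-lambda} is not proved on the spot: it is quoted from \cite{pep-ENS,pep-hol-series}, where it is obtained by Witten-type localization of the Riemann--Roch character on $G\cdot\lambda$ along $\Cr(\|\Phi^K_{G\cdot\lambda}\|^2)=K\cdot\lambda$; the paper then reproves a generalization, namely (\ref{eq:formal-G-K}), $\qfor_K(G\times_KY)=\qfor_K(Y)\otimes S^\bullet(\pgot)$, by a symbol-level argument (Sections \ref{sec:prof-theorem-formal-G-K-1} and \ref{sec:prof-theorem-formal-G-K-2}): the slice decomposition $M\simeq G\times_KY$ with $Y=K\cdot\lambda$, homotopies of pushed symbols, the multiplicative property, and Atiyah's computation $\indice^{S^1\times K}_{\pgot}(\mathrm{At}_\pgot)=S^\bullet(\pgot)$. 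You instead rest the proof on the stronger geometric fact that $G\cdot\lambda$ is $K$-Hamiltonian symplectomorphic to the product $K\cdot\lambda\times\pgot$ (Deltour's McDuff-type theorem \cite{Deltour-mcduff}, quoted in the proof of Proposition \ref{prop:Delta-K-Lambda}), and then invoke as black boxes $\Qcal^\Phi_K=\qfor_K$ (Theorem \ref{theo:formel=formel}), the product rule of Theorem \ref{theo:pep-formal}, the Fock-space computation $\qfor_K(\pgot)=S^\bullet(\pgot)$ (available from \cite{pep-formal}, Section 5, or from the Atiyah symbol together with Theorem \ref{theo:formel=formel}), and Borel--Weil on $K\cdot\lambda$. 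None of these inputs depends on the theorem being proved, so your short chain of equalities is a valid proof; what it buys is brevity and a purely functorial argument, and what it costs is the reliance on Deltour's symplectomorphism, which is special to holomorphic orbits, and on the full strength of the $[Q,R]=0$ machinery, whereas the paper's symbol argument needs only the diffeomorphism $M\simeq G\times_KY$ and therefore yields the general statement (\ref{eq:formal-G-K}) for arbitrary proper Hamiltonian $G$-manifolds. Two small repairs: the slice of Proposition \ref{prop:M.Y} for $M=G\cdot\lambda$ is $Y=K\cdot\lambda$, not $\pgot$, so the parenthetical ``$G$-equivariantly diffeomorphic to $G\times_KY$ with $Y\simeq\pgot$'' should be dropped --- the product structure you actually use is Deltour's theorem, not the slice decomposition; and you should add a line on the prequantum data: since $G/K_\lambda$ is simply connected and the symplectomorphism intertwines the moment maps, the equivariant prequantization is unique, so $G\times_{K_\lambda}\Cbb_\lambda$ corresponds to the product line bundle, after which the convention check resolves as you anticipate (the complex structure on $(\pgot,\Omega_\pgot)$ compatible in the positive sense is $-\mathrm{ad}(z)$, so the Fock model is $S^\bullet$ of the conjugate module $(\pgot,\mathrm{ad}(z))$, which is exactly the paper's $S^\bullet(\pgot)$).
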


This result will be generalized in (\ref{eq:formal-G-K}). It shows that $\Qcal^\Phi_K(G\cdot\lambda)$ coincides with the vector space of $K$-finite vector of the holomorphic discrete representation $V^G_\lambda$ when $\lambda\in\chol(z)$. Note that for $\lambda\in \mathrm{Interior}(\Chol(z))\setminus\chol(z)$, the generalized character $\Qcal^\Phi_K(G\cdot\lambda)$ can not be associated to an holomorphic discrete representation of $G$. 

Theorems \ref{theo:Q-Phi-G-lambda} and \ref{theo:formel=formel}, gives us the following informations concerning the $K$-multiplicities.

\begin{coro}\label{cor:K-multiplicity}Let $\lambda\in \wedge^*_+\cap\mathrm{Interior}(\Chol(z))$, and $\mu\in\wedge^*_+$. 

$\bullet$ The multiplicity of $V^K_\mu$ in $V^K_\lambda\otimes S^\bullet(\pgot)$ is equal to the quantization of the reduced space $(G\cdot\lambda)_\mu$.

$\bullet$ If $[V^K_\mu : V^K_\lambda\otimes S^\bullet(\pgot)]\neq 0$ then $\mu\in \lambda + \Ccal(z)\subset\Chol(z)$. The last condition imposes that $\|\mu\|> \|\lambda\|$ or $\mu=\lambda$.
\end{coro}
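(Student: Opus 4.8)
The plan is to read off both assertions directly from Theorems \ref{theo:Q-Phi-G-lambda} and \ref{theo:formel=formel}, together with the description of the Kirwan polyhedron in Proposition \ref{prop:Delta-K-Lambda}. For the first bullet, observe that by Theorem \ref{theo:Q-Phi-G-lambda} we have $\Qcal^\Phi_K(G\cdot\lambda)=V^K_\lambda\otimes S^\bullet(\pgot)$ in $\Rfor(K)$, and by Theorem \ref{theo:formel=formel} (Ma--Zhang, Paradan) this equals $\qfor_K(G\cdot\lambda)$. By Definition \ref{def:formal-quant}, the coefficient of $V^K_\mu$ in $\qfor_K(G\cdot\lambda)$ is precisely $\Qcal\left((G\cdot\lambda)_\mu\right)$, where $(G\cdot\lambda)_\mu=(\Phi^K_{G\cdot\lambda})^{-1}(K\cdot\mu)/K$ is the compact (possibly singular) symplectic quotient; here we use Lemma \ref{lem:orbite=moment} to guarantee properness of $\Phi^K_{G\cdot\lambda}$, hence compactness of the reduced spaces. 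Comparing coefficients of $V^K_\mu$ on the two sides gives
$$
[V^K_\mu : V^K_\lambda\otimes S^\bullet(\pgot)]=\Qcal\left((G\cdot\lambda)_\mu\right),
$$
which is the first assertion.

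For the second bullet, the point is that $\Qcal\left((G\cdot\lambda)_\mu\right)$ can only be nonzero when the reduced space is nonempty, i.e. when $\mu\in\Delta_K(G\cdot\lambda)$ (the vanishing outside the Kirwan polytope is recalled just before Definition \ref{def:formal-quant}). By part $a)$ of Proposition \ref{prop:Delta-K-Lambda} we have $\Delta_K(G\cdot\lambda)\subset\lambda+\Ccal(z)\subset\Chol(z)$, so $[V^K_\mu : V^K_\lambda\otimes S^\bullet(\pgot)]\neq 0$ forces $\mu\in\lambda+\Ccal(z)$. It remains to deduce the norm inequality. Write $\mu=\lambda+v$ with $v\in\Ccal(z)$, so $v=\sum_{\beta\in\Rgot_n(z)}t_\beta\beta$ with $t_\beta\geq 0$. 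Then
$$
\|\mu\|^2=\|\lambda\|^2+2(\lambda,v)+\|v\|^2.
$$
Since $\lambda\in\mathrm{Interior}(\Chol(z))$ we have $(\lambda,\beta)>0$ for every $\beta\in\Rgot_n(z)$, hence $(\lambda,v)\geq 0$ with equality only if all $t_\beta=0$, i.e. $v=0$; and $\|v\|^2\geq 0$ with equality only if $v=0$. Therefore $\|\mu\|^2\geq\|\lambda\|^2$, with equality exactly when $\mu=\lambda$, which gives $\|\mu\|>\|\lambda\|$ or $\mu=\lambda$ as claimed.

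There is essentially no hard obstacle here: the corollary is a bookkeeping consequence of the main theorems quoted above, and the only mild subtlety is making sure that the membership $\mu\in\Delta_K(G\cdot\lambda)$ is genuinely equivalent to nonemptiness of the (possibly singular) quotient, which is exactly the convention recorded in the paragraph preceding Definition \ref{def:formal-quant}. I would therefore expect the write-up to be short, with the norm computation above being the only actual calculation.
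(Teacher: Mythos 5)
Your proposal is correct and follows essentially the same route as the paper: the first bullet is read off from $\Qcal^\Phi_K(G\cdot\lambda)=V^K_\lambda\otimes S^\bullet(\pgot)$ (Theorem \ref{theo:Q-Phi-G-lambda}) combined with $\Qcal^\Phi_K=\qfor_K$ (Theorem \ref{theo:formel=formel}), and the second bullet from the containment $\Delta_K(G\cdot\lambda)\subset\lambda+\Ccal(z)$ together with the same norm expansion. The only cosmetic difference is that in the equality case the paper uses only $(\beta,\lambda)\geq 0$ and the vanishing of $\|\sum_\beta x_\beta\beta\|^2$, whereas you also invoke strict positivity of $(\lambda,\beta)$ from $\lambda\in\mathrm{Interior}(\Chol(z))$; both arguments are valid.
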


\begin{proof}
The first point is a consequence of the equality $\Qcal^\Phi_K(G\cdot\lambda)=$ \break $\qfor_K(G\cdot\lambda)$. We know then that if 
$[V^K_\mu : V^K_\lambda\otimes S^\bullet(\pgot)]\neq 0$, then $\mu$ belongs to the Kirwan polytope $\Delta_K(G\cdot \lambda)$. 
But we know after Lemma \ref{lem:Kirwan-G-lambda} that $\Delta_K(G\cdot \lambda)\subset \lambda +\Ccal(z)$ : so 
$\mu=\lambda +\sum_{\beta\in\Rgot_n(z)} x_\beta \beta$ with $x_\beta\geq 0$. Finally, we have
\begin{eqnarray*}
\|\mu\|^2 &=& \|\lambda\|^2 + \|\!\!\sum_{\beta\in\Rgot_n(z)} x_\beta \beta\,\|^2 +2 \sum_{\beta\in\Rgot_n(z)} x_\beta 
\underbrace{(\beta,\lambda)}_{\geq 0}\\
&\geq& \|\lambda\|^2,
\end{eqnarray*}
and we have $\|\mu\|^2= \|\lambda\|^2$ only if $\mu=\lambda$.
  \end{proof}

\subsection{Multiplicities of the holomorphic discrete series}\label{subsection:multiplicity-hol-discrete}

We consider now a connected reductive subgroup $G'\subset G$ such that $z\in \ggot'$. Then it is easy to check that $G'$ satisfies 
(\ref{condition-hol}). Let $K'\subset K$ be the maximal compact subgroup in $G'$, and let $T'\subset T$ be a maximal torus in $K'$. 
Let $\Chol(z),\chol(z)\subset \tgot^*$ and $\Chol'(z),\cholp(z)\subset (\tgot')^*$ be the corresponding convex cone.
For $\lgot\in\{\tgot,\kgot,\ggot\}$, we denote $\pi_{\lgot',\lgot}:\lgot^*\to(\lgot')^*$ the canonical projection. We have the 
following important  fact.

\begin{prop}\label{prop:cone-projection}
We have the following relations
\begin{eqnarray*}
(a)\;\quad\qquad\pi_{\tgot',\tgot}\left(\Chol(z)\right)&\subset& \Chol'(z),\\
(b)\qquad\pi_{\kgot',\kgot}\left(K\cdot\Chol(z)\right)&\subset& K'\cdot\Chol'(z),\\
(c)\qquad\pi_{\kgot',\kgot}\left(K\cdot\chol(z)\right)&\subset& K'\cdot\cholp(z),\\
(d)\qquad\pi_{\ggot',\ggot}\left(G\cdot\chol(z)\right)&\subset& G'\cdot\cholp(z).
\end{eqnarray*}
\end{prop}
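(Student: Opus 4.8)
The plan is to prove the four inclusions in order, deducing each from the previous ones, and reducing everything to statement $(a)$, which is the genuinely combinatorial fact. For $(a)$, the point is to understand how the projection $\pi_{\tgot',\tgot}$ interacts with the defining inequalities of the cones. Recall $\Chol(z)=\{\xi\in\tgot^*\mid (\beta,\xi)\geq 0,\ \forall\beta\in\Rgot_n(z)\}$ and similarly $\Chol'(z)=\{\eta\in(\tgot')^*\mid (\beta',\eta)\geq 0,\ \forall\beta'\in\Rgot'_n(z)\}$. Given $\xi\in\Chol(z)$, I want $(\beta',\pi_{\tgot',\tgot}\xi)\geq 0$ for every non-compact root $\beta'$ of $(\ggot',\tgot')$ with $\langle\beta',z\rangle=1$. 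The key observation is that $\pi_{\tgot',\tgot}$ is the orthogonal projection for the scalar product coming from $b$ (via the identifications $\tgot^*\simeq\tgot$, $(\tgot')^*\simeq\tgot'$), so $(\beta',\pi_{\tgot',\tgot}\xi)=(\tilde\beta',\tilde\xi)$ where $\tilde\beta'\in\tgot'\subset\tgot$. Thus I must show: if $\beta'\in(\tgot')^*$ is a non-compact root of $\ggot'$ with $\langle\beta',z\rangle=1$, then $\tilde\beta'$, viewed as an element of $\tgot\simeq\tgot^*$, lies in the closed dual cone of $\Chol(z)$, i.e. in $\sum_{\beta\in\Rgot_n(z)}\Rbb^{\geq 0}\beta=\Ccal(z)$. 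This is exactly the statement that $\Ccal'(z)\subset\Ccal(z)$ after the identification, which should follow from decomposing $\pgot'\otimes\Cbb\subset\pgot\otimes\Cbb$ into $T'$-weight spaces: the $T'$-weights occurring in $(\pgot',\mathrm{ad}(z))$ are precisely the $\beta'\in\Rgot'_n(z)$, and each such weight is obtained by restricting to $\tgot'$ some sum of $\tgot$-weights of $(\pgot,\mathrm{ad}(z))$, all of which lie in $\Rgot_n(z)$ (since $\mathrm{ad}(z)$ acts the same way on $\pgot'$ and $\pgot$, the eigenvalue $1$ condition is inherited). Hence $\tilde\beta'$ restricted from $\tgot$ is a nonnegative combination of elements of $\Rgot_n(z)$, which is what we need. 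I expect this weight-space bookkeeping — keeping straight the difference between "$\tgot'$-weights of $\pgot'$" and "restrictions to $\tgot'$ of $\tgot$-weights of $\pgot$" — to be the main obstacle, since one must check that the positivity is preserved and not merely the support.

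For $(b)$: since $K'\cdot\Chol'(z)$ is $K'$-invariant and $\Chol(z)\subset\kgot^*$, it suffices by $K$-equivariance considerations to treat $\pi_{\kgot',\kgot}(\Chol(z))$, but $\Chol(z)$ sits in $\tgot^*$ and $\pi_{\kgot',\kgot}$ does not respect $\tgot^*$; instead I use that any $\eta\in(\kgot')^*$ is $K'$-conjugate into $(\tgot')^*_+$, and that for $\xi\in\tgot^*$ one has $\pi_{\kgot',\kgot}(\xi)=\pi_{\tgot',\tgot}(\xi)+(\text{terms orthogonal to }\tgot')$ — more precisely, composing $\pi_{\kgot',\kgot}$ with the further projection $(\kgot')^*\to(\tgot')^*$ gives $\pi_{\tgot',\tgot}$. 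The cleaner route: given $k\cdot\xi$ with $k\in K$, $\xi\in\Chol(z)$, apply to $\pi_{\kgot',\kgot}(k\cdot\xi)\in(\kgot')^*$ the fact that $\Chol'(z)$ is $W_{K'}$-invariant and that $K'\cdot\Chol'(z)$ is characterized by: $\eta\in K'\cdot\Chol'(z)$ iff the $T'$-weights appearing with positive coefficient in a suitable sense pair nonnegatively; alternatively, simply reduce $(b)$ to $(a)$ by noting $K\cdot\Chol(z)\cap\kgot^*$ decomposes and invoking that the image of a $K$-orbit's worth of $\Chol(z)$ under $\pi_{\kgot',\kgot}$, intersected with $(\tgot')^*$, lies in $W_{K'}\cdot\Chol'(z)$ by $(a)$ applied after conjugating. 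I would state $(b)$ as a consequence of $(a)$ together with the elementary fact that for any $\xi\in\kgot^*$, the $(\tgot')^*$-part of $\pi_{\kgot',\kgot}(\xi)$ (after $K'$-conjugation into the closed Weyl chamber) is $\pi_{\tgot',\tgot}$ of a $K$-conjugate of $\xi$ lying in $\tgot^*$.

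Statement $(c)$ is then $(b)$ shifted by $2\rho_n(z)$: one has $\chol(z)=2\rho_n(z)+\Chol(z)$ and $\cholp(z)=2\rho'_n(z)+\Chol'(z)$, so it suffices to show $\pi_{\kgot',\kgot}(K\cdot 2\rho_n(z))\subset K'\cdot(2\rho'_n(z)+\Chol'(z))$, i.e. $\pi_{\tgot',\tgot}(2\rho_n(z))\in 2\rho'_n(z)+\Chol'(z)$ (then combine with $(b)$ for the $\Chol(z)$-part, using convexity and $K'$-invariance). Writing $2\rho_n(z)=\sum_{\beta\in\Rgot_n(z)}\beta$ and $2\rho'_n(z)=\sum_{\beta'\in\Rgot'_n(z)}\beta'$, the difference $\pi_{\tgot',\tgot}(2\rho_n(z))-2\rho'_n(z)$ should be a nonnegative combination of elements of $\Rgot'_n(z)$ — again a weight-multiplicity count: restricting the $T$-action on $(\pgot,\mathrm{ad}(z))$ to $T'$, the weight $\beta'\in\Rgot'_n(z)$ occurs with multiplicity $m_{\beta'}\geq 1$, and $\pi_{\tgot',\tgot}(2\rho_n(z))=\sum_{\beta'}m_{\beta'}\beta'$, whence the difference is $\sum_{\beta'}(m_{\beta'}-1)\beta'\in\Ccal'(z)\subset\Chol'(z)$. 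Finally $(d)$ follows from $(c)$: given $g\cdot\xi$ with $g\in G$, $\xi\in\chol(z)$, Proposition \ref{prop:M.Y} / the decomposition $G=K\exp(\pgot)$ together with Theorem \ref{prop:Kirwan.M.Y} show that $\pi_{\ggot',\ggot}(G\cdot\xi)$ is a union of $G'$-orbits whose elliptic parts are controlled by the $K'$-Kirwan set of the slice, which by $(c)$ and Proposition \ref{prop:Delta-K-Lambda} lands in $K'\cdot\cholp(z)$, hence in $G'\cdot\cholp(z)$; concretely, $\pi_{\ggot',\ggot}(G\cdot\chol(z))\cap(\ggot')^*_{se}$ meets each $G'$-orbit in a set whose $(\tgot')^*_+$-part lies in $\cholp(z)$ by $(c)$, and one checks the image cannot escape $(\ggot')^*_{se}$ using $z\in\ggot'$ and the inclusion $\Chol(z)\subset\{\langle\xi,z\rangle\geq 0\}$ from the Lemma above. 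The main obstacle throughout is $(a)$, i.e. verifying the positivity (not just the support) in the weight-space comparison between $\pgot'$ and $\pgot$; once that is done, $(b)$–$(d)$ are reductions via equivariance, the shift by $\rho_n$, and the $G=K\exp(\pgot)$ decomposition already established in Section \ref{sec:hamiltonian-reductive}.
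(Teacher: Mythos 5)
Your overall architecture matches the paper's: everything is reduced to $(a)$, then $(b)$ is obtained via the convexity theorem and $W_{K'}$-invariance, $(c)$ by the shift $2\rho_n(z)$, and $(d)$ from properness of $\Phi^{G'}_{G\cdot\lambda}$ (Corollary \ref{coro:propre-z}) together with the elliptic decomposition of the image and point $(c)$. But your proof of $(a)$ — which you correctly single out as the crux — has a genuine gap. What you need is that for each $\beta'\in\Rgot'_n(z)$ the metric dual $\tilde\beta'\in\tgot'$ pairs nonnegatively with all of $\Chol(z)$, i.e.\ that $\tilde\beta'$, which is the \emph{orthogonal projection onto $\tgot'$} of $\tilde\beta$ for any $\beta\in\Rgot_n(z)$ restricting to $\beta'$, lies in the cone generated by $\{\tilde\gamma,\ \gamma\in\Rgot_n(z)\}$. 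Your justification ("each weight of $(\pgot',\mathrm{ad}(z))$ is a restriction of $\tgot$-weights lying in $\Rgot_n(z)$, hence $\tilde\beta'$ is a nonnegative combination of elements of $\Rgot_n(z)$") is a non sequitur: the restriction statement concerns the functionals, whereas the needed statement concerns their duals, and the orthogonal projection of a cone generator onto a subspace is in general \emph{not} again in the cone, even when the generators have pairwise nonnegative inner products (take $\Ccal$ spanned by an orthonormal pair $e_1,e_2$ and $V'=\Rbb(e_1-e_2)$: then $P_{V'}e_1=\tfrac12(e_1-e_2)\notin\Ccal$, and the vector $e_2$ of the dual cone pairs negatively with it). What saves the day here is Lie-theoretic, not convex-geometric: the coroot elements. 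This is exactly how the paper argues: take $h_\alpha\in i[\ggot_\alpha,\overline{\ggot_\alpha}]\cap\tgot$ with $\alpha=-b(h_\alpha,\cdot)$, so that $(\alpha,\xi)\geq 0\Leftrightarrow\langle\xi,h_\alpha\rangle\geq 0$ and the ray $\Rbb^{>0}h_\alpha$ is canonical; for a weight vector $v\in\ggot'_{\beta'}\subset\pgot'\otimes\Cbb$ written as $v=\sum_\beta v_\beta$ with $v_\beta\in\ggot_\beta$, one has $i[v,\bar v]\in\tgot'$ and, since the cross terms carry nonzero $\tgot$-weights, $i[v,\bar v]=\sum_\beta i[v_\beta,\bar v_\beta]$ with each term a \emph{nonnegative} multiple of $h_\beta$, and each contributing $\beta$ lies in $\Rgot_n(z)$ because $z\in\tgot'$ forces $\langle\beta,z\rangle=\langle\beta',z\rangle=1$. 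This bracket computation is precisely the missing positivity; without it (or an equivalent), your $(a)$ is unproved.

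A secondary imprecision, repairable by the same mechanism: in $(c)$ the identity $\pi_{\tgot',\tgot}(2\rho_n(z))=\sum_{\beta'\in\Rgot'_n(z)}m_{\beta'}\beta'$ is false in general, since the $T'$-weights of $\pgot/\pgot'$ are restrictions of elements of $\Rgot_n(z)$ but need not be roots of $(\ggot',\tgot')$, so the difference $\pi_{\tgot',\tgot}(2\rho_n(z))-2\rho'_n(z)$ need not visibly lie in $\Ccal'(z)$. The paper instead writes this difference as $\pi_{\tgot',\tgot}(A)$, where $A$ is the sum of those $\beta\in\Rgot_n(z)$ with $\ggot_\beta\not\subset\pgot'\otimes\Cbb$; since $A\in\Ccal(z)\subset\Chol(z)$, point $(a)$ places the difference in $\Chol'(z)$, which is all that $(c)$ requires — but note this again leans on the coroot argument behind $(a)$ rather than on weight bookkeeping.
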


\begin{proof} Let $\alpha\in \tgot^*$ be a non-compact root of $(\ggot,\tgot)$. Let $\ggot_\alpha\subset \pgot\otimes\Cbb$ 
the corresponding $1$-dimensional weight space. Then we know that there exists $h_\alpha\in i[\ggot_\alpha,\overline{\ggot_\alpha}]\cap \tgot$
such that $\alpha= -b(h_\alpha,\cdot)$. Note that the half-line $\Rbb^{>0}h_\alpha$ does not depend of the bilinear form $b$, and that 
the condition $(\alpha,\xi)\geq 0$ is equivalent to $\langle\xi,h_\alpha\rangle\geq 0$ for any $\xi\in\tgot^*$.

Let $\alpha\in \tgot^*$ be a non-compact root of $(\ggot,\tgot)$ such that its restriction $\alpha'=\pi_{\tgot',\tgot}(\alpha)$ is a non-compact root of $(\ggot',\tgot')$. Since the $1$-dimensional weight spaces $\ggot_\alpha$ and $\ggot'_{\alpha'}$ coincide we have $\Rbb^{>0}h_\alpha=\Rbb^{>0}h_{\alpha'}\subset\tgot'$. Then the condition $\langle\xi,h_\alpha\rangle\geq 0$ is equivalent to 
$\langle\pi_{\tgot',\tgot}(\xi),h_{\alpha'}\rangle\geq 0$. Finally we have proved the point $(a)$ : if one has $\langle\xi,h_\alpha\rangle\geq 0$
for any positive non-compact root of $(\ggot,\tgot)$, then $\langle\pi_{\tgot',\tgot}(\xi),h_{\alpha'}\rangle\geq 0$ holds for any positive non-compact root of $(\ggot',\tgot')$. 

Let $\xi\in\Chol(z)$ and $\xi'\in\pi_{\kgot',\kgot}(K\cdot\xi)\cap (\tgot')^*$. Then 
$\xi'\in \pi_{\tgot',\tgot}\circ \pi_{\tgot,\kgot}(K\cdot\xi)$. By the Convexity theorem 
\cite{Atiyah82,Guillemin-Sternberg82.bis,Kirwan.84.bis,L-M-T-W}, we know that $\pi_{\tgot,\kgot}(K\cdot\xi)$
is equal to the convex hull of $W_K\xi$. But $\xi$ belongs to the $W_K$-invariant convex cone $\Chol(z)$, and then 
$\pi_{\tgot,\kgot}(K\cdot\xi)\subset \Chol(z)$. Finally $\xi'\in \pi_{\tgot',\tgot}(\Chol(z))\subset \Chol'(z)$ thanks to the point $(a)$.

Let $\xi\in\Chol(z)$. Since $2\rho_n(z)$ is $K$-invariant, we have $K\cdot(2\rho_n(z)+\xi)=2\rho_n(z)+K\cdot\xi$. 
Thanks to the point $(b)$, we see that
\begin{eqnarray*}
\pi_{\kgot',\kgot}(K\cdot(2\rho_n(z)+\xi))&=&\pi_{\kgot',\kgot}(2\rho_n(z))+ \pi_{\kgot',\kgot}(K\cdot\xi)\\
&\subset&K'\cdot\left(\pi_{\kgot',\kgot}(2\rho_n(z))+ \Chol'(z)\right).
\end{eqnarray*}
The $K'$-invariant term $\pi_{\kgot',\kgot}(2\rho_n(z))$ belongs to $(\tgot')^*$ and is equal to $2\rho_n'(z)+\pi_{\tgot',\tgot}(A)$ 
where $A$ is the sum of the positive non-compact roots $\alpha$ such that $\ggot_\alpha$ is not included in $\pgot'\otimes\Cbb$. Hence
$A\in \Chol(z)$ and thanks to point $(a)$ its projection $\pi_{\tgot',\tgot}(A)$ belongs to $\Chol'(z)$.
The point $(c)$ is then proved.

Let $\lambda\in\chol(z)$. The coadjoint orbit $G\cdot\lambda$ is contained in $\ggot^*_{se}$, and the moment map 
$\Phi_{G\cdot\lambda}^{G'}$ is proper since $z\in \ggot'$ (see Corollary \ref{coro:propre-z}). Then, we know that 
$$
\pi_{\ggot',\ggot}\left(G\cdot\lambda\right)=\mathrm{Image}(\Phi_{G\cdot\lambda}^{G'})=
G'\cdot\left(\pi_{\ggot',\ggot}\left(G\cdot\lambda\right)\bigcap (\kgot')^*\right)
$$
and 
\begin{eqnarray*}
\pi_{\ggot',\ggot}\left(G\cdot\lambda\right)\bigcap (\kgot')^* &\subset &\pi_{\kgot',\kgot}\circ\pi_{\kgot,\ggot}\left(G\cdot\lambda\right)\\
&\subset &\pi_{\kgot',\kgot}\left(K\cdot\Delta_K(G\cdot\lambda)\right)\\
&\subset &\pi_{\kgot',\kgot}\left(K\cdot\chol(z)\right) \qquad [1]\\
&\subset &  K'\cdot\cholp(z).\quad \qquad [2]
\end{eqnarray*}
Equality $[1]$ is due to the fact that $\Delta_K(G\cdot\lambda)\subset \lambda+\Ccal(z)\subset \chol(z)$ when $\lambda\in\chol(z)$ (see Lemma \ref{prop:Delta-K-Lambda}). Equality $[2]$ corresponds to $c)$.
\end{proof}

\medskip

\begin{rem}
When the Lie algebra $\ggot$ is simple the set $G\cdot\Chol(z)\subset \ggot_{se}^*$ is a maximal closed convex $G$-invariant cone. See 
\cite{Paneitz83,Vinberg80}.
\end{rem}

\medskip

We finish the section by considering the restriction of the irreducible representation $V^G_\lambda$ to the reductive subgroup $G'$.  
We will denoted $\Khol(z)\subset \widehat{K}$ the subset $\wedge^*_+\cap\chol(z)$. We see that $\Khol(z)$ and $\Ghol(z)$ are the same set 
but they parametrize representations of different groups ($K$ and $G$ respectively).

We start with the

\begin{prop}\label{prop:restriction-G-K-prime}
 Let $b\in \Khol(z)$, $\lambda\in \Ghol(z)$ and $\mu\in \widehat{K'}$. We have

$\bullet$ If  $[V_\mu^{K'} : V^K_b\vert_{K'}]\neq 0$ then $\mu\in \Kholp(z)$.

$\bullet$ If $[V_\mu^{K'} : V^G_\lambda\vert_{K'}]\neq 0$ then $\mu\in \Gholp(z)$.
\end{prop}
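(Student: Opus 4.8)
\textbf{Proof proposal for Proposition \ref{prop:restriction-G-K-prime}.}

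The plan is to reduce both statements to the cone inclusions already established in Proposition \ref{prop:cone-projection}, using the elementary fact that if $\mu\in\widehat{K'}$ is the highest weight of an irreducible constituent of the restriction of a $K$-representation (or $G$-representation) $W$, then $\mu$ lies in the $K'$-saturation of the set of $T$-weights of $W$ intersected with the Weyl chamber $(\tgot')^*_+$. More precisely, since $V^{K'}_\mu$ has $\mu$ among its weights, any occurrence of $V^{K'}_\mu$ in $W\vert_{K'}$ forces $\mu$ to be a weight of $W$; and as $\mu$ is $(\tgot')^*_+$-dominant it must lie in $\pi_{\tgot',\tgot}(\{\text{weights of }W\})\cap (\tgot')^*_+$, hence in $K'\cdot\big(\text{convex hull of the }T\text{-weights of }W\big)$. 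Combining this with the convexity theorem, one obtains that $\mu$ belongs to the $K'$-saturation of $\pi_{\kgot',\kgot}$ applied to the convex hull of the $W_K$-orbit of the weights of $W$.

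For the first point, take $W=V^K_b$ with $b\in\Khol(z)=\wedge^*_+\cap\chol(z)$. The weights of $V^K_b$ lie in the convex hull of $W_K\cdot b$, which, since $b\in\chol(z)$ and $\chol(z)=2\rho_n(z)+\Chol(z)$ is a $W_K$-invariant convex subset (a translate of a $W_K$-invariant convex cone), is contained in $\chol(z)$. Therefore $\mu\in K'\cdot\big(\pi_{\kgot',\kgot}(K\cdot\chol(z))\big)\cap(\tgot')^*$, and part $(c)$ of Proposition \ref{prop:cone-projection} gives $\pi_{\kgot',\kgot}(K\cdot\chol(z))\subset K'\cdot\cholp(z)$. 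Since $\cholp(z)$ is $W_{K'}$-invariant and $\mu$ is $(\tgot')^*_+$-dominant, we conclude $\mu\in\wedge^*_+\cap\cholp(z)=\Kholp(z)$. For the second point, take $W=V^G_\lambda$ with $\lambda\in\Ghol(z)$; by Harish-Chandra's theorem $V^G_\lambda\vert_K=V^K_\lambda\otimes S^\bullet(\pgot)$, so the $T$-weights of $V^G_\lambda$ are those of $V^K_\lambda$ shifted by sums of weights of $S^\bullet(\pgot)$, i.e. by elements of $-\Ccal(z)$... here one must be careful with the sign convention: the weights of $(\pgot,\mathrm{ad}(z))$ are the $\alpha\in\Rgot_n(z)$ (this is implicitly the convention making $\chol(z)=2\rho_n(z)+\Chol(z)$ the correct parameter set), so the weights of $V^G_\lambda$ lie in $\lambda+\Ccal(z)+(\text{hull of }W_K\cdot 0\text{ perturbations})$; the clean way to package this is to invoke Theorem \ref{theo:Q-Phi-G-lambda} and Corollary \ref{cor:K-multiplicity}, which give that every $K$-type $V^K_\nu$ of $V^G_\lambda$ satisfies $\nu\in\lambda+\Ccal(z)\subset\chol(z)$. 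Then the weights of $V^G_\lambda$ lie in $\mathrm{hull}(W_K\cdot(\lambda+\Ccal(z)))\subset\chol(z)$, and we are reduced to exactly the situation of the first point, so $\mu\in\Kholp(z)=\Gholp(z)$ (the two sets coincide as subsets of $(\wedge')^*_+$).

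The main obstacle I anticipate is bookkeeping of sign/weight conventions: one needs to be sure that ``weights of $S^\bullet(\pgot)$'' are the $\alpha\in\Rgot_n(z)$ with the sign that makes the $K$-types of $V^G_\lambda$ land in $\lambda+\Ccal(z)$ rather than $\lambda-\Ccal(z)$, and correspondingly that the convexity-theorem statement about $\pi_{\tgot,\kgot}(K\cdot\xi)$ being the hull of $W_K\xi$ is applied to the correct cone. Both of these are already handled in the paper — the first by Corollary \ref{cor:K-multiplicity} (``$\mu\in\lambda+\Ccal(z)\subset\Chol(z)$''), the second inside the proof of Proposition \ref{prop:cone-projection} — so the proposition should follow by assembling these ingredients with the elementary ``highest weight is a weight'' observation, with no genuinely new estimate required.
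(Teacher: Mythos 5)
Your argument is essentially correct, but it follows a genuinely different route from the paper. The paper deduces both bullets from the quantization machinery: by Borel--Weil, $V^K_b=\Qcal_K(K\cdot b)$, and by Theorem \ref{theo:Q-Phi-G-lambda}, $V^G_\lambda\vert_{K'}=\Qcal^{\Phi}_{K'}(G\cdot\lambda)$; the restriction-to-subgroup property of Theorem \ref{theo:pep-formal} then says that a $K'$-type can occur only if $\mu$ lies in the image of the corresponding moment map $\Phi^{K'}$, i.e.\ in $\pi_{\kgot',\kgot}(K\cdot b)$ resp.\ $\pi_{\kgot',\kgot}(K\cdot\Delta_K(G\cdot\lambda))$, and point $(c)$ of Proposition \ref{prop:cone-projection} finishes the proof. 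You instead use the elementary observation that the highest weight $\mu$ of a constituent of $W\vert_{K'}$ is a $\tgot'$-weight of $W$, hence a $\pi_{\tgot',\tgot}$-projection of a $\tgot$-weight of $W$; since $\chol(z)$ is convex and $W_K$-invariant, all $\tgot$-weights of $V^K_b$ (and, via Corollary \ref{cor:K-multiplicity} or Harish-Chandra's description of the $K$-types, of $V^G_\lambda$) lie in $\chol(z)$, so the cone-projection statement does the rest. Your route is more elementary (no appeal to the $\Qcal^{\Phi}$ formalism for the first bullet; for the second you still lean on Corollary \ref{cor:K-multiplicity}, which is itself proved geometrically, or alternatively on the classical fact that the $K$-types of a holomorphic discrete series lie in $\lambda+\Ccal(z)$), while the paper's route is uniform and is the same mechanism used throughout the article, which is why it is the natural choice there.

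One point of bookkeeping you should tighten: the assertion $\mu\in K'\cdot\bigl(\pi_{\kgot',\kgot}(K\cdot\chol(z))\bigr)\cap(\tgot')^*$ is not literally what your weight argument gives. What you actually get is $\mu\in\pi_{\tgot',\tgot}\bigl(\mathrm{hull}(W_K\cdot b)\bigr)\subset\pi_{\tgot',\tgot}(\chol(z))$, so you need the inclusion $\pi_{\tgot',\tgot}(\chol(z))\subset\cholp(z)$. This is not point $(c)$ of Proposition \ref{prop:cone-projection} verbatim (which concerns $\pi_{\kgot',\kgot}$ of the $K$-saturation), but it follows from its proof: $\pi_{\tgot',\tgot}(2\rho_n(z))=2\rho'_n(z)+\pi_{\tgot',\tgot}(A)$ with $\pi_{\tgot',\tgot}(A)\in\Chol'(z)$, combined with point $(a)$ and the fact that $\Chol'(z)$ is a convex cone. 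Equivalently, you can route through point $(c)$ provided you replace ``intersection with $(\tgot')^*$'' by the projection $\pi_{\tgot',\kgot'}$ and use the convexity theorem together with the convexity and $W_{K'}$-invariance of $\cholp(z)$. Either fix is routine, so this is an imprecision rather than a gap.
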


\begin{proof} We use here the "Restriction to subgroup" property of Theorem \ref{theo:pep-formal}.

For the first point, we know after the Borel-Weil Theorem that $V^K_b=\Qcal_K(K\cdot b)$, and then  
$V^K_b\vert{K'}=\Qcal_{K'}(K\cdot b)$. Then $[V_\mu^{K'} : V^K_b\vert_{K'}]\neq 0$ only if $b$ belongs to 
$\mathrm{Image}(\Phi^{K'}_{K\cdot b})=\pi_{\kgot',\kgot}(K\cdot b)\subset K'\cdot\cholp(z)$. But 
$K'\cdot\cholp(z)(\tgot')^*=\cholp(z)$ since $\cholp(z)$ is $W_{K'}$-invariant. We have proved finally that 
$b\in \Kholp(z)$.

For the second point, it works the same. We know that $V^G_\lambda\vert_{K'}=\Qcal^\Phi_{K'}(G\cdot\lambda)$. Hence $V_\mu^{K'}$ occurs in the restriction $V_\lambda^G\vert_{K'}$ only if $\mu$ belongs to the image of the moment map $\Phi^{K'}_{G\cdot\lambda}=\pi_{\kgot',\kgot}\circ\Phi^{K}_{G\cdot\lambda}$. 
Since $\Delta_K(G\cdot\lambda)\subset \chol(z)$ (see Lemma \ref{prop:Delta-K-Lambda}), we have 
\begin{eqnarray*}
\mathrm{Image}(\Phi^{K'}_{G\cdot\lambda})=\pi_{\kgot',\kgot}\Big(\mathrm{Image}(\Phi^{K}_{G\cdot\lambda})\Big)
&=& \pi_{\kgot',\kgot}\Big(K\cdot\Delta_K(G\cdot\lambda)\Big)\\
&\subset & \pi_{\kgot',\kgot}\Big(K\cdot\chol(z))\Big)\subset K'\cdot \cholp(z),
\end{eqnarray*}
where the last inclusion is point $(c)$ of Proposition \ref{prop:cone-projection}. We have then proved that $V_\mu^{K'}$ occurs in the restriction $V_\lambda^G\vert_{K'}$ only if
$$
\mu \in \mathrm{Image}(\Phi^{K'}_{G\cdot\lambda})\cap (\tgot')^*\subset (K'\cdot \cholp(z))\cap (\tgot')^*=\cholp(z).
$$
The last equality is due to the fact that $\cholp(z)$ is a $W_{K'}$-invariant subset of $(\tgot')^*$.
\end{proof}

\medskip

We denote by $\widehat{G}'$ the unitary dual of $G'$, and by $\widehat{G}'_d$ the subset of classes of square integrable  irreducible unitary representations. The elements of $\widehat{G}'_d$ are called discrete series representations of $G'$ and $\widehat{G}'_d$ contains the holomorphic ones~: $\Gholp(z)\croc\widehat{G}'_d$.

Since the moment map $\Phi^{G'}_{G\cdot\lambda}$ is proper, we know, thanks to the work of  T. Kobayashi \cite{Toshi-inventiones94} and Duflo-Vargas \cite{Duflo-Vargas2007}, that the unitary representation $V^G_\lambda$ is discretely admissible relatively to $G'$. It means that we have 
an Hilbertian direct sum 
$$
V^G_\lambda\vert_{G'}=\bigoplus_{\Pi\in \widehat{G}'_d} m_\lambda(\Pi) \ \Pi
$$
where the multiplicities $m_\lambda(\Pi)$ are finite. In fact, we can be more precise.

\begin{prop}Let $\lambda\in \Ghol(z)$. The multiplicty $m_\lambda(\Pi)$ is non-zero only if $\Pi= V^{G'}_\mu$ for some 
$\mu\in \Gholp(z)$. 
This means that we have 
$$
V^G_\lambda\vert_{G'}=\bigoplus_{\mu\in \Gholp(z)} m_\lambda(\mu) \ V^{G'}_\mu,
$$
with $m_\lambda(\mu) $ finite for any $\mu$.
\end{prop}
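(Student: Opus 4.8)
The plan is to reduce the statement to the two pieces already at our disposal: the decomposition of $V^G_\lambda|_{G'}$ into $G'$-discrete series (known from Kobayashi and Duflo--Vargas), and the restriction to the maximal compact subgroup $K'$ controlled by Proposition \ref{prop:restriction-G-K-prime}. First I would write $V^G_\lambda|_{G'}=\bigoplus_{\Pi\in\widehat{G}'_d} m_\lambda(\Pi)\,\Pi$ as above, and suppose for contradiction that $m_\lambda(\Pi_0)\neq 0$ for some $\Pi_0\in\widehat{G}'_d$ which is \emph{not} of the form $V^{G'}_\mu$ with $\mu\in\Gholp(z)$.

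The key observation is that every discrete series representation $\Pi$ of $G'$ has a lowest (or more precisely a minimal) $K'$-type, and for the \emph{holomorphic} ones this minimal $K'$-type $V^{K'}_\mu$ has highest weight $\mu\in\Gholp(z)\subset\cholp(z)$, whereas for a non-holomorphic discrete series $\Pi_0$ every $K'$-type $V^{K'}_\mu$ occurring in $\Pi_0|_{K'}$ has highest weight $\mu\notin\cholp(z)$. Indeed, the $K'$-types of a discrete series with Harish-Chandra parameter $\Lambda$ lie in $\Lambda+\rho'+(\text{cone generated by the positive roots determined by }\Lambda)$, and this cone is contained in $\cholp(z)$ precisely when the chamber of $\Lambda$ is the holomorphic one, i.e.\ $\Rgot_n'$ is entirely positive or entirely negative on $\Lambda$. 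So if $\Pi_0$ is not holomorphic, then no $K'$-type of $\Pi_0$ can lie in $\cholp(z)$. On the other hand, restricting the admissible decomposition to $K'$ and using the second bullet of Proposition \ref{prop:restriction-G-K-prime}, every $K'$-type $V^{K'}_\mu$ appearing in $V^G_\lambda|_{K'}$ satisfies $\mu\in\Gholp(z)\subset\cholp(z)$. Since $m_\lambda(\Pi_0)\neq 0$, every $K'$-type of $\Pi_0$ also appears in $V^G_\lambda|_{K'}$, forcing those $\mu$ to lie in $\cholp(z)$ --- contradiction. Hence only holomorphic discrete series of $G'$ can occur.

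It remains to see that the holomorphic discrete series of $G'$ that occur are exactly those indexed by $\mu\in\Gholp(z)$, which is immediate once one knows the occurring ones are holomorphic: $\Gholp(z)$ by definition parametrizes all holomorphic discrete series of $G'$ relative to $z$. The finiteness $m_\lambda(\mu)<\infty$ is part of the discrete admissibility statement already cited. Re-indexing the sum then gives the displayed formula $V^G_\lambda|_{G'}=\bigoplus_{\mu\in\Gholp(z)} m_\lambda(\mu)\,V^{G'}_\mu$.

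The main obstacle I anticipate is making the dichotomy ``holomorphic $\Rightarrow$ $K'$-types in $\cholp(z)$, non-holomorphic $\Rightarrow$ no $K'$-type in $\cholp(z)$'' completely precise, in particular pinning down which cone (the $\rho$-shifted one $\cholp(z)=2\rho_n'(z)+\Chol'(z)$ versus the unshifted $\Chol'(z)$) controls the $K'$-spectrum of a given discrete series, and handling the boundary/wall cases where $\Lambda$ is close to a non-compact wall. Everything else is bookkeeping with the projections $\pi_{\kgot',\kgot}$ and the structural results from Section \ref{sec:QR-hol} that are already available.
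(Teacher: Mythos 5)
Your overall strategy (restrict to $K'$ and play the $K'$-types of an occurring discrete series against Proposition \ref{prop:restriction-G-K-prime}) is the right one and is the paper's, but the key claim you rely on is both unjustified and false. You assert that a non-holomorphic discrete series $\Pi_0$ of $G'$ has \emph{no} $K'$-type with highest weight in $\cholp(z)$, and you deduce this from the fact that the cone containing its $K'$-spectrum is not \emph{contained} in $\cholp(z)$; that inference is a non sequitur (a translated cone can meet $\cholp(z)$ without being contained in it), and the claim itself fails. For instance, for $G'=\mathrm{SU}(2,1)$, $K'=\mathrm{U}(2)$, a discrete series attached to a ``middle'' (non-holomorphic) chamber has, by Blattner's formula, $K'$-types with highest weights $\mu_B+n\beta$ for all $n\geq 0$, where $\beta$ is a noncompact root lying in the interior of $\Cholp(z)$; for $n$ large these weights lie in $\cholp(z)=2\rho'_n(z)+\Cholp(z)$. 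So the dichotomy ``non-holomorphic $\Rightarrow$ no $K'$-type in the holomorphic cone'' is simply not available (also, as a minor slip, the $K'$-spectrum is based at the Blattner parameter $\Lambda-\rho'_c+\rho'_n(\Lambda)$, not at $\Lambda+\rho'$).

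What the argument actually needs, and what the paper proves, is the corresponding statement for the \emph{minimal} $K'$-type only: if $\Pi^{G'}_\mu$ is a discrete series with Harish-Chandra parameter $\mu$ and Blattner parameter $\mu_B=\mu-\rho'_c+\rho'_n(\mu)$, and if $\mu_B\in\cholp(z)$, then $\mu\in\Cholp(z)$, so that $\Pi^{G'}_\mu=V^{G'}_{\mu_B}$ is holomorphic with $\mu_B\in\Gholp(z)$. This is proved by writing $\mu_B=2\rho'_n(z)+\xi$ with $\xi\in\Cholp(z)$, hence $\mu=(\rho'_n(z)+\rho'_c)+(\rho'_n(z)-\rho'_n(\mu))+\xi$, and checking that each of the three terms pairs nonnegatively (the first strictly positively) with every $\alpha\in\Rgot'_n(z)$. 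Since $V^{K'}_{\mu_B}$ occurs with multiplicity one in $\Pi^{G'}_\mu\vert_{K'}$, the occurrence $m_\lambda(\Pi^{G'}_\mu)\neq 0$ forces $V^{K'}_{\mu_B}$ to occur in $V^G_\lambda\vert_{K'}$, hence $\mu_B\in\cholp(z)$ by Proposition \ref{prop:restriction-G-K-prime}, and the positivity computation concludes. This is precisely the ``main obstacle'' you flagged at the end of your sketch: it is the heart of the proof and must be carried out for the Blattner parameter, not replaced by the blanket statement about all $K'$-types. The finiteness of the multiplicities and the reindexing of the sum are indeed immediate from the Kobayashi/Duflo--Vargas admissibility, as you say.
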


\begin{proof} Recall the parametrization of $\widehat{G}'_d$ given by Harish-Chandra. 
Let $(\wedge')^*\subset (\tgot')^*$ be the weight lattice. Let $\Rgot(\ggot',\tgot')^+$ be a choice of positive roots and let $\rho'$ be half the sum of its elements. The set $(\wedge')^* + \rho'$ does not depend of the choice of $\Rgot(\ggot',\tgot')^+$: we denote
it by $(\wedge')^*_\rho$. Let $(\tgot')^*_+\subset (\tgot')^*$ be the Weyl chamber corresponding to the choice of a set 
$\Rgot(\kgot',\tgot')^+$ of positive roots. 

The discrete series representations of $G'$ are parametrized by 
$$
\widehat{G}'_d:=\left\{\mu\in (\tgot')^*, \ \ggot' -{\rm regular}\right\}\cap (\wedge')^*_\rho\cap (\tgot')^*_+.
$$
At $\mu\in\widehat{G}'_d$, Harish-Chandra associates a square integrable unitary representation $\Pi^{G'}_\mu$ : $\mu$ is the {\em Harish-Chandra parameter}, and 
$$
\mu_{B}:=\mu-\rho'_c+\rho'_n(\mu)
$$
is the corresponding {\em Blattner parameter}. Here $\rho'_n(\mu)$ is associated to 
$\Rgot'_n(\mu)$. It is a classical fact that $\mu_{\small B}\in (\wedge')^*\cap (\tgot')^*_+$, and that the representation 
$V_{\mu_B}^{K'}$ occurs in $\Pi^{G'}_\mu\vert_{K'}$ with multiplicity one : $\mu_B$ is the minimal $K'$-types of $\Pi^{G'}_\mu$ in the sense of Vogan. Moreover the map $\mu\mapsto \mu_B$ induces a bijection between $\widehat{G}'_d\cap \Cholp(z)$ and $\Gholp(z)$ and we have  $\Pi^{G'}_\mu=V^{G'}_{\mu_B}$.

Let $\mu \in\widehat{G}'_d$ such that $[\Pi^{G'}_\mu : V^G_\lambda\vert_{G'}]\neq 0$. The Proposition will be proved if we check that 
$\mu\in \Cholp(z)$. Since $[V^{K'}_{\mu_B} : \Pi^{G'}_\mu\vert_{K'}]=1$, we have $[V^{K'}_{\mu_B} : V^G_\lambda\vert_{K'}]\neq 0$. 
Thanks to Proposition \ref{prop:restriction-G-K-prime}, we have then $\mu_B\in \cholp(z)$ : $\mu_B=2\rho'_n(z) +\xi$ with $\xi\in \Cholp(z)$. 
Hence 
$$
\mu = (\rho'_n(z) +\rho'_c)  + (\rho'_n(z)-\rho'_n(\mu))+\xi.
$$
The term $\rho':=\rho_n(z) +\rho_c'$ is associated to the choice of positive roots $\Rgot(\ggot',\tgot')^+:=\Rgot(\kgot',\tgot')^+\cup \Rgot'_n(z)$. Thus we have $(\rho',\alpha)>0$ for any $\alpha\in \Rgot'_n(z)$. 

The term $\rho'_n(z)-\rho'_n(\mu)$ is equal to the sum 
$$
\sum_{\stackrel{ \langle\alpha ,z\rangle >0}{(\alpha,\mu)<0}}\alpha
$$
and then $(\rho'_n(z)-\rho'_n(\mu),\alpha)\geq 0$ for any $\alpha\in \Rgot'_n(z)$. Finally we have proved that 
$$
(\mu,\alpha) = \underbrace{(\rho'_n(z) +\rho'_c,\alpha)}_{>0}  + \underbrace{(\rho'_n(z)-\rho'_n(\mu),\alpha)}_{\geq 0}
+\underbrace{(\xi,\alpha)}_{\geq 0}
$$
is positive for any $\alpha\in \Rgot'_n(z)$, thus $\mu \in\Cholp(z)$.
 \end{proof}
 
 \subsection{Jakobsen-Vergne's formula}

The aim of this section is to give a direct proof of the following result of Jakobsen-Vergne \cite{Jakobsen-Vergne}. 

\begin{theo}[Jakobsen-Vergne]\label{theo:Jakobsen-Vergne}
The multiplicity $m_\lambda(\mu)$ is equal to the multiplicity of the representation of $V^{K'}_\mu$ in 
$S^\bullet(\pgot/\pgot')\otimes V^{K}_\lambda\vert_{K'} $.
\end{theo}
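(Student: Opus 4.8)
The plan is to relate the two sides through the slice decomposition $G\cdot\lambda = G\times_K Y_\lambda$, where $Y_\lambda := (\Phi^G_{G\cdot\lambda})^{-1}(\kgot^*)$ is the $K$-invariant symplectic submanifold provided by Proposition \ref{prop:M.Y}, and through the factorization principle announced in the introduction, namely $\qfor_{K'}(G\cdot\lambda) = \qfor_{K'}(Y_\lambda)\otimes S^\bullet(\pgot)$ (the restriction to $K'$ of the identity $\qfor_K(M)=\qfor_K(Y)\otimes S^\bullet(\pgot)$). First I would combine Theorem \ref{theo:Q-Phi-G-lambda} with Theorem \ref{theo:formel=formel} to get $\qfor_K(G\cdot\lambda)=V^K_\lambda\otimes S^\bullet(\pgot)$ in $\Rfor(K)$, so that in particular $\qfor_K(Y_\lambda)=V^K_\lambda$ as a $K$-representation (using that $S^\bullet(\pgot)$ is torsion-free in the appropriate ring of formal characters). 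Then, restricting to $K'$ via the "Restriction to subgroup" part of Theorem \ref{theo:pep-formal}, I obtain
$$
\qfor_{K'}(G\cdot\lambda) = V^K_\lambda\vert_{K'}\otimes S^\bullet(\pgot)\vert_{K'}
$$
as a virtual character of $K'$ (the admissibility being guaranteed by the properness of $\Phi^{K'}_{G\cdot\lambda}$, i.e.\ Corollary \ref{coro:propre-z} together with Proposition \ref{prop:Phi-K-hol-propre}).

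The second ingredient is Theorem C (Theorem \ref{theo:QR=0-non-compact}), which identifies $V^G_\lambda\vert_{G'}$ with $\qfor_{G'}(G\cdot\lambda)$, so that $m_\lambda(\mu)$ is the multiplicity of $V^{G'}_\mu$ in $\qfor_{G'}(G\cdot\lambda)$. Applying Lemma A (Lemma \ref{lem:restriction-G-K}), the injective restriction morphism $\mathbf{r}_{K',G'}:\Rfor(G',z)\to\Rfor(K')$, I would compute $\mathbf{r}_{K',G'}\bigl(\qfor_{G'}(G\cdot\lambda)\bigr)=\qfor_{K'}(G\cdot\lambda)$ (this is Theorem B applied to the $G'$-manifold $G\cdot\lambda$, whose relevant Assumption {\bf A1} or {\bf A2} holds because $z\in\ggot'$). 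Now I decompose $S^\bullet(\pgot)\vert_{K'}$ as $S^\bullet(\pgot')\otimes S^\bullet(\pgot/\pgot')$ using the $K'$-module splitting $\pgot=\pgot'\oplus\pgot/\pgot'$; combining with the displayed formula above gives
$$
\qfor_{K'}(G\cdot\lambda)=\bigl(V^K_\lambda\vert_{K'}\otimes S^\bullet(\pgot/\pgot')\bigr)\otimes S^\bullet(\pgot').
$$
On the other side, for each $\mu\in\Gholp(z)$ the $K'$-finite vectors of $V^{G'}_\mu$ are $V^{K'}_\mu\otimes S^\bullet(\pgot')$ by Harish-Chandra's theorem, so $\mathbf{r}_{K',G'}\bigl(\sum_\mu m_\lambda(\mu)V^{G'}_\mu\bigr)=\bigl(\sum_\mu m_\lambda(\mu)V^{K'}_\mu\bigr)\otimes S^\bullet(\pgot')$.

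Equating these two expressions for $\qfor_{K'}(G\cdot\lambda)$ and cancelling the common factor $S^\bullet(\pgot')$ — which is legitimate because multiplication by the character of $S^\bullet(\pgot')$ is injective on the module of formal $K'$-characters supported on $\Cholp$-type cones, exactly the cancellation property underlying Lemma A — yields
$$
\sum_{\mu\in\Kholp(z)} [V^{K'}_\mu : V^K_\lambda\vert_{K'}\otimes S^\bullet(\pgot/\pgot')]\,V^{K'}_\mu
=\sum_{\mu\in\Gholp(z)} m_\lambda(\mu)\,V^{K'}_\mu,
$$
and comparing coefficients of $V^{K'}_\mu$ gives precisely $m_\lambda(\mu)=[V^{K'}_\mu : S^\bullet(\pgot/\pgot')\otimes V^K_\lambda\vert_{K'}]$, which is the assertion. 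The main obstacle I anticipate is making the "cancel $S^\bullet(\pgot')$" step rigorous: one must work in the right completed representation ring where infinite sums make sense, check that all the characters appearing are genuinely summable (admissibility of $S^\bullet(\pgot')$ and of $S^\bullet(\pgot/\pgot')$ as $K'$-modules, which follows from Proposition \ref{prop:Phi-K-hol-propre} and the hypothesis $z\in\ggot'$), and verify that multiplication by $S^\bullet(\pgot')$ is injective there — this is essentially the content of (the proof of) Lemma A, so invoking it carefully should suffice. A secondary point to handle with care is keeping track of $\rho$-shifts: everything above is stated at the level of Blattner parameters / highest weights, and one should confirm that the identification $\Pi^{G'}_\mu = V^{G'}_{\mu_B}$ and the parametrization $\Gholp(z)=\Khol'(z)$ used in the statement match the indexing of $m_\lambda(\mu)$.
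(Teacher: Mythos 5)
Your argument is correct, but it takes a genuinely different (and much heavier) route than the paper's own proof of Theorem \ref{theo:Jakobsen-Vergne}, which is purely representation-theoretic: it uses only Harish-Chandra's description of the finite vectors ($V^G_\lambda\vert_{K'}=S^\bullet(\pgot)\otimes V^{K}_\lambda\vert_{K'}$ and $V^{G'}_\mu\vert_{K'}=S^\bullet(\pgot')\otimes V^{K'}_\mu$), the support statement of Proposition \ref{prop:restriction-G-K-prime}, and the injectivity of $\mathbf{r}_{K',G'}$ from Lemma \ref{lem:restriction-G-K} --- no quantization result enters. Moreover, instead of cancelling $S^\bullet(\pgot')$, the paper multiplies out: it introduces the candidate element $\delta=\sum_\mu n_\lambda(\mu)\,V^{G'}_\mu$ with $n_\lambda(\mu)=[V^{K'}_\mu : S^\bullet(\pgot/\pgot')\otimes V^{K}_\lambda\vert_{K'}]$, checks that $\mathbf{r}_{K',G'}(\delta)=S^\bullet(\pgot)\otimes V^{K}_\lambda\vert_{K'}=\mathbf{r}_{K',G'}(V^G_\lambda\vert_{G'})$, and concludes by injectivity; your division by $S^\bullet(\pgot')$ is legitimate for exactly the same reason (after checking, via Proposition \ref{prop:restriction-G-K-prime}, that both sides are supported on $\Gholp(z)$, the cancellation is equivalent to that injectivity), so your flagged worry is resolved by Lemma \ref{lem:restriction-G-K} just as you suspected. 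Your detour through Theorems \ref{theo:Q-Phi-G-lambda}, \ref{theo:formal-G-K} and \ref{theo:QR=0-non-compact} is not circular (their proofs do not use Jakobsen--Vergne, and indeed in the paper this theorem is established before them), but it is unnecessary: the only output you extract from that machinery is $\mathbf{r}_{K',G'}(V^G_\lambda\vert_{G'})=V^G_\lambda\vert_{K'}=S^\bullet(\pgot)\otimes V^{K}_\lambda\vert_{K'}$, which is immediate from Harish-Chandra's theorem. What your approach buys is the geometric reading of $m_\lambda(\mu)$ as quantized reduced spaces, close in spirit to what the paper does separately in Section \ref{subsec:quantization-Y} (there with the roles of the two groups interchanged and via the slice); what the paper's approach buys is a short direct proof, independent of the $[Q,R]=0$ theorems, which can then serve as a check on them rather than a consequence of them.
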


Let us denote 
\begin{equation}\label{eq:Rfor-G-z}
\Rfor(G,z)
\end{equation}
the $\Zbb$-module formed by the infinite sum $\sum_{\lambda\in \Ghol(z)} m_\lambda \ V^{G}_\lambda$ with  
$m_\lambda\in\Zbb$. Similarly, we define $\Rfor(K,z)\subset \Rfor(K)$ as the sub-module formed by the infinite sum $\sum_{\mu\in \wedge^*_+} n_\mu \ V^{K}_\mu$ where $n_\mu\in\Zbb$ is non-zero only if $\mu\in\Khol(z)$. 

We have the following basic result.

\begin{lem}\label{lem:restriction-G-K} $\bullet$ The restriction to $K$ defines a morphism 
\begin{equation}\label{eq:restriction-K}
\mathbf{r}_{K,G} : \Rfor(G,z)\to\Rfor(K,z)
\end{equation}
 that is injective.
 
 $\bullet$ The product by $S^\bullet(\pgot)$ defines a map from $\Rfor(K,z)$ into itself.
\end{lem}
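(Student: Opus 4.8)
The plan is to prove both assertions using the explicit description of the $K$-finite vectors of the holomorphic discrete series, $V^G_\lambda|_K = V^K_\lambda\otimes S^\bullet(\pgot)$, together with the combinatorics of the cone $\chol(z)$ established in Corollary \ref{cor:K-multiplicity}.

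For the first point, I would define $\mathbf{r}_{K,G}$ by sending $\sum_{\lambda\in\Ghol(z)} m_\lambda V^G_\lambda$ to $\sum_{\lambda\in\Ghol(z)} m_\lambda\, (V^K_\lambda\otimes S^\bullet(\pgot))$, which formally makes sense as an element of $\Rfor(K)$ once we check local finiteness of the sum: for a fixed $\mu\in\wedge^*_+$, only finitely many $\lambda$ contribute. This is exactly the content of the second bullet of Corollary \ref{cor:K-multiplicity}: if $[V^K_\mu : V^K_\lambda\otimes S^\bullet(\pgot)]\neq 0$ then $\|\mu\|\geq\|\lambda\|$ with equality iff $\mu=\lambda$; since $\Ghol(z)$ is a discrete set (a subset of the lattice $\wedge^*$), the ball $\{\lambda : \|\lambda\|\leq\|\mu\|\}$ meets it in finitely many points, so $\mathbf{r}_{K,G}$ lands in $\Rfor(K)$. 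Moreover Proposition \ref{prop:restriction-G-K-prime} (with $G'=G$, $K'=K$, so that the argument degenerates to: $V^K_\mu$ occurs in $V^G_\lambda|_K$ only if $\mu\in\Khol(z)$) shows the image actually lies in the submodule $\Rfor(K,z)$. For injectivity, the key observation is that the partial order given by ``$\mu\in\lambda+\Ccal(z)$'' together with the norm estimate makes the matrix of $\mathbf{r}_{K,G}$ ``uni-triangular'': if $x=\sum_\lambda m_\lambda V^G_\lambda\neq 0$, pick $\lambda_0$ with $m_{\lambda_0}\neq 0$ of minimal norm among those with $m_\lambda\neq 0$; then by the norm estimate the coefficient of $V^K_{\lambda_0}$ in $\mathbf{r}_{K,G}(x)$ is $m_{\lambda_0}\cdot[V^K_{\lambda_0}:V^K_{\lambda_0}\otimes S^\bullet(\pgot)] = m_{\lambda_0}\neq 0$ (the weight $0$ occurs in $S^\bullet(\pgot)$ with multiplicity one, and no $\lambda\neq\lambda_0$ with $m_\lambda\neq 0$ can contribute to $V^K_{\lambda_0}$ since that would force $\|\lambda\|<\|\lambda_0\|$). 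Hence $\mathbf{r}_{K,G}(x)\neq 0$.

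For the second point, I would observe that $S^\bullet(\pgot)$ is an admissible $K$-module whose $T$-weights all lie in $\Ccal(z)$ (they are sums of the roots in $\Rgot_n(z)$), so tensoring a highest-weight module $V^K_\mu$ with $S^\bullet(\pgot)$ produces irreducibles $V^K_{\mu'}$ only with $\mu'\in\mu+\Ccal(z)$. Since $\Khol(z)=\wedge^*_+\cap\chol(z)$ and $\chol(z)=2\rho_n(z)+\Chol(z)$ is stable under addition of $\Ccal(z)$ (because $\Ccal(z)+\Chol(z)\subset\Chol(z)$ by \eqref{eq:inclusion-cone} applied after the translation), multiplication by $S^\bullet(\pgot)$ preserves the condition $\mu\in\Khol(z)$; and local finiteness of the resulting sum follows again from the norm estimate in Corollary \ref{cor:K-multiplicity}. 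Thus the map is well-defined from $\Rfor(K,z)$ into itself.

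\textbf{Main obstacle.} The only genuinely delicate point is the local finiteness/well-definedness: one must be sure that for fixed $\mu$ the sums involved are finite, and this is precisely where the norm estimate $\|\mu\|\geq\|\lambda\|$ of Corollary \ref{cor:K-multiplicity} is indispensable — without it the ``triangularity'' argument for injectivity also collapses. Everything else is bookkeeping with the cones $\Ccal(z)\subset\Chol(z)\subset\chol(z)$ and the $W_K$-invariance of $\chol(z)$.
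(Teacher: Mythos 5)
Your proof is correct and follows essentially the same route as the paper: well-definedness and the landing in $\Rfor(K,z)$ come from the norm estimate and cone containment of Corollary \ref{cor:K-multiplicity} (the paper phrases the coefficients as $\Qcal((G\cdot\lambda)_\mu)$ rather than $[V^K_\mu:V^K_\lambda\otimes S^\bullet(\pgot)]$, but these are identified by that corollary), and injectivity is the same ``minimal norm'' triangularity argument, the paper likewise using implicitly that $V^K_{\lambda}$ occurs exactly once in $V^G_{\lambda}\vert_K$. The second bullet is also handled as in the paper, via the same norm and cone estimates applied to $V^K_\mu\otimes S^\bullet(\pgot)$.
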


\begin{proof} Let us prove the first point. Thanks to Corollary \ref{cor:K-multiplicity}, we have $V^G_\lambda=\sum_{\mu\in \wedge^*_+} Q((G\cdot\lambda)_\mu) \ V^{K}_\mu$. Then 
\begin{eqnarray*}
\mathbf{r}_{K,G}\Big(\sum_{\lambda\in \Ghol(z)} m_\lambda \ V^{G}_\lambda\Big)&:=&
\sum_{\lambda\in \Ghol(z)} m_\lambda \ V^{G}_\lambda\vert_K\\
&=&\sum_{\mu\in \wedge^*_+}  \Big(\sum_{\lambda\in \Ghol(z)} m_\lambda Q((G\cdot\lambda)_\mu) \Big) V^{K}_\mu.
\end{eqnarray*}

We know that $Q((G\cdot\lambda)_\mu)\neq 0$ only if $\|\lambda\|\leq \|\mu\|$ and $\mu\in\Ccal(z)$. Hence the 
sum $n_\mu:=\sum_{\lambda\in \Ghol(z)} m_\lambda Q((G\cdot\lambda)_\mu)$ has a finite number of non-zero term and 
$n_\mu\neq 0$ only if $\mu\in\Chol(z)$.

Let $A=\sum_{\lambda\in \Ghol(z)} m_\lambda \ V^{G}_\lambda$ be a non-zero element in $\Rfor(G,z)$. Let 
$\lambda_A\in \Ghol(z)$ such that $\|\lambda_A\|$ is minimal among the set $\{\|\lambda\|\ \vert\ m_\lambda\neq 0\}$.
Let $\mathbf{r}_{K,G}(A)=\sum_{\mu} n_\mu \ V^{K}_\mu$. Then
$$
n_{\lambda_A}:=m_{\lambda_A} +\sum_{\lambda\neq \lambda_A} m_\lambda Q((G\cdot\lambda)_{\lambda_A}).
$$
But $m_\lambda=0$ if  $\|\lambda\|<\|\lambda_A\|$ and $Q((G\cdot\lambda)_{\lambda_A})=0$ if $\lambda\neq \lambda_A$ and 
$\|\lambda\|\geq\|\lambda_A\|$ (see second point of Corollary \ref{cor:K-multiplicity}). We have checked that $n_{\lambda_A}=m_{\lambda_A}\neq 0$ and then 
$\mathbf{r}_{K,G}(A)\neq 0$.

Let us check the second point. Let $A= \sum_{\mu\in \wedge^*_+} n_\mu V^K_\mu\in \Rfor(K,z)$. Then 
\begin{eqnarray*}
A\otimes S^\bullet(\pgot)&=& \sum_{\mu\in \Ccal(z)} n_\mu V^K_\mu\otimes S^\bullet(\pgot)\\
&=& \sum_{\theta} \Big(\sum_{\mu\in \Ccal(z)}n_\mu Q((G\cdot\mu)_\theta)\Big) V^K_\theta.
\end{eqnarray*}
Like before, the term $Q((G\cdot\mu)_\theta)$ is non-zero only if $\|\mu\|\leq \|\theta\|$ and $\theta\in\Ccal(z)$. Hence the 
sum $\sum_{\mu\in \Ccal(z)}n_\mu Q((G\cdot\mu)_\theta)$ has a finite number of non-zero term and 
is non-zero only if $\theta\in\Ccal(z)$.
\end{proof}

\medskip

Let us consider the similar morphism $\mathbf{r}_{K',G'} : \Rfor(G',z)\to\Rfor(K',z)$ for the reductive subgroup $G'$. We consider the following elements of $\Rfor(G',z)$: 
\begin{eqnarray*}
V^G_\lambda\vert_{G'}&=&\sum_{\mu\in \Gholp(z)} m_\lambda(\mu) \ V^{G'}_\mu,\quad \mathrm{and}\\
\delta &:=& \sum_{\mu\in \Gholp(z)} n_\lambda(\mu) \ V^{G'}_\mu,
\end{eqnarray*}
where $n_\lambda(\mu):=[V^{K'}_\mu : S^\bullet(\pgot/\pgot')\otimes V^{K}_\lambda\vert_{K'}]$. Theorem \ref{theo:Jakobsen-Vergne} 
will be proved if we check that $\mathbf{r}_{K',G'}(V^G_\lambda\vert_{G'})=\mathbf{r}_{K',G'}(\delta)$. But 
$\mathbf{r}_{K',G'}(V^G_\lambda\vert_{G'})= V^G_\lambda\vert_{K'}=S^\bullet(\pgot)\otimes V^{K}_\lambda\vert_{K'}$, and 
\begin{eqnarray*}
\mathbf{r}_{K',G'}(\delta)&=&\sum_{\mu\in \Gholp(z)} n_\lambda(\mu) \ V^{G'}_\mu\vert_{K'}\\
&=&S^\bullet(\pgot')\otimes \Big(\sum_{\mu\in \Gholp(z)} n_\lambda(\mu) \  V^{K'}_\mu\Big) \qquad [1]\\
&=&S^\bullet(\pgot')\otimes \Big(\sum_{\mu\in \Gholp(z)} [V^{K'}_\mu : S^\bullet(\pgot/\pgot')\otimes V^{K}_\lambda\vert_{K'} ] 
\ V^{K'}_\mu\Big)\\
&=& S^\bullet(\pgot)\otimes V^{K}_\lambda\vert_{K'}
\end{eqnarray*}

The second point of Lemma \ref{lem:restriction-G-K} insures that the product in $[1]$ is well-defined. We need to explain the last equality. Note that $[V^{K'}_\mu,\, S^\bullet(\pgot/\pgot')\otimes V^{K}_\lambda\vert_{K'} ]\neq 0$ implies 
$[V^{K'}_\mu,\, S^\bullet(\pgot)\otimes V^{K}_\lambda\vert_{K'} ]\neq 0$ and then $\mu\in \Gholp(z)$ (see Proposition \ref{prop:restriction-G-K-prime}). This insures that the sum 
$$
\sum_{\mu\in \Gholp(z)} [V^{K'}_\mu : S^\bullet(\pgot/\pgot')\otimes V^{K}_\lambda\vert_{K'}]\ V^{K'}_\mu\quad \in\ 
\Rfor(K',z)
$$
is equal to $S^\bullet(\pgot/\pgot')\otimes V^{K}_\lambda\vert_{K'}$.

\subsection{Formal geometric quantization of $G$-actions}\label{sec:formal-G}

In this section we consider the Hamiltonian action of a connected real reductive Lie group $G$ on a symplectic manifold $(M,\Omega_M)$. 
We suppose that the action of $G$ on $M$ is {\em proper} and that the moment map $\Phi^G_M:M\to\ggot^*$ is {\em proper}. We know that we have 
a global slice $Y\subset M$ such that 
$$
M\simeq G\times_K Y,
$$
 and that the $G$-orbits in the image of $\Phi^G_M$ are parametrized by the Kirwan polytope $\Delta_{K}(Y)$.

Let us suppose the existence of a $G$-equivariant pre-quantum line bundle $L_M\to M$. Note that $L_M$ is completely determined by its restriction $L_Y\to Y$ to the sub-manifold $Y$: here $L_Y$ is a $K$-equivariant pre-quantum line bundle over $(Y,\Omega_Y)$. For any dominant weight $\mu$, we see that the reduce space  
$$
M_{\mu,G}:=(\Phi^G_M)^{-1}(G\cdot\mu)/G
$$
coincides with $Y_{\mu,K}:=(\Phi^K_M)^{-1}(K\cdot\mu)/K$. Hence its quantization 
$$
\Qcal(M_{\mu,G}):=\Qcal(Y_{\mu,K})\in\Zbb
$$
 is well-defined (see Section \ref{subsec:formal-quantization}).

We suppose also that $G$ satisfies (\ref{condition-hol}), and we fix a complex structure $\mathrm{ad}(z)$ on $\pgot$. 
Let $\chol(z)\subset \tgot^*$ be the corresponding cone. 

\begin{lem}\label{lem-phi-z-positif}
Let Let $(M,\Omega_M,\Phi^G_M)$ be a  Hamiltonian manifold. Suppose that the image of $\Phi^G_M$ is contained in $G\cdot\chol(z)\subset\ggot_{se}^*$. 
Then :
\begin{enumerate}
\item the Kirwan polytopes $\Delta_K(Y)\subset\Delta_K(M)$ are contained in $\chol(z)$,
\item the functions $\langle\Phi^K_Y,z\rangle$ and $\langle\Phi^K_M,z\rangle$ take strictly positive values.
\end{enumerate}
\end{lem}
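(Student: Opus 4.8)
The plan is to derive both statements from the structural results already established in Section~\ref{sec:hamiltonian-reductive}, together with the cone inclusions of the present section. First I would invoke Proposition~\ref{prop:M.Y}: since $\Phi^G_M$ is proper and the action is proper, we have $M\simeq G\times_K Y$ with $\Phi^K_Y$ the restriction of $\Phi^G_M$ to $Y$, and $\Phi^G_M([g,y])=g\cdot\Phi^K_Y(y)$. In particular $\mathrm{Image}(\Phi^K_Y)=\mathrm{Image}(\Phi^G_M)\cap\kgot^*$, and since $\mathrm{Image}(\Phi^G_M)\subset G\cdot\chol(z)$ with $G\cdot\chol(z)\subset\ggot^*_{se}=\Ad^*(G)\cdot\kgot^*_{se}$, intersecting with $\kgot^*$ gives $\mathrm{Image}(\Phi^K_Y)\subset (G\cdot\chol(z))\cap\kgot^*$. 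The key point is that $(G\cdot\chol(z))\cap\kgot^*=K\cdot\chol(z)$: the inclusion $\supseteq$ is clear since $\chol(z)\subset\kgot^*$, and $\subseteq$ follows because if $g\cdot\xi\in\kgot^*$ for $\xi\in\chol(z)\subset\kgot^*_{se}$, then writing $g=k\exp(X)$ and applying the Cartan-involution argument already used in the proof of Lemma~\ref{lem:M=se} (namely $h^{-1}\Theta(h)\in G_{\xi}\subset K$ forces $h\in K$) one gets $g\cdot\xi\in K\cdot\xi\subset K\cdot\chol(z)$.

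Next, intersecting with the Weyl chamber, $\Delta_K(Y)=\mathrm{Image}(\Phi^K_Y)\cap\tgot^*_+\subset (K\cdot\chol(z))\cap\tgot^*_+=\chol(z)$, the last equality because $\chol(z)$ is a $W_K$-invariant cone in $\tgot^*$ containing its $K$-saturation's chamber part. For the inclusion $\Delta_K(Y)\subset\Delta_K(M)$, I would note $Y\hookrightarrow M$ as a $K$-submanifold with $\Phi^K_M|_Y=\Phi^K_Y$, so $\mathrm{Image}(\Phi^K_Y)\subset\mathrm{Image}(\Phi^K_M)$, hence the Kirwan polytopes are nested; and $\Delta_K(M)\subset\chol(z)$ follows the same way since $\mathrm{Image}(\Phi^K_M)\subset\mathrm{Image}(\Phi^G_M)\cap\kgot^*$ (using the decomposition $\Phi^G_M=\Phi^K_M\oplus\Phi^\pgot_M$ and that on $Y$ all of $\Phi^K_M$ is attained). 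Actually the cleanest route is: $\mathrm{Image}(\Phi^K_M)$ is the $\kgot^*$-component of $\mathrm{Image}(\Phi^G_M)\subset G\cdot\chol(z)$; but the $\kgot^*$-component of $G\cdot\chol(z)$ need not lie in $K\cdot\chol(z)$ in general, so I would instead go through $\Phi^K_M([g,y])$ directly—writing $g=k\exp X$, the $\kgot^*$-part of $\exp(X)\cdot\Phi^K_Y(y)$—and use the convexity/Kirwan description to conclude it stays in the $W_K$-invariant cone generated by $\Delta_K(Y)\subset\chol(z)$, hence in $\chol(z)$.

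For the second statement, once $\Delta_K(Y)\subset\chol(z)$ is established, the inclusion (\ref{eq:inclusion-cone}) gives $\chol(z)=2\rho_n(z)+\Chol(z)\subset\{\xi\,\vert\,\langle\xi,z\rangle\geq 0\}$; moreover on $\chol(z)$ one has $\langle\xi,z\rangle=\langle 2\rho_n(z),z\rangle+\langle\eta,z\rangle$ with $\eta\in\Chol(z)$, $\langle\eta,z\rangle\geq0$, and $\langle 2\rho_n(z),z\rangle=\sum_{\beta\in\Rgot_n(z)}\langle\beta,z\rangle=|\Rgot_n(z)|>0$ since $\langle\beta,z\rangle=1$ for each such $\beta$. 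Thus $\langle\xi,z\rangle>0$ strictly for every $\xi\in\chol(z)$. Since $z\in\cgot_\kgot$ is $K$-invariant, $\langle\Phi^K_Y,z\rangle$ is constant on $K$-orbits, so its values are exactly $\langle\Delta_K(Y),z\rangle\subset\langle\chol(z),z\rangle\subset\Rbb^{>0}$; likewise for $\langle\Phi^K_M,z\rangle$ using $\langle\Phi^K_M([g,y]),z\rangle=\langle\Phi^G_M([g,y]),z\rangle$ (the $\pgot$-component pairs to zero with $z\in\kgot$) $=\langle g\cdot\Phi^K_Y(y),z\rangle=\langle\Phi^K_Y(y),\Ad(g^{-1})z\rangle$, and here one must check $\langle\Ad(g)\xi,z\rangle>0$ whenever $\xi\in\chol(z)$—which holds because $\langle\Ad(g)\xi,z\rangle=\langle\xi,\Ad(g^{-1})z\rangle$ and, more robustly, because $\langle\,\cdot\,,z\rangle$ is (up to sign and the identification $b$) the quadratic form whose positivity on strongly elliptic elements near $G\cdot\chol(z)$ is exactly what makes $\exp$-translates preserve positivity, as in the estimate in the proof of Theorem~\ref{theo:properGK}.

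\textbf{Main obstacle.} The subtle point—and the one I expect to require the most care—is controlling $\langle\Phi^K_M,z\rangle$ (as opposed to $\langle\Phi^G_M,z\rangle$) on all of $M$, i.e.\ showing that $G$-translation of $\chol(z)$ followed by projection to $\kgot^*$ does not destroy positivity of the pairing with $z$. On $Y$ everything is immediate from the cone computation, but the statement asks for strict positivity of $\langle\Phi^K_M,z\rangle$ globally; the resolution is that $\langle\Phi^K_M(m),z\rangle=\langle\Phi^G_M(m),z\rangle$ because $z\in\kgot$ annihilates the $\pgot^*$-part, so in fact the two functions $\langle\Phi^K_M,z\rangle$ and $\langle\Phi^G_M,z\rangle$ literally coincide, and $\langle\Phi^G_M(m),z\rangle=-b(\widetilde{\Phi^G_M(m)},z)$ with $\Phi^G_M(m)\in G\cdot\chol(z)$; since $b$ is $G$-invariant this equals $-b(\Ad(g^{-1})\widetilde{\Phi^G_M(m)},\Ad(g^{-1})z)$ — but that reintroduces $g$, so the honest argument is: for $\xi\in\chol(z)$ and $g\in G$, $\langle g\cdot\xi,z\rangle=\langle\xi,\Ad(g^{-1})z\rangle$, write $\Ad(g^{-1})z=z_\kgot+z_\pgot$; then $\langle\xi,\Ad(g^{-1})z\rangle=\langle\xi,z_\kgot\rangle$ (as $\xi\in\kgot^*$), and one shows $z_\kgot$ stays in the open dual cone to $\chol(z)$ by a limiting/properness argument analogous to Lemma~\ref{lem-phi-z-positif}(1) itself. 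I would present this by first proving (1) and then deducing (2) purely from (1) plus the cone inequality $\langle 2\rho_n(z)+\Chol(z),z\rangle>0$, which is the clean and short path.
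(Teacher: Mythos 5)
Your treatment of the slice is fine: the identity $(G\cdot\chol(z))\cap\kgot^*=K\cdot\chol(z)$ via the Cartan-involution trick of Lemma \ref{lem:M=se} correctly gives $\Delta_K(Y)\subset\chol(z)$, and your final plan for statement (2) — deduce it from (1) together with $\langle\xi,z\rangle\geq\langle 2\rho_n(z),z\rangle=|\Rgot_n(z)|>0$ for $\xi\in\chol(z)$ — is exactly the paper's route. The problem is the remaining half of (1), namely $\Delta_K(M)\subset\chol(z)$, which is the genuinely non-trivial content of the lemma and which statement (2) for $\langle\Phi^K_M,z\rangle$ also hinges on. Your first justification ("on $Y$ all of $\Phi^K_M$ is attained", i.e. $\mathrm{Image}(\Phi^K_M)\subset\mathrm{Image}(\Phi^G_M)\cap\kgot^*$) is false: $\Phi^K_M$ is the \emph{projection} $\pi_{\kgot,\ggot}\circ\Phi^G_M$, and already for $M=G\cdot\lambda$, $Y=K\cdot\lambda$ one has $\Delta_K(Y)=\{\lambda\}$ while $\Delta_K(M)=\Delta_K(G\cdot\lambda)$ is an unbounded polyhedron, so $\Delta_K(M)$ is not recovered from $Y$. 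You notice this yourself, but the replacement — "use the convexity/Kirwan description to conclude it stays in the $W_K$-invariant cone generated by $\Delta_K(Y)$, hence in $\chol(z)$" — is not an argument: the cone generated by $\Delta_K(Y)$ is not contained in the affine cone $\chol(z)=2\rho_n(z)+\Chol(z)$ (it contains small multiples), and no reason is given why the $\kgot^*$-part of $e^X\cdot\xi$, $\xi\in\chol(z)$, should stay in $K\cdot\chol(z)$.

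What is missing is precisely the ingredient the paper invokes: $\Delta_K(M)=\pi_{\kgot,\ggot}(\mathrm{Image}\,\Phi^G_M)\cap\tgot^*_+\subset\bigcup_{\lambda\in\chol(z)}\Delta_K(G\cdot\lambda)$, and for each such $\lambda$ one has $\Delta_K(G\cdot\lambda)\subset\lambda+\Ccal(z)\subset\chol(z)$ by point (a) of Proposition \ref{prop:Delta-K-Lambda} (i.e. Lemma \ref{lem:Kirwan-G-lambda}, which rests on Sjamaar's description of the local Kirwan cone and the weight computation on $(\pgot,\Omega_\lambda)$). Without this orbit-wise cone estimate, the step from "image of $\Phi^G_M$ in $G\cdot\chol(z)$" to "image of $\Phi^K_M$ in $K\cdot\chol(z)$" is unproved, and your alternative attempts at (2) (the appeal to the estimate in Theorem \ref{theo:properGK}, or the claim that $[\mathrm{Ad}(g^{-1})z]_\kgot$ "stays in the open dual cone") are likewise unsubstantiated. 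Once you cite Proposition \ref{prop:Delta-K-Lambda}(a) at this point, both statements follow exactly along the lines you sketch, and indeed your separate argument for $\Delta_K(Y)$ becomes unnecessary since $\Delta_K(Y)\subset\Delta_K(M)$.
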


\begin{proof} The Kirwan polytope $\Delta_K(M)= \pi_{\kgot,\ggot}\left(\mathrm{Image}(\Phi^G_M)\right)\cap \tgot^*_+$ is contained in
$$
\bigcup_{\lambda\in\chol(z)}\Delta_K(G\cdot\lambda)\subset\chol(z),
$$
where the last inclusion is a consequence of point $(a)$ in Proposition \ref{prop:Delta-K-Lambda}. The first point is proved. Hence we get, thanks to (\ref{eq:inclusion-cone}), 
the following relations 
$$
\mathrm{Image}(\langle\Phi^K_Y,z\rangle)\subset \mathrm{Image}(\langle\Phi^K_M,z\rangle)\subset \langle\Delta_K(M),z\rangle\subset \langle\chol(z),z\rangle\subset [c, +\infty[
$$
with $c=\langle\rho(z),z\rangle=\frac{1}{2}\dim\pgot$.
\end{proof}

\medskip

We have the following notion of formal geometric quantization 
that extends the case of compact Lie group actions.

\begin{defi} \label{def:formal-G}
Let $(M,\Omega_M,\Phi^G_M)$ be a pre-quantized Hamiltonian manifold, such that the moment map 
$\Phi^G_M$ is a {\em proper} map from $M$ into $G\cdot\chol(z)$.
Then we define the formal geometric quantization of $M$ as the following element of $\Rfor(G,z)$:
$$
\qfor_G(M):=\sum_{\mu\in \Ghol(z)} \Qcal(M_{\mu,G})\ V^G_\mu.
$$
\end{defi}

\medskip

Let $\mathbf{r}_{K,G}: \Rfor(G,z)\to \Rfor(K,z)$ be the restriction morphism defined in Lemma \ref{lem:restriction-G-K}. Recall that in the setting of Definition \ref{def:formal-G}, the moment map $\Phi^K_M$ is proper (see Theorem \ref{theo:properGK}). Then the formal geometric quantization of $M$ relatively to the $K$-action is well-defined : $\qfor_K(M)\in \Rfor(K)$.

We have proved in Theorem \ref{theo:properGK}, that the sets of critical points of the function $\|\Phi_M^G\|^2, \|\Phi_M^K\|^2$ and 
$\|\Phi_Y^K\|^2$ are equal.  We will be interested by one of the following hypothesis:

\begin{assu}\label{assumption}
\begin{itemize}
\item {\bf A1} The set $\Cr(\|\Phi_M^G\|^2)$ is compact.
\item {\bf A2} The map $\langle\Phi_M^G,z\rangle:M\to\Rbb$ is proper.
\end{itemize}
\end{assu}

\medskip

In the following Lemma, we exhibit examples where the Assumptions {\bf A1} or {\bf A2} are satisfied.

\begin{lem}$\bullet$ Suppose that we are in the algebraic setting: the manifold $M$ is real algebraic and the map $\Phi^G_M$ is a proper algebraic map. Then  $\Cr(\|\Phi_M^G\|^2)$ is compact.

$\bullet$ Suppose that the Lie algebra $\ggot$ is simple. Then, in the context of Definition \ref{def:formal-G}, the map $\langle\Phi_M^G,z\rangle:M\to\Rbb$ is proper.
\end{lem}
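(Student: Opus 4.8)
The plan is to prove the two bullet points separately, since they concern different hypotheses (\textbf{A1} and \textbf{A2} respectively) and invoke different structural facts.

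\medskip

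For the first bullet point, assume $M$ is real algebraic and $\Phi^G_M$ is a proper algebraic map. By Theorem \ref{theo:properGK} we know that $\Cr(\|\Phi_M^G\|^2) = \Cr(\|\Phi_M^K\|^2) = \Cr(\|\Phi_Y^K\|^2) \subset Y$, so it suffices to show that this common set is bounded; since it is automatically closed (being the vanishing locus of the smooth vector field $\kappa^G$), boundedness gives compactness. The approach here is to use the algebraicity: $\|\Phi_M^G\|^2 = -b(\Phi_M^G, \Theta(\Phi_M^G))$ is a real algebraic function on $M$ (the bilinear form $b$ is algebraic and $\Theta$ is an algebraic involution), so its critical set $\Cr(\|\Phi_M^G\|^2)$ is a real algebraic subvariety of $M$. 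A real algebraic variety has finitely many connected components, and on each one the proper algebraic map $\|\Phi_M^G\|^2$ restricted there is either unbounded or has bounded image. I would rule out the unbounded case by appealing to properness of $\Phi^G_M$ together with the fact that along $\Cr(\|\Phi_M^G\|^2)$ one has $\widetilde{\Phi_M^G(m)} \in \ggot_m$, i.e.\ $\Phi_M^G(m)$ is a fixed point of the coadjoint action of the (compact) stabilizer $G_m$; more concretely, by the Łojasiewicz-type / curve selection arguments available in the real algebraic setting, if $\Cr(\|\Phi_M^G\|^2)$ were unbounded one could find a real algebraic curve $t \mapsto m(t)$ inside it with $\|\Phi_M^G(m(t))\|^2 \to \infty$, and analyzing $\frac{d}{dt}\|\Phi_M^G(m(t))\|^2 = 0$ along this curve (the defining property of the critical set) gives a contradiction with the growth. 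Alternatively, and perhaps more cleanly, one restricts everything to the algebraic $K$-variety $Y$ and uses that $\Cr(\|\Phi_Y^K\|^2)$ is the critical set of a proper algebraic function on an algebraic variety, which therefore has compact critical locus by the classical fact that a proper Morse--Bott-type function arising algebraically attains its critical values on a compact set — here the Kirwan-stratification picture for compact group actions on algebraic varieties applies directly.

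\medskip

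For the second bullet point, assume $\ggot$ is simple and we are in the setting of Definition \ref{def:formal-G}, so $\Phi^G_M$ is proper with image in $G\cdot\chol(z)$. I want to show $\langle\Phi_M^G, z\rangle: M \to \Rbb$ is proper. Write $m = [ke^X, y]$ with $k\in K$, $X\in\pgot$, $y\in Y$, using the decomposition $M \simeq G\times_K Y$ of Proposition \ref{prop:M.Y}. Then $\Phi_M^G(m) = ke^X\cdot\Phi_Y^K(y)$. Since $z$ is $K$-fixed, $\langle\Phi_M^G(m), z\rangle = \langle e^X\cdot\Phi_Y^K(y), z\rangle$. Now the key point exploiting simplicity is that $G\cdot\chol(z)$ is a \emph{maximal} closed convex $G$-invariant cone in $\ggot^*_{se}$ (the Remark after Proposition \ref{prop:cone-projection}), and on such a cone the linear functional $\langle\cdot, z\rangle$ is a proper function: indeed $\langle\cdot,z\rangle$ vanishes only at the origin on the closed cone $G\cdot\chol(z)$ (by the inclusion \eqref{eq:inclusion-cone}, $\langle\xi,z\rangle\geq 0$ on $\chol(z)$ with equality forcing $\xi$ into a proper face, and maximality/simplicity forces that face to be $\{0\}$ on the relevant component), hence $\langle\cdot,z\rangle$ is proper on $G\cdot\chol(z)$ because a nonnegative linear functional that is positive off the apex of a closed cone is automatically proper on that cone. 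Combined with properness of $\Phi^G_M$ itself, properness of $\langle\Phi_M^G,z\rangle = \langle\cdot,z\rangle\circ\Phi_M^G$ follows: if $\langle\Phi_M^G(m_i),z\rangle$ stays bounded then $\Phi_M^G(m_i)$ stays in a bounded subset of $G\cdot\chol(z)$ by properness of $\langle\cdot,z\rangle$ there, hence in a compact subset of $\ggot^*$, hence $m_i$ stays in a compact subset of $M$ by properness of $\Phi^G_M$. One subtlety to address carefully: $\langle\cdot,z\rangle$ restricted to $\chol(z)=2\rho_n(z)+\Chol(z)$ is bounded below by $c = \tfrac12\dim\pgot > 0$ (as computed in Lemma \ref{lem-phi-z-positif}), so $\langle\cdot,z\rangle$ does not literally vanish on the image; I would instead argue directly that the sublevel set $\{\xi\in G\cdot\chol(z) : \langle\xi,z\rangle\leq R\}$ is bounded, using that $G\cdot\chol(z)\subset\ggot^*_{se}$ consists of strongly elliptic (hence semisimple, in fact "elliptic") elements on which $\|\cdot\|^2$ is controlled by $\langle\cdot,z\rangle$ up to the action of $G$, via the estimate $\|e^{X'}\cdot\xi_o\|$-type computations already appearing in the proof of Theorem \ref{theo:properGK}.

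\medskip

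The main obstacle I anticipate is the second bullet: making rigorous the claim that $\langle\cdot,z\rangle$ is proper on $G\cdot\chol(z)$, which is precisely where simplicity of $\ggot$ enters (for non-simple $\ggot$ the cone $G\cdot\chol(z)$ need not be maximal and $z$ might pair trivially with a nontrivial invariant direction, destroying properness). Concretely one needs: $(i)$ on the compact-part slice, $\langle\xi,z\rangle$ with $\xi\in\Chol(z)$ controls $\|\xi\|$ — this follows from $\langle\xi,z\rangle = 2\sum_{\beta\in\Rgot_n(z)}(\beta,\xi)$ and the fact that, $\ggot$ being simple, the non-compact roots $\Rgot_n(z)$ span $\tgot^*$, so $\xi\mapsto\langle\xi,z\rangle$ is a positive functional that is a proper (norm-equivalent) function on the cone $\Chol(z)$; $(ii)$ along the $\exp(\pgot)$-directions, the $G_\xi$-compactness of strongly elliptic $\xi$ plus the $\mathrm{ad}^*(X')$-expansion estimate from the proof of Theorem \ref{theo:properGK} shows $\|e^{X'}\cdot\xi_o\|$ grows with $\|X'\|$ while $\langle e^{X'}\cdot\xi_o, z\rangle \geq \langle\xi_o,z\rangle$ stays controlled from below — wait, this is the wrong direction, so one must instead observe $\langle e^{X'}\cdot\xi_o, z\rangle \geq \|[e^{X'}\cdot\xi_o]_{\kgot^*}\| \cdot \|z\|^{-1}\langle\cdot\rangle$-type bound showing the $z$-component itself grows, using that $z\in\cgot_\kgot$ and the non-compact exponentials move mass into directions pairing positively with $z$. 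Assembling $(i)$ and $(ii)$ into a clean proof that sublevel sets of $\langle\cdot,z\rangle\circ\Phi_M^G$ are compact is the technical heart; everything else is bookkeeping with the decomposition $M = G\times_K Y$ and the already-established properness results.
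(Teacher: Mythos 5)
Your first bullet is essentially correct and close in spirit to the paper's own proof. The paper also uses algebraicity to get finitely many connected components of $\Cr(\|\Phi^G_M\|^2)$ (Whitney's theorem), and then concludes by noting that $\varphi:=\|\Phi^G_M\|^2$ is constant on each component, so each component lies in a compact fibre of the proper map $\varphi$. Your curve-selection variant (a semialgebraic curve in the critical set along which $\varphi\to\infty$ would satisfy $\frac{d}{dt}\varphi=0$, hence be constant) accomplishes the same thing and is fine; your second, ``Morse--Bott / Kirwan stratification'' alternative is not an argument, but you do not rely on it.

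The second bullet has a genuine gap. Your route requires proving that the sublevel sets $\{\xi\in G\cdot\chol(z)\ :\ \langle\xi,z\rangle\le R\}$ are bounded, i.e.\ that $\langle\cdot,z\rangle$ controls the norm on the whole $G$-saturated image, and that is exactly the step you leave open. Your point $(i)$ only treats the slice $\Chol(z)\subset\tgot^*$, and there your argument is correct and is in substance the paper's: $\langle\xi,z\rangle=2\sum_{\beta\in\Rgot_n(z)}(\beta,\xi)=0$ on $\Chol(z)$ forces $[\tilde{\xi},\pgot]=0$, hence $\tilde{\xi}$ centralizes $[\pgot,\pgot]=\kgot$ (simplicity), hence $\tilde{\xi}\in\cgot_\kgot=\Rbb z$ and then $\xi=0$. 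But your point $(ii)$, the control along the $\exp(\pgot)$-directions, is explicitly left as ``the technical heart'': the inequality you first write goes, as you note yourself, in the wrong direction, and the substitute bound you gesture at ($\langle\eta,z\rangle$ dominating $\|\eta_{\kgot^*}\|$ on the image) is precisely the statement to be proved. The appeal to maximality of the invariant cone (``maximality/simplicity forces that face to be $\{0\}$'') is not a proof either; making it one would need the Vinberg--Paneitz theory of invariant convex cones, which the paper only cites in a remark. The paper avoids the noncompact directions altogether: since $z$ is central in $\kgot$, $\langle\Phi^G_M,z\rangle=\langle\Phi^K_M,z\rangle$ is the moment map of the central circle $S^1\subset K$, and $\Phi^K_M$ is proper (Theorem \ref{theo:properGK}), so Proposition \ref{prop:map=propre} reduces properness to $\As(\Delta_K(M))\cap(\Rbb z)^\perp=\{0\}$, which by Lemma \ref{lem-phi-z-positif} follows from $\Chol(z)\cap(\Rbb z)^\perp=\{0\}$ --- i.e.\ from your point $(i)$. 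If you insert this reduction your argument closes; as written it does not.
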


\begin{proof} Let us prove the first point. The map $\varphi:=\|\Phi_M^G\|^2:M\to\Rbb$ is a real algebraic map on a real algebraic manifold. Thus the set $\Cr(\varphi)$ is an algebraic variety, and by a standard Theorem of Whitney, it as a finite number of connected components $C_1,\cdots,C_p$. Each $C_i$ is contained in $\varphi^{-1}(\varphi(C_i))$ which is compact since $\varphi$ is proper. The proof is completed.

For the  second point we use the result of Proposition \ref{prop:map=propre}, and the facts that, since $\ggot$ is simple, $[\pgot,\pgot]=\kgot$ 
and the center $\cgot_\kgot$ of $\kgot$ is reduced to $\Rbb z$.

The function $\langle\Phi_M^G,z\rangle$, which is the moment map for the $S^1$-action, is proper if and only if $\As(\Delta_K(M))\cap (\Rbb z)^\perp=\{0\}$. Since $\Delta_K(M)\subset\chol(z)$ (see Lemma \ref{lem-phi-z-positif}), it is sufficient to prove that $\Chol(z) \cap (\Rbb z)^\perp=\{0\}$. Let $\xi\in \Chol(z)$. We have 
$\langle\xi, z\rangle=-b(\tilde{\xi}, z)= 2\sum_{\beta\in\Rgot_n(z)}\langle\beta, \tilde{\xi}\rangle$ with $\langle\beta, \tilde{\xi}\rangle=
(\beta,\xi)\geq 0$. If $\langle\xi, z\rangle=0$, we must have $(\beta,\xi) =0, \forall \beta\in \Rgot_n(z)$ or equivalently $[\tilde{\xi},\pgot]=0$. 
Then $\tilde{\xi}$ commutes with all elements in $[\pgot,\pgot]=\kgot$, i.e. $\tilde{\xi}\in\cgot_\kgot=\Rbb z$.  Finally, we have proved that $\xi\in (\Rbb z)^\perp$ and $\tilde{\xi}\in \Rbb z$, hence $\xi=0$.
\end{proof}

\medskip

We can now state the main result of this section.

\medskip

\begin{theo}\label{theo:formal-G-K}
If Assumptions {\bf A1} or {\bf A2} are satisfied, we have the following relation
$$
\mathbf{r}_{K,G}\Big(\qfor_G(M)\Big)=\qfor_K(M).
$$ 
\end{theo}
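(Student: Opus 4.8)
The plan is to compare the two formal sums term by term, using the injectivity of $\mathbf{r}_{K,G}$ together with the structural decomposition $M = G\times_K Y$ and the relation $\Qcal(M_{\mu,G})=\Qcal(Y_{\mu,K})$ recorded just before Definition \ref{def:formal-G}. First I would observe that, by the definition of $\mathbf{r}_{K,G}$ on generators (Lemma \ref{lem:restriction-G-K}) combined with Corollary \ref{cor:K-multiplicity}, we have
$$
\mathbf{r}_{K,G}\Big(\qfor_G(M)\Big)
=\sum_{\mu\in\Ghol(z)}\Qcal(M_{\mu,G})\, V^G_\mu\big|_K
=\Big(\sum_{\mu\in\Ghol(z)}\Qcal(Y_{\mu,K})\, V^K_\mu\Big)\otimes S^\bullet(\pgot),
$$
where the $S^\bullet(\pgot)$ factor comes from $V^G_\mu|_K = V^K_\mu\otimes S^\bullet(\pgot)$. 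On the other hand, the formal quantization of $M$ relative to $K$ is $\qfor_K(M)=\sum_{\mu\in\wedge^*_+}\Qcal(M_{\mu,K})\, V^K_\mu$, and since $M_{\mu,K}$ coincides with $Y_{\mu,K}$ only up to the $\pgot$-directions, one expects the key identity
$$
\qfor_K(M)=\qfor_K(Y)\otimes S^\bullet(\pgot),
$$
which is precisely the ``main tool'' advertised in the introduction. Granting this identity, the theorem follows immediately, since then both sides of the asserted equality equal $\qfor_K(Y)\otimes S^\bullet(\pgot)$, noting that $\qfor_K(Y)=\sum_{\mu\in\wedge^*_+}\Qcal(Y_{\mu,K})\,V^K_\mu$ is supported in $\Khol(z)$ by Lemma \ref{lem-phi-z-positif}, so the infinite tensor products are well-defined in $\Rfor(K,z)$ by the second point of Lemma \ref{lem:restriction-G-K}.

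So the real content is establishing $\qfor_K(M)=\qfor_K(Y)\otimes S^\bullet(\pgot)$, and this is where I expect the main obstacle to lie. The natural approach is to pass through the non-abelian localization picture: by Theorem \ref{theo:formel=formel}, $\qfor_K(M)=\Qcal^\Phi_K(M)$, the localization of the index of $D_{L_M}$ on $\Cr(\|\Phi^K_M\|^2)$, and by Theorem \ref{theo:properGK} this critical set equals $\Cr(\|\Phi^K_Y\|^2)\subset Y$. The strategy is then to realize $M\cong G\times_K Y \cong \pgot\times Y$ (via the Cartan decomposition) as, up to $K$-equivariant deformation, a product whose quantization factors as the quantization of $Y$ times the quantization of the linear symplectic $K$-space $(\pgot,\Omega_\pgot)$, the latter being $S^\bullet(\pgot)$ by the standard computation for the metaplectic/Bargmann-type representation. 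Concretely, I would use the explicit form of the symplectic structure $\Omega_Y - d\langle\Phi^K_Y,\theta^K\rangle$ on $G\times_K Y$ (Proposition \ref{prop:M.Y}) and of the Kostant--Souriau line bundle $L_M=(G\times L_Y)/K$ from Section \ref{sec:line-bundle}, and run the transversally elliptic localization of Section \ref{sec:transversally}: the contributions localize on $\Cr(\|\Phi^K_Y\|^2)\subset Y$, where the transverse $\pgot$-directions contribute exactly the symmetric algebra factor.

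The two hypotheses {\bf A1} and {\bf A2} enter precisely at the point of justifying that the localized sum converges and that the localization data along $\Cr(\|\Phi^K_M\|^2)$ is tame enough to be computed: {\bf A1} (compact critical set) makes the localization a finite sum directly, while {\bf A2} (properness of $\langle\Phi^G_M,z\rangle$, equivalently of $\langle\Phi^K_M,z\rangle$ by Lemma \ref{lem-phi-z-positif} which also shows this function is bounded below) provides an $S^1$-action that exhausts $M$ by compact sublevel sets, letting one stratify the critical set into pieces with compact closure and sum the contributions. In either case the harder analytic point is controlling the infinite number of reduced spaces $Y_{\mu,K}$ with $\mu$ ranging over $\Khol(z)$ and checking that the product with $S^\bullet(\pgot)$ reshuffles multiplicities in exactly the way dictated by $V^G_\mu|_K = V^K_\mu\otimes S^\bullet(\pgot)$ — i.e. that no cancellation or divergence spoils the term-by-term matching. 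Once $\qfor_K(M)=\qfor_K(Y)\otimes S^\bullet(\pgot)$ is in hand, the rest is the bookkeeping displayed above, and the injectivity of $\mathbf{r}_{K,G}$ from Lemma \ref{lem:restriction-G-K} guarantees that the identity we have proved after applying $\mathbf{r}_{K,G}$ determines $\qfor_G(M)$ unambiguously, completing the proof.
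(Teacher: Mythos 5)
Your reduction is exactly the paper's: both sides are shown to equal $\qfor_K(Y)\otimes S^\bullet(\pgot)$ via $V^G_\mu\vert_K=V^K_\mu\otimes S^\bullet(\pgot)$ and $\Qcal(M_{\mu,G})=\Qcal(Y_{\mu,K})$, and the whole weight falls on the key identity $\qfor_K(G\times_K Y)=\qfor_K(Y)\otimes S^\bullet(\pgot)$, which the paper likewise establishes by transversally elliptic (Witten-type) localization — a finite critical set under {\bf A1}, and localization at $M^z=Y^z$ with the $S^1$-grading controlling convergence under {\bf A2} — so your plan follows essentially the same route. One small remark: the injectivity of $\mathbf{r}_{K,G}$ plays no role in this particular theorem (it is needed later, for Theorem \ref{theo:QR=0-non-compact}), since the statement is itself an identity after applying $\mathbf{r}_{K,G}$ and both sides are computed directly.
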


\medskip

\begin{proof}
We have 
\begin{eqnarray*}
\mathbf{r}_{K,G}\Big(\qfor_G(M)\Big)&=&\sum_{\mu\in \Ghol(z)} \Qcal(M_{\mu,G})\ V^G_\mu\vert_K\\
&=& \Big(\sum_{\mu\in \wedge^*_+\cap \chol(z)} \Qcal(Y_{\mu})\ V^K_\mu\Big)\otimes S^\bullet(\pgot)\qquad [1]\\
&=& \qfor_K(Y) \otimes S^\bullet(\pgot)\qquad [2].
\end{eqnarray*}

Note that the product in $[1]$ and $[2]$ are well defined thanks to Lemma \ref{lem:restriction-G-K}. In $[2]$ we use the fact that 
$\qfor_K(Y)=\sum_{\mu\in \wedge^*_+\cap \chol(z)} \Qcal(Y_{\mu})V^K_\mu$ since by hypothesis $\Delta_K(Y)\subset \chol(z)$. 
So Theorem \ref{theo:formal-G-K} follows from the following equality
\begin{equation}\label{eq:formal-G-K}
\qfor_K\left(G\times_K Y\right)=\qfor_K(Y) \otimes S^\bullet(\pgot),
\end{equation}
that will be proved in Sections \ref{sec:prof-theorem-formal-G-K-1} and \ref{sec:prof-theorem-formal-G-K-2}.
\end{proof}

\medskip

We consider now a connected reductive subgroup $G'\subset G$ such that $z\in \ggot'$. The coadjoint orbit 
$G\cdot\lambda$ is pre-quantized when $\lambda\in \Ghol(z)$ and we have obviously $\qfor_G(G\cdot\lambda)=V^G_\lambda$. 
The moment map $\Phi^{G'}_{G\cdot\lambda} :G\cdot \lambda\to(\ggot')^*$ relative to the $G'$-action on $G\cdot\lambda$ is proper. In fact we have more : the map $\langle\Phi^{G'}_{G\cdot\lambda},z\rangle :G\cdot \lambda\to\Rbb$ is proper, thus Assumption {\bf A2} holds. 

We are interested in the compact reduced spaces
$$
(G\cdot\lambda)_{\mu,G'}:=(\Phi^{G'}_{G\cdot\lambda})^{-1}(G'\cdot\mu)/G',
$$
for $\mu'\in \Ghol(z)$. We are now able to prove the following

\begin{theo}\label{theo:QR=0-non-compact} Let $\lambda\in \Ghol(z)$. Then we have the following relation
$$
V^G_\lambda\vert_{G'}=\qfor_{G'}(G\cdot\lambda)
$$
in $\Rfor(G',z)$. It means that for any $\mu\in \Gholp(z)$, the multiplicity of the representation $V^{G'}_\mu$ in 
the restriction $V^G_\lambda\vert_{G'}$ is equal to the geometric quantization
$$
\Qcal\Big((G\cdot\lambda)_{\mu,G'}\Big)\in \Zbb
$$ 
of the (compact) reduced space $(G\cdot\lambda)_{\mu,G'}$.
\end{theo}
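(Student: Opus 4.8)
The plan is to deduce the statement from Theorem \ref{theo:formal-G-K} applied to the Hamiltonian $G'$-manifold $M=G\cdot\lambda$. First I would check that this $M$ fits the hypotheses of Definition \ref{def:formal-G} and of Theorem \ref{theo:formal-G-K} relative to the group $G'$: the $G'$-action on $G\cdot\lambda$ is proper (since $\lambda$ is strongly elliptic, so $G_\lambda$, hence $(G')_\lambda\subset G_\lambda$, is compact), the moment map $\Phi^{G'}_{G\cdot\lambda}=\pi_{\ggot',\ggot}\circ\Phi^{G}_{G\cdot\lambda}$ is proper by Corollary \ref{coro:propre-z} because $z\in\ggot'$, its image lies in $G'\cdot\cholp(z)$ by point $(d)$ of Proposition \ref{prop:cone-projection}, and $\langle\Phi^{G'}_{G\cdot\lambda},z\rangle$ is proper (this is the content of the paragraph preceding the theorem, using that $\langle\Phi^{K}_\pgot,z\rangle$ is proper and positive), so Assumption {\bf A2} holds for $G'$. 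Also $G\cdot\lambda$ carries the Kostant-Souriau line bundle $L=G\times_{K_\lambda}\Cbb_\lambda$. Hence $\qfor_{G'}(G\cdot\lambda)\in\Rfor(G',z)$ is well defined and Theorem \ref{theo:formal-G-K} gives
$$
\mathbf{r}_{K',G'}\bigl(\qfor_{G'}(G\cdot\lambda)\bigr)=\qfor_{K'}(G\cdot\lambda).
$$

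Next I would compute the right-hand side. By Theorem \ref{theo:Q-Phi-G-lambda} together with Theorem \ref{theo:formel=formel} (the equality $\Qcal^\Phi_K=\qfor_K$), we have $\qfor_K(G\cdot\lambda)=V^K_\lambda\otimes S^\bullet(\pgot)$ in $\Rfor(K)$; restricting further to $K'$ via the "Restriction to subgroup" part of Theorem \ref{theo:pep-formal} yields $\qfor_{K'}(G\cdot\lambda)=V^K_\lambda\vert_{K'}\otimes S^\bullet(\pgot)$. On the other hand, by the Harish-Chandra description $V^G_\lambda\vert_K=V^K_\lambda\otimes S^\bullet(\pgot)$, so $V^G_\lambda\vert_{K'}=V^K_\lambda\vert_{K'}\otimes S^\bullet(\pgot)$ as well. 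Since $V^G_\lambda\vert_{G'}=\bigoplus_{\mu\in\Gholp(z)}m_\lambda(\mu)V^{G'}_\mu$ lies in $\Rfor(G',z)$ (by the admissibility result of Kobayashi--Duflo--Vargas and the Proposition refining the support to $\Gholp(z)$), we get
$$
\mathbf{r}_{K',G'}\bigl(V^G_\lambda\vert_{G'}\bigr)=V^G_\lambda\vert_{K'}=V^K_\lambda\vert_{K'}\otimes S^\bullet(\pgot)=\qfor_{K'}(G\cdot\lambda)=\mathbf{r}_{K',G'}\bigl(\qfor_{G'}(G\cdot\lambda)\bigr).
$$
Both $V^G_\lambda\vert_{G'}$ and $\qfor_{G'}(G\cdot\lambda)$ are elements of $\Rfor(G',z)$, and $\mathbf{r}_{K',G'}$ is injective on $\Rfor(G',z)$ by Lemma \ref{lem:restriction-G-K} (applied to the group $G'$). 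Therefore $V^G_\lambda\vert_{G'}=\qfor_{G'}(G\cdot\lambda)$, which unpacks to the stated multiplicity formula $m_\lambda(\mu)=\Qcal\bigl((G\cdot\lambda)_{\mu,G'}\bigr)$.

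The main obstacle, and the step I expect to require the most care, is the verification that Theorem \ref{theo:formal-G-K} genuinely applies with $M=G\cdot\lambda$ and the group $G'$ playing the role of $G$: one must be sure that the slice decomposition $G\cdot\lambda\simeq G'\times_{K'}Y'$ holds here, that the pre-quantum line bundle restricts correctly to $Y'$, and that the critical-set or properness hypothesis (Assumption {\bf A2}) is legitimately satisfied for the $S^1$-action generated by $z\in\ggot'$. All of these are addressed by the earlier sections (Proposition \ref{prop:M.Y}, Corollary \ref{coro:propre-z}, and the discussion of $\langle\Phi^K_\pgot,z\rangle$), so the argument is really an assembly of previously established facts; the only subtlety is bookkeeping the two roles of $z$ (complex structure on $\pgot$ versus on $\pgot'$) and checking that $S^\bullet(\pgot)$ is an admissible $K'$-module, which follows from Proposition \ref{prop:Phi-K-hol-propre} since $\Phi^{K'}_\pgot$ is proper when $z\in\ggot'$. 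Finally I would note explicitly that the reduced spaces match, $M_{\mu,G'}=(G\cdot\lambda)_{\mu,G'}$, so Definition \ref{def:formal-G} indeed produces $\qfor_{G'}(G\cdot\lambda)=\sum_{\mu\in\Gholp(z)}\Qcal\bigl((G\cdot\lambda)_{\mu,G'}\bigr)V^{G'}_\mu$.
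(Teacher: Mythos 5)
Your proposal is correct and follows essentially the same route as the paper: reduce via the injectivity of $\mathbf{r}_{K',G'}$ (Lemma \ref{lem:restriction-G-K}), identify $\mathbf{r}_{K',G'}\bigl(\qfor_{G'}(G\cdot\lambda)\bigr)$ with $\qfor_{K'}(G\cdot\lambda)$ by Theorem \ref{theo:formal-G-K} (after verifying Assumption {\bf A2} via $z\in\ggot'$), and identify $\mathbf{r}_{K',G'}\bigl(V^G_\lambda\vert_{G'}\bigr)=V^G_\lambda\vert_{K'}$ with $\qfor_{K'}(G\cdot\lambda)$ using Theorem \ref{theo:Q-Phi-G-lambda} and the restriction property of Theorem \ref{theo:pep-formal}. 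Your extra verifications of the hypotheses are exactly the points the paper records in the paragraph preceding the theorem, so there is no substantive difference.
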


\begin{proof} Since the restriction morphism $\mathbf{r}_{K',G'}:\Rfor(G',z)\to\Rfor(K',z)$ is injective (see Lemma \ref{lem:restriction-G-K}) it 
suffices to prove that 
\begin{equation}\label{eq:preuve-QR=0}
\mathbf{r}_{K',G'}\Big(V^G_\lambda\vert_{G'}\Big)= \mathbf{r}_{K',G'}\Big(\qfor_{G'}(G\cdot\lambda)\Big).
\end{equation} 
But the left hand side of (\ref{eq:preuve-QR=0}) is equal to the restriction $V^G_\lambda\vert_{K'}$, while the right hand side is equal to 
$\qfor_{K'}(G\cdot\lambda)$ thanks to Theorem \ref{theo:formal-G-K}. Theorem \ref{theo:Q-Phi-G-lambda} tells us that 
$\qfor_{K}(G\cdot\lambda)= V^G_\lambda\vert_{K}$ and the functoriality of the quantization process $\qfor$ 
(see Theorem \ref{theo:pep-formal}) insures that the restriction $V^G_\lambda\vert_{K'}=\qfor_{K}(G\cdot\lambda)\vert_{K'}$ is equal to 
$\qfor_{K'}(G\cdot\lambda)$.
\end{proof}

\bigskip

We can finish this section by extending the functoriality of the quantization process $\qfor$ relatively to the restrictions. 

\begin{defi}
An element $m:=\sum_{\lambda\in \Ghol(z)} m_\lambda \ V^{G}_\lambda\in \Rfor(G,z)$ is admissible relatively to $G'$ if 
the projection $\pi_{\ggot',\ggot}$ is proper when restricted to the subset $G\cdot \mathrm{Support}(m)\subset G\cdot\chol(z)$, where  $\mathrm{Support}(m)=\{\lambda\, \vert\, m_\lambda\neq 0\}\subset \Ghol(z)$.

The same definition holds for the couple $(K',K)$.
\end{defi}
 
 When $m\in \Rfor(G,z)$ is $G'$-admissible, we can define its restriction 
\begin{eqnarray*}
\mathbf{r}_{G',G}(m)&:=& \sum_{\lambda\in \Ghol(z)} m_\lambda  V^{G}_\mu\vert_{G'}\\
&=&\sum_{\mu\in \Gholp(z)}\left(\sum_{\lambda\in \Ghol(z)} m_\lambda Q((G\cdot\lambda)_{\mu,G'})\right) 
 V^{G'}_\mu\in \Rfor(G',z).
\end{eqnarray*}
Note that for any $\mu\in\Gholp(z)$ the sum $\sum_{\lambda\in \Ghol(z)} m_\lambda Q((G\cdot\lambda)_{\mu,G'})$ has only a finite number 
of non-zero term. Similarly, when $n=\sum_{b\in \Khol(z)} n_b \ V^{K}_b\in \Rfor(K,z)$ is $K'$-admissible, we can define its restriction 
\begin{eqnarray*}
\mathbf{r}_{K',K}(n)&:=& \sum_{b\in \Khol(z)} n_b V^{K}_b\vert_{K'}\\
&=&\sum_{\mu\in \Kholp(z)}\left(\sum_{b\in \Khol(z)} n_b Q((K\cdot b)_{a,K'})\right) 
 V^{K'}_a\in \Rfor(K',z).
\end{eqnarray*}

We will used the following Lemma that will be proved in the Appendix.

\begin{lem}\label{lem:restriction-admissible}
Let $m\in \Rfor(G,z)$ that is $G'$-admissible. Then $\mathbf{r}_{K,G}(m)\in \Rfor(K,z)$ is $K'$-admissible and the following  relation
$$
\mathbf{r}_{K',K}\circ\mathbf{r}_{K,G}(m)= \mathbf{r}_{K',G'}\circ\mathbf{r}_{G',G}(m)
$$
holds in $\Rfor(K',z)$.
\end{lem}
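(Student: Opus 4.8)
The plan is to reduce the identity $\mathbf{r}_{K',K}\circ\mathbf{r}_{K,G}=\mathbf{r}_{K',G'}\circ\mathbf{r}_{G',G}$ to a statement about multiplicities of irreducible $K'$-representations, and then to recognise both sides as restrictions to $K'$ of the genuine Hilbert-space representations that lie over the formal symbols involved. First I would check that the admissibility hypotheses make all four morphisms well defined on $m$: by definition $m$ being $G'$-admissible means $\pi_{\ggot',\ggot}$ is proper on $G\cdot\mathrm{Support}(m)$; I would verify using point $(c)$ of Proposition \ref{prop:cone-projection} together with the inclusions $\Delta_K(G\cdot\lambda)\subset\chol(z)$ (Proposition \ref{prop:Delta-K-Lambda}) that this forces $\pi_{\kgot',\kgot}$ to be proper on $K\cdot\mathrm{Support}(\mathbf{r}_{K,G}(m))$, i.e.\ that $\mathbf{r}_{K,G}(m)$ is $K'$-admissible. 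This is essentially the ``Appendix'' content alluded to, and is the first genuine task.

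Next, by $\Zbb$-linearity and continuity of all the maps (each coefficient in the output is a finite sum), it suffices to treat $m=V^G_\lambda$ for a single $\lambda\in\Ghol(z)$ whose $G\cdot\lambda$ has proper projection to $(\ggot')^*$. For such $\lambda$, the right-hand side is $\mathbf{r}_{K',G'}(\qfor_{G'}(G\cdot\lambda))$, which by Theorem \ref{theo:QR=0-non-compact} equals $\mathbf{r}_{K',G'}(V^G_\lambda\vert_{G'})$; and this in turn, unwinding the definition of $\mathbf{r}_{K',G'}$ on the discrete decomposition $V^G_\lambda\vert_{G'}=\bigoplus_\mu m_\lambda(\mu)V^{G'}_\mu$, is simply the further restriction $V^G_\lambda\vert_{K'}$ expanded in $\Rfor(K',z)$. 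On the other side, $\mathbf{r}_{K,G}(V^G_\lambda)=V^G_\lambda\vert_K=V^K_\lambda\otimes S^\bullet(\pgot)$ (Harish-Chandra / Corollary \ref{cor:K-multiplicity}), and applying $\mathbf{r}_{K',K}$ to this uses the functoriality of $\qfor_K$ under the closed subgroup $K'\subset K$ (Theorem \ref{theo:pep-formal}, ``Restriction to subgroup''), yielding $\qfor_{K'}(G\cdot\lambda)=V^G_\lambda\vert_{K'}$. So both composites produce the single well-defined element $V^G_\lambda\vert_{K'}\in\Rfor(K',z)$, and the identity follows.

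The one subtlety to be careful about — and the step I expect to be the main obstacle — is the bookkeeping that guarantees $\mathbf{r}_{K',K}$ may legitimately be applied to $V^K_\lambda\otimes S^\bullet(\pgot)$, i.e.\ that this element is $K'$-admissible and that applying $\mathbf{r}_{K',K}$ commutes with the tensor expansion. Concretely one must know that $\pi_{\kgot',\kgot}$ is proper on $K\cdot\mathrm{Support}(V^K_\lambda\otimes S^\bullet(\pgot))$; this support is contained in $\lambda+\Ccal(z)$ by Proposition \ref{prop:Delta-K-Lambda}(a) and Corollary \ref{cor:K-multiplicity}, so $K\cdot\mathrm{Support}\subset K\cdot\chol(z)$, and properness of $\pi_{\kgot',\kgot}$ there is exactly the compact-reduced-spaces statement that makes $\qfor_{K'}(G\cdot\lambda)$ finite. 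The equality $\mathbf{r}_{K',K}(\mathbf{r}_{K,G}(m))=\mathbf{r}_{K',G'}(\mathbf{r}_{G',G}(m))$ for general admissible $m$ then follows by the density of finite sums and the fact, just established on generators, that both sides compute $\sum_\lambda m_\lambda\,(V^G_\lambda\vert_{K'})$ with coefficients rearranged by the quantization-of-reduced-space formula; the rearrangement is licit because for each fixed $K'$-type only finitely many $\lambda$ contribute (Corollary \ref{cor:K-multiplicity} controls $\|\lambda\|$, and Proposition \ref{prop:restriction-G-K-prime} controls the relevant cone). I would close by remarking that this reduction shows the two-step restriction $G\rightsquigarrow K\rightsquigarrow K'$ and the two-step restriction $G\rightsquigarrow G'\rightsquigarrow K'$ agree, as they must, since both compute the honest restriction of the unitary representation $V^G_\lambda$ to the compact group $K'$.
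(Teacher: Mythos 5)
Your second half — the identity $\mathbf{r}_{K',K}\circ\mathbf{r}_{K,G}(m)=\mathbf{r}_{K',G'}\circ\mathbf{r}_{G',G}(m)$ — is essentially the paper's argument: the paper computes, for each $\delta\in\Kholp(z)$, the coefficient of $V^{K'}_\delta$ on both sides and finds in both cases $\sum_\mu m_\mu\,[V^{K'}_\delta:V^G_\mu\vert_{K'}]$, using exactly the ingredients you invoke ($\Qcal((G\cdot\mu)_{\lambda,K})=[V^K_\lambda:V^G_\mu\vert_K]$ from Corollary \ref{cor:K-multiplicity}, the compact $[Q,R]=0$ for $K'\subset K$, and $\Qcal((G\cdot\mu)_{\nu,G'})=m_\mu(\nu)$ from Theorem \ref{theo:QR=0-non-compact}). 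Your per-generator formulation plus rearrangement is the same computation organized differently, and for a single $\lambda$ everything you say is fine, since $z\in\ggot'$ makes $\Phi^{K'}_{G\cdot\lambda}$ proper (Corollary \ref{coro:propre-z} and Theorem \ref{theo:properGK}).

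The genuine gap is the first half of the statement, the $K'$-admissibility of $\mathbf{r}_{K,G}(m)$, which you flag as ``the first genuine task'' but propose to settle with point $(c)$ of Proposition \ref{prop:cone-projection} and the inclusion $\Delta_K(G\cdot\lambda)\subset\chol(z)$. Those are statements about \emph{images} of projections of cones; they cannot produce \emph{properness}. What is needed (and what the paper proves in its Appendix lemma) is a quantitative chain: writing $\pi_{\ggot',\ggot}(G\cdot\mathrm{Support}(m))=G'\cdot C_m$ with $C_m\subset\cholp(z)$ closed, one first shows $\pi_{\kgot',\ggot'}$ is proper on $G'\cdot C_m$ via the estimate $\|\pi_{\kgot',\ggot'}(e^{X'}\cdot\lambda')\|\geq c(\Kcal)\|X'\|^2$ (the same strong-ellipticity bound as in the proof of Theorem \ref{theo:properGK}), and only then combines this with the hypothesis that $\pi_{\ggot',\ggot}$ is proper on $G\cdot\mathrm{Support}(m)$ to conclude that $\pi_{\kgot',\kgot}$ is proper on $\pi_{\kgot,\ggot}(G\cdot\mathrm{Support}(m))\supset K\cdot\mathrm{Support}(\mathbf{r}_{K,G}(m))$. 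Note that properness of $\pi_{\kgot',\kgot}$ on $K\cdot\chol(z)$ is false in general (e.g.\ when $\ggot$ has a compact simple factor, $\Chol(z)\cap(\Rbb z)^\perp\neq\{0\}$), so no cone-inclusion argument can replace this. The same admissibility fact is also what legitimizes your final rearrangement: the bound you cite from Corollary \ref{cor:K-multiplicity} controls $\|\lambda\|$ by the norm of an intermediate $K$-type, but restriction from $K$ to $K'$ does not bound that $K$-type by $\|\delta\|$; finiteness of the set of contributing $\lambda$ for fixed $\delta$ again rests on the properness statement (or on re-routing the estimate through $G'$ using Corollary \ref{cor:K-multiplicity} for $G'$ together with the $G'$-admissibility hypothesis), and this needs to be said explicitly.
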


\medskip

We finish this section with the following 

\begin{theo}Let $(M,\Omega_M,\Phi^G_M)$ be a pre-quantized Hamiltonian manifold. Suppose that $\mathrm{Image}(\Phi^G_M)\subset G\cdot\Chol(z)$, and that the map $\langle\Phi^G_M,z\rangle$ is {\em proper}. Let $G'$ be a reductive subgroup such that $z\in\ggot'$.  Then: 

$\bullet$ The map $\Phi^{G'}_M$ is proper and $\mathrm{Image}(\Phi^{G'}_M)\subset G'\cdot\cholp(z)$. So $\qfor_{G'}(M)\in \Rfor(G',z)$ is well defined.

$\bullet$ The element $\qfor_{G}(M)\in \Rfor(G,z)$ is $G'$-admissible and we have 
$$
\mathbf{r}_{G',G}\left(\qfor_G(M)\right)=\qfor_{G'}(M).
$$
\end{theo}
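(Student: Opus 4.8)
The plan is to reduce the statement to the analogous statement at the level of the maximal compact subgroup, where all the hard analytic work (the properties of $\qfor_K$ from Theorems \ref{theo:pep-formal} and \ref{theo:formel=formel}, and the key identity (\ref{eq:formal-G-K})) has already been established. First I would settle the first bullet point. Since $z\in\ggot'$ and $\mathrm{Image}(\Phi^G_M)\subset G\cdot\Chol(z)\subset\ggot^*_{se}$, Corollary \ref{coro:propre-z} applies fibrewise to each holomorphic orbit $G\cdot\lambda$ appearing in the image, so the $G'$-moment map is proper; more precisely, since $\langle\Phi^G_M,z\rangle=\langle\Phi^{G'}_M,z\rangle$ (as $z\in\ggot'$) and the latter function is assumed proper, Assumption {\bf A2} holds for the $G'$-action and the properness of $\Phi^{G'}_M$ follows from Theorem \ref{theo:properGK}. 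That $\mathrm{Image}(\Phi^{G'}_M)\subset G'\cdot\cholp(z)$ follows from point $(d)$ of Proposition \ref{prop:cone-projection} applied to $\mathrm{Image}(\Phi^G_M)\subset G\cdot\chol(z)$ (using $\Chol(z)$ here, which forces the slight enlargement to $G\cdot\Chol(z)$; one applies the cone-projection inclusions with $\Chol$ in place of $\chol$, which is points $(a)$--$(b)$). Hence $\qfor_{G'}(M)\in\Rfor(G',z)$ is well defined by Definition \ref{def:formal-G}.

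Next I would prove $G'$-admissibility of $\qfor_G(M)$. Writing $\qfor_G(M)=\sum_{\mu\in\Ghol(z)}\Qcal(M_{\mu,G})V^G_\mu$, its support is contained in $\Delta_K(Y)\cap\wedge^*$, which by Lemma \ref{lem-phi-z-positif} lies in $\chol(z)$, and in fact in the Kirwan polytope $\Delta_K(M)$, a closed locally polyhedral subset of $\tgot^*_{se}$. The projection $\pi_{\ggot',\ggot}$ restricted to $G\cdot\Delta_K(M)=\mathrm{Image}(\Phi^G_M)$ is proper precisely because $\Phi^{G'}_M=\pi_{\ggot',\ggot}\circ\Phi^G_M$ is proper and $\Phi^G_M$ is proper (so preimages of compacts under the composite are compact, hence their images under $\Phi^G_M$ are compact). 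This gives admissibility of $\qfor_G(M)$, and since $G\cdot\mathrm{Support}(\qfor_G(M))\subset G\cdot\Delta_K(M)$, the same reasoning yields $K'$-admissibility of $\mathbf{r}_{K,G}(\qfor_G(M))$, as guaranteed abstractly by Lemma \ref{lem:restriction-admissible}.

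For the main identity I would exploit the injectivity of $\mathbf{r}_{K',G'}:\Rfor(G',z)\to\Rfor(K',z)$ from Lemma \ref{lem:restriction-G-K}: it suffices to check
$$
\mathbf{r}_{K',G'}\Big(\mathbf{r}_{G',G}\big(\qfor_G(M)\big)\Big)=\mathbf{r}_{K',G'}\big(\qfor_{G'}(M)\big).
$$
For the left side, Lemma \ref{lem:restriction-admissible} gives $\mathbf{r}_{K',G'}\circ\mathbf{r}_{G',G}(\qfor_G(M))=\mathbf{r}_{K',K}\circ\mathbf{r}_{K,G}(\qfor_G(M))$, and Theorem \ref{theo:formal-G-K} identifies $\mathbf{r}_{K,G}(\qfor_G(M))=\qfor_K(M)$; so the left side is $\mathbf{r}_{K',K}(\qfor_K(M))=\qfor_K(M)\vert_{K'}$. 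By the ``restriction to subgroup'' part of Theorem \ref{theo:pep-formal}, this equals $\qfor_{K'}(M)$ (one needs $M$ to be proper as a $K'$-Hamiltonian manifold, which follows from Proposition \ref{prop:map=propre}: $\As(\Delta_K(M))\subset\Ccal(z)$ and $\Ccal(z)\cap K\cdot(\kgot')^\perp=\{0\}$ since $\Rbb z\subset\kgot'$ forces any element of $(\kgot')^\perp$ to pair to $0$ with $z$, whereas $\Chol(z)\cap(\Rbb z)^\perp=\{0\}$). For the right side, Theorem \ref{theo:formal-G-K} applied to the reductive group $G'$ (valid since Assumption {\bf A2} holds for $G'$) gives $\mathbf{r}_{K',G'}(\qfor_{G'}(M))=\qfor_{K'}(M)$. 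The two sides agree, completing the proof.

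\textbf{Main obstacle.} The routine-looking but genuinely load-bearing step is the bookkeeping of cones under projection when one works with $\Chol(z)$ rather than the shifted cone $\chol(z)$: Proposition \ref{prop:cone-projection} and Lemma \ref{lem-phi-z-positif} are phrased for $\chol(z)$, and one must be careful that $\mathrm{Image}(\Phi^G_M)\subset G\cdot\Chol(z)$ together with $\langle\Phi^G_M,z\rangle$ proper still forces $\Delta_K(M)\subset\chol(z)$ — which it does not in general; rather $\Delta_K(M)$ is a \emph{compact-direction-bounded-below} closed polyhedral set, and the properness of $\langle\Phi^G_M,z\rangle$ is exactly what rescues admissibility. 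Sorting out precisely which hypotheses ({\bf A2} for $G$ versus for $G'$, $\Chol$ versus $\chol$) propagate to $G'$ is where care is needed; everything else is a formal consequence of the already-established functoriality of $\qfor_K$ and the injectivity of the restriction morphisms.
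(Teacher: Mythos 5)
Your proposal follows essentially the same route as the paper's own proof: properness of $\Phi^{G'}_M$ from the properness of $\langle\Phi^G_M,z\rangle$, the image statement from Proposition \ref{prop:cone-projection}, $G'$-admissibility from the properness of $\pi_{\ggot',\ggot}$ on $\mathrm{Image}(\Phi^G_M)$, and then the identity via injectivity of $\mathbf{r}_{K',G'}$ combined with Lemma \ref{lem:restriction-admissible}, Theorem \ref{theo:formal-G-K} (applied to both $G$ and $G'$) and Theorem \ref{theo:pep-formal}. The only blemish is the parenthetical justification of $K'$-properness, where the claims $\As(\Delta_K(M))\subset\Ccal(z)$ and $\Chol(z)\cap(\Rbb z)^\perp=\{0\}$ are not valid in general (the latter needs $\ggot$ simple); this is harmless, since the properness of $\Phi^{K'}_M$ already follows from Assumption {\bf A2} for $G'$ (equivalently $\As(\Delta_K(M))\cap(\Rbb z)^\perp=\{0\}$ and $(\kgot')^\perp\subset(\Rbb z)^\perp$), which you establish at the start.
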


%
%

\begin{proof} The map $\Phi^{G'}_M$ is proper since $\langle\Phi^G_M,z\rangle$ is proper. The point concerning the image of $\Phi^{G'}_M$ is a consequence of point $(d)$ in Proposition \ref{prop:cone-projection}. Let $m=\qfor_{G}(M)\in \Rfor(G,z)$. 
By definition $\mathrm{Support}(m)$ is contained in $\mathrm{Image}(\Phi^G_M)$, and then 
$G\cdot \mathrm{Support}(m)\subset \mathrm{Image}(\Phi^G_M)$. Since the moment map $\Phi^{G'}_M$ is proper, we know that 
the projection $\pi_{\ggot',\ggot}$ is proper when restricted to $\mathrm{Image}(\Phi^G_M)$. This implies that $m\in\Rfor(G,z)$ is $G'$-admissible.

We have then
\begin{eqnarray*}
\mathbf{r}_{K',G'}\circ\mathbf{r}_{G',G}\left(\qfor_G(M)\right)&=&\mathbf{r}_{K',K}\circ\mathbf{r}_{K,G}\left(\qfor_G(M)\right)\qquad [1] \\
&=& \mathbf{r}_{K',K}\left(\qfor_K(M)\right) \qquad [2] \\
&=& \qfor_{K'}(M).\qquad [3]
\end{eqnarray*}
Here $[1]$ follows from Lemma \ref{lem:restriction-admissible}, $[2]$ follows from Theorem \ref{theo:formal-G-K} and 
$[3]$ is the consequence of Theorem \ref{theo:pep-formal}. We have checked that 
$$
\mathbf{r}_{K',G'}\circ\mathbf{r}_{G',G}\left(\qfor_G(M)\right)= \qfor_{K'}(M)= \mathbf{r}_{K',G'}\left(\qfor_{G'}(M)\right).
$$
Since the map $\mathbf{r}_{K',G'}$ is injective, it follows that $\mathbf{r}_{G',G}\left(\qfor_G(M)\right)=\qfor_{G'}(M)$.
\end{proof}

\subsection{Geometric quantization of the slice $Y$}\label{subsec:quantization-Y}

Let $\lambda\in\Gholp(z)$. Consider\footnote{In this section, we interchange the role of the groups $G$ and $G'$ in order to minimize the primes in the notation.} the coadjoint orbit $G'\cdot\lambda$ associated to the holomorphic discrete series representation $V^{G'}_\lambda$. Let $G$ be a reductive subgroup of $G'$ such that $z\in\ggot$. 
We know that we have a geometric decomposition
$$
G'\cdot\lambda= G\times_{K} Y
$$
where $Y\subset G'\cdot\lambda$ is a closed $K$-invariant symplectic sub-manifold. 

We have two ways of computing the multiplicity of $m_\lambda(\mu)$ of $V^{G}_\mu$ in $V^{G'}_\lambda$. First, after 
Jakobsen-Vergne, we know that 
$$
m_\lambda(\mu)= \left[V^{K}_\mu : S^\bullet(\pgot'/\pgot)\otimes V^{K'}_\lambda\vert_{K} \right],
$$
and Theorem \ref{theo:QR=0-non-compact} tells us also that 
$$
m_\lambda(\mu)=\Qcal((G'\cdot\lambda)_{\mu,G})=\Qcal(Y_{\mu,K}).
$$
We would like to understand a priori why $\Qcal(Y_{\mu,K})=[V^{K}_\mu : S^\bullet(\pgot'/\pgot)\otimes V^{K'}_\lambda\vert_{K} ]$ for any 
$\mu\in \Khol(z)$, or equivalently why we have the relation
\begin{equation}\label{eq:formal-quant-Y-group}
\qfor_{K}(Y)= S^\bullet(\pgot'/\pgot)\otimes V^{K'}_\lambda\vert_{K}.
\end{equation}

Note that Assumption {\bf A2} holds in this setting : the map $\langle\Phi^{G}_{G'\cdot\lambda},z\rangle$ is  proper.

\medskip

Let us consider  a more general situation. Let $(M,\Omega_M,\Phi^{G}_M)$ be a pre-quantized Hamiltonian $G$-manifold. We suppose the $G$-action proper, and that the moment map $\Phi^{G}_M$ takes values in $G\cdot\chol(z)$. We suppose furthermore that Assumption {\bf A2} holds. Let $Y\subset M$ be the symplectic slice. The aim of this section is to compute $\qfor_{K}(Y)$ in a way similar to (\ref{eq:formal-quant-Y-group}).

\medskip

Let $\Xcal$ be a connected component of $Y^z$. Let us fix a $K$-invariant almost complex structure on $\Xcal$ which is compatible with the symplectic structure. Let 
$$
\RR^{K}(\Xcal, -)
$$ 
be the corresponding Riemann-Roch character (see Section \ref{sec:RR}). Recall that, if $L_{\Xcal}$ denotes the restriction of the Kostant-Souriau line bundle $L_M$ on $\Xcal$, we have $\Qcal_{K}(\Xcal)=\RR^{K}(\Xcal, L_{\Xcal})$.

Let $\Ncal_\Xcal\to \Xcal$ be the normal bundle of $\Xcal$ in $Y$ : it inherits a complex structure $J_\Xcal$ and a linear endomorphism  
$\Lcal(z)$ on the fibres. We have a decomposition $\Ncal_{\Xcal}=\sum_{a\in\Rbb}\Ncal^a_{\Xcal}$ where $\Ncal^a_{\Xcal}
=\{v\in \Ncal_{\Xcal}\, \vert\, \Lcal(z)v= a J_{\Xcal}(v)\}$ is a sub-bundle of $\Ncal_{\Xcal}$. We define the vector bundle $\Ncal^{\pm,z}_{\Xcal}:=\sum_{\pm a>0}\Ncal_{\Xcal}$ and 
$$
\vert\Ncal\vert^{z}=\Ncal^{+,z}_{\Xcal}\oplus \overline{\Ncal^{-,z}_{\Xcal}}.
$$

\begin{theo}\label{theo:qfor-Y}
We have the following equality in $\Rfor(K)$:
$$
\qfor_{K}(Y)=\sum_{\Xcal}(-1)^{r_\Xcal}\RR^{K}\left(\Xcal, L_{\Xcal}\otimes \det(\Ncal^{+,z}_{\Xcal})\otimes S^\bullet(\vert\Ncal_{\Xcal}\vert^{z})\right),
$$
where $r_\Xcal$ is the complex rank of $\Ncal^{+,z}_{\Xcal}$.
\end{theo}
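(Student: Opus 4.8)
The plan is to compute $\qfor_K(Y)$ by localizing on the critical set of $\|\Phi^K_Y\|^2$, which under our hypotheses equals the critical set of $\|\Phi^G_M\|^2$ and sits inside $Y$ (Theorem \ref{theo:properGK}). Since $z\in\cgot_\kgot$ and $\langle\Phi^K_Y,z\rangle$ is proper and bounded below (Lemma \ref{lem-phi-z-positif}), the circle action generated by $z$ and its moment map $\langle\Phi^K_Y,z\rangle$ give a well-behaved Morse--Bott--Kirwan stratification of $Y$ whose critical set is the fixed point set $Y^z$. The idea is to identify a neighbourhood of each component $\Xcal$ of $Y^z$ with (a neighbourhood of the zero section of) its normal bundle $\Ncal_\Xcal$, decompose $\Ncal_\Xcal=\bigoplus_a\Ncal_\Xcal^a$ by the weights of $\Lcal(z)$ relative to the complex structure $J_\Xcal$, and then apply the author's localization formula for $\qfor_K$ in the transversally elliptic setting (the material announced for Section \ref{sec:transversally}).

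\textbf{First I would} recall that formal geometric quantization can be computed as the index of a transversally elliptic symbol obtained by deforming the Dolbeault--Dirac symbol by the Kirwan vector field $\kappa^K$; because $z$ is central in $\kgot$, one may equally well deform along the vector field generated by $\widetilde{\langle\Phi^K_Y,z\rangle}$, i.e. by the $z$-action. The fixed point contributions are then indexed by the components $\Xcal\subset Y^z$. Near such a $\Xcal$ the normal bundle splits as $\Ncal_\Xcal^{+,z}\oplus\Ncal_\Xcal^{-,z}$ according to the sign of the $\Lcal(z)$-weight (there is no zero-weight part, since $\Xcal$ is a full component of the fixed locus). The standard bookkeeping for the index of the Thom symbol along $\Ncal_\Xcal$, combined with the fact that the induced circle weights on $\Ncal_\Xcal^{+,z}$ are positive and on $\Ncal_\Xcal^{-,z}$ negative, converts the contribution of $\Xcal$ into $\RR^K(\Xcal,L_\Xcal\otimes\Lambda^\bullet(\Ncal_\Xcal^{+,z})^*\otimes\Lambda^\bullet\overline{\Ncal_\Xcal^{-,z}})$ with appropriate signs; the identity $\Lambda^\bullet W^*=(-1)^{\mathrm{rk}W}\det(W)^{-1}\otimes\Lambda^\bullet W$ together with the geometric-series expansion $\bigl(\det(1-t)\bigr)^{-1}$ for $t$ of positive weight turns $\Lambda^\bullet(\Ncal_\Xcal^{+,z})^*$ into $(-1)^{r_\Xcal}\det(\Ncal_\Xcal^{+,z})\otimes S^\bullet(\Ncal_\Xcal^{+,z})$, and likewise $\Lambda^\bullet\overline{\Ncal_\Xcal^{-,z}}$ into $S^\bullet(\overline{\Ncal_\Xcal^{-,z}})$, so that after summing over $\Xcal$ one obtains exactly
\begin{equation*}
\qfor_K(Y)=\sum_\Xcal(-1)^{r_\Xcal}\RR^K\Bigl(\Xcal,L_\Xcal\otimes\det(\Ncal_\Xcal^{+,z})\otimes S^\bullet(|\Ncal_\Xcal|^z)\Bigr),
\end{equation*}
where $|\Ncal_\Xcal|^z=\Ncal_\Xcal^{+,z}\oplus\overline{\Ncal_\Xcal^{-,z}}$ as defined.

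\textbf{The hard part will be} establishing convergence and well-definedness of the right-hand side in $\Rfor(K)$: each $\Xcal$ may be non-compact, so $\RR^K(\Xcal,-)$ is itself a formal (transversally elliptic) index, and the infinitely many weights appearing in $S^\bullet(|\Ncal_\Xcal|^z)$ must be controlled so that every $K$-isotypic multiplicity receives only finitely many contributions. This is where the properness of $\langle\Phi^K_Y,z\rangle$ is essential: it forces the $\Lcal(z)$-weights on $\Ncal_\Xcal^{+,z}$ to be bounded away from $0$ in the appropriate sense and guarantees that the partial sums over the Morse strata stabilize, exactly as in the author's earlier work on $\qfor$ via non-abelian localization (Theorems \ref{theo:formel=formel} and \ref{theo:pep-formal}). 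I would also need to check $K$-equivariance and independence of the auxiliary almost complex structures on the $\Xcal$'s, both of which follow from homotopy invariance of the transversally elliptic index. Once these finiteness and invariance points are in place, the localization computation sketched above is essentially formal, and the theorem follows; the detailed verification is deferred to Section \ref{sec:transversally}.
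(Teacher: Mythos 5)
Your proposal is essentially the paper's own proof: you trade the localization on $\Cr(\|\Phi^K_Y\|^2)$ for a localization on the fixed-point set $Y^z$ of the circle generated by $z$ (in the paper this switch is precisely Theorem \ref{theo:quantization-phi-1}, imported from \cite{pep-formal-2} — centrality of $z$ alone is not the justification), you compute each component's contribution with the $z$-deformed Thom symbol exactly as in \cite{pep-RR}[Theorem 5.8], and you invoke the strict positivity and properness of $\langle\Phi^K_Y,z\rangle$ to make the critical set equal to $Y^z$ and to control convergence. The only correction worth noting is that each component $\Xcal$ of $Y^z$ is automatically \emph{compact} (the proper map $\langle\Phi^K_Y,z\rangle$ is constant on it), so $\RR^{K}(\Xcal,-)$ is an ordinary index and the convergence issue is only the sum over the $\Xcal$'s and over symmetric powers, which the paper settles with the grading $\Rfor_{\geq n}(K)$ of Proposition \ref{prop:Rfor-positive}.
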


The proof will be given in Section \ref{sec:proof-theo-qfor-Y}. 

Let us explain how  the formulas of Jakobsen-Vergne can be recover with Theorem \ref{theo:qfor-Y}. When $M=G'\cdot\lambda$, the sub-manifolds $Y^z$ and $M^z$ are both equal to $K'\cdot\lambda$. The restriction 
of the Kostant-Souriau line bundle $L_M\to M$ on $Y^z$ is $[\Cbb_\lambda]:=K'\times_{K'_\lambda}\Cbb_\lambda\to K'\cdot\lambda$. 
Relation (\ref{rel=3}) tells us that the normal bundle $\Ncal_1$ of $Y$ in $M$ is equal to the trivial bundle $\pgot\times Y$, and the normal bundle $\Ncal_2$ of $Y^z$ in $M$ is equal to $Y^z\times \pgot'$. Hence the normal bundle of $Y^z$ in $Y$ is 
$$
\Ncal=\Ncal_2/(\Ncal_1\vert_{Y^z})=Y^z\times (\pgot'/\pgot).
$$
We check that $\Ncal^{+,z}=0$ : this is due to the fact that the function $\langle\Phi^{G}_{G'\cdot\lambda},z\rangle$ takes its minimal value on $Y^z=K'\cdot\lambda$ (see Lemma 7.3 in \cite{pep-RR}). So $\vert\Ncal\vert^{z}=\overline{\Ncal}$ is the trivial complex bundle with fiber $(\pgot'/\pgot,\mathrm{ad}(z))$. Theorem \ref{theo:qfor-Y} gives 
\begin{eqnarray*}
\qfor_{K}(Y)&=&\RR^{K}\left(K'\cdot\lambda, [\Cbb_\lambda]\otimes S^\bullet(\pgot'/\pgot)\right)\\
&=&\RR^{K'}\left(K'\cdot\lambda, [\Cbb_\lambda]\right)\vert_{K}\otimes S^\bullet(\pgot'/\pgot)\\
&=& V_{\lambda}^{K'}\vert_{K}\otimes S^\bullet(\pgot'/\pgot).\qquad [1]
\end{eqnarray*}
In $[1]$, we use that $\RR^{K'}(K'\cdot\lambda, [\Cbb_\lambda])=V_{\lambda}^{K'}$ thanks to the Borel-Weil theorem.

\section{Transversally elliptic operators}\label{sec:transversally}

The aim of this section is to give a proof of Theorem \ref{theo:formal-G-K} and \ref{theo:qfor-Y}.
In the first section, we briefly introduce the material we need from the theory of transversally elliptic operator. And in Section \ref{sec:Qcal-Phi}
 we recall the definition of the geometric quantization process $\Qcal^\Phi$. In the rest of this paper, $K$ will denoted a connected compact Lie group.

\subsection{Transversally elliptic operators}
Here we give the basic definitions from the theory of transversally
elliptic symbols (or operators) defined by Atiyah-Singer in
\cite{Atiyah74}. For an axiomatic treatment of the index morphism
see Berline-Vergne \cite{B-V.inventiones.96.1,B-V.inventiones.96.2} and 
Paradan-Vergne \cite{pep-vergne:bismut}. For a short introduction see \cite{pep-RR}.

Let $\Xcal$ be a {\em compact} $K$-manifold. Let $p:\T
\Xcal\to \Xcal$ be the projection, and let $(-,-)_\Xcal$ be a
$K$-invariant Riemannian metric. If $E^{0},E^{1}$ are
$K$-equivariant complex vector bundles over $\Xcal$, a
$K$-equivariant morphism 
$$
\sigma \in \Gamma(\T\Xcal,\hom(p^{*}E^{0},p^{*}E^{1}))
$$ 
is called a {\em symbol} on $\Xcal$. The
subset of all $(x,v)\in \T \Xcal$ where\footnote{The map $\sigma(x,v)$ will be also denote 
$\sigma\vert_x(v)$} $\sigma(x,v): E^{0}_{x}\to
E^{1}_{x}$ is not invertible is called the {\em characteristic set}
of $\sigma$, and is denoted by $\Char(\sigma)$.

In the following, the product of a symbol  $\sigma$ by a complex vector bundle $F\to M$, is the symbol 
$\sigma\otimes F$ defined by $\sigma\otimes F(x,v)=\sigma(x,v)\otimes {\rm Id}_{F_x}$ from 
$E^{0}_x\otimes F_x$ to $E^{1}_x\otimes F_x$. Note that $\Char(\sigma\otimes F)=\Char(\sigma)$.

Let $\T_{K}\Xcal$ be the following subset of $\T \Xcal$ :
$$
   \T_{K}\Xcal\ = \left\{(x,v)\in \T \Xcal,\ (v,X_{\Xcal}(x))_{_{\Xcal}}=0 \quad {\rm for\ all}\
   X\in\kgot \right\} .
$$

A symbol $\sigma$ is {\em elliptic} if $\sigma$ is invertible
outside a compact subset of $\T \Xcal$ (i.e. $\Char(\sigma)$ is
compact), and is $K$-{\em transversally elliptic} if the
restriction of $\sigma$ to $\T_{K}\Xcal$ is invertible outside a
compact subset of $\T_{K}\Xcal$ (i.e. $\Char(\sigma)\cap
\T_{K}\Xcal$ is compact). An elliptic symbol $\sigma$ defines an
element in the equivariant $\Ko$-theory of $\T\Xcal$ with compact
support, which is denoted by $\Ko_{K}(\T \Xcal)$, and the
index of $\sigma$ is a virtual finite dimensional representation of
$K$, that we denote $\indice^K_{\Xcal}(\sigma)\in R(K)$
\cite{Atiyah-Segal68,Atiyah-Singer-1,Atiyah-Singer-2,Atiyah-Singer-3}.

A $K$-{\em transversally elliptic} symbol $\sigma$ defines an
element of $\Ko_{K}(\T_{K}\Xcal)$, and the index of
$\sigma$ is defined as a trace class virtual representation of $K$, that we still denote 
$\indice^K_\Xcal(\sigma)\in \Rfor(K)$ \cite{Atiyah74}.

Using the {\em excision property}, one can easily show that the
index map $\indice^K_\Ucal: \Ko_{K}(\T_{K}\Ucal)\to
\Rfor(K)$ is still defined when $\Ucal$ is a
$K$-invariant relatively compact open subset of a
$K$-manifold (see \cite{pep-RR}[section 3.1]).

\medskip

Suppose now that the group $K$ is equal to the product $K_1\times K_2$. 
An intermediate notion between the ``ellipticity'' and ``$K_1\times K_2$-transversal ellipticity'' is 
the ``$K_1$-transversal ellipticity''. When a $K_1\times K_2$-equivariant symbol $\sigma$ is 
{\em $K_1$-transversally elliptic}, its index $\indice^{K_1\times K_2}_\Xcal(\sigma)\in \Rfor(K_1\times K_2)$, 
viewed as a generalized function on $K_1\times K_2$, is {\em smooth} relatively to the variable in $K_2$ 
\cite{Atiyah74,B-V.inventiones.96.2,pep-vergne:bismut}. It implies that :

$\bullet$ $\indice^{K_1\times K_2}_\Xcal(\sigma)=\sum_{\lambda\in\what{K_1}} \theta_\lambda\otimes V_\lambda^{K_1}$ with $\theta_\lambda\in R(K_2)$,

$\bullet$ we can restrict $\indice^{K_1\times K_2}_\Xcal(\sigma)$ to the subgroup $K_1$ and 
\begin{equation}\label{eq:1-2-index}
\indice^{K_1\times K_2}_\Xcal(\sigma)\vert_{K_1}= \sum_{\lambda\in\what{K_1}} \dim(\theta_\lambda)\, V_\lambda^{K_1}=
\indice^{K_1}_\Xcal(\sigma).
\end{equation}
Here $\dim:R(K_2)\to \Zbb$ is the morphism induced by the restriction to $1\in K_2$.

\medskip

Let us recall the multiplicative property of the index map for the product
of manifolds that was proved by Atiyah-Singer in \cite{Atiyah74}.  Consider a compact Lie group $K_2$ acting on two
manifolds $\Xcal_1$ and $\Xcal_2$, and assume that another compact Lie group $K_1$ acts on $\Xcal_1$ 
commuting with the action of $K_2$. The external product of complexes on $\T\Xcal_1$ and $\T\Xcal_2$ induces
a multiplication (see \cite{Atiyah74}):
$$
\odot: \Ko_{K_1\times K_2}(\T_{K_1} \Xcal_1)\times \Ko_{K_2}(\T_{K_2} \Xcal_2)
\longrightarrow \Ko_{K_1\times K_2}(\T_{K_1\times K_2} (\Xcal_1\times \Xcal_2)).
$$

Let us recall the definition of this external product. For $k=1,2$, we consider equivariant
morphisms\footnote{In order to simplify the notation, we do not make the distinctions between
vector bundles on $\T\Xcal$ and on $\Xcal$.} $\sigma_k:\Ecal^+_k\to \Ecal_k^-$  on $\T\Xcal_k$. We consider the equivariant morphism on $\T(\Xcal_1\times \Xcal_2)$
$$
\sigma_1\odot\sigma_2: \Ecal_1^{+}\otimes \Ecal_2^{+}\oplus \Ecal_1^{-}\otimes \Ecal_2^{-}
\longrightarrow \Ecal_1^{-}\otimes \Ecal_2^{+} \oplus \Ecal_1^{+}\otimes \Ecal_2^{-}
$$
defined by
\begin{equation}\label{eq:produit.externe}
\sigma_{1}\odot \sigma_{2}=
\left(
\begin{array}{cc}
  \sigma_{1}\otimes {\rm Id} & -{\rm Id}\otimes \sigma_{2}^{*}\\
{\rm Id} \otimes \sigma_{2} & \sigma_{1}^{*}\otimes {\rm Id}
\end{array}
\right)\ .
\end{equation}

We see that the set $\Char(\sigma_{1}\odot \sigma_{2})\subset \T\Xcal_1\times\T\Xcal_2$ is equal to
$\Char(\sigma_{1})\times \Char(\sigma_{2})$. We suppose now that the morphisms $\sigma_k$ are respectively $K_k$-transversally elliptic. Since $\T_{K_1\times K_2}(\Xcal_1\times \Xcal_2)\neq \T_{K_1} \Xcal_1\times\T_{K_2} \Xcal_2$, the
morphism $\sigma_{1}\odot\sigma_{2}$ is not necessarily $K_1\times K_2$-transversally elliptic. Nevertheless, if
$\sigma_2$ is taken {\em almost homogeneous}, then the morphism $\sigma_{1}\odot \sigma_{2}$ is $K_1\times K_2$-transversally elliptic (see \cite{pep-vergne:bismut}). So the exterior product $a_1\odot a_2$ is the $\K$-theory class defined by $\sigma_1\odot\sigma_2$,  where $a_k=[\sigma_k]$ and $\sigma_2$ is taken almost homogeneous.

The following property is a useful tool (see \cite{Atiyah74}[Lecture 3] and \cite{pep-vergne:bismut}).

\begin{theo}[Multiplicative property] \label{theo:multiplicative-property}
For any $[\sigma_1]\in \Ko_{K_1\times K_2}(\T_{K_1} \Xcal_1)$ and
any $[\sigma_2]\in \Ko_{K_2}(\T_{K_2} \Xcal_2)$ we have
$$
\indice^{K_1\times K_2}_{\Xcal_1\times \Xcal_2}([\sigma_1]\odot[\sigma_2])
=\indice^{K_1\times K_2}_{\Xcal_1}([\sigma_1])\otimes\indice^{K_2}_{\Xcal_2}([\sigma_2]).
$$
\end{theo}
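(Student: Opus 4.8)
The plan is to deduce the multiplicative property from the classical Künneth-type multiplicativity of the analytic index, by a deformation-and-excision reduction to linear models, in the spirit of Atiyah--Singer and Paradan--Vergne. (A slicker but more bookkeeping-heavy alternative is the axiomatic route: for fixed $[\sigma_2]$, both $[\sigma_1]\mapsto\indice^{K_1\times K_2}_{\Xcal_1\times\Xcal_2}([\sigma_1]\odot[\sigma_2])$ and $[\sigma_1]\mapsto\indice^{K_1\times K_2}_{\Xcal_1}([\sigma_1])\otimes\indice^{K_2}_{\Xcal_2}([\sigma_2])$ are ``index-like'' morphisms out of $\Ko_{K_1\times K_2}(\T_{K_1}\Xcal_1)$ with $\Rfor(K_2)$-coefficients, and one invokes the uniqueness theorem for such morphisms from \cite{pep-vergne:bismut}.) Concretely I would first note that both sides are bi-additive in $([\sigma_1],[\sigma_2])$ and invariant under homotopies of transversally elliptic symbols, so it suffices to check the formula on a generating family of classes. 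Using the excision property of the index one localizes $[\sigma_1]$ near $\Char(\sigma_1)\cap\T_{K_1}\Xcal_1$ and $[\sigma_2]$ near $\Char(\sigma_2)\cap\T_{K_2}\Xcal_2$; by the structure of equivariant $\Ko$-theory of tangent spaces this reduces the problem to the case where $\Xcal_1,\Xcal_2$ are invariant relatively compact opens in linear $K$-representations and $\sigma_1,\sigma_2$ are Bott/Thom symbols twisted by equivariant bundles. On such linear models the index of a transversally elliptic Bott symbol is an explicit formal character, and the identity then follows from the multiplicativity of the symmetric algebra, $S^\bullet(E_1\oplus E_2)=S^\bullet(E_1)\otimes S^\bullet(E_2)$, together with $\Char(\sigma_1\odot\sigma_2)=\Char(\sigma_1)\times\Char(\sigma_2)$.

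The step I expect to be the main obstacle is the very reason ``almost homogeneous'' appears in the statement: $\T_{K_1\times K_2}(\Xcal_1\times\Xcal_2)$ strictly contains $\T_{K_1}\Xcal_1\times\T_{K_2}\Xcal_2$, so for a general representative $\sigma_2$ the characteristic set $\Char(\sigma_1)\times\Char(\sigma_2)$ may fail to meet $\T_{K_1\times K_2}(\Xcal_1\times\Xcal_2)$ in a compact set, and $\sigma_1\odot\sigma_2$ need not be $K_1\times K_2$-transversally elliptic. I would show (i) that with an almost-homogeneous representative $\sigma_2$ the symbol $\sigma_1\odot\sigma_2$ is genuinely $K_1\times K_2$-transversally elliptic, so that the left-hand index is defined, and (ii) that the resulting class in $\Ko_{K_1\times K_2}(\T_{K_1\times K_2}(\Xcal_1\times\Xcal_2))$, hence its index, is independent of the almost-homogeneous representative chosen. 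This is precisely the transversal refinement of the elliptic Künneth theorem and is the technical heart of \cite{pep-vergne:bismut}; once one has reduced to a genuinely elliptic (or product-of-transversally-elliptic) situation, the identity $\indice_{\Xcal_1\times\Xcal_2}=\indice_{\Xcal_1}\otimes\indice_{\Xcal_2}$ is the standard multiplicativity of the analytic index on a product manifold.

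To assemble the pieces I would argue by continuity in $\Rfor(K_1\times K_2)$: since $\sigma_1$ is only $K_1$-transversally elliptic, one has a decomposition $\indice^{K_1\times K_2}_{\Xcal_1}([\sigma_1])=\sum_{\lambda\in\what{K_1}}\theta_\lambda\otimes V^{K_1}_\lambda$ with $\theta_\lambda\in R(K_2)$, and by bilinearity the general statement reduces to the same statement for elementary $K_1$-isotypic input classes, which is covered by the linear-model computation above. The factor $\indice^{K_2}_{\Xcal_2}([\sigma_2])\in\Rfor(K_2)$ is simply carried along unchanged throughout, since $\odot$ only alters the $\Xcal_1$-side fibrewise over $\Xcal_2$; this is also what makes the right-hand side well-defined in $\Rfor(K_1\times K_2)$ even though $\indice^{K_2}_{\Xcal_2}([\sigma_2])$ is an infinite character.
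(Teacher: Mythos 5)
The paper does not actually prove Theorem \ref{theo:multiplicative-property}: it is quoted as a known result, with the proof deferred to Lecture 3 of \cite{Atiyah74} and to \cite{pep-vergne:bismut}, and the only point discussed in the surrounding text is the one you also single out, namely that $\sigma_1\odot\sigma_2$ is $K_1\times K_2$-transversally elliptic only after $\sigma_2$ is replaced by an almost homogeneous representative, because $\T_{K_1\times K_2}(\Xcal_1\times\Xcal_2)\neq \T_{K_1}\Xcal_1\times\T_{K_2}\Xcal_2$. So there is no argument in the paper to compare yours against; judged on its own terms, your text is a reasonable outline but not a proof, since the steps carrying all the weight are exactly the ones you delegate back to the cited references (your points (i) and (ii), and the ``axiomatic route'' whose verification of the axioms for the left-hand side is precisely the content of \cite{pep-vergne:bismut}). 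The final bookkeeping paragraph, using that $\indice^{K_1\times K_2}_{\Xcal_1}(\sigma_1)$ is smooth in the $K_2$-variable so that the right-hand side makes sense in $\Rfor(K_1\times K_2)$, is correct and matches the discussion around (\ref{eq:1-2-index}).

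The concrete step that would fail as written is the claimed reduction, via excision and homotopy, to ``Bott/Thom symbols twisted by equivariant bundles on relatively compact opens in linear models''. In the transversally elliptic setting the groups $\Ko_K(\T_K\Xcal)$ are not generated in any simple way by such classes: for instance the pushed symbols $\clif^\kappa$ used throughout Section \ref{sec:transversally} have characteristic sets meeting $\T_K M$ along arbitrary critical sets of $\|\Phi^K_M\|^2$ and have indices that are genuinely infinite characters, and no K\"unneth-type generation statement for $\Ko_{K_1\times K_2}(\T_{K_1}\Xcal_1)$ is available to make ``check it on a generating family'' meaningful. This is also not how the cited proofs go: Atiyah's argument is analytic, working directly with the product operator and the decomposition of its kernel into isotypic components, while Berline--Vergne and Paradan--Vergne deduce multiplicativity from the cohomological index formula, respectively from the axiomatic characterization of the index morphism, where the independence of the almost homogeneous representative (your point (ii)) is proved as part of establishing that $[\sigma_1]\odot[\sigma_2]$ is a well-defined class in $\Ko_{K_1\times K_2}(\T_{K_1\times K_2}(\Xcal_1\times\Xcal_2))$. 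In short, you have correctly located the technical heart of the statement, but as it stands your proposal establishes the theorem only modulo the results it was meant to prove.
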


\subsection{Riemann-Roch character}\label{sec:RR}

Let $M$ be a compact $K$-manifold equipped with an invariant almost complex structure $J$. Let $p:\T M\to M$ be the projection. The complex vector bundle $(\T^* M)^{0,1}$ is $K$-equivariantly identified with the tangent bundle $\T M$ equipped with the complex structure $J$.
Let $h$ be the  Hermitian structure on  $(\T M,J)$ defined by : $h(v,w)=\Omega(v,J w) - i \Omega(v,w)$ for $v,w\in \T M$. The symbol 
$$
\Thom(M,J)\in 
\Gamma\left(\T M,\hom(p^{*}(\wedge_{\Cbb}^{even} \T M),\,p^{*}
(\wedge_{\Cbb}^{odd} \T M))\right)
$$  
at $(m,v)\in \T M$ is equal to the Clifford map
\begin{equation}\label{eq.thom.complex}
 \clif_{m}(v)\ :\ \wedge_{\Cbb}^{even} \T_m M
\longrightarrow \wedge_{\Cbb}^{odd} \T_m M,
\end{equation}
where $\clif_{m}(v).w= v\wedge w - \iota(v)w$ for $w\in 
\wedge_{\Cbb}^{\bullet} \T_{m}M$. Here $\iota(v):\wedge_{\Cbb}^{\bullet} 
\T_{m}M\to\wedge_{\Cbb}^{\bullet -1} \T_{m}M$ denotes the 
contraction map relative to $h$. Since $\clif_{m}(v)^2=-\| v\|^2 {\rm Id}$, the map  
$\clif_{m}(v)$ is invertible for all $v\neq 0$. Hence the characteristic set 
of $\Thom(M,J)$ corresponds to the $0$-section of $\T M$. 

\begin{defi}
To any $K$-equivariant complex vector bundle $E\to M$, we associate its Riemann-Roch character 
$$
\RR^K(M,E):=\indice^K_M(\Thom(M,J)\otimes E)\in R(K).
$$
\end{defi}

\begin{rem}
The character $\RR^{K}(M, E)$ is equal to the equivariant index of the Dolbeault-Dirac operator 
$\Dcal_E:=\sqrt{2}(\overline{\partial}_{E} + \overline{\partial}^*_{E})$, since $\Thom(M,J)\otimes E$  corresponds to the principal symbol  of $\Dcal_E$ (see \cite{B-G-V}[Proposition 3.67]).  
\end{rem}

\subsection{Definition of $\Qcal^{\Phi}$}\label{sec:Qcal-Phi}

Let $(M,\Omega_M,\Phi_M^K)$ be a compact Hamiltonian $K$-manifold pre-quantized by an 
equivariant line bundle $L_M$. Let $J$ be an invariant almost complex structure 
compatible with $\Omega$. Let $\RR^K(M,-)$ be the corresponding Riemann-Roch character. The topological index 
of $\Thom(M,J)\otimes L_M\in\Ko_{K}(\T M)$ is equal 
to the analytical index of the Dolbeault-Dirac operator $\sqrt{2}(\overline{\partial}_{L_M}+\overline{\partial}_{L_M}^*)$~:
\begin{equation}\label{eq:index-analytique-topologique}
    \Qcal_{K}(M)=\RR^K(M,L_M).
\end{equation}

\medskip


When $M$ is not compact the topological index of $\Thom(M,J)\otimes L_M$
is not defined. In order to extend the notion of geometric quantization to this 
setting we deform the symbol $\Thom(M,J)\otimes L$ in the ``Witten'' way \cite{pep-RR,pep-ENS,Ma-Zhang}. Consider the identification
$\xi\mapsto\wtde{\xi},\kgot^*\to\kgot$ defined by a $K$-invariant scalar product
on  $\kgot^*$. We define the {\em Kirwan vector field} on
$M$ : 
\begin{equation}\label{eq-kappa}
    \kappa_m= \left(\wtde{\Phi_M^K(m)}\right)_M(m), \quad m\in M.
\end{equation}

\begin{defi}\label{def:pushed-sigma}
The symbol  $\Thom(M,J)\otimes L$ pushed by the vector field $\kappa$ is the symbol $\clif^\kappa$ 
defined by the relation
$$
\clif^\kappa\vert_m(v)=\Thom(M,J)\otimes L\vert_m(v-\kappa_m)
$$
for any $(m,v)\in\T M$. More generally, if $E\to M$ is an equivariant complex vector bundle, one defines the symbol 
$\clif^\kappa_E$ with the same relation (with $E$ at the place of $L$).
\end{defi}

Note that $\clif^\kappa\vert_m(v)$ is invertible except if
$v=\kappa_m$. If furthermore $v$ belongs to the subset $\T_K M$
of tangent vectors orthogonal to the $K$-orbits, then $v=0$ and
$\kappa_m=0$.  Indeed $\kappa_m$ is tangent to $K\cdot m$ while
$v$ is orthogonal.

Since $\kappa$ is the Hamiltonian vector field of the function
$\frac{-1}{2}\|\Phi_M^K\|^2$, the set of zeros of $\kappa$ coincides with the set 
of critical points of $\|\Phi_M^K\|^2$. Finally we have 
$$
\Char(\clif^\kappa)\cap \T_K M \simeq
\Cr(\|\Phi_M^K\|^2).
$$

In general $\Cr(\|\Phi_M^K\|^2)$ is not compact, so $\clif^\kappa$ does not define a transversally elliptic 
symbol on $M$. In order to define a kind of index of $\clif^\kappa$, we proceed as follows. 
For any invariant open relatively compact subset 
$U\subset M$  the set  $\Char(\clif^\kappa\vert_{U})\cap \T_K U \simeq \Cr(\|\Phi\|^2)\cap U$ is compact when  
\begin{equation}\label{eq:condition-U}
\partial U\cap \Cr(\|\Phi\|^2)=\emptyset.
\end{equation}
When (\ref{eq:condition-U}) holds we denote 
\begin{equation}\label{eq:Q-Phi-U}
\Qcal^{\Phi}_K(U):= \indice^K_{U}(\clif^\kappa\vert_{U})\quad
   \in\quad \Rforc(K)
\end{equation}
the equivariant index of the transversally elliptic symbol $\clif^\kappa\vert_{U}$.

Let us recall the description of the critical points of $\|\Phi_M^K\|^2$, when the moment map $\Phi_M^K$ is proper. We knows that 
$m\in \Cr(\|\Phi_M^K\|^2)$ if and only if $\wtde{\beta}_M(m)=0$ for $\beta=\Phi(m)$. Hence the set $\Cr(\|\Phi_M^K\|^2)$ has the following decomposition
\begin{equation}\label{eq:crit-Phi}
\Cr(\|\Phi_M^K\|^2)=\bigcup_{\beta\in \kgot^*} \ M^{\wtde{\beta}}\cap (\Phi_M^K)^{-1}(\beta)\nonumber\\
=\bigcup_{\beta\in\Bcal} \ \underbrace{K\cdot (M^{\wtde{\beta}}\cap (\Phi_M^K)^{-1}(\beta))}_{Z_\beta},
\end{equation}
where $\Bcal$ is a subset of the Weyl chamber $\tgot^*_+$. We denote by $B_r\subset \tgot^*$ the open 
ball $\{\xi\in\tgot^*\  | \ \|\xi\|<r\}$. The following Proposition is proved in \cite{pep-formal-2}.

\begin{prop}\label{prop:crit-Phi}
$\bullet$ For any $r>0$, the set $\Bcal\cap B_r$ is finite.

$\bullet$ The set of singular values of $\|\Phi_M^K\|^2: M\to\Rbb$ forms a sequence $0\leq r_1< r_2<\ldots < r_k<\ldots$  
which is finite  if and only if $\Cr(\|\Phi_M^K\|^2)$ is compact. In the other case $\lim_{k\to\infty}r_k=\infty$.
\end{prop}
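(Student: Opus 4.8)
The plan is to read off both statements from the decomposition $\Cr(\|\Phi_M^K\|^2)=\bigcup_{\beta\in\Bcal}Z_\beta$ recalled above, together with the properness of $\Phi_M^K$. Fix once and for all the following elementary remarks. The function $\|\Phi_M^K\|^2$ is identically equal to $\|\beta\|^2$ on $Z_\beta=K\cdot(M^{\widetilde\beta}\cap(\Phi_M^K)^{-1}(\beta))$, since it equals $\|\beta\|^2$ on $M^{\widetilde\beta}\cap(\Phi_M^K)^{-1}(\beta)$ and is $K$-invariant; hence the set of critical values of $\|\Phi_M^K\|^2$ is exactly $\{\|\beta\|^2\,\vert\,\beta\in\Bcal\}$. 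Moreover $Z_\beta\subset(\Phi_M^K)^{-1}(K\cdot\beta)$, which is compact by properness, so each $Z_\beta$ is compact. Finally, for $r>0$ the set $A_r:=(\Phi_M^K)^{-1}(\{\xi\in\kgot^*\,\vert\,\|\xi\|\le r\})$ is a compact $K$-invariant subset of $M$, and if $\beta\in\Bcal$ satisfies $\|\beta\|<r$ then $M^{\widetilde\beta}\cap(\Phi_M^K)^{-1}(\beta)\subset A_r$.

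For the first assertion, fix $\beta\in\Bcal\cap B_r$, choose $m\in M^{\widetilde\beta}\cap(\Phi_M^K)^{-1}(\beta)\subset A_r$, and let $C$ be the connected component of $m$ in the fixed-point submanifold $M^{\widetilde\beta}=\{x\,\vert\,\widetilde\beta_M(x)=0\}$. Since $\widetilde\beta_M$ vanishes identically on $M^{\widetilde\beta}$, the moment map relation $\iota(X_M)\Omega_M=-d\langle\Phi_M^K,X\rangle$ shows that $d\langle\Phi_M^K,\widetilde\beta\rangle$ vanishes at every point of $M^{\widetilde\beta}$, so $\langle\Phi_M^K,\widetilde\beta\rangle$ is constant on $C$, equal to $\langle\beta,\widetilde\beta\rangle=\|\beta\|^2$. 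Hence $\Phi_M^K(x)-\beta$ is orthogonal to $\widetilde\beta$ and $\|\Phi_M^K(x)\|^2=\|\beta\|^2+\|\Phi_M^K(x)-\beta\|^2\ge\|\beta\|^2$ for every $x\in C$, so $\beta$ is the point of minimal norm of $\Phi_M^K(C)$. A short Cauchy--Schwarz argument then shows that $\beta$ is determined uniquely by $C$. It therefore suffices to prove that only finitely many connected components of fixed-point sets $M^{X}$, $X\in\kgot$, meet the compact set $A_r$. This is the main point: I would deduce it from the slice theorem, covering $A_r$ by finitely many $K$-slices $K\times_{K_{m_i}}V_i$ and using that only finitely many conjugacy classes of stabilizer subgroups occur along $A_r$ and that in each linear slice the fixed-point set of a subgroup has finitely many connected components. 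Granting this, $\Bcal\cap B_r$ is finite.

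For the second assertion, the first part shows that $\{\|\beta\|^2\,\vert\,\beta\in\Bcal\}$, the set of critical values of $\|\Phi_M^K\|^2$, has finite intersection with every bounded interval, hence is a discrete subset of $[0,\infty)$ that can be enumerated as an increasing sequence $0\le r_1<r_2<\cdots$. If $\Bcal$ is finite, this sequence is finite and $\Cr(\|\Phi_M^K\|^2)=\bigcup_{\beta\in\Bcal}Z_\beta$ is a finite union of compact sets, hence compact. Conversely, if $\Cr(\|\Phi_M^K\|^2)$ is compact then $\|\Phi_M^K\|^2$ is bounded on it, so $\Bcal$ lies in some ball $B_r$ and is finite by the first part, whence the sequence of singular values is finite. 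When $\Bcal$ is infinite, for every $R>0$ only finitely many of the $r_k$ are $\le R$ (apply the first part with radius $\sqrt R+1$), so $\lim_{k\to\infty}r_k=\infty$. I expect the only real obstacle to be the finiteness, in the second paragraph, of the family of connected components of fixed-point sets meeting $A_r$; everything else is bookkeeping with the moment map relation and the two properness hypotheses.
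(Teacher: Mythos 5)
Your reduction of the second bullet to the first is fine, and your observation that each $\beta\in\Bcal\cap B_r$ is the unique minimal-norm point of $\Phi_M^K(C)$, where $C$ is the connected component of $M^{\widetilde{\beta}}$ through a point of $M^{\widetilde{\beta}}\cap(\Phi_M^K)^{-1}(\beta)\subset A_r$, is correct and is exactly the Kirwan-type mechanism one wants (note that the paper itself does not prove this proposition but refers to \cite{pep-formal-2} for it). The problem is the step you yourself flag as the main point and only grant: the claim that only finitely many connected components of fixed-point sets $M^{X}$, $X\in\kgot$, meet the compact set $A_r$ is \emph{false} as stated. If $K$ is nonabelian and some point of $A_r$ is fixed by $K$ with isotropy representation containing the adjoint representation (e.g.\ $K=\mathrm{SU}(2)$ acting on $\Pbb(\Cbb\oplus \mathrm{Sym}^2\Cbb^2)$ at the point $[1:0]$), then the submanifolds $M^X$ form a continuous family of pairwise distinct sets all passing through that point, so uncountably many distinct components meet any neighbourhood of it. Your proposed mechanism does not repair this: finiteness of the conjugacy classes of stabilizers along $A_r$ does not bound the family $\{M^X\}$, precisely because conjugation moves the fixed sets around in continuous families; and the phrase ``in each linear slice the fixed-point set of a subgroup has finitely many connected components'' is beside the point (in a linear slice such a fixed set is a subspace, hence connected — the issue is that infinitely many distinct subspaces $V^X$ can occur as $X$ varies).

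What saves the argument is that $\Bcal\subset\tgot^*_+$, so only $X=\widetilde{\beta}\in\tgot$ is ever needed; for a fixed maximal torus the local fixed sets in a slice are intersections of kernels of the finitely many isotropy weights, hence take only finitely many values. Even then, passing from ``finitely many local models on a finite slice cover of $A_r$'' to ``finitely many global components'' needs care (two distinct global components, for different subtori, can agree on a local model), and the clean way to close the argument is to bypass global components altogether: cover $A_r$ by finitely many $T$-slice neighbourhoods, observe that the point $m\in Z_\beta\cap A_r$ lies in one of the finitely many connected local pieces $P$ of the form (orbit directions)$\times W^{\widetilde\beta}$, and then rerun your orthogonality identity on $P$ itself to conclude that $\beta$ is the minimal-norm point of $\Phi_M^K(P)$ — hence $\beta$ is determined by one of finitely many local pieces. (Properness of $\Phi_M^K$ is genuinely needed in any such argument, e.g.\ to guarantee compactness of $A_r$ and, if one insists on counting global components, compactness of the components of $M^T$; your sketch never uses it beyond compactness of $A_r$.) As it stands, the central finiteness claim is both unproved and, in the generality in which you state it, incorrect, so the proof has a genuine gap.
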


\medskip

For any $\beta\in \Bcal$, we consider a relatively compact open invariant neighbourhood 
$\Ucal_\beta$ of  $Z_\beta$ such that $\Cr(\|\Phi_M^K\|^2)\cap \overline{\Ucal_\beta}= Z_\beta$.  The excision property tell us that 
the generalized character $\Qcal^{\Phi}_K(\Ucal_\beta)=\indice^{K}_{\Ucal_\beta}(\clif^\kappa\vert_{\Ucal_\beta})$ does not depend of the choice of $\Ucal_\beta$. In order to simplify the notations we consider the following 

\begin{defi}\label{def:Q-beta}
$\bullet$ We denote $\Qcal^{\beta}_K(M)\in\Rforc(K)$ the equivariant index\footnote{The index of $\clif^\kappa\vert_{\Ucal_\beta}$ was denoted $RR^{^K}_\beta(M,L)$ in \cite{pep-RR}}  of the transversally elliptic symbol $\clif^\kappa\vert_{\Ucal_\beta}$.

$\bullet$ When $E\to M$ is an equivariant complex vector bundle, we denote $\RR^{K}_\beta(M,E)$ the equivariant index of the transversally elliptic symbol $\clif^\kappa_E\vert_{\Ucal_\beta}$.
\end{defi}

The following crucial property is proved in \cite{Ma-Zhang,pep-formal-2}.

\begin{theo}\label{prop:Q-beta-support}
A representation $V_\lambda^K$ occurs in the generalized character $\Qcal^{\beta}_K(M)\in \Rfor(K)$ only if 
$\|\lambda\|\geq\|\beta\|$. 
\end{theo}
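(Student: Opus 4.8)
The plan is to push the localization one step further, down onto the piece $Z_\beta=K\cdot(M^{\widetilde\beta}\cap(\Phi^K_M)^{-1}(\beta))$ (with $\Bcal\ni\beta$ as in Proposition \ref{prop:crit-Phi}), and then to read off which $K$-types can occur from the $\widetilde\beta$-weights of the bundles that appear. Write $K_\beta=Z_K(\beta)$, with Lie algebra $\kgot_\beta$, and note that $M^{\widetilde\beta}=M^{T_\beta}$ for $T_\beta$ the torus generated by $\exp(\Rbb\widetilde\beta)$, so that $M^{\widetilde\beta}$ is a $K_\beta$-invariant symplectic submanifold on which $\widetilde\beta\in\kgot_\beta$ acts trivially; since $\widetilde\beta_M$ vanishes on $M^{\widetilde\beta}$ the function $\langle\Phi^K_M,\widetilde\beta\rangle$ is locally constant on $M^{\widetilde\beta}$ and equals $\|\beta\|^2$ on the part meeting $(\Phi^K_M)^{-1}(\beta)$. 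As $\Phi^K_M$ is proper, $Z_\beta$ is compact, and by the excision property $\Qcal^\beta_K(M)=\indice^K_{\Ucal_\beta}(\clif^\kappa\vert_{\Ucal_\beta})$ depends only on a $K$-invariant tubular neighbourhood of $Z_\beta$. I would use the $K$-equivariant Hamiltonian normal form there: a neighbourhood of $Z_\beta$ is $K\times_{K_\beta}\mathcal{W}$, where $\mathcal{W}$ is a neighbourhood of $M^{\widetilde\beta}\cap(\Phi^K_M)^{-1}(\beta)$ inside the normal bundle $\Ncal\to M^{\widetilde\beta}$ of $M^{\widetilde\beta}$ in $M$, and on this model the Kirwan vector field $\kappa$ splits into a Kirwan-type field along the base $M^{\widetilde\beta}$ and the field of the linear $T_\beta$-action on the fibres of $\Ncal$.

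On this model the Witten-deformed symbol $\clif^\kappa\vert_{\Ucal_\beta}$ factors, via the multiplicative property (Theorem \ref{theo:multiplicative-property}) and the induction morphism $\mathrm{Ind}_{K_\beta}^K:\Rfor(K_\beta)\to\Rfor(K)$, as the external product of the analogous $K_\beta$-transversally elliptic symbol on $M^{\widetilde\beta}$, localized near $(\Phi^K_M)^{-1}(\beta)$, with an almost homogeneous transversally elliptic symbol in the normal fibre directions whose index is, up to a sign $(-1)^{r_\beta}$ ($r_\beta$ the rank of the positive part), equal to $\det(\Ncal^{+})\otimes S^\bullet(\Ncal^{+}\oplus\overline{\Ncal^{-}})$, where $\Ncal=\Ncal^{+}\oplus\Ncal^{-}$ is the decomposition of the normal bundle according to the sign of the $\widetilde\beta$-weights. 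In other words one obtains a formula
$$
\Qcal^\beta_K(M)=(-1)^{r_\beta}\,\mathrm{Ind}_{K_\beta}^{K}\Big(\RR^{K_\beta}_\beta\big(M^{\widetilde\beta},\;L\vert_{M^{\widetilde\beta}}\otimes\det(\Ncal^{+})\otimes S^\bullet(\Ncal^{+}\oplus\overline{\Ncal^{-}})\big)\Big),
$$
where $\RR^{K_\beta}_\beta(M^{\widetilde\beta},-)$ denotes the $K_\beta$-Riemann--Roch character of $M^{\widetilde\beta}$ localized near $(\Phi^K_M)^{-1}(\beta)$, defined exactly as in Definition \ref{def:Q-beta} but with $(K_\beta,M^{\widetilde\beta})$ in place of $(K,M)$. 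This is precisely the type of localization computation carried out in \cite{pep-RR}.

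It then remains to do the weight bookkeeping. Since $\widetilde\beta$ is central in $\kgot_\beta$ it acts by a scalar $i(\nu,\beta)$ on every $K_\beta$-irreducible $V^{K_\beta}_\nu$; since it moreover acts trivially on $M^{\widetilde\beta}$ and on $\mathrm{T}M^{\widetilde\beta}$, the $\widetilde\beta$-action on the virtual vector space $\RR^{K_\beta}_\beta(M^{\widetilde\beta},L\vert_{M^{\widetilde\beta}}\otimes\det(\Ncal^{+})\otimes S^\bullet(\Ncal^{+}\oplus\overline{\Ncal^{-}}))$ comes only from the coefficient bundle. Over $(\Phi^K_M)^{-1}(\beta)$ the Kostant relation (\ref{eq:kostant-L}) makes $\widetilde\beta$ act on $L$ with weight $i\|\beta\|^2$, while every $\widetilde\beta$-weight of $\det(\Ncal^{+})$ and of $S^\bullet(\Ncal^{+}\oplus\overline{\Ncal^{-}})$ pairs non-negatively with $\beta$ (the weights of $\Ncal^{+}$ and of $\overline{\Ncal^{-}}$ pair strictly positively with $\beta$ by construction). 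Hence that virtual space is concentrated in $\widetilde\beta$-weights $\geq\|\beta\|^2$, so every $K_\beta$-constituent $V^{K_\beta}_\nu$ occurring in it has $(\nu,\beta)\geq\|\beta\|^2$. If now $V^K_\lambda$ occurs in $\Qcal^\beta_K(M)$, Frobenius reciprocity for $\mathrm{Ind}_{K_\beta}^K$ yields a $K_\beta$-constituent $V^{K_\beta}_\nu$ of the bracketed character which is also a $K_\beta$-constituent of $V^K_\lambda\vert_{K_\beta}$; then $\nu$ is a weight of $V^K_\lambda$, and since $\beta$ is dominant, $(\nu,\beta)\leq(\lambda,\beta)$. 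Therefore $(\lambda,\beta)\geq\|\beta\|^2$, and Cauchy--Schwarz $(\lambda,\beta)\leq\|\lambda\|\,\|\beta\|$ gives $\|\lambda\|\geq\|\beta\|$. The main obstacle is the first two steps — establishing the $K$-equivariant Hamiltonian normal form around the (possibly disconnected) $Z_\beta$ and proving that the deformed symbol really splits as the asserted external product of an almost homogeneous normal symbol with a deformed symbol on $M^{\widetilde\beta}$; once this localization statement is available, the multiplicative property, the induction formula, the weight count and Cauchy--Schwarz are all routine.
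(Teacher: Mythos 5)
Two remarks before the substance: the paper itself does not prove this statement, it quotes it from \cite{Ma-Zhang,pep-formal-2}; and your strategy --- localize on $Z_\beta$, reduce to a $K_\beta$-equivariant index near $M^{\widetilde\beta}\cap(\Phi^K_M)^{-1}(\beta)$, then use the Kostant relation (\ref{eq:kostant-L}) to see that the $\widetilde\beta$-weight of the coefficient data is at least $\|\beta\|^2$, and conclude by Frobenius reciprocity, dominance of $\beta$ and Cauchy--Schwarz --- is exactly the strategy of the author's own proof in \cite{pep-RR,pep-formal-2} (not the analytic route of Ma--Zhang). Your final weight-counting paragraph is correct and is the heart of that proof.

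The gap is in the localization identity you assert, which you yourself flag as the main obstacle: as written it is false. The local model is already dimensionally inconsistent: $K\times_{K_\beta}\mathcal{W}$, with $\mathcal{W}$ open in the total space of the normal bundle $\Ncal$ of $M^{\widetilde\beta}$ in $M$, has dimension $\dim M+\dim(K/K_\beta)$, so it is not a neighbourhood of $Z_\beta$ unless $K_\beta=K$. The reason is that the orbit directions $[\kgot,\widetilde\beta]\cdot m$ already sit inside $\Ncal_m$ (they carry non-zero $T_\beta$-weights), so your displayed formula, which simultaneously induces from $K_\beta$ to $K$ and polarizes the whole of $\Ncal$ into $\det(\Ncal^{+})\otimes S^\bullet(\Ncal^{+}\oplus\overline{\Ncal^{-}})$, counts these directions twice. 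It fails already for $M=K\cdot\beta$ with $K=\mathrm{SU}(2)$: there $Z_\beta=M$ and $\Qcal^\beta_K(M)=\RR^K(M,L_\beta)=V^K_\beta$, while $M^{\widetilde\beta}\cap\Phi^{-1}(\beta)=\{\beta\}$ is a point, $\Ncal=\Ncal^{+}=\T_\beta M$ has weight $\alpha$, and your right-hand side is $-\,\mathrm{Ind}_{T}^{K}\bigl(\Cbb_{\beta+\alpha}\otimes S^\bullet(\Cbb_{\alpha})\bigr)$, which does not contain $V^K_\beta$ at all (and has infinitely many negative multiplicities), whichever of the two natural meanings (Frobenius-dual or holomorphic induction) you give to $\mathrm{Ind}_T^K$. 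The correct statement in \cite{pep-RR} goes through a $K_\beta$-invariant slice transverse to the orbit directions: those directions contribute an alternating factor $\wedge^\bullet_\Cbb(\kgot/\kgot_\beta)$ (equivalently, the induction is a holomorphic one), and only the directions normal to $M^{\widetilde\beta}$ \emph{inside the slice} are polarized into $\det(\cdot)\otimes S^\bullet(\cdot)$; the central case $K_\beta=K$, where no induction occurs and your formula is the right one, is exactly what this paper uses in Section \ref{sec:proof-theo-qfor-Y}. With the corrected identity your bookkeeping still closes, since with the complex structure defined by $\mathrm{ad}(\widetilde\beta)$ all the extra factors pair non-negatively with $\beta$ and $L$ contributes exactly $\|\beta\|^2$ on the components meeting $\Phi^{-1}(\beta)$; but as proposed, the crucial identity on which everything rests is mis-stated and would not survive the basic example above, so the proof is not complete.
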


\medskip

\begin{defi}\label{def:Q-Phi}
The generalized character $\Qcal^{\Phi}_K(M)\in\Rfor(K)$ is defined by 
\begin{equation}\label{eq:Q-Q-beta}
\Qcal^{\Phi}_K(M)= \sum_{\beta\in\Bcal} \Qcal^{\beta}_K(M).
\end{equation}
\end{defi}

The sum (\ref{eq:Q-Q-beta}) converges in $\Rfor(K)$ since we know after Theorem \ref{prop:Q-beta-support} 
that the multiplicity of $V_\lambda^K$ in $\Qcal^{\beta}_K(M)$ is zero when $\|\beta\|>\|\lambda\|$. 

\medskip

We finish this section, by recalling a result that will be needed in Section \ref{sec:proof-theo-qfor-Y}. 
Suppose that  $\kgot=\kgot_1\oplus\kgot_2$ where $[\kgot_1,\kgot_2]=0$ and $\kgot_i$ are the Lie algebras of closed connected subgroups $K_i$. 
We assume that the moment map $\Phi_M^{K_1}: M\to \kgot^*_1$ relative to the $K_1$-action is {\em proper}.  Let us explain how we can use the $K$-invariant proper map 
$\|\Phi_M^{K_1}\|^2$ instead of $\|\Phi_M^K\|^2$ in order to defined the the geometric quantization $\Qcal^{\Phi}_K(M)$. 

Let us choose $\tgot=\tgot_1\oplus\tgot_2$ such that $\tgot_i\subset \kgot_i$ is a maximal abelian sub-algebras. We start a decomposition
\begin{equation}\label{eq:crit-Phi-agot}
\Cr(\|\Phi_M^{K_1}\|^2)=\bigcup_{\beta\in\Bcal_1} \ \underbrace{K\cdot (M^{\wtde{\beta}}\cap (\Phi_M^{K_1})^{-1}(\beta))}_{Z^1_\beta},
\end{equation} 
with $\Bcal_1\subset \tgot_1^*$.

Let $\kappa_1$ be the Hamiltonian vector field of $\frac{-1}{2}\|\Phi_M^{K_1}\|^2$, and let $\clif^{\kappa_1}$ be the corresponding pushed symbol. For any 
$\beta\in \Bcal_1$, we consider a relatively compact open $K$-invariant neighbourhood 
$\Ucal_\beta^1$ of  $Z_\beta^1$ such that $\Cr(\|\Phi_M^{K_1}\|^2)\cap \overline{\Ucal_\beta^1}= Z_\beta^1$.  
 We denote $\Qcal^{\beta,1}_K(M)\in\Rfor(K)$ the equivariant index of the $K_1$-transversally elliptic symbol $\clif^{\kappa_1}\vert_{\Ucal_\beta^1}$. 
 Theorem \ref{prop:Q-beta-support} admits the following extension 
 \begin{theo}\label{prop:Q-beta-support-agot}
A representation $V_\lambda^K$ occurs in the generalized character $\Qcal^{\beta,1}_K(M)$ only if 
$\|\lambda_1\|\geq\|\beta\|$. Here $\lambda\in\wedge^*\subset\tgot^*$ is decomposed in $\lambda=\lambda_1\oplus \lambda_2$ with 
$\lambda_i\in\tgot^*_i$.
\end{theo}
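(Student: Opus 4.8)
Theorem \ref{prop:Q-beta-support-agot} is the statement to prove: if $\kgot=\kgot_1\oplus\kgot_2$ commute, $\Phi_M^{K_1}$ is proper, and we localize with respect to $\|\Phi_M^{K_1}\|^2$ near a critical component $Z^1_\beta$ with $\beta\in\Bcal_1\subset\tgot_1^*$, then $V_\lambda^K$ can occur in $\Qcal^{\beta,1}_K(M)$ only when $\|\lambda_1\|\geq\|\beta\|$, where $\lambda=\lambda_1\oplus\lambda_2$.

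The plan is to reduce the $K=K_1\times K_2$ statement to the already-known $K_1$-statement of Theorem \ref{prop:Q-beta-support} by applying it fibrewise along $K_2$. First I would observe that the pushed symbol $\clif^{\kappa_1}|_{\Ucal^1_\beta}$ uses only the vector field $\kappa_1$ associated to the $K_1$-moment map, so it is in fact $K_1$-transversally elliptic (not merely $K$-transversally elliptic): its characteristic set meets $\T_{K_1}\Ucal^1_\beta$ in the compact set $Z^1_\beta$. Hence its index $\Qcal^{\beta,1}_K(M)\in\Rfor(K_1\times K_2)$ is, by the general smoothness property recalled around equation (\ref{eq:1-2-index}), smooth in the $K_2$-variable; so we may write
\begin{equation*}
\Qcal^{\beta,1}_K(M)=\sum_{\lambda_1\in\widehat{K_1}}\theta_{\lambda_1}\otimes V_{\lambda_1}^{K_1},\qquad \theta_{\lambda_1}\in R(K_2).
\end{equation*}
By (\ref{eq:1-2-index}), restricting to $K_1$ gives $\Qcal^{\beta,1}_K(M)|_{K_1}=\sum_{\lambda_1}\dim(\theta_{\lambda_1})\,V_{\lambda_1}^{K_1}$, and this equals the index of the same symbol regarded as a $K_1$-transversally elliptic symbol for the $K_1$-action alone — which is precisely $\Qcal^{\beta}_{K_1}(M)$ in the notation of Definition \ref{def:Q-beta} (applied to $K_1$). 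Theorem \ref{prop:Q-beta-support}, applied to the compact group $K_1$ acting on $M$ with proper moment map $\Phi_M^{K_1}$, then tells us that $V_{\lambda_1}^{K_1}$ occurs in $\Qcal^{\beta}_{K_1}(M)$ only if $\|\lambda_1\|\geq\|\beta\|$. Therefore $\theta_{\lambda_1}=0$ in $R(K_2)$ whenever $\|\lambda_1\|<\|\beta\|$, and consequently no irreducible $V_\lambda^K=V_{\lambda_1}^{K_1}\otimes V_{\lambda_2}^{K_2}$ with $\|\lambda_1\|<\|\beta\|$ can appear in $\Qcal^{\beta,1}_K(M)$. This is exactly the claim.

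A couple of points need care. One must make sure the open set $\Ucal^1_\beta$ is chosen so that the $K_1$-localization and the $K$-localization genuinely see the same critical set: since $Z^1_\beta=K\cdot(M^{\widetilde\beta}\cap(\Phi_M^{K_1})^{-1}(\beta))$ and $K_2$ commutes with $K_1$, the component $Z^1_\beta$ is $K$-invariant and equals $K_2\cdot(K_1\cdot(\cdots))$, so a $K$-invariant tubular neighbourhood works for both localizations and the excision-based independence of $\Qcal^{\beta,1}$ from the choice of $\Ucal^1_\beta$ goes through verbatim as in \cite{pep-RR}[section 3.1]. One should also note that properness of $\Phi_M^{K_1}$ is what makes $\|\Phi_M^{K_1}\|^2$ proper, hence its critical set admits the locally finite decomposition (\ref{eq:crit-Phi-agot}) with the $Z^1_\beta$ closed and pairwise separable, so the individual pieces $\Qcal^{\beta,1}_K(M)$ are well defined in $\Rfor(K)$. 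The main (and really only) obstacle is the bookkeeping in identifying $\Qcal^{\beta,1}_K(M)|_{K_1}$ with the purely $K_1$-equivariant index $\Qcal^{\beta}_{K_1}(M)$: this is where the external-action formalism of transversally elliptic symbols and the smoothness in the $K_2$-direction from \cite{Atiyah74,B-V.inventiones.96.2,pep-vergne:bismut} are invoked, but once that identification is in place the norm estimate is an immediate consequence of the already-established Theorem \ref{prop:Q-beta-support}.
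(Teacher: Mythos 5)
Your reduction breaks down at the very last step. From the smoothness of the index in the $K_2$-variable you correctly write $\Qcal^{\beta,1}_K(M)=\sum_{\lambda_1\in\widehat{K_1}}\theta_{\lambda_1}\otimes V^{K_1}_{\lambda_1}$ with $\theta_{\lambda_1}\in R(K_2)$, and the identification $\Qcal^{\beta,1}_K(M)\vert_{K_1}=\Qcal^{\beta}_{K_1}(M)$ via (\ref{eq:1-2-index}) is fine (as is the observation that $Z^1_\beta$ is the same critical component for the $K_1$- and the $K$-localization, since $K_2$ commutes with $\tilde\beta$ and acts trivially on $\kgot_1^*$). But Theorem \ref{prop:Q-beta-support} applied to $K_1$ then only tells you that $\dim(\theta_{\lambda_1})=0$ whenever $\|\lambda_1\|<\|\beta\|$, and you conclude ``therefore $\theta_{\lambda_1}=0$ in $R(K_2)$''. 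That implication is false: $\theta_{\lambda_1}$ is a \emph{virtual} character of $K_2$ (the localized indices $\Qcal^{\beta,1}$ genuinely have multiplicities of both signs), so its virtual dimension can vanish by cancellation among different $\lambda_2$'s without any individual multiplicity $[V^{K_1}_{\lambda_1}\otimes V^{K_2}_{\lambda_2}:\Qcal^{\beta,1}_K(M)]$ being zero. Restriction to $K_1$, i.e.\ evaluation of the $K_2$-variable at the identity, simply loses the information the theorem is about, and there is no cheap twist that recovers it: extracting the multiplicity of $V^{K_2}_{\lambda_2}$ requires taking $K_2$-invariants of $\theta_{\lambda_1}\otimes (V^{K_2}_{\lambda_2})^*$, not a dimension, and that cannot be converted into a purely $K_1$-equivariant index without further hypotheses on the $K_2$-action.

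The statement is in fact not a formal consequence of Theorem \ref{prop:Q-beta-support}; it is an extension of its \emph{proof}. The argument in \cite{Ma-Zhang,pep-formal-2} (to which the paper appeals, giving no proof here) bounds the $T$-weights $\mu$ occurring in the character localized near $Z_\beta$ by an estimate of the form $\langle\mu,\tilde\beta\rangle\geq\|\beta\|^2$, coming from the positivity of the action of $\tilde\beta$ on the relevant polarized normal data; the norm inequality is then obtained by Cauchy--Schwarz. When one localizes with respect to $\|\Phi^{K_1}_M\|^2$, the vector $\tilde\beta$ lies in $\tgot_1$, so the same estimate reads $\langle\mu_1,\beta\rangle\geq\|\beta\|^2$ and only constrains the $\tgot_1^*$-component; since $W_K=W_{K_1}\times W_{K_2}$, the $\tgot_1^*$-components of the weights of $V^K_\lambda$ have norm at most $\|\lambda_1\|$, whence $\|\lambda_1\|\geq\|\beta\|$ for every $\lambda_2$ simultaneously. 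If you want a complete proof you should rerun that localization estimate in the direction $\tilde\beta\in\tgot_1$ rather than try to quote the $K_1$-statement after restriction.
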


Like in Definition \ref{def:Q-Phi}, we can define the generalized character $\Qcal^{\Phi_1}_K(M)\in\Rfor(K)$  by 
\begin{equation}\label{eq:Q-Q-beta-agot}
\Qcal^{\Phi_1}_K(M)= \sum_{\beta\in\Bcal_1} \Qcal^{\beta,1}_K(M).
\end{equation}

In \cite{pep-formal-2}[Section 4.1],  we prove the following 
\begin{theo}\label{theo:quantization-phi-1}
Let $(M,\Omega_M,\Phi^K_M)$ be a proper Hamiltonian $K$-manifold that is pre-quantized. If the moment map 
$\Phi_M^{K_1}: M\to \kgot^*_1$ is proper, we have 
$$
\Qcal^{\Phi}_K(M)=\Qcal^{\Phi_1}_K(M)
$$
in $\Rfor(K)$.
\end{theo}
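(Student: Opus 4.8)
The plan is to deduce the identity from the fact that \emph{both} generalized characters coincide with the ``naive'' formal quantization $\qfor_K(M)=\sum_{\mu\in\what K}\Qcal(M_{\mu,K})\,V_\mu^K$. Theorem~\ref{theo:formel=formel} already gives $\Qcal^{\Phi}_K(M)=\qfor_K(M)$, so the entire content of the statement is the equality $\Qcal^{\Phi_1}_K(M)=\qfor_K(M)$. The geometric reason this should hold is that $\|\Phi_M^{K_1}\|^2$ is, just like $\|\Phi_M^K\|^2$, a \emph{proper}, $K$-invariant, ``Kirwan'' function: its critical set decomposes exactly as in (\ref{eq:crit-Phi-agot}), and the decisive support estimate is available in the form of Theorem~\ref{prop:Q-beta-support-agot}. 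So the localization machinery of Section~\ref{sec:Qcal-Phi} applies to $\|\Phi_M^{K_1}\|^2$ verbatim, and one re-runs the Ma--Zhang/Paradan proof of Theorem~\ref{theo:formel=formel} with $\|\Phi_M^{K_1}\|^2$ in place of $\|\Phi_M^K\|^2$.

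More concretely, I would first reduce to a bounded statement. Fix $\lambda\in\what K$ and write $\lambda=\lambda_1\oplus\lambda_2$ with $\lambda_i\in\tgot_i^*$. Since $\|\Phi_M^{K_1}\|^2$ is proper and its set of critical values is discrete (the analogue of Proposition~\ref{prop:crit-Phi}), one can pick a regular value $R>\|\lambda\|^2$ and set $\Ucal_R:=\{m\in M\,\vert\,\|\Phi_M^{K_1}(m)\|^2<R\}$, a relatively compact $K$-invariant open subset whose boundary misses $\Cr(\|\Phi_M^{K_1}\|^2)$. Each component $Z_\beta^1$ lies in $\{\|\Phi_M^{K_1}\|^2=\|\beta\|^2\}$, so $Z_\beta^1\not\subset\Ucal_R$ forces $\|\beta\|\ge\sqrt R>\|\lambda\|\ge\|\lambda_1\|$, and by Theorem~\ref{prop:Q-beta-support-agot} such a component contributes nothing to the $V_\lambda^K$-isotypic component of $\Qcal^{\Phi_1}_K(M)$. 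Hence, by excision,
$$
[\Qcal^{\Phi_1}_K(M):V_\lambda^K]=\Big[\indice^K_{\Ucal_R}\big(\clif^{\kappa_1}\vert_{\Ucal_R}\big):V_\lambda^K\Big]=\sum_{\beta\in\Bcal_1,\ \|\beta\|^2<R}\big[\Qcal^{\beta,1}_K(M):V_\lambda^K\big].
$$
The remaining task is to identify this finite sum with $\Qcal(M_{\lambda,K})$, which I would do by the inductive ``$[Q,R]=0$'' argument on the value of $\|\Phi_M^{K_1}\|^2$: peel off the components $Z_\beta^1$ in increasing order of $\|\beta\|$; near each $Z_\beta^1$ the transverse structure is a linear $K$-model, so the normal contribution is an \emph{induced} class whose index is computed by the multiplicative property (Theorem~\ref{theo:multiplicative-property}); and Theorem~\ref{prop:Q-beta-support-agot} guarantees that passing to the next level does not disturb the $K$-types already produced. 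Matching the surviving term with the reduced quantization $\Qcal(M_{\lambda,K})$ entering $\qfor_K(M)$ finishes the multiplicity computation, and summing over $\lambda$ yields $\Qcal^{\Phi_1}_K(M)=\qfor_K(M)=\Qcal^{\Phi}_K(M)$ in $\Rfor(K)$.

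The hard part is this last inductive identification. A tempting shortcut would be to deform $\clif^{\kappa}$ directly into $\clif^{\kappa_1}$ along the linear family $\kappa^{(s)}=\kappa_1+s\kappa_2$ ($s\in[0,1]$, with $\kappa_2$ the Kirwan vector field of $-\tfrac12\|\Phi_M^{K_2}\|^2$) and invoke homotopy invariance of the index; but this fails, because $\clif^{\kappa^{(s)}}$ need not stay $K_1$-transversally elliptic — the pointwise inner product $(\kappa_1,\kappa_2)_M$ can be negative even when $\Phi_M^{K_1}$ is proper, so the characteristic set of $\clif^{\kappa^{(s)}}$ in $\T_{K_1}M$ need not remain compact. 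Consequently the comparison must be carried out locally, component by component of $\Cr(\|\Phi_M^{K_1}\|^2)$, and the real work is to check that peeling off $Z_\beta^1$ produces exactly the reduced contributions one gets from $\|\Phi_M^K\|^2$ — equivalently, that the ``reduction in stages'' map $\kgot^*\to\kgot_1^*$ is compatible with the local normal-bundle models. This is precisely the point that is established in \cite{pep-formal-2}[Section~4.1].
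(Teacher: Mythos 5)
The paper does not actually prove this statement: it is quoted from \cite{pep-formal-2}[Section 4.1], so there is no in-text argument to measure your attempt against. Your preliminary reductions are sound: invoking Theorem \ref{theo:formel=formel} to replace $\Qcal^{\Phi}_K(M)$ by $\qfor_K(M)$, using the properness of $\Phi^{K_1}_M$ (and the analogue of Proposition \ref{prop:crit-Phi} for $\|\Phi^{K_1}_M\|^2$) to truncate to a relatively compact $\Ucal_R$, and using Theorem \ref{prop:Q-beta-support-agot} to discard the components with $\|\beta\|\geq\sqrt{R}$ are all legitimate steps, and your remark that the naive deformation $\kappa_1+s\kappa_2$ need not remain transversally elliptic is a correct and relevant caution.

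But the proposal stops exactly where the proof begins. The identification $\sum_{\|\beta\|^2<R}\big[\Qcal^{\beta,1}_K(M):V^K_\lambda\big]=\Qcal(M_{\lambda,K})$ is the whole content of the theorem, and the ``inductive peeling'' sketch does not establish it: for $\beta\neq 0$ the contribution localized near $Z^1_\beta$ is not simply an induced class handled by the multiplicative property of Theorem \ref{theo:multiplicative-property}; analysing it requires the localization on $M^{\tilde{\beta}}$ of \cite{pep-RR} together with a non-trivial argument showing that these terms either vanish in the relevant $K$-types or reassemble into the reduced-space quantizations of $M$ with respect to $K$ (not $K_1$) — and nothing in your outline shows why the surviving term is $\Qcal(M_{\lambda,K})$. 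You acknowledge this and defer it to \cite{pep-formal-2}[Section 4.1], which is precisely the citation the paper itself gives for the entire theorem. So, as a self-contained argument your proposal has a genuine gap at its decisive step; as a reduction to the literature it does not go beyond the paper's own citation.
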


\subsection{Proof of Theorem \ref{theo:formal-G-K} under Assumption {\bf A1}}\label{sec:prof-theorem-formal-G-K-1}

In this section we consider the manifold $M=G\times_K Y$, where $(Y,\Omega_Y,\Phi^K_Y)$ is a Hamiltonian 
$K$-manifold pre-quantized by a line bundle $L_Y$. We suppose that the moment map $\Phi^K_Y$ is proper, and that the Kirwan polytope $\Delta_K(Y)$ is contained in the cone $\chol(z)\subset \tgot_{se}^*$. 

Then on $M$, we have an induced $G$-invariant symplectic form $\Omega_M$ and a  moment 
map $\Phi^G_M: M\to \ggot^*$ defined by $\Phi^G_M([g,y])=g\cdot\Phi^K_M(y)$. We know that 
the line bundle $L_M=(G\times L_Y)/K$ pre-quantizes the Hamiltonian manifold $(M,\Omega_M,\Phi^G_M)$.
Let us consider the $K$-action on $M$: the moment map $\Phi^K_M$ is also proper.

We are then in a setting where the formal geometric quantization of $M$ and $Y$ relatively to the 
$K$-action are well defined: $\Qcal^\Phi_K(M),\,\Qcal^\Phi_K(Y)\in\Rfor(K)$. 
The aim of this section is to prove that 
\begin{equation}
\Qcal^\Phi_K(M)=\Qcal^\Phi_K(Y)\otimes S^\bullet(\pgot),
\end{equation}
when $(M,\Omega_M,\Phi^G_M)$ satisfies Assumption {\bf A1}. Then the set (see Theorem \ref{theo:properGK}) 
$$
\Cr(\|\Phi^G_M\|^2)=\Cr(\|\Phi^K_M\|^2)=\Cr(\|\Phi^K_Y\|^2)=\bigcup_{\beta\in\Bcal} \underbrace{K\cdot\left(Y^{\tilde{\beta}}\cap(\Phi^K_Y)^{-1}(\beta)\right)}_{Z_\beta}
$$
is compact: the parametrizing set $\Bcal$ is {\em finite}. So we have 
$\Qcal^\Phi_K(M)=\sum_{\beta\in\Bcal}\Qcal^\beta_K(M)$ and $\Qcal^\Phi_K(Y)=\sum_{\beta\in\Bcal}\Qcal^\beta_K(Y)$, and we reduce to show the following 
\begin{theo}
For any $\beta\in\Bcal$, the following relation 
\begin{equation}
\Qcal^\beta_K(M)=\Qcal^\beta_K(Y)\otimes S^\bullet(\pgot),
\end{equation}
holds in $\Rfor(K)$.
\end{theo}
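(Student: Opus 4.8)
The plan is to localise near the critical component $Z_\beta$, put $M$ in a product form along the slice $Y$, recognise the pushed symbol computing $\Qcal^\beta_K(M)$ as an external product whose factors have index $\Qcal^\beta_K(Y)$ and $S^\bullet(\pgot)$, and then invoke the multiplicative property of the transversal index (Theorem \ref{theo:multiplicative-property}). So first I would use the Cartan decomposition $G\simeq K\times\exp(\pgot)$: the map $(X,y)\mapsto[\exp X,y]$ is a $K$-equivariant diffeomorphism $\pgot\times Y\simeq M$, with $K$ acting diagonally (by $\Ad$ on $\pgot$ and by the given action on $Y$) and carrying $\{0\}\times Y$ onto $Y$. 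Under this identification the restriction of $L_M$ to a neighbourhood of $Y$ is $K$-equivariantly isomorphic to $\underline{\Cbb}\boxtimes L_Y$ (the connection term $\langle\Phi^K_Y,\theta^K\rangle$ is irrelevant for the index). Since $\Cr(\|\Phi^K_M\|^2)\subset Y$ by Theorem \ref{theo:properGK}, I choose a relatively compact $K$-invariant neighbourhood $\Ucal^Y_\beta$ of $Z_\beta$ in $Y$ isolating $Z_\beta$ inside $\Cr(\|\Phi^K_Y\|^2)$, let $B\subset\pgot$ be a small invariant ball, and set $\Ucal_\beta:=B\times\Ucal^Y_\beta$; by excision $\Qcal^\beta_K(M)=\indice^K_{\Ucal_\beta}(\clif^\kappa|_{\Ucal_\beta})$.

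A key observation is that $\Delta_K(Y)\subset\chol(z)=2\rho_n(z)+\Chol(z)$ forces $(\alpha,\beta)>0$ for every $\alpha\in\Rgot_n(z)$ and every $\beta\in\Delta_K(Y)$. Hence, for $y$ close to $Z_\beta$, the $\Omega_M$-compatible complex structure on $\pgot\cdot y\cong\pgot$, which is ruled by $\mathrm{ad}(\widetilde{\Phi^K_Y(y)})$, is $K$-equivariantly homotopic to the constant structure $\mathrm{ad}(z)$; so on $\Ucal_\beta$ I may take the almost complex structure of $M$ to be the product $\mathrm{ad}(z)\oplus J_Y$, and then $\Thom(M,J)\otimes L_M$ equals on $\Ucal_\beta$ the external product $\Thom(\pgot,\mathrm{ad}(z))\odot\big(\Thom(Y,J_Y)\otimes L_Y\big)$. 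Next I would split the Kirwan vector field. On $Y$ its $\pgot$-component vanishes and $\kappa=\kappa_Y$; at a general point $(X,y)$ of $B\times\Ucal^Y_\beta$ the $\pgot$-component of $\kappa$ is $-\mathrm{ad}(\widetilde{\Phi^K_M(X,y)})X=-\mathrm{ad}(\widetilde{\Phi^K_Y(y)})X+O(X^2)$, and the positivity $(\alpha,\beta)>0$ shows that the linear homotopy $t\mapsto-\mathrm{ad}\big((1-t)\widetilde{\Phi^K_Y(y)}+tz\big)X$ never vanishes for $X\neq0$ and $y$ near $Z_\beta$. Thus $\kappa$ can be deformed, through vector fields whose zero locus stays $Z_\beta$, to the split field $v_z\oplus\kappa_Y$, where $v_z$ is the vector field on $\pgot$ generated by $z$ (so $v_z(X)=-\mathrm{ad}(z)X$); along this deformation the characteristic set of the pushed symbol meets $\T_K\Ucal_\beta$ exactly in the compact set $Z_\beta$, so the homotopy is through $K$-transversally elliptic symbols. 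Writing $\sigma_\pgot$ for $\Thom(\pgot,\mathrm{ad}(z))$ pushed by $v_z$ and $\sigma_Y$ for $\Thom(Y,J_Y)\otimes L_Y$ pushed by $\kappa_Y$ on $\Ucal^Y_\beta$ (so $\indice^K_{\Ucal^Y_\beta}(\sigma_Y)=\Qcal^\beta_K(Y)$ by Definition \ref{def:Q-beta}), this gives
\[
\Qcal^\beta_K(M)=\indice^K_{\Ucal_\beta}\big(\sigma_\pgot\odot\sigma_Y\big).
\]

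Finally I would apply multiplicativity. Regarding $\pgot\times\Ucal^Y_\beta$ as a $K\times K$-manifold (the first $K$ on $\pgot$, the second on $\Ucal^Y_\beta$) and taking $\sigma_\pgot$ almost homogeneous, $\sigma_\pgot\odot\sigma_Y$ becomes $(K\times K)$-transversally elliptic and Theorem \ref{theo:multiplicative-property} yields $\indice^{K\times K}_{\pgot\times\Ucal^Y_\beta}(\sigma_\pgot\odot\sigma_Y)=\indice^K_\pgot(\sigma_\pgot)\boxtimes\Qcal^\beta_K(Y)$ in $\Rfor(K\times K)$; restricting to the diagonal $K\subset K\times K$ recovers the symbol of the previous display and turns $\boxtimes$ into the product of $\Rfor(K)$. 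It remains to compute $\indice^K_\pgot(\sigma_\pgot)=S^\bullet(\pgot)$, which is the standard index of the Bott symbol of a complex $K$-module pushed by its rotation field — the same local computation that underlies Theorem \ref{theo:Q-Phi-G-lambda} in the case $M=G\cdot\lambda$, $Y=K\cdot\lambda$ — and we conclude $\Qcal^\beta_K(M)=\Qcal^\beta_K(Y)\otimes S^\bullet(\pgot)$, the product being well defined in $\Rfor(K)$ because $\Qcal^\beta_K(Y)$ is supported in $\{\|\mu\|\geq\|\beta\|\}$ (Theorem \ref{prop:Q-beta-support}) and $S^\bullet(\pgot)$ in the cone $\Ccal(z)$. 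The main obstacle is exactly the control of transversal ellipticity: checking that the characteristic locus stays compact inside the appropriate $\T_K$ along the homotopy above and under the almost-homogeneity modification and the diagonal/product interchange, which is where the cone condition $\Delta_K(Y)\subset\chol(z)$ (hence $(\alpha,\beta)>0$) is used decisively; once that is secured, the $\pgot$-factor computation is routine.
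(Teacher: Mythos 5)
Your route is the paper's route: identify $M\simeq\pgot\times Y$ through the Cartan decomposition, take a product almost complex structure on a neighbourhood of $Y$ (this is where $\Delta_K(Y)\subset\chol(z)$ enters, via the definiteness of $\mathrm{ad}(z)\mathrm{ad}(\tilde\xi)$ on $\pgot$ for $\xi\in K\cdot\chol(z)$, Lemma \ref{lem:negatif}), deform the pushed symbol to the external product $\mathrm{At}_\pgot\odot\clif^\kappa_Y$, and conclude by the multiplicative property together with $\indice(\mathrm{At}_\pgot)=S^\bullet(\pgot)$. But there is a genuine gap at the step you yourself flag as the main obstacle. You justify the homotopy from the Kirwan field to the split field $v_z\oplus\kappa_Y$ by the fact that the $\pgot$-component $-[(1-t)\widetilde{\Phi^K_Y(y)}+tz,X]$ is nonzero for $X\neq 0$, and you infer that the characteristic set of the pushed symbol meets $\T_K\Ucal_\beta$ exactly in $Z_\beta$. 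That inference fails: for the genuine Kirwan field, $\Char\cap\T_K$ coincides with the zero set only because the field is pointwise tangent to the $K$-orbit, so a value orthogonal to the orbit must vanish. As soon as the $Y$-component is generated by $\widetilde{\Phi^K_Y(y)}$ while the $\pgot$-component is generated by the \emph{different} element $(1-t)\widetilde{\Phi^K_Y(y)}+tz$, the deformed field is no longer tangent to the diagonal $K$-orbits, and a characteristic point lies in $\T_K$ whenever the (possibly nonzero) field value is merely orthogonal to the orbit directions. Nonvanishing of the $\pgot$-component therefore does not control $\Char\cap\T_K\Ucal_\beta$, and its compactness, uniformly in the deformation parameter and away from $\partial\Ucal_\beta$, is precisely what homotopy invariance of the index requires. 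The paper closes this by an orthogonality computation: at a point of $\Char(\tau^t\odot\clif^\kappa_Y)\cap\T_K(\pgot\times Y)$ one pairs the field value with the orbit direction generated by $\widetilde{\Phi^K_Y(y)}$ and gets
\begin{equation*}
0=\|\kappa_Y(y)\|^2+t\,\|[\widetilde{\Phi^K_Y(y)},X]\|^2+(1-t)\left([z,X],[\widetilde{\Phi^K_Y(y)},X]\right),
\end{equation*}
where the last term is strictly positive for $X\neq 0$ by Lemma \ref{lem:negatif}; hence $X=0$ and $\kappa_Y(y)=0$. This is the decisive use of the cone hypothesis, and your nonvanishing argument must be replaced by it. (The preliminary replacement of $A=[e^X\cdot\widetilde{\Phi^K_Y(y)}]_\kgot$ by $\widetilde{\Phi^K_Y(y)}$, which you pass over, is unproblematic for the opposite reason: there the field is pointwise orbit-tangent, so the naive argument does apply, as in the paper's first homotopy.)

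Two smaller points. Your final step restricts a $K\times K$-index to the diagonal, but the restriction property quoted in the paper, (\ref{eq:1-2-index}), only covers restriction to a \emph{factor}; this is why the paper works with $S^1\times K$, where $S^1\subset K$ is the central circle generated by $z$: $\mathrm{At}_\pgot$ is $S^1$-transversally elliptic, Theorem \ref{theo:multiplicative-property} applies, and the original $K$ is literally a factor of $S^1\times K$, the restriction being legitimate because the product symbol is $K$-transversally elliptic (which is exactly what the homotopy lemma provides). Your diagonal restriction needs a more general restriction statement than the one invoked in the paper. Finally, your convergence remark is not quite a proof that $\Qcal^\beta_K(Y)\otimes S^\bullet(\pgot)$ is defined: Theorem \ref{prop:Q-beta-support} bounds $\|\mu\|$ from below on the support of $\Qcal^\beta_K(Y)$ but does not bound $\langle\mu,z\rangle$ from below, which is what the argument really needs; in the paper this issue does not arise because the product is obtained as the restriction to $K$ of a genuine element of $\Rfor(S^1\times K)$.
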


\begin{proof} Let $\kappa_M$ be the Kirwan vector field on $M$ associated to the moment map 
$\Phi^K_M$. Let $J_M$ a $K$-invariant almost complex structure compatible with $\Omega_M$, and 
let $\Ucal_\beta\subset M$ be a (small) neighbourhood of $Z_\beta$ in $M$.

The symbol  $\Thom(M,J_M)\otimes L_M$ pushed by the vector field $\kappa_M$ is denoted  $\clif^\kappa_M$.
By definition $\Qcal^\beta_K(M)$ is the equivariant index of the $K$-transversally elliptic symbol $\clif^\kappa_M\vert_{\Ucal_\beta}$. 
Note that $\Qcal^\beta_K(M)$ does not depend on the choice of the neighbourhood $\Ucal_\beta$ nor on the choice of the almost complex structure on $\Ucal_\beta\subset M$.

We use the $K$-diffeomorphism $\varphi:\pgot\times Y\simeq M$ defined by $\varphi(X,y)=[e^X,y]$. The Kirwan vector field 
$\kappa_{\pgot\times Y}:=\varphi^*(\kappa_M)$ is defined by the relations~: $\kappa_{\pgot\times Y}(X,y)=(\kappa_1(X,y),\kappa_2(X,y))\in \T_\pgot\times Y$ 
where\footnote{$[Z]_\kgot$ and $[X]_\pgot$ are respectively the $\kgot$ and $\pgot$ components of $Z\in\ggot$.}
$$
\kappa_1(X,y)= A_Y(y),\quad \kappa_2(X,y)=-[A,X]\quad \mathrm{and}\quad  A=[e^X\cdot\widetilde{\Phi^K_Y(y)}]_\kgot.
$$

The Kostant-Souriau line bundle $\varphi^*(L_M)$ is $K$-diffeomorphic with $L_Y$ since $Y$ is a deformation retract of $\pgot\times Y$. 
Let us compute the pull-back of the symplectic form $\Omega_{\pgot\times Y}=\varphi^*(\Omega_M)$ at $(0,y)$. For 
$v,v'\in\T_y Y$ and $\eta,\eta'\in \T_0\pgot=\pgot$, we have
\begin{eqnarray*}
\Omega_{\pgot\times Y}(\eta\oplus v, \eta'\oplus v')&=&\Omega_M(v\oplus\eta\cdot y, v'\oplus\eta'\cdot y)\\
&=&\Omega_Y(v,v') +\langle\Phi^K_Y(y),[\eta,\eta']\rangle.
\end{eqnarray*}

\begin{lem}\label{lem:negatif}
$\langle\xi,[\eta,\mathrm{ad}(z)\eta]\rangle=-([\tilde{\xi},\eta], [z,\eta])<0$ for any $\xi\in K\cdot\chol(z)$ and any $\eta\in\pgot\setminus\{0\}$. 
\end{lem}
\begin{proof} Recall that the scalar product on $\ggot$ is defined by $(X,Y)=-b(X,\Theta(Y))$. Hence 
\begin{eqnarray*}
\langle\xi,[\eta,\mathrm{ad}(z)\eta]\rangle&=&-b(\tilde{\xi},\Theta([\eta,\mathrm{ad}(z)\eta]))\\
&=&(\mathrm{ad}(z)\mathrm{ad}(\tilde{\xi})\eta,\eta)\\
&=&(\mathrm{ad}(z)\mathrm{ad}(\tilde{\xi'})\eta',\eta')
\end{eqnarray*}
where $\xi=k\cdot\xi'$ with $\xi'\in\chol(z)$ and $\eta=k\cdot\eta'$ for some $k\in K$. We can then check that 
the symmetric endomorphism $\mathrm{ad}(z)\mathrm{ad}(\tilde{\xi'}):\pgot\to\pgot$ is negative definite when 
$\xi'\in\chol(z)$: the Lemma is proved.
\end{proof}

\medskip

If $J_Y$ is a $K$-invariant almost complex structure on $Y$ compatible with $\Omega_Y$, the last Lemma tells us that 
$(-\mathrm{ad}(z),,J_Y)$ is a $K$-invariant almost complex structure on $\pgot\times Y$ compatible with 
$\Omega_{\pgot\times Y}$ in a neighbourhood of $Y$. 

Let us fix $\Ucal_\beta$, such that $\varphi^{-1}(\Ucal_\beta)=B_r \times \Vcal_\beta$ where $\Vcal_\beta$ is a neighbourhood of $Z_\beta$ in $Y$ and $B_r:=\{X\in\pgot\,\vert\, \|X\|<r\}$. The almost complex structure $J_M$
on $\Ucal_\beta$ defined by $\varphi^*(J_M)= (-\mathrm{ad}(z),,J_Y)$ is compatible with $\Omega_M$ if  $\Vcal_\beta$ and $B_r$ are small enough. 
Finally we see that the symbol $\varphi^*(\clif^\kappa_M\vert_{\Ucal_\beta})$ is equal to the product $\sigma_1\odot\sigma_2\vert_{B_r \times \Vcal_\beta}$, where 
$$
\sigma_2(X,y;\eta,v)=\clif(v-\kappa_1(X,y)),\quad (X,y;\eta, v)\in\T(\pgot\times Y),
$$ 
acts on $\wedge^*_\Cbb\T_yY\otimes L_Y$, and 
$$
\sigma_1(X,y;\eta,v)=\clif(\eta-\kappa_2(X,y)),\quad (X,y;\eta, v)\in\T(\pgot\times Y),
$$ 
acts on $\wedge^*_\Cbb\pgot^{-}$ (here $\pgot^{-}$ denotes the complex $K$-module $(\pgot,-\mathrm{ad}(z))$).

\medskip

Let $\kappa_Y$ be the Kirwan vector field on $Y$ associated to the moment map $\Phi^K_Y$. We denoted $\clif^\kappa_Y$, 
the symbol  $\Thom(Y,J_Y)\otimes L_Y$ pushed by the vector field $\kappa_Y$. By definition $\Qcal^\beta_K(Y)$ is the 
equivariant index of the $K$-transversally elliptic symbol $\clif^\kappa_Y\vert_{\Vcal_\beta}$. 

The Atiyah symbol $\mathrm{At}_\pgot$ on $\pgot$ is defined by the following relations : for $(X,\eta)\in\T \pgot$,
\begin{equation}\label{eq:atiyah-symbol}
\mathrm{At}_\pgot(X,\eta):=\clif(\eta+[z,X]): \wedge^{\mathrm{\small even}}_\Cbb\pgot^{-}\longrightarrow \wedge^{\mathrm{\small odd}}_\Cbb\pgot^{-}.
\end{equation}

\begin{lem}The symbols $\sigma_1\odot\sigma_2\vert_{B_r \times \Vcal_\beta}$ and 
$\mathrm{At}_\pgot\odot \clif^\kappa_Y\vert_{B_r \times \Vcal_\beta}$ 
define the same class in $\Ko_K(\T_K (B_r \times \Vcal_\beta))$.
\end{lem}

\begin{proof}
We consider the paths $s\in [0,1]\mapsto A^s:=[e^{sX}\cdot\widetilde{\Phi^K_Y(y)}]_\kgot$, 
$\kappa^s_1(X,y)= A^s_Y(y)$, and $\kappa^s_2(X,y)=-[A^s,X]$. We define then the paths at the level of symbols : 
$\sigma_1^s$ and $\sigma^s_2$. We check that 
$$
\Char(\sigma_1^s\odot\sigma_2^s)\cap \T_K(Y\times\pgot)= \{(X,y; v,\eta)\,\vert\, v=A^s_Y(y)=0, \mathrm{and}\ \eta=[A^s,X]=0\}.
$$
But since $\Phi^K_Y(y)\in\kgot^*_{se}$, the condition $[A^s,X]=[e^X\cdot\widetilde{\Phi^K_Y(y)},X]_\pgot=0$ forces $X$ to be equal to $0$. Hence we get 
$$
\Char(\sigma_1^s\odot\sigma_2^s)\cap \T_K(Y\times\pgot)\simeq \Cr(\|\Phi^K_Y\|^2)\times\{0\},\quad \forall s\in[0,1].
$$
We have proved that $s\in [0,1]\mapsto \sigma_1^s\odot\sigma_2^s\vert_{B_r \times \Vcal_\beta}$ is an homotopy of transversally elliptic symbols:
 $\sigma_1\odot\sigma_2$ and $\sigma_1^0\odot\sigma_2^0$ define the same class in $\Ko_K(\T_K (B_r \times \Vcal_\beta))$.
 
 We see that $\sigma_2^0=\clif^\kappa_Y$ and we have
 $$
 \sigma_1^0(X,y;\eta,v)=\clif(\eta+[\widetilde{\Phi^K_Y(y)},X]).
 $$
 We consider another path of symbols 
 $$
 \tau^t(X,y;\eta,v)=\clif(\eta+[t\widetilde{\Phi^K_Y(y)}+(1-t)z,X]), \quad t\in [0,1].
 $$
We check that if $(X,y; \eta,v)\in \Char(\tau^t\odot\clif^\kappa_Y)\cap \T_K(\pgot\times Y)$ then the vector 
$\eta\oplus v\in \T_{(X,y)}(\pgot\times Y)$ is orthogonal to the vector field generated by $\widetilde{\Phi^K_Y(y)}$ and 
we have moreover $v=\kappa_Y(y)$ and $\eta=-[t\widetilde{\Phi^K_Y(y)}+(1-t)z,X]$. Thus 
\begin{eqnarray*}
0&=&\|\kappa_Y(y)\|^2+\left([t\widetilde{\xi}+(1-t)z,X], [\tilde{\xi},X]\right)\\
&=&\|\kappa_Y(y)\|^2+\underbrace{t\, \|[\tilde{\xi},X]\|^2 + (1-t) \left([z,X], [\tilde{\xi},X]\right)}_{\delta}
\end{eqnarray*}
where $\xi=\Phi^K_Y(y)\in K\cdot\chol(z)$. Since $\xi$ is strongly elliptic and thanks to Lemma \ref{lem:negatif}, we know that the term $\delta$ is strictly positive if $X\neq 0$, thus $\kappa_Y(y)=0$ and $X=0$.

We have proved that $t\in [0,1]\mapsto \tau^t\odot\clif^\kappa_Y\vert_{B_r \times \Vcal_\beta}$ is an homotopy of $K$-transversally elliptic symbols: $\sigma_1^0\odot\sigma_2^0$ and $\mathrm{At}_\pgot\odot \clif^\kappa_Y$ define the same class in 
$\Ko_K(\T_K (B_r \times \Vcal_\beta))$.
\end{proof}

At this stage we know that 
$$
\Qcal^\beta_K(M)=\indice^K_{B_r \times \Vcal_\beta}\left(\mathrm{At}_\pgot\vert_{B_r}\odot \clif^\kappa_Y\vert_{\Vcal_\beta}\right)\in \Rfor(K).
$$
Since $\clif^\kappa_Y\odot \mathrm{At}_\pgot$ is also $K$-transversally elliptic on $\Vcal_\beta\times \pgot$, the excision property gives also
$$
\Qcal^\beta_K(M)=\indice^K_{\Vcal_\beta\times \pgot}\left(\mathrm{At}_\pgot\odot \clif^\kappa_Y\vert_{\Vcal_\beta}\right)\in \Rfor(K).
$$

Let $S^1$ be the circle subgroup of $K$ with Lie algebra equal to $\Rbb z$. We can consider $\pgot$ as a $S^1\times K$-manifold. We note that the Atiyah symbol $\mathrm{At}_\pgot$ is $S^1\times K$-equivariant and $S^1$-transversally elliptic. Its index is computed in \cite{Atiyah74}, see also \cite{pep-RR}[Section 5]. We have the following relation 
$$
\indice^{S^1\times K}_{\pgot}(\mathrm{At}_\pgot)=S^\bullet(\pgot)
$$
in $\Rfor(S^1\times K)$.

So we have two classes $\mathrm{At}_\pgot\in \Ko_{S^1\times K}(\T_{S^1}\pgot)$, and $\clif^\kappa_Y\vert_{\Vcal_\beta}\in \Ko_{K}(\T_{K}\Vcal_\beta)$. By the {\em multiplicative property} (see Theorem \ref{theo:multiplicative-property}) we know that their product $\mathrm{At}_\pgot\odot \clif^\kappa_Y\vert_{\Vcal_\beta}\in \Ko_{K\times S^1}(\T_{K\times S^1}(Y\times\pgot))$ 
has the following $S^1\times K$-equivariant index
\begin{eqnarray*}
\indice^{S^1\times K}_{\pgot\times \Vcal_\beta}(\mathrm{At}_\pgot\odot\clif^\kappa_Y\vert_{\Vcal_\beta})
&=&\indice^{S_1\times K}_{\pgot}(\mathrm{At}_\pgot)\otimes\indice^{K}_{\Vcal_\beta}(\clif^\kappa_Y\vert_{\Vcal_\beta})\\
&=& S^\bullet(\pgot)\otimes\indice^{K}_{\Vcal_\beta}(\clif^\kappa_Y\vert_{\Vcal_\beta})\\
&=& S^\bullet(\pgot)\otimes\Qcal^\beta_K(Y)\quad \in\Rfor(S^1\times K).
\end{eqnarray*}

Finally, thanks to the {\em restriction property} (\ref{eq:1-2-index}), we know that 
$$
\Qcal^\beta_K(M)=\indice^K_{\pgot \times\Vcal_\beta}\left(\mathrm{At}_\pgot\odot \clif^\kappa_Y\vert_{\Vcal_\beta}\right)\in \Rfor(K)
$$
is equal to the restriction of 
$$
\indice^{S^1\times K}_{\pgot\times \Vcal_\beta}(\mathrm{At}_\pgot\odot\clif^\kappa_Y\vert_{\Vcal_\beta})=S^\bullet(\pgot)\otimes\Qcal^\beta_K(Y) \in \Rfor(S^1\times K)
$$
to the subgroup $K\croc S^1\times K$. The Theorem is then proved. 
\end{proof}

\begin{rem}
The Assumption {\bf A1} is used because we don't how to prove the equality
$$
\sum_{\beta\in\Bcal}\left(\Qcal_K^\beta(Y)\otimes S^\bullet(\pgot)\right)= 
\left(\sum_{\beta\in\Bcal}\Qcal_K^\beta(Y)\right)\otimes S^\bullet(\pgot)
$$
when the set $\Bcal$ is not finite, e.g. the set $\Cr(\|\Phi^K_M\|^2)$ is non-compact.
\end{rem}

\subsection{Proof of Theorem \ref{theo:qfor-Y}}\label{sec:proof-theo-qfor-Y}

Here we work with a pre-quantized Hamiltonian $K$-manifold $(P,\Omega_P,\Phi^K_P)$, and we assume that 
the map $\langle\Phi^K_P,z\rangle$ is proper. Here $\Rbb z$ is the Lie algebra of a circle subgroup $S^1\subset K$ 
contained in the center of $K$. 

We are in the context of Theorem \ref{theo:quantization-phi-1}. We have a decomposition $\kgot=\kgot_1\oplus \kgot_2$ 
where $\kgot_1:=\Rbb z$ and $\kgot_2$ are ideals of $\kgot$ and the moment map $\langle\Phi^K_P,z\rangle$ relative to the 
$S^1$-action is proper. Then we have the following equality 
\begin{equation}\label{eq:proof-Y-1}
\qfor_K(M)=\Qcal^\Phi_K(P)=\Qcal^{\langle\Phi,z\rangle}_K(P)\quad \in \Rfor(K)
\end{equation}
where the right hand side is computed via a localization procedure on the set $\Cr(\varphi_P)$ of critical points of 
the proper map $\varphi_P:=(\langle\Phi^K_P,z\rangle)^2$.  We note that 
$$
\Cr(\varphi_P)= \varphi_P^{-1}(0)\bigcup P^z.
$$

We are interested in the following cases
\begin{enumerate}
\item $P$ is a proper Hamiltonian $G$-manifold $(M,\Omega_M,\Phi^G_M)$ with a moment map taking values in $G\cdot\chol(z)$, and which satisfies Assumption {\bf A2}.
\item $P$ is the symplectic slice $Y$ of the former case $M:=G\times_K Y$.
\end{enumerate} 

Thanks to Lemma \ref{lem-phi-z-positif}, we know that in the two cases described above, the proper map $\varphi_P$ is strictly positive : hence 
$\varphi_P^{-1}(0)=\emptyset$.  Let us compute the generalized character $\Qcal^{\langle\Phi,z\rangle}_K(P)$ in this case. 

Let $\kappa_\varphi$ be the Hamiltonian vector field of $\frac{-1}{2}\varphi_P$. The symbol  $\Thom(P,J_P)\otimes L_P$ pushed by the vector field $\kappa_\varphi$ is denoted  $\clif^{\varphi}_P$. Let $\Bcal_P$ the set of connected component of $P^z$.  For any $\Xcal\in \Bcal_P$, we consider a relatively compact open $K$-invariant neighbourhood $\Ucal_\Xcal$ of  $\Xcal$ such that $\Cr(\varphi_P)\cap \overline{\Ucal_\Xcal}= \Xcal$.  We denote $\Qcal^{\Xcal}_K(P)\in\Rfor(K)$ the equivariant index of the $S^1$-transversally elliptic symbol $\clif^{\varphi}\vert_{\Ucal_\Xcal}$. 

When $\varphi_P^{-1}(0)=\emptyset$, the generalized character $\Qcal^{\langle\Phi,z\rangle}_K(P)$ is defined by the relation 
\begin{equation}\label{eq:proof-Y-2}
\Qcal^{\langle\Phi,z\rangle}_K(P)=\sum_{\Xcal\in\Bcal_P}\Qcal^{\Xcal}_K(P)\in\Rfor(K).
\end{equation}

For $\Xcal\in\Bcal_P$,  we denote 
\begin{itemize}
\item $L_\Xcal$ the restriction of the Kostant-Souriau line bundle $L_P$ on $\Xcal$,
\item $\Ncal_{\Xcal}$ the normal bundle of $\Xcal$ in $P$, and $\vert\Ncal_{\Xcal}\vert^{z},\Ncal_{\Xcal}^{+,z}$ 
are the $z$-polarized versions (see Section \ref{subsec:quantization-Y}).
\end{itemize}

If we use (\ref{eq:proof-Y-1}) and (\ref{eq:proof-Y-2}), the proof of Theorem \ref{theo:qfor-Y} is reduced to the following
\begin{prop}
We have the following equality in $\Rfor(K)$:
\begin{equation}\label{eq:proof-Y-3}
\Qcal^{\Xcal}_K(P)=(-1)^{r_\Xcal}\RR^{K}\left(\Xcal, L_{\Xcal}\otimes \det(\Ncal^{+,z}_{\Xcal})
\otimes S^\bullet(\vert\Ncal_{\Xcal}\vert^{z})\right),
\end{equation}
where $r_\Xcal$ is the complex rank of $\Ncal^{+,z}_{\Xcal}$.
\end{prop}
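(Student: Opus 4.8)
The plan is to compute the localized contribution $\Qcal^{\Xcal}_K(P)$ of a connected component $\Xcal$ of $P^z$ by a local model near $\Xcal$, using the fact that the proper map $\varphi_P=(\langle\Phi^K_P,z\rangle)^2$ has $\Xcal$ as an isolated piece of its critical set. First I would use a $K$-invariant tubular neighbourhood of $\Xcal$ in $P$ to identify $\Ucal_\Xcal$ with a neighbourhood of the zero section in the total space of the normal bundle $\Ncal_\Xcal\to\Xcal$, equipped with a compatible almost complex structure. The $S^1$-action generated by $z$ acts trivially on $\Xcal$ and linearly on the fibres of $\Ncal_\Xcal$, and the decomposition $\Ncal_\Xcal=\sum_a\Ncal_\Xcal^a$ into weight-type subbundles (with $\Lcal(z)v=aJ_\Xcal v$ on $\Ncal_\Xcal^a$) is exactly the data needed: on $\Ncal_\Xcal^a$ the moment map $\langle\Phi^K_P,z\rangle$ behaves, to leading order near $\Xcal$, like $\langle\Phi^K_\Xcal,z\rangle + \tfrac a2\|v\|^2$, so $\kappa_\varphi$ looks like a radial vector field whose sign on each piece is governed by the sign of $a$.

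The key step is then to compute the index of the pushed symbol $\clif^\varphi\vert_{\Ucal_\Xcal}$ via the multiplicative property (Theorem \ref{theo:multiplicative-property}) applied to the fibration $\Ncal_\Xcal\to\Xcal$: the symbol splits as an external product of the Dolbeault-type symbol along $\Xcal$ (with coefficients in $L_\Xcal$) and a fibrewise transversally elliptic symbol on the fibres of $\Ncal_\Xcal$ pushed by the radial vector field. The latter is precisely of the Atiyah type already used in Section \ref{sec:prof-theorem-formal-G-K-1}: on the subbundle $\Ncal_\Xcal^{+,z}$ (where $a>0$) the pushed Bott symbol has index $(-1)^{r_\Xcal}\det(\Ncal_\Xcal^{+,z})\otimes S^\bullet(\Ncal_\Xcal^{+,z})$, reflecting the orientation reversal, while on $\Ncal_\Xcal^{-,z}$ (where $a<0$) one gets $S^\bullet(\overline{\Ncal_\Xcal^{-,z}})$; assembling these gives the factor $\det(\Ncal_\Xcal^{+,z})\otimes S^\bullet(\vert\Ncal_\Xcal\vert^z)$ with sign $(-1)^{r_\Xcal}$. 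Multiplying by $\RR^K(\Xcal,-)$ then yields the right-hand side of (\ref{eq:proof-Y-3}). I would invoke here the computation of the index of the Atiyah symbol on a vector space recalled in Section \ref{sec:prof-theorem-formal-G-K-1} (the relation $\indice^{S^1\times K}_{\pgot}(\mathrm{At}_\pgot)=S^\bullet(\pgot)$) together with its twisted/Bott variants from \cite{pep-RR,pep-ENS}.

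The main obstacle I expect is the honest justification that the pushed symbol near $\Xcal$ is homotopic, through $S^1$-transversally elliptic symbols supported in a small neighbourhood, to the product symbol of the local model — i.e. that the quadratic approximation of $\langle\Phi^K_P,z\rangle$ along the normal directions does not change the transversally elliptic homotopy class. This requires checking that along the homotopy the characteristic set intersected with $\T_{S^1}\Ucal_\Xcal$ stays compact inside $\Ucal_\Xcal$, which uses the properness of $\langle\Phi^K_P,z\rangle$ and the sign information on each $\Ncal_\Xcal^a$ (the contribution $\tfrac a2\|v\|^2$ pushes critical points of the approximate map back onto the zero section). This is morally the same deformation argument as in the two lemmas of Section \ref{sec:prof-theorem-formal-G-K-1}, but carried out fibrewise over $\Xcal$ rather than over the slice $Y$; once it is in place the rest is a bookkeeping of the $z$-weights and an application of the excision and multiplicative properties.
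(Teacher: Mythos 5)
Your proposal is, in substance, a re-derivation by hand of the localization theorem that the paper simply quotes, so it is a workable but genuinely different (and longer) route. The paper's own proof is two lines: since $\kappa_\varphi$ is the Hamiltonian vector field of $-\tfrac12(\langle\Phi^K_P,z\rangle)^2$, one has the exact identity $\kappa_\varphi=\langle\Phi^K_P,z\rangle\, z_P$, and $\langle\Phi^K_P,z\rangle>0$ near $\Ucal_\Xcal$ (Lemma \ref{lem-phi-z-positif}); hence $\clif^{\varphi}\vert_{\Ucal_\Xcal}$ is homotopic, through transversally elliptic symbols, to the symbol $\sigma^z\vert_{\Ucal_\Xcal}$ pushed by the vector field $z_P$ itself, and the index of the latter is precisely the right-hand side of (\ref{eq:proof-Y-3}) by Theorem 5.8 of \cite{pep-RR}. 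Your route --- tubular neighbourhood, quadratic approximation of $\langle\Phi^K_P,z\rangle$, deformation to a product symbol, then the multiplicative property together with the twisted Atiyah--Bott index computations on each weight subbundle --- is essentially the proof of that cited theorem carried out from scratch; the compactness checks you flag are indeed the technical crux there, and they can be handled as in \cite{pep-RR}, so I see no fatal gap. What you miss is the shortcut that makes the quadratic approximation unnecessary: because $\kappa_\varphi$ is exactly a positive function times $z_P$ on $\Ucal_\Xcal$, the only deformation needed is a rescaling of the pushing vector field, whose characteristic set intersected with $\T_{K}\Ucal_\Xcal$ remains $\Xcal$ throughout. Two smaller inaccuracies in your geometric picture: $\kappa_\varphi$ is not a radial field on the normal fibres, it is a positive multiple of the rotational field $v\mapsto\Lcal(z)v$ generated by the $S^1$-action; and the signs of the weights $a$ play no role in confining the critical set (near $\Xcal$ the zeros of $\kappa_\varphi$ form $P^z$ whatever the signs, since $\langle\Phi^K_P,z\rangle\neq 0$) --- they enter only through the polarization giving $\det(\Ncal^{+,z}_{\Xcal})\otimes S^\bullet(\vert\Ncal_{\Xcal}\vert^{z})$ and the sign $(-1)^{r_\Xcal}$, which your final bookkeeping does get right.
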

\begin{proof}Relations (\ref{eq:kostant=rel}) show that $\kappa_\varphi= \langle\Phi^K_P,z\rangle z_P$. Since 
$\langle\Phi^K_P,z\rangle>0$ in a neighbourhood of $\Ucal_\Xcal$, we can replace $\kappa_\varphi$ by the vector field $z_P$
without changing the index of the corresponding transversally elliptic operator. This means that $\Qcal^{\Xcal}_K(M)$ is equal to the index 
of $\sigma^z\vert_{\Ucal_\Xcal}$, where the symbol $\sigma^z$ is defined by : for $(m,v)\in\T P$,
\begin{equation}\label{eq:z-symbol}
\sigma^z(m,v):=\clif(v-z_P(m)): \wedge^{\mathrm{\small even}}_\Cbb\T_m P\otimes L_P\vert_m\longrightarrow \wedge^{\mathrm{\small odd}}_\Cbb\T_m P\otimes L_P\vert_m.
\end{equation}
We have proved in \cite{pep-RR}[Theorem 5.8], that the index of $\sigma^z\vert_{\Ucal_\Xcal}$ is equal to the right hand side of 
(\ref{eq:proof-Y-3}). Hence the proof is completed.
\end{proof}

\medskip

We want now to clarify the convergence of the sum that appears in (\ref{eq:proof-Y-2}), when $P^z$ is non-compact.

\medskip

Let $T$ be a maximal torus in $K$: it contains the circle subgroup $S^1$. Let $\wedge\subset \tgot$ be the lattice which is the kernel of $\exp:\tgot\to T$. Let $z_o\in \Rbb^{>0}z\cap \wedge$ that generates the sub-lattice $\Rbb z\cap \wedge$: the torus $S^1$ acts on an irreducible representation $V_\mu^K$ through the character $t\mapsto t^n$ with $n=\frac{\langle\mu,z_o\rangle}{2\pi}\in\Zbb$. We have then a graduation $R(K)=\sum_{n\in\Zbb}R_n(K)$ where $R_n(K)$ is the group generated by the representations $V_\mu^K$ such that 
$\frac{\langle\mu,z_o\rangle}{2\pi}=n$. We see that $R_n(K)\cdot R_m(K)\subset R_{n+m}(K)$.

For any $n\in\Zbb$, we denote $R_{\geq n}(K)$ (resp. $\Rfor_{\geq n}(K)$) the subgroup formed by the finite (resp. infinite) sum $\sum_{l\geq n} E_l$ where 
$E_l\in R_l(K)$. We have the following basic lemma
\begin{lem}\label{lem:Rfor-positive}
$\bullet$ If $A\in \Rfor_{\geq n}(K)$ and $B\in\Rfor_{\geq m}(K)$, then 
the product $A\cdot B$ is well-defined and belongs to $\Rfor_{\geq n +m}(K)$.

$\bullet$ An infinite sum $\sum_{n\geq 0} A_n$, with $A_n\in \Rfor_{\geq n}(K)$, converges in $\Rfor_{\geq 0}(K)$.
\end{lem}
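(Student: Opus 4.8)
Lemma \ref{lem:Rfor-positive} asserts two things: (i) the product $A\cdot B$ of $A\in\Rfor_{\geq n}(K)$ and $B\in\Rfor_{\geq m}(K)$ is well-defined and lies in $\Rfor_{\geq n+m}(K)$; and (ii) an infinite sum $\sum_{n\geq 0}A_n$ with $A_n\in\Rfor_{\geq n}(K)$ converges in $\Rfor_{\geq 0}(K)$. Both are formal/combinatorial facts about the $\Zbb$-grading on $R(K)$ induced by the central circle $S^1$, so the proof is a matter of bookkeeping on weights.

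The plan for (i) is as follows. Write $A=\sum_{l\geq n}A_l$ and $B=\sum_{l\geq m}B_l$ with $A_l,B_l\in R_l(K)$ (finite rank in each graded piece, since $R_l(K)$ is a genuine — not formal — group). For each fixed degree $p\geq n+m$, the coefficient of the product in degree $p$ is the finite sum $\sum_{l+l'=p,\, l\geq n,\, l'\geq m}A_l\cdot B_{l'}$, which makes sense because $R_l(K)\cdot R_{l'}(K)\subset R_{l+l'}(K)$ and because there are only finitely many pairs $(l,l')$ with $l+l'=p$, $l\geq n$, $l'\geq m$. Thus each graded component of $A\cdot B$ is a well-defined element of $R_p(K)$, no infinite sums of representations appear in any single degree, and $A\cdot B=\sum_{p\geq n+m}\big(\sum_{l+l'=p}A_lB_{l'}\big)\in\Rfor_{\geq n+m}(K)$. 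The key point being used — the reason this differs from an unrestricted product in $\Rfor(K)$, which is \emph{not} defined — is that the grading is bounded below for both factors, so the convolution in each fixed degree is finite.

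For (ii), the plan is to observe that if $A_n\in\Rfor_{\geq n}(K)$ then, by definition, the graded component of $A_n$ in any degree $p$ vanishes whenever $p<n$. Hence for each fixed $p\geq 0$, only the terms $A_0,\dots,A_p$ can contribute to degree $p$ in the sum $\sum_{n\geq 0}A_n$, and the degree-$p$ component of the sum is the \emph{finite} sum $\sum_{n=0}^{p}(A_n)_p\in R_p(K)$. Therefore $\sum_{n\geq 0}A_n$ is a well-defined element of $\prod_{p\geq 0}R_p(K)=\Rfor_{\geq 0}(K)$; this is exactly what convergence in $\Rfor_{\geq 0}(K)$ means (the natural topology being the product topology, or equivalently: a sequence of partial sums stabilizes in each fixed degree).

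The only mild subtlety — and the one step worth spelling out carefully — is the identification of what ``converges in $\Rfor_{\geq 0}(K)$'' should mean: one must be explicit that $\Rfor_{\geq 0}(K)$ is the group of formal series $\sum_{p\geq 0}E_p$ with $E_p\in R_p(K)$ arbitrary, and that the relevant notion of convergence is degreewise stabilization of partial sums. Once that is in place, both statements are immediate from the grading being multiplicative and bounded below. I do not anticipate any genuine obstacle here; the lemma is purely formal and its role is simply to legitimize the manipulations with $z$-graded formal characters (products and infinite sums of $z$-positive classes) that occur in the localization arguments of Section \ref{sec:proof-theo-qfor-Y}.
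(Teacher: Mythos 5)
Your argument is correct and is exactly the routine bookkeeping the paper intends, since its own ``proof'' is left to the reader: the grading $R_l(K)\cdot R_{l'}(K)\subset R_{l+l'}(K)$ is bounded below for both factors, so each fixed degree of the product (resp.\ of the sum $\sum_{n\geq 0}A_n$) receives only finitely many contributions, each lying in $R_p(K)$ because, as you rightly note, the graded pieces $E_l$ of an element of $\Rfor_{\geq n}(K)$ are by definition \emph{finite} elements of $R_l(K)$ (this finiteness in each degree is the one point where the product could otherwise fail). Nothing further is needed.
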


\begin{proof}
The proof is left to the reader. 
\end{proof}

\medskip

For $\Xcal\in\Bcal_P$, the action of $S^1$ is trivial on $\Xcal$, and relation (\ref{eq:kostant-L}) shows that $S^1$ acts on the fibres of 
Kostant-Souriau line bundle $L_\Xcal$ through the character $t\mapsto t^{n(\Xcal)}$, where $n(\Xcal)=\frac{\langle\Phi^K_P(\Xcal),z_o\rangle}{2\pi}$ is a strictly positive integer.

\begin{prop}\label{prop:Rfor-positive}
$\bullet$ The generalized character $\Qcal^\Xcal_K(P)$ belongs to $\Rfor_{\geq n(\Xcal)}(K)$.

$\bullet$ The sum $\sum_{\Xcal\in\Bcal_P}\Qcal^{\Xcal}_K(P)$ converges in $\Rfor_{\geq 0}(K)$.
\end{prop}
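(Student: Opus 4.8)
The plan is to establish the two assertions in order, the first being the substantive one and the second a formal consequence via Lemma \ref{lem:Rfor-positive}. For the first point, I would start from the explicit formula established just above, namely
\[
\Qcal^{\Xcal}_K(P)=(-1)^{r_\Xcal}\RR^{K}\left(\Xcal, L_{\Xcal}\otimes \det(\Ncal^{+,z}_{\Xcal})\otimes S^\bullet(\vert\Ncal_{\Xcal}\vert^{z})\right),
\]
and analyse the weights of $S^1$ (the circle with Lie algebra $\Rbb z$) on each tensor factor appearing inside the Riemann-Roch character. The key observation is that $S^1$ acts trivially on $\Xcal$, so $\RR^K(\Xcal,-)$ of an $S^1$-isotypic bundle of weight $n$ lands in $R_n(K)$ (and an infinite direct sum of such contributes to $\Rfor_{\geq n_0}(K)$ if $n\geq n_0$ throughout). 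Thus I only need: (i) $L_\Xcal$ carries the strictly positive weight $n(\Xcal)$, which is exactly the content of relation (\ref{eq:kostant-L}) evaluated at $X=z$ on the fixed component $\Xcal$; (ii) $\det(\Ncal^{+,z}_{\Xcal})$ has weight $\geq 0$; and (iii) every weight of $S^\bullet(\vert\Ncal_{\Xcal}\vert^{z})$ is $\geq 0$.

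For (ii) and (iii) the point is the definition of the $z$-polarization. On $\Ncal^{a}_{\Xcal}$ one has $\Lcal(z)=a\,J_\Xcal$, so with respect to the complex structure $J_\Xcal$ the infinitesimal $z$-action is multiplication by $ia$; hence $\Ncal^{+,z}_{\Xcal}=\bigoplus_{a>0}\Ncal^{a}_{\Xcal}$ is a sum of strictly positive $S^1$-weight spaces, its determinant has $S^1$-weight a positive multiple of $z_o$, so weight $\geq 0$ in the $R_n$-grading. For $\vert\Ncal_{\Xcal}\vert^{z}=\Ncal^{+,z}_{\Xcal}\oplus\overline{\Ncal^{-,z}_{\Xcal}}$: the first summand has positive weights by the same computation, and the conjugate of $\Ncal^{-,z}_{\Xcal}=\bigoplus_{a<0}\Ncal^{a}_{\Xcal}$ converts the negative weights $a<0$ into positive ones, so $\vert\Ncal_{\Xcal}\vert^{z}$ has all $S^1$-weights strictly positive; therefore its symmetric algebra has all weights $\geq 0$. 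Combining, every $K$-weight $\mu$ occurring in $\Qcal^{\Xcal}_K(P)$ satisfies $\tfrac{\langle\mu,z_o\rangle}{2\pi}\geq n(\Xcal)>0$, which is the first assertion. A small technical check that $\RR^K(\Xcal,-)$ of these (possibly infinite-rank, since $\Xcal$ may be non-compact and $S^\bullet$ is infinite-dimensional) bundles is well-defined in $\Rfor(K)$ should be addressed: it follows because on each $S^1$-isotypic piece the bundle is finite rank and $\Xcal$-part is handled by $\qfor_K$/the localized index already available, and the $S^1$-grading is bounded below, so one stays inside $\Rfor_{\geq n(\Xcal)}(K)$.

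For the second point, I would invoke the first point together with the fact (from Lemma \ref{lem-phi-z-positif}, transported to $P$) that $n(\Xcal)=\tfrac{\langle\Phi^K_P(\Xcal),z_o\rangle}{2\pi}$ is a \emph{strictly positive integer} for every $\Xcal\in\Bcal_P$, and moreover that for each fixed $N$ only finitely many components $\Xcal$ have $n(\Xcal)\leq N$ — this last finiteness coming from properness of $\langle\Phi^K_P,z\rangle$, which forces $\{\Phi^K_P=0\}\cup P^z$-pieces in any bounded energy range to be compact, hence the corresponding set of components finite (cf.\ Proposition \ref{prop:crit-Phi}). Grouping $\sum_{\Xcal\in\Bcal_P}\Qcal^{\Xcal}_K(P)=\sum_{n\geq 1}\big(\sum_{n(\Xcal)=n}\Qcal^{\Xcal}_K(P)\big)$ writes the sum as $\sum_{n\geq 1}A_n$ with $A_n\in\Rfor_{\geq n}(K)$ a finite sum, and the second part of Lemma \ref{lem:Rfor-positive} gives convergence in $\Rfor_{\geq 0}(K)$. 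The main obstacle I anticipate is the careful bookkeeping in the first point: making sure the $z$-polarization conventions yield \emph{strictly} positive weights on $\vert\Ncal_{\Xcal}\vert^{z}$ and on $L_\Xcal$ with the correct sign, and verifying that the infinite-rank Riemann-Roch characters genuinely define elements of the graded completion rather than merely formal expressions; once the weight signs are pinned down, the rest is the formal algebra of the grading.
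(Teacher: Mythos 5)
Your proposal is correct and follows essentially the same route as the paper: grade by the $S^1$-weight, note that $L_\Xcal$, $\det(\Ncal^{+,z}_\Xcal)$ and $S^\bullet(\vert\Ncal_\Xcal\vert^{z})$ carry weights $\geq n(\Xcal)$, $\geq 0$ and $\geq 0$ respectively (the paper organizes this by the symmetric degree $p$, with $E_p\in R_{\geq n(\Xcal)+p}(K)$), and then use properness of $\langle\Phi^K_P,z\rangle$ to get finitely many components with $n(\Xcal)\leq N$ before applying Lemma \ref{lem:Rfor-positive}. Your worry about non-compact $\Xcal$ is moot: $\langle\Phi^K_P,z\rangle$ is constant on each connected component of $P^z$, so properness forces $\Xcal$ to be compact and each $\RR^K(\Xcal,E_p)$ lies in $R(K)$.
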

\begin{proof}
The generalized character $\Qcal^\Xcal_K(P)$ is equal to the sum $(-1)^{r(\Xcal)}\sum_{p\geq 0} E_p$, with 
$$
E_p=\RR^{K}(\Xcal, L_{\Xcal}\otimes \det(\Ncal^{+,z}_{\Xcal})\otimes S^p(\vert\Ncal_{\Xcal}\vert^{z}))\in R(K).
$$
Since the group $S^1$ acts on the fibres of the polarized bundles $\Ncal^{+,z}_{\Xcal}$ and  $\vert\Ncal_{\Xcal}\vert^{z}$ through characters 
$t^n$ with $n>0$, we see that $E_p\in R_{\geq n(\Xcal)+p}(K)$. Hence $\Qcal^\Xcal_K(P)=(-1)^{r(\Xcal)}\sum_{p\geq 0} E_p$ converges 
in $\Rfor_{\geq n(\Xcal)}(K)$.

For the second point we see that $\sum_{\Xcal\in\Bcal_P}\Qcal^{\Xcal}_K(P)= \sum_{n\geq 0} A_n$ with 
$$
A_n=\sum_{n(\Xcal)=n}\Qcal^{\Xcal}_K(P)\in \Rfor_{\geq n}(K).
$$
The former sum is finite (and then well-defined) because the map $\langle\Phi^K_P,z_o\rangle$ is proper : for any $C>0$, we have 
only a finite  number of $\Xcal\in\Bcal_P$ such that $\langle\Phi^K_P(\Xcal),z_o\rangle\leq C$. The last point is proved.

\end{proof}

\subsection{Proof of Theorem \ref{theo:formal-G-K} under Assumption {\bf A2}}\label{sec:prof-theorem-formal-G-K-2}

The proof of Theorem \ref{theo:formal-G-K}, in the case where Assumption {\bf A2} is satisfied, follows directly of the results of 
the Section \ref{sec:proof-theo-qfor-Y} applied to the following two cases:
\begin{enumerate}
\item $(M,\Omega_M,\Phi^G_M)$ is a proper Hamiltonian $G$-manifold with a moment map taking values in $G\cdot\chol(z)$, and which satisfies Assumption {\bf A2}.
\item $Y$ is the symplectic slice of the former case $M:=G\times_K Y$.
\end{enumerate} 

Note that the fixed point set $M^z$ and $Y^z$ coincide. For a connected component $\Xcal$ of $Y^z$, let $\Ncal_\Xcal$ (resp. $\Ncal'_\Xcal$) 
be the normal bundle of $\Xcal$ in $M$ (resp. $Y$). Since the normal bundle of $Y$ in $M$ is the trivial bundle $Y\times \pgot$, we have $\Ncal_\Xcal=\Ncal'_\Xcal\oplus \pgot$. A small computation shows that 
$$
\Ncal_\Xcal^{+,z}=(\Ncal'_\Xcal)^{+,z}\quad \mathrm{and} \quad
\vert\Ncal_\Xcal\vert^{+,z}=\vert\Ncal'_\Xcal\vert^{+,z}\oplus (\pgot,\mathrm{ad}(z)).
$$

Finally (\ref{eq:proof-Y-1}) and Proposition \ref{eq:proof-Y-3} gives
\begin{eqnarray*}
\qfor_K(M)&=&
\sum_{\Xcal}(-1)^{r_\Xcal}\RR^{K}\left(\Xcal, L_{\Xcal}\otimes \det(\Ncal^{+,z}_{\Xcal})\otimes S^\bullet(\vert\Ncal_{\Xcal}\vert^{z})\right)\\
&=&\sum_{\Xcal}(-1)^{r_\Xcal}\RR^{K}\left(\Xcal, L_{\Xcal}\otimes \det(\Ncal'_\Xcal)^{+,z}\otimes S^\bullet(\vert\Ncal'_{\Xcal}\vert^{z})\otimes 
S^\bullet(\pgot)\right)\\
&=&\left(\sum_{\Xcal}(-1)^{r_\Xcal}\RR^{K}\left(\Xcal, L_{\Xcal}\otimes \det(\Ncal'_\Xcal)^{+,z}\otimes S^\bullet(\vert\Ncal'_{\Xcal}\vert^{z})\right)\right)\otimes 
S^\bullet(\pgot)\\
&=& \qfor_K(Y)\otimes S^\bullet(\pgot).
\end{eqnarray*}
We know that the term $\sum_{\Xcal}(-1)^{r_\Xcal}\RR^{K}\left(\Xcal, L_{\Xcal}\otimes \det(\Ncal'_\Xcal)^{+,z}\otimes S^\bullet(\vert\Ncal'_{\Xcal}\vert^{z})\right)$ belongs to $\Rfor_{\geq 0}(K)$ (see Proposition \ref{prop:Rfor-positive}).  We see also that $S^\bullet(\pgot)\in \Rfor_{\geq 0}(K)$. Hence 
their product is well-defined (see Lemma \ref{lem:Rfor-positive}).

\section{Appendix: proof of Lemma \ref{lem:restriction-admissible}}
 
Let $m=\sum_{\mu\in \Ghol(z)}m_\mu V^G_\mu \in \Rfor(G,z)$ that is $G'$-admissible : the map $\pi_{\ggot',\ggot}: G\cdot\mathrm{Support}(m)\to (\ggot')^*$ is proper. We start with the 

\begin{lem} $\bullet$ There exists a closed subset $C_m\subset\cholp(z)$ such that \break 
$\pi_{\ggot',\ggot}\left(G\cdot\mathrm{Support}(m)\right)= G'\cdot C_m$.

$\bullet$ The projection $\pi_{\kgot',\ggot'}: G'\cdot C_m \to (\kgot')^*$ is proper.

$\bullet$ The projection $\pi_{\kgot',\kgot} : \pi_{\kgot,\ggot}(G\cdot\mathrm{Support}(m))\to (\kgot')^*$ is proper.
\end{lem}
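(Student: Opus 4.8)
\medskip

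The plan is to establish the three assertions in turn, writing $Z := G\cdot\mathrm{Support}(m)\subset\ggot^*$. The first step is to observe that $Z$ is a \emph{closed} subset of $\ggot^*$: if $\xi_k\to\xi$ with $\xi_k\in G\cdot\mu_k$ and $\mu_k\in\mathrm{Support}(m)\subset\chol(z)\subset\kgot^*$, then $\|\mu_k\|^2=-b(\xi_k,\xi_k)\to -b(\xi,\xi)$, so the $\mu_k$ stay in a bounded region of the weight lattice and take only finitely many values; passing to a subsequence with $\mu_k=\mu_0$ fixed, $\xi_k$ lies in the closed elliptic orbit $G\cdot\mu_0$, whence $\xi\in Z$. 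Since by hypothesis $\pi_{\ggot',\ggot}\colon Z\to(\ggot')^*$ is proper, the image $E:=\pi_{\ggot',\ggot}(Z)$ is a closed and $G'$-invariant subset of $(\ggot')^*$, and point $(d)$ of Proposition \ref{prop:cone-projection} gives $E\subset G'\cdot\cholp(z)$. For the first assertion I would then take $C_m:=E\cap\cholp(z)$, which is closed: one has $G'\cdot C_m\subset E$, while conversely any $\eta\in E$ can be written $\eta=g'\cdot c$ with $g'\in G'$ and $c\in\cholp(z)$, so that $c=(g')^{-1}\cdot\eta\in E$, hence $c\in C_m$ and $\eta\in G'\cdot C_m$. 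Thus $E=G'\cdot C_m$ with $C_m\subset\cholp(z)$ closed.

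\medskip

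For the second assertion the crux is that $\cholp(z)\subset(\tgot')^*$ lies in the ``compact part'' of $(\ggot')^*$: for $c\in(\tgot')^*$ one has $b(c,c)=-\|c\|^2\leq 0$. Given $\eta\in G'\cdot\cholp(z)$, write $\eta=g'\cdot c$; by $G'$-invariance of $b$, $b(\eta,\eta)=b(c,c)\leq 0$. Combining the identities $\|\eta\|^2=\|\eta_{\kgot'}\|^2+\|\eta_{\pgot'}\|^2$ and $-b(\eta,\eta)=\|\eta_{\kgot'}\|^2-\|\eta_{\pgot'}\|^2$, with $\eta_{\kgot'}=\pi_{\kgot',\ggot'}(\eta)$, yields $\|\eta\|^2=2\|\pi_{\kgot',\ggot'}(\eta)\|^2+b(\eta,\eta)\leq 2\|\pi_{\kgot',\ggot'}(\eta)\|^2$. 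Hence, for any compact $B\subset(\kgot')^*$, the set $E\cap\pi_{\kgot',\ggot'}^{-1}(B)$ is bounded, and it is closed since $E$ is closed; being closed and bounded it is compact, so $\pi_{\kgot',\ggot'}\colon G'\cdot C_m\to(\kgot')^*$ is proper.

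\medskip

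For the third assertion I would first check that $\pi_{\kgot,\ggot}\colon Z\to\kgot^*$ is proper: if $x\in G\cdot\mu$ with $\mu\in\mathrm{Support}(m)$ and $\|\pi_{\kgot,\ggot}(x)\|\leq R$, the relation $\|x_\pgot\|^2=\|x_\kgot\|^2-\|\mu\|^2\geq 0$ forces $\|\mu\|\leq R$ (finitely many such weights) and $\|x\|^2\leq 2R^2$, so together with the closedness of the finitely many orbits involved the relevant preimage is compact. Consequently $S:=\pi_{\kgot,\ggot}(Z)$ is closed and $\pi_{\kgot,\ggot}\colon Z\to S$ is proper and surjective. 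Now $\pi_{\kgot',\ggot}\vert_Z=\pi_{\kgot',\ggot'}\vert_E\circ\pi_{\ggot',\ggot}\vert_Z$ is a composition of proper maps, hence proper; and since $\pi_{\kgot',\ggot}\vert_Z=\pi_{\kgot',\kgot}\vert_S\circ\pi_{\kgot,\ggot}\vert_Z$ with $\pi_{\kgot,\ggot}\vert_Z$ proper and surjective onto $S$, for compact $B\subset(\kgot')^*$ the set $\pi_{\kgot',\kgot}^{-1}(B)\cap S$ equals the image under $\pi_{\kgot,\ggot}\vert_Z$ of the compact set $(\pi_{\kgot',\ggot}\vert_Z)^{-1}(B)$, hence is compact. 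This proves that $\pi_{\kgot',\kgot}\colon S\to(\kgot')^*$ is proper.

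\medskip

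The computations involved are routine; the only genuinely geometric inputs are point $(d)$ of Proposition \ref{prop:cone-projection} and the sign $b(c,c)\leq 0$ on $(\tgot')^*$, which is exactly what powers the estimate in the second assertion. The place where care is needed is ensuring that every restricted projection is proper in the strict sense — compact, not merely bounded, preimages of compacta — and there the finiteness fact that only finitely many $\mu\in\mathrm{Support}(m)$ have $\|\mu\|$ bounded (valid because $\mathrm{Support}(m)$ is a set of weights) is used repeatedly.
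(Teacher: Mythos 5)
Your argument is correct, and its overall architecture is the one the paper uses: closedness of $G\cdot\mathrm{Support}(m)$, point $(d)$ of Proposition \ref{prop:cone-projection} to produce $C_m\subset\cholp(z)$, properness of $\pi_{\kgot',\ggot'}$ on $G'\cdot C_m$, and then the factorizations $\pi_{\kgot',\ggot}=\pi_{\kgot',\ggot'}\circ\pi_{\ggot',\ggot}=\pi_{\kgot',\kgot}\circ\pi_{\kgot,\ggot}$ for the third bullet. Where you genuinely diverge is the second bullet: the paper writes an element of $G'\cdot C_m$ in Cartan form $e^{X'}\cdot\lambda'$, bounds $\lambda'$ via $\|\pi_{\kgot',\ggot'}(e^{X'}\cdot\lambda')\|\geq\|\lambda'\|$, and then controls $X'$ by re-using the quantitative estimate $\|\pi_{\kgot',\ggot'}(e^{X'}\cdot\lambda')\|\geq c(\Kcal)\|X'\|^2$ from the proof of Theorem \ref{theo:properGK}; you instead use the elementary identity $\|\eta\|^2=2\|\pi_{\kgot',\ggot'}(\eta)\|^2+b(\eta,\eta)$ together with $b(\eta,\eta)=b(c,c)\leq 0$ on $G'\cdot\cholp(z)$, and conclude by ``closed and bounded implies compact'' inside the vector space $(\ggot')^*$. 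Your route is more self-contained and exploits the fact that everything here sits in a finite-dimensional vector space, so boundedness of $\eta$ itself suffices; the paper's route is the one that survives in the abstract manifold setting (where one must control the Cartan coordinates $X'$ and $\lambda'$ separately) and simply recycles an estimate already proved. You also supply two details the paper leaves implicit — the closedness of $G\cdot\mathrm{Support}(m)$ (via boundedness of the weights and closedness of elliptic orbits) and the closedness/compactness bookkeeping in the third bullet via images of compacta — while your separate verification that $\pi_{\kgot,\ggot}$ is proper on $G\cdot\mathrm{Support}(m)$ is correct but not actually needed for your final step, since only continuity and surjectivity onto $S$ enter there.
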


\begin{proof}
First we note that $G\cdot\mathrm{Support}(m)$ is closed in $\ggot^*$. Thanks to point $(d)$ of Proposition \ref{prop:cone-projection}, we know that $\pi_{\ggot',\ggot}\left(G\cdot\mathrm{Support}(m)\right)= G'\cdot C_m$, with $C_m= \pi_{\ggot',\ggot}(G\cdot\mathrm{Support}(m))\cap (\tgot')^*\subset \cholp(z)$. The properness assumption forces $G'\cdot C_m$ (resp. $C_m$) to be closed in $(\ggot')^*$ (resp. $\cholp(z)$). The first point is proved. 

Let $B_R:=\{ \xi' \in (\tgot')^* \, \vert \, \|\xi'\|\leq R \}$, and consider $e^{X'}\cdot\lambda'\in G'\cdot C_m\cap\pi_{\kgot',\ggot'}^{-1}(B_R)$.  
Since $\|\pi_{\kgot',\ggot'}(e^{X'}\cdot\lambda')\geq \|\lambda'\|$, we see that $\lambda'$ belongs to a compact subset $\Kcal$ of $\cholp(z)$. 
Then there exists a constant $c(\Kcal)>0$ such that $\|\pi_{\kgot',\ggot'}(e^{Y}\cdot\lambda')\|\geq c(\Kcal)\|X'\|^2$ for any $Y\in\pgot'$ and 
$\lambda'\in\Kcal$ (see the proof of Proposition \ref{theo:properGK}). Finally, we have proved that $e^{X'}\cdot\lambda'$ belongs to a compact subset of $G'\cdot C_m$.

Let $\eta\in G\cdot\mathrm{Support}(m)$ such that $\xi=\pi_{\kgot,\ggot}(\eta)\in \pi_{\kgot',\kgot}^{-1}(B_R)$. Then 
$\pi_{\kgot',\kgot}(\xi)=\pi_{\kgot',\ggot}(\eta)=\pi_{\kgot',\ggot'}\circ\pi_{\ggot',\ggot}(\eta)\in B_R$. Thanks to the former point, we know that $\pi_{\ggot',\ggot}(\eta)$ is bounded. Since $\pi_{\ggot',\ggot}: G\cdot\mathrm{Support}(m)\to (\ggot')^*$ is proper, it implies that 
$\eta$ and $\xi=\pi_{\kgot,\ggot}(\eta)$ are bounded.

\end{proof}

\medskip

Consider now the restriction of $m\in\Rfor(G,z)$ to $K$ and $G'$.  The generalized character $n:=\mathbf{r}_{K,G}(m)=\sum_{\lambda\in\Khol(z)} n_\lambda V^K_\lambda$, is defined by the relations
$$
n_\lambda=\sum_{\mu\in\Ghol(z)} m_\mu \Qcal((G\cdot\mu)_{\lambda,K}),
$$
and $q:=\mathbf{r}_{G',G}(m)=\sum_{\nu\in\Gholp(z)} q_\nu V^{G'}_\nu$, is defined by the relations
$$
q_\nu=\sum_{\mu\in\Ghol(z)} m_\mu \Qcal((G\cdot\mu)_{\nu, G'}),
$$

We see that $\lambda\in \mathrm{Support}(n)$ only if there exists 
$\mu$ such that  $m_\mu \Qcal((G\cdot\mu)_{\lambda,K})\neq 0$:  hence 
$$
\mathrm{Support}(n)\subset \pi_{\kgot,\ggot}\left(G\cdot\mathrm{Support}(m)\right).
$$
Since $\pi_{\kgot',\kgot}$ is proper on $\pi_{\kgot,\ggot}\left(G\cdot\mathrm{Support}(m)\right)$, we have that 
$\pi_{\kgot',\kgot}$ is proper on $K\cdot \mathrm{Support}(n)$. Thus $n\in \Rfor(K,z)$ is $K'$-admissible. 

\medskip

We will now compare the following two elements of $\Rfor(K',z)$ : $A:=\mathbf{r}_{K',K}(n)=\sum_{\delta\in\Kholp(z)} a_\delta V^{K'}_\delta$
and $B:=\mathbf{r}_{K',G'}(q)= \sum_{\delta\in\Kholp(z)} b_\delta V^{K'}_\delta$. 

By definition, we have 
\begin{eqnarray*}
a_\delta&=&\sum_{\lambda\in\Khol(z)} n_\lambda \Qcal((K\cdot\lambda)_{\delta,K'})\\
&=&\sum_{\mu\in\Ghol(z)} m_\mu \underbrace{\left(\sum_{\lambda\in\Khol(z)}\Qcal((G\cdot\mu)_{\lambda,K})\Qcal((K\cdot\lambda)_{\delta,K'})\right)}_{X_{\mu,\delta}},
\end{eqnarray*}
and
\begin{eqnarray*}
b_\delta&=& \sum_{\nu\in\Gholp(z)} q_\nu \Qcal((G'\cdot\nu)_{\delta,K'})\\
&=&\sum_{\mu\in\Ghol(z)} m_\mu 
\underbrace{\left(\sum_{\nu\in\Ghol(z)} \Qcal((G'\cdot\nu)_{\delta,K'}) \Qcal((G\cdot\mu)_{\nu, G'})\right)}_{Y_{\mu,\delta}}.
\end{eqnarray*}

We have
\begin{eqnarray*}
X_{\mu,\delta}&=&\sum_{\lambda\in\Khol(z)}\Qcal((G\cdot\mu)_{\lambda,K})\Qcal((K\cdot\lambda)_{\delta,K'})\\
&=&\sum_{\lambda\in\Khol(z)}[V_\lambda^K : V_\mu^G\vert_K]\cdot [V_\delta^{K'} : V_\lambda^{K'}\vert_K]\\
&=&[V_\delta^{K'} :V_\mu^G\vert_{K'}]
\end{eqnarray*}
which is finite if $m_\mu\neq 0$. Similarly, we check that  $Y_{\mu,\delta}=[V_\delta^{K'} : V_\mu^G\vert_{K'}]$. 

Finally, we have proved that $\mathbf{r}_{K',K}\circ\mathbf{r}_{K,G}(m)=A=B=\mathbf{r}_{K',G'}\circ\mathbf{r}_{G',G}(m)$.


{\small

}

\end{document}